\DeclareMathOperator{\tr}{tr}
\DeclareMathOperator{\diag}{diag}
\newtheorem{corollary}{Corollary}
\newtheorem{theorem}{Theorem}
\newtheorem{lemma}{Lemma}
\theoremstyle{definition}
\newtheorem{observation}{Observation}
\newtheorem{assumption}{Assumption}
\begin{document}
\bibliographystyle{abbrv}

\title{A Necessary Condition for the Spectrum of Nonnegative Symmetric $ 5 \times 5 $ Matrices}
\author{Raphael Loewy \and Oren Spector}
\date{\today}
\maketitle

\begin{abstract}
Let $A$ be a nonnegative symmetric $ 5 \times 5 $ matrix with eigenvalues $ \lambda_1 \geq \lambda_2 \geq \lambda_3 \geq \lambda_4 \geq \lambda_5 $. We show that if $ \sum_{i=1}^{5} \lambda_{i} \geq \frac{1}{2} \lambda_1 $ then $ \lambda_3 \leq \sum_{i=1}^{5} \lambda_{i} $. McDonald and Neumann showed that $ \lambda_1 + \lambda_3 + \lambda_4 \geq 0 $. Let $ \sigma = \left( \lambda_1, \lambda_2, \lambda_3, \lambda_4, \lambda_5 \right) $ be a list of decreasing real numbers satisfying:
\begin{enumerate}
\item $ \sum_{i=1}^{5} \lambda_{i} \geq \frac{1}{2} \lambda_1 $,
\item $ \lambda_3 \leq \sum_{i=1}^{5} \lambda_{i} $,
\item $ \lambda_1 + \lambda_3 + \lambda_4 \geq 0 $,
\item the Perron property, that is $ \lambda_1 = \max_{\lambda \in \sigma} \left| \lambda \right| $.
\end{enumerate}
We show that $ \sigma $ is the spectrum of a nonnegative symmetric $ 5 \times 5 $ matrix. Thus, we solve the symmetric nonnegative inverse eigenvalue problem for $ n = 5 $ in a region for which a solution has not been known before.
\end{abstract}

\section{Introduction}

Let $ \sigma = \left( \lambda_1, \lambda_2, \dots, \lambda_n \right) $ be a list of complex numbers. The problem of determining when $ \sigma $ is the spectrum of a nonnegative $ n \times n $ matrix is called the nonnegative inverse eigenvalue problem (NIEP). When $ \sigma $ consists of real numbers, the problem of determining when $ \sigma $ is the spectrum of a nonnegative (symmetric nonnegative) $ n \times n $ matrix is called RNIEP (SNIEP). All three problems are currently unsolved in the general case, more precisely for any list consisting of $ n \geq 5 $ numbers.

Loewy and London~\cite{RefWorks:39} have solved NIEP in the case of $ 3 \times 3 $ matrices and RNIEP in the case of $ 4 \times 4 $ matrices. Moreover, RNIEP and SNIEP are the same in the case of $ n \times n $ matrices for $ n \leq 4 $. This can be seen from \cite{RefWorks:39} and a paper by Fiedler~\cite{RefWorks:59}. However, it has been shown by Johnson, Laffey and Loewy~\cite{RefWorks:47} that RNIEP and SNIEP are different in general.

In this paper we consider SNIEP in the case $ n = 5 $. It is known that RNIEP and SNIEP are different for $ n = 5 $. Consider the list $ \sigma = \left( 3 + t, 3, -2, -2, -2 \right) $. It has been shown by Loewy and Hartwig~\cite{Refworks:100} that the smallest $t$ for which $ \sigma $ is the spectrum of a nonnegative symmetric $ 5 \times 5 $ matrix is $1$. On the other hand, it has been shown by Meehan~\cite{RefWorks:71} that there exists $ t < 0.52 $ such that $ \sigma $ is the spectrum of a nonnegative $ 5 \times 5 $ matrix.

The trace zero case, namely when $ \sum_{i=1}^{5} \lambda_{i} = 0 $, has been solved by Spector~\cite{RefWorks:79}. Other results about SNIEP in case $ n = 5 $ can be found in Egleston, Lenker and Narayan~\cite{RefWorks:16}, Loewy and McDonald~\cite{RefWorks:36}, and McDonald and Neumann~\cite{RefWorks:45}.

Let $A$ be a nonnegative symmetric $ 5 \times 5 $ matrix with eigenvalues $ \lambda_1 \geq \lambda_2 \geq \lambda_3 \geq \lambda_4 \geq \lambda_5 $. We show that if $ \sum_{i=1}^{5} \lambda_{i} \geq \frac{1}{2} \lambda_1 $ then $ \lambda_3 \leq \sum_{i=1}^{5} \lambda_{i} $. It has been shown in~\cite{RefWorks:45} that $ \lambda_1 + \lambda_3 + \lambda_4 \geq 0 $. Let $ \sigma = \left( \lambda_1, \lambda_2, \lambda_3, \lambda_4, \lambda_5 \right) $ be a list of decreasing real numbers satisfying:
\begin{enumerate}
\item $ \sum_{i=1}^{5} \lambda_{i} \geq \frac{1}{2} \lambda_1 $,
\item $ \lambda_3 \leq \sum_{i=1}^{5} \lambda_{i} $,
\item $ \lambda_1 + \lambda_3 + \lambda_4 \geq 0 $,
\item the Perron property, that is $ \lambda_1 = \max_{\lambda \in \sigma} \left| \lambda \right| $.
\end{enumerate}
We show that $ \sigma $ is the spectrum of a nonnegative symmetric $ 5 \times 5 $ matrix. Thus, we solve SNIEP for $ n = 5 $ in a region for which a solution has not been known before.

As we consider in this paper only SNIEP with $ n = 5 $, we will not list specifically the many results that appear in the literature concerning NIEP, RNIEP and SNIEP. The relevant results that we shall use in this paper will be cited in the appropriate places.

The paper is organized as follows. In Section~\ref{sec:preliminaries_nnc} we state some preliminary results. In Section~\ref{sec:main_theorem} we state our main result and show that it suffices to prove the result for two matrix patterns similar to those mentioned in~\cite{RefWorks:36}. In Section~\ref{sec:pattern_H} we prove the result for the first pattern and in Section~\ref{sec:pattern_C} we prove the result for the second pattern. In order to ease the reading of the paper some of the more technical details used in Sections~\ref{sec:pattern_H} and~\ref{sec:pattern_C} are deferred to the Appendices. In the Appendices roots of polynomials of degree at most $4$ and with integer coefficients are computed with an accuracy of $ 10^{-10} $, which is sufficient for our purposes.

\section{Preliminaries} \label{sec:preliminaries_nnc}

Let $M$ be an $ n \times n $ matrix and let $ 1 \leq c_1 < c_2 < \dots < c_k \leq n $, where $ k \leq n $, be natural numbers. We denote by $ M[ c_1, c_2, \dots, c_k ] $ the principal submatrix of $M$ generated by taking only rows and columns $ c_1, c_2, \dots, c_k $ of $M$. We denote the spectral radius of $M$ by $ \rho \left( M \right) $, the determinant of $M$ by $ \det \left( M \right) $, the trace of $M$ by $ \tr \left( M \right) $ and the characteristic polynomial of $M$ by $ P_M \left( \lambda \right) = \det \left( \lambda \cdot I_n - M \right) $, where $ I_n $ is the $ n \times n $ identity matrix. The diagonal matrix with elements $ x_1, x_2, \dots, x_n $ on its main diagonal is denoted by $ \diag \left( x_1, x_2, \dots, x_n \right) $. When $ \mathbf{x} = \left( x_1, x_2, \dots, x_n \right) \in {\mathbb{R}}^n $ we write $ \mathbf{x} \geq 0 $ to denote $ x_i \geq 0 $ for $ i = 1, 2, \dots, n $.

Let $M$ be a nonnegative symmetric $ n \times n $ matrix. Let $ \left( \lambda_1, \lambda_2, \dots, \lambda_n \right) $ be the spectrum of $M$, where $ \lambda_1 \geq \lambda_2 \geq \dots \geq \lambda_n $. By the eigenvalue interlacing property of symmetric matrices we have the following observations:

\begin{observation} \label{obs:spectral_radius}
Any principal submatrix $M'$ of $M$ has $ \rho \left( M' \right) \leq \rho \left( M \right) $.
\end{observation}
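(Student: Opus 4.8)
The plan is to derive the inequality from two standard facts: the Cauchy interlacing property for symmetric matrices (already quoted in the sentence preceding the statement) and the Perron--Frobenius theorem. First I would record that for a nonnegative symmetric matrix the spectral radius equals the largest eigenvalue: since $M$ is symmetric its spectrum is real, so $\lambda_1 \leq \max_i |\lambda_i| = \rho(M)$ trivially, while Perron--Frobenius guarantees that $\rho(M)$ is itself an eigenvalue of $M$ and therefore, being nonnegative, cannot exceed $\lambda_1$; hence $\rho(M) = \lambda_1$. Exactly the same argument applies to $M'$, which is again nonnegative and symmetric, so $\rho(M')$ equals its largest eigenvalue, say $\mu_1$.

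Next I would carry out the interlacing step. Writing $M' = M[c_1, c_2, \dots, c_k]$ with eigenvalues $\mu_1 \geq \mu_2 \geq \dots \geq \mu_k$, one obtains $M'$ from $M$ by successively deleting the $n - k$ rows not among $c_1, \dots, c_k$ together with the matching columns. Applying the Cauchy interlacing inequality at each deletion shows that the top eigenvalue is nonincreasing along this process, whence $\mu_1 \leq \lambda_1$. An equivalent and perhaps cleaner route is the Courant--Fischer variational formula: padding any $y \in \mathbb{R}^k$ by zeros in the coordinates outside $\{c_1, \dots, c_k\}$ produces $x \in \mathbb{R}^n$ with $x^{T} M x = y^{T} M' y$ and $x^{T} x = y^{T} y$, so the supremum of the Rayleigh quotient of $M'$ is at most that of $M$, i.e.\ $\mu_1 \leq \lambda_1$.

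Combining the two steps gives $\rho(M') = \mu_1 \leq \lambda_1 = \rho(M)$, which is the claim. I do not anticipate any real obstacle here: the statement is a direct corollary of eigenvalue interlacing, and the only point genuinely worth writing down is the elementary identity, for a nonnegative symmetric matrix, between its spectral radius and its largest eigenvalue, which is exactly what allows one to replace $\rho$ by $\lambda_1$ on both sides of the inequality.
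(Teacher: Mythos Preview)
Your proposal is correct and matches the paper's approach: the paper simply states that this observation follows from the eigenvalue interlacing property of symmetric matrices, without giving further detail. Your write-up supplies the missing justification (identifying $\rho$ with the top eigenvalue via Perron--Frobenius, then applying interlacing or Courant--Fischer), which is exactly what is needed.
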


\begin{observation} \label{obs:3x3_sufficient_condition}
If the largest eigenvalue of an $ \left( n - 2 \right) \times \left( n - 2 \right) $ principal submatrix of $M$ is less than or equal to $ \alpha $ then $ \lambda_3 \leq \alpha $.
\end{observation}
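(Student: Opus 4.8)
\emph{Proof proposal.} The statement should follow immediately from the Cauchy eigenvalue interlacing theorem for real symmetric matrices --- the very ``eigenvalue interlacing property'' already invoked in the sentence preceding Observation~\ref{obs:spectral_radius}. I would start from the one-step form: if $N$ is real symmetric of order $m$ with eigenvalues $\mu_1 \geq \cdots \geq \mu_m$ and $N'$ is any principal submatrix of $N$ of order $m-1$, then its eigenvalues $\mu'_1 \geq \cdots \geq \mu'_{m-1}$ satisfy $\mu_i \geq \mu'_i \geq \mu_{i+1}$ for all $i$.

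The plan is then simply to apply this twice. Deleting one row and the corresponding column of $M$ yields an $(n-1)\times(n-1)$ principal submatrix whose largest eigenvalue is at least $\lambda_2$; deleting a second row and the corresponding column yields an $(n-2)\times(n-2)$ principal submatrix $M'$ whose largest eigenvalue $\mu_1$ satisfies $\mu_1 \geq \lambda_3$. Equivalently, iterating the interlacing bound gives $\lambda_i \geq \mu_i \geq \lambda_{i+2}$ for $i = 1, \ldots, n-2$, and one takes $i = 1$. Consequently, if some $(n-2)\times(n-2)$ principal submatrix of $M$ has largest eigenvalue at most $\alpha$, then $\lambda_3 \leq \mu_1 \leq \alpha$, which is the claim.

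There is essentially no obstacle here; the only points requiring care are the direction of the interlacing inequalities and the bookkeeping of indices when two rows and columns are removed rather than one. In the intended application ($n = 5$) the observation says that exhibiting a $3\times 3$ principal submatrix --- in practice one matching a prescribed pattern --- with spectral radius at most $\alpha$ suffices to force $\lambda_3 \leq \alpha$, and I expect it to be used in exactly this form, together with Observation~\ref{obs:spectral_radius}, in Sections~\ref{sec:pattern_H} and~\ref{sec:pattern_C}.
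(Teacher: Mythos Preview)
Your proposal is correct and matches the paper's approach exactly: the paper simply states that Observation~\ref{obs:3x3_sufficient_condition} follows from the eigenvalue interlacing property of symmetric matrices, and your argument (applying Cauchy interlacing twice to get $\lambda_3 \le \mu_1 \le \alpha$) is precisely the standard justification of that claim.
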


\begin{observation} \label{obs:4x4_sufficient_condition}
If the second largest eigenvalue of an $ \left( n - 1 \right) \times \left( n - 1 \right) $ principal submatrix of $M$ is less than or equal to $ \alpha $ then $ \lambda_3 \leq \alpha $.
\end{observation}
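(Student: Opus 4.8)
The plan is to obtain Observation~\ref{obs:4x4_sufficient_condition} as an immediate consequence of the Cauchy eigenvalue interlacing theorem for real symmetric matrices, which is the tool already announced for the preceding two observations. First I would fix an $(n-1)\times(n-1)$ principal submatrix $M'$ of $M$ --- concretely, the submatrix obtained by deleting row and column $j$ for some $j \in \{1, 2, \dots, n\}$ --- and denote its eigenvalues by $\mu_1 \geq \mu_2 \geq \dots \geq \mu_{n-1}$. Since $M$ is symmetric and $M'$ is obtained by a single deletion, interlacing gives $\lambda_{i+1} \leq \mu_i \leq \lambda_i$ for every $i \in \{1, 2, \dots, n-1\}$.

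The key step is then to read off the case $i = 2$, namely $\lambda_3 \leq \mu_2$. Combining this with the hypothesis that the second largest eigenvalue $\mu_2$ of $M'$ is at most $\alpha$ gives $\lambda_3 \leq \mu_2 \leq \alpha$, which is exactly the assertion. Since $j$ was arbitrary, the statement holds for any $(n-1)\times(n-1)$ principal submatrix.

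I do not expect any genuine obstacle: the argument is a single invocation of interlacing. The only points meriting care are bookkeeping ones --- that it is the \emph{second} largest eigenvalue of the $(n-1)\times(n-1)$ submatrix (not the largest) that bounds $\lambda_3$, and that the submatrix must be principal for interlacing to apply. For completeness I would note that the same circle of ideas yields the companion facts: applying interlacing twice (passing to an $(n-2)\times(n-2)$ principal submatrix) gives Observation~\ref{obs:3x3_sufficient_condition}, while applying it to the largest eigenvalue, together with the fact that a nonnegative symmetric matrix has spectral radius equal to its largest eigenvalue, gives Observation~\ref{obs:spectral_radius}.
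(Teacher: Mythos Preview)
Your proposal is correct and is exactly the approach the paper takes: the observation is stated as an immediate consequence of the Cauchy interlacing inequalities $\lambda_{i+1}\le\mu_i\le\lambda_i$, and the paper offers no further argument beyond invoking interlacing.
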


Another observation we shall often use is:
\begin{observation} \label{obs:negative_cp}
If $ P_M \left( \lambda \right) < 0 $ then $ \rho \left( M \right) > \lambda $. Let $ \mu_1, \mu_2, \mu_3 $ be the three largest eigenvalues of $M$ and $ \alpha $ some number such that $ \mu_3 \leq \alpha \leq \mu_2 \leq \mu_1 $. If $ \mu_2 < \lambda < \mu_1 $ then $ P_M \left( \lambda \right) < 0 $. Moreover, if for some $ \lambda \geq \alpha $ we have $ P_M \left( \lambda \right) < 0 $, then $ \mu_2 < \lambda < \mu_1 $.
\end{observation}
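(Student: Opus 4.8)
The plan is to work directly with the factorization of the characteristic polynomial. Since $M$ is a nonnegative symmetric matrix, its spectrum is real and, by the Perron--Frobenius theorem, $\rho(M) = \lambda_1$; moreover $P_M(\lambda) = \det(\lambda I_n - M) = \prod_{i=1}^n (\lambda - \lambda_i)$. For the first assertion I would argue by contraposition: if $\lambda \geq \rho(M) = \lambda_1 \geq \lambda_i$ for all $i$, then every factor $\lambda - \lambda_i$ is nonnegative, so $P_M(\lambda) \geq 0$. Hence $P_M(\lambda) < 0$ forces $\lambda < \rho(M)$.

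For the second assertion, note that $\mu_1 = \lambda_1$, $\mu_2 = \lambda_2$, $\mu_3 = \lambda_3$. Assume $\mu_2 < \lambda < \mu_1$. Then $\lambda - \lambda_1 < 0$, while $\lambda - \lambda_2 > 0$ and, for each $i \geq 3$, $\lambda - \lambda_i \geq \lambda - \lambda_3 = \lambda - \mu_3 > \mu_2 - \mu_3 \geq 0$. Thus $P_M(\lambda)$ is a product of one negative factor and $n-1$ positive factors, so $P_M(\lambda) < 0$.

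For the third (``moreover'') assertion, suppose $\lambda \geq \alpha$ and $P_M(\lambda) < 0$. The first assertion immediately gives $\lambda < \rho(M) = \mu_1$. It remains to show $\lambda > \mu_2$; suppose instead that $\lambda \leq \mu_2$. Since $\lambda \geq \alpha \geq \mu_3 = \lambda_3 \geq \lambda_i$ for all $i \geq 3$, the factors $\lambda - \lambda_i$ with $i \geq 3$ are all nonnegative, while $\lambda - \lambda_1$ and $\lambda - \lambda_2 = \lambda - \mu_2$ are both nonpositive; hence $P_M(\lambda) = (\lambda - \lambda_1)(\lambda - \lambda_2)\prod_{i \geq 3}(\lambda - \lambda_i) \geq 0$, contradicting $P_M(\lambda) < 0$. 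Therefore $\mu_2 < \lambda < \mu_1$.

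There is no real obstacle beyond careful sign bookkeeping: the only subtlety is to track strict versus non-strict inequalities correctly when eigenvalues coincide, and to use the hypothesis $\alpha \geq \mu_3$ (equivalently $\lambda \geq \lambda_3$) in the last step, which is precisely what guarantees that at most the first two factors of the product can be negative.
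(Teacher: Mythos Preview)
Your argument is correct. The paper states this as an observation without proof, treating it as an elementary fact about the sign of the real-rooted polynomial $P_M(\lambda)=\prod_i(\lambda-\lambda_i)$; your direct sign-bookkeeping via the factorization is exactly the intended justification.
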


Two results will be of great importance to us in proving our main results. To state the first result we need the concept of an {\em extreme matrix}, introduced by Laffey~\cite{RefWorks:23}. A nonnegative $ n \times n $ matrix is called an {\em extreme matrix} if its spectrum $ \left( \lambda_1, \lambda_2, \dots, \lambda_n \right) $ has the property that for all $ \varepsilon > 0 $, $ \left( \lambda_1 - \varepsilon, \lambda_2 - \varepsilon, \dots, \lambda_n - \varepsilon \right) $ is not the spectrum of a nonnegative matrix. In particular, an extreme matrix cannot be similar to a positive matrix. In the context of this paper an extreme matrix means a symmetric extreme matrix. Such a matrix cannot be orthogonally similar to a positive symmetric matrix.

\begin{theorem}[\cite{RefWorks:23}] \label{th:extreme_L}
Let $ M = \left( m_{ij} \right) $ be an $ n \times n $ nonnegative symmetric matrix that is not orthogonally similar to a positive symmetric matrix. Then there exists an $ n \times n $ nonnegative, symmetric, nonzero matrix $ Y = \left( y_{ij} \right) $ such that
\begin{enumerate}
\item $ M Y = Y M $,
\item $ m_{ij} y_{ij} = 0 $ for all $i$, $j$.
\end{enumerate}
\end{theorem}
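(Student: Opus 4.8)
The plan is to prove the contrapositive by a separation (Gordan-type alternative) argument combined with a first-order perturbation of $M$ inside its orthogonal-similarity orbit. Suppose the conclusion fails. Let $\mathcal{C} = \{Y : Y^T = Y,\ MY = YM\}$ be the symmetric part of the commutant of $M$, let $Z = \{(i,j) : m_{ij} = 0\}$ be the zero pattern of $M$, let $\mathcal{Z}$ be the space of symmetric matrices supported on $Z$, and put $\mathcal{C}_0 = \mathcal{C} \cap \mathcal{Z}$. Since $Y \geq 0$, the requirement $m_{ij} y_{ij} = 0$ for all $i,j$ is exactly $Y \in \mathcal{Z}$, so the existence of the desired $Y$ is equivalent to $\mathcal{C}_0$ containing a nonzero member of the cone $K$ of nonnegative symmetric matrices. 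Thus under our assumption $\mathcal{C}_0 \cap K = \{0\}$.

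The cone $K$ is closed, pointed, and full-dimensional in the space of symmetric matrices, so it has a compact convex base $\Delta = \{Y \in K : \sum_{i,j} y_{ij} = 1\}$, which is disjoint from the subspace $\mathcal{C}_0$. Strictly separating $\Delta$ from $\mathcal{C}_0$ produces a symmetric matrix $W$ with $\tr(WY) = 0$ for all $Y \in \mathcal{C}_0$ and $\tr(WY) > 0$ for every nonzero $Y \in K$; testing the latter on $e_ie_j^T + e_je_i^T$ and on $e_ie_i^T$ (with $e_i$ the standard basis vectors) shows that $W$ has strictly positive entries. Next I would identify $\mathcal{C}_0^\perp$ inside the symmetric matrices. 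A short trace computation shows that for skew-symmetric $S$ the commutator $[M,S] = MS - SM$ is symmetric and orthogonal to $\mathcal{C}$, and a dimension count — grouping an orthonormal eigenbasis of $M$ by eigenspaces and using $\sum_k m_k = n$ for the multiplicities $m_k$ — gives $\mathcal{C}^\perp = \{[M,S] : S^T = -S\}$. Since $\mathcal{Z}^\perp$ is the space of symmetric matrices supported on the pattern of $M$, we get $\mathcal{C}_0^\perp = \mathcal{C}^\perp + \mathcal{Z}^\perp$, so $W = [M,S] + T$ for some skew-symmetric $S$ and some symmetric $T$ supported on $\{(i,j) : m_{ij} \neq 0\}$.

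On the pattern $Z$ both $M$ and $T$ vanish, so $[M,S]_{ij} = w_{ij} > 0$ for every $(i,j) \in Z$. Now deform: $M(t) = e^{-tS} M e^{tS}$ is symmetric and orthogonally similar to $M$ for all $t$, with entrywise expansion $M(t) = M + t[M,S] + O(t^2)$. For $(i,j) \in Z$ this gives $M(t)_{ij} = t\,[M,S]_{ij} + O(t^2) > 0$ for all sufficiently small $t > 0$, while for $(i,j) \notin Z$ we have $m_{ij} > 0$, hence $M(t)_{ij} > 0$ for all small $t$. As the pattern is finite, a single small $t > 0$ works for every entry at once, so $M(t)$ is entrywise positive — contradicting that $M$ is not orthogonally similar to a positive symmetric matrix.

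The step I expect to require the most care is the identification $\mathcal{C}^\perp = \{[M,S] : S^T = -S\}$: the inclusion $\supseteq$ is immediate from the trace computation, but equality rests on the dimension identity $\dim \mathcal{C} - \dim\{S : S^T = -S,\ MS = SM\} = n$, which follows by decomposing the full commutant of the (real-symmetric, hence diagonalizable) $M$ into its symmetric and skew parts eigenspace by eigenspace. The remaining ingredients — that $K$ is pointed with a compact base so that strict separation from a subspace is legitimate, and that the $O(t^2)$ remainder is uniform over the finitely many entries of $Z$ — are routine.
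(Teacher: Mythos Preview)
The paper does not supply its own proof of this theorem: it is quoted from Laffey~\cite{RefWorks:23}, with only the remark that the original argument uses nothing about extreme matrices beyond the failure of orthogonal similarity to a positive symmetric matrix. So there is no in-paper proof to compare against.

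Your argument is correct and is essentially the standard one. The key steps --- the identification of the symmetric commutant's orthogonal complement as $\{[M,S]:S^T=-S\}$ via the dimension count $\binom{n}{2}-\sum_k\binom{m_k}{2}=\binom{n+1}{2}-\sum_k\binom{m_k+1}{2}$, the decomposition $(\mathcal{C}\cap\mathcal{Z})^\perp=\mathcal{C}^\perp+\mathcal{Z}^\perp$, the strict separation of the compact base $\Delta$ from the subspace $\mathcal{C}_0$ (forcing the separating functional to vanish on $\mathcal{C}_0$ and be strictly positive on $K\setminus\{0\}$, hence represented by an entrywise positive $W$), and the first-order deformation $M(t)=e^{-tS}Me^{tS}=M+t[M,S]+O(t^2)$ --- all go through as you describe. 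The only places one might want an extra line are: (i) when invoking separation, note explicitly that $\mathcal{C}_0$ is closed (it is a subspace in finite dimensions) so the strict separation theorem for a compact convex set and a disjoint closed convex set applies; and (ii) the positivity of $W$ follows from testing against $E_{ii}$ and $E_{ij}+E_{ji}$, exactly as you indicate. Neither is a gap.
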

Actually Theorem~\ref{th:extreme_L} is stated for extreme matrices, but their only property being used in the proof is that of not being orthogonally similar to a positive symmetric matrix.

The second result is due to Loewy and McDonald~\cite{RefWorks:36}. To state this result we need the following definitions. Let
\begin{center}
\begin{tabular}{ *{3}{ >{$} l <{$} } }
\mathbf{c} = \left( 1, 1, 1, -1, -1 \right) , & \mathbf{d} = \left( 1, 1, 0, -1, -1 \right) , & \mathbf{e} = \left( 1, 0, 0, 0, -1 \right) , \\
\addlinespace
\mathbf{i} = \left( 1, \frac{1}{2}, \frac{1}{2}, -1, -1 \right) , & \mathbf{l} = \left( 1, 0, 0, -\frac{1}{2}, -\frac{1}{2} \right) , \\
\end{tabular}
\end{center}
and let $ \mathbb{F}_1 $ ($ \mathbb{F}_2 $) be the convex hulls of $ \mathbf{c}, \mathbf{d}, \mathbf{e}, \mathbf{l} $, ($ \mathbf{d}, \mathbf{e}, \mathbf{i}, \mathbf{l} $ respectively). Let $ \mathbb{U} $ be the convex hull of $ \mathbf{c}, \mathbf{d}, \mathbf{e}, \mathbf{i}, \mathbf{l} $, and let $ \mathbb{U}_1 = \mathbb{U} \setminus \left( \mathbb{F}_1 \cup \mathbb{F}_2 \right) $.

\begin{theorem}[\cite{RefWorks:36}] \label{th:extreme_LM}
Let $A$ be a symmetric extreme matrix with eigenvalues in $ \mathbb{U}_1 $. Then the pattern of $A$ (up to permutation) is either
\[ H = \left(
\begin{array}{ccccc}
+ & + & + & 0 & 0 \\ \noalign{\medskip}
+ & 0 & 0 & + & + \\ \noalign{\medskip}
+ & 0 & + & 0 & + \\ \noalign{\medskip}
0 & + & 0 & * & + \\ \noalign{\medskip}
0 & + & + & + & 0
\end{array}
\right) \quad
\text{or} \quad
C_0 = \left(
\begin{array}{ccccc}
+ & + & + & 0 & 0 \\ \noalign{\medskip}
+ & * & 0 & 0 & + \\ \noalign{\medskip}
+ & 0 & 0 & + & 0 \\ \noalign{\medskip}
0 & 0 & + & * & + \\ \noalign{\medskip}
0 & + & 0 & + & *
\end{array}
\right) , \]
where $+$ indicates a positive element and $*$ indicates a zero or a positive element.
\end{theorem}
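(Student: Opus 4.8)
The plan is to build on Laffey's structural result. Since a symmetric extreme matrix is not orthogonally similar to a positive symmetric matrix, Theorem~\ref{th:extreme_L} supplies a nonzero nonnegative symmetric matrix $Y$ with $AY = YA$ and with support disjoint from that of $A$, that is $a_{ij} \neq 0 \Rightarrow y_{ij} = 0$ for all $i,j$, diagonal entries included. In graph terms the pattern of $A$ and the pattern of $Y$, viewed as (loop-allowing) graphs on $\{1,\dots,5\}$, are edge- and loop-disjoint, and the theorem amounts to classifying the pattern of $A$. Before the case analysis I would settle the reducible case: if $A$ is, up to permutation, a direct sum of smaller nonnegative symmetric blocks, then $\tr A$ is a sum of nonnegative block traces, and combining this with the constraints forced by $\sigma \in \mathbb{U}$ — namely $\lambda_1 = 1$, $0 \le \tr A \le 1$, $\lambda_4 \le 0$ and $\lambda_5 \le -\tfrac12$, since these hold at every vertex of $\mathbb{U}$ — one checks that every admissible block structure pushes $\sigma$ onto $\mathbb{F}_1 \cup \mathbb{F}_2$, contradicting $\sigma \in \mathbb{U}_1$. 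So $A$ is irreducible; then $\lambda_1 = 1$ is a simple eigenvalue with a positive eigenvector $v$, and $AY = YA$ gives $Yv \in \ker(A - I) = \mathbb{R}v$, say $Yv = \mu v$. Here $\mu = v^{T}Yv/\|v\|^{2} \ge 0$, while $\mu = 0$ would force $Y = 0$ because $v > 0$ and $Y \ge 0$; hence $\mu > 0$. In particular $Y$ is not the entrywise positive matrix $\mu vv^{T}/\|v\|^{2}$, whose support meets the nonempty support of $A$, so the pattern of $Y$ is a nonempty graph disjoint from that of $A$.

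Next I would split on whether the eigenvalues of $A$ are distinct. If they are, every matrix commuting with $A$ is a polynomial in it, so $Y = p(A)$ with $\deg p \le 4$, and the disjointness says $p(A)$ is a nonzero entrywise nonnegative matrix vanishing on every edge and loop of the graph $G$ of $A$. Since $A$ is irreducible its powers progressively fill in $G$, so this is a strong constraint: it excludes $G$ being too dense (e.g. complete, which forces $p(A) = 0$ and hence $p = 0$, contradicting $Y \neq 0$) and in general forces all the ``mass'' of $p(A)$ onto the non-edges of $G$. I would then run through the connected (loop-allowing) graphs on five vertices organized by number of edges, and for each candidate use the interlacing Observations~\ref{obs:spectral_radius} and~\ref{obs:negative_cp} on $3\times 3$ and $4\times 4$ principal submatrices of $A$, together with the explicit linear inequalities cutting out $\mathbb{U}_1$ (the normalization $\lambda_1 = 1$, the trace bounds, $\lambda_4 \le 0$, $\lambda_5 \le -\tfrac12$, and the inequalities separating $\mathbb{U}_1$ from the subpolytopes $\mathbb{F}_1$ and $\mathbb{F}_2$), to eliminate every pattern except those underlying $H$ and $C_0$; finally I would verify that for these two patterns a suitable $Y = p(A)$ does exist, so the classification is tight. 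The repeated-eigenvalue case — which genuinely occurs inside $\mathbb{U}_1$, for instance at interior points of the segment joining $\mathbf c$ and $\mathbf i$ — I would handle separately: a common $A$-eigenspace of dimension at least two gives extra freedom in choosing $Y$ (one may take, say, a rank-one $Y$ assembled from orthogonal non-Perron eigenvectors), and a parallel but shorter analysis forces $G$ to be the graph of $H$ or $C_0$ once more, or else $\sigma$ onto a facet.

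The step I expect to be the main obstacle is the graph-by-graph elimination. Although Laffey's matrix $Y$, the relation $AY = YA$, and the simplicity of the Perron root trim the list of candidate patterns quickly, certifying that every remaining connected five-vertex pattern other than $H$ and $C_0$ is incompatible with having all of its eigenvalues in $\mathbb{U}_1$ requires sharp spectral estimates for families of nonnegative matrices with a prescribed zero pattern, and it is exactly the exclusion of the facets $\mathbb{F}_1$ and $\mathbb{F}_2$ that these estimates must exploit; keeping that case analysis finite and verifiable — rather than the structural reductions that precede it — is where the proof is most delicate.
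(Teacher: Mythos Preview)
This theorem is quoted from \cite{RefWorks:36} and is not proved in the present paper; however, Section~\ref{sec:main_theorem} describes and adapts the original proof in enough detail to compare. The argument in \cite{RefWorks:36} does \emph{not} proceed by writing $Y=p(A)$ and analysing the support of powers of $A$. Instead, after invoking Theorem~\ref{th:extreme_L}, it first establishes a short list of structural constraints on $A$ (a positive diagonal entry from $\tr A>0$; at least two positive off-diagonal entries per row; a zero in every row; a nonzero in every row of $Y$; and, crucially for matrices in $\mathbb{U}_1$, at least one zero on the diagonal). It then eliminates candidate patterns by two concrete mechanisms: (a) a family of forbidden principal submatrices $S_1,\dots,S_5$ whose presence forces $\lambda_3\le \tr A$, which for $\sigma\in\mathbb{U}_1$ is impossible; and (b) explicit Givens-type orthogonal similarities (the ``$i,j\ OS$'' moves) that, for many patterns, produce a positive row and hence a matrix orthogonally similar to a positive symmetric one, contradicting extremeness. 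The combination of the $AY=YA$ equations (used entrywise, as in the displayed systems in Section~\ref{sec:main_theorem}) with these two elimination tools is what reduces everything to $H$ and $C_0$.

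Your proposal is structurally sound up to the point where you obtain the irreducibility of $A$, the simplicity of $\lambda_1$, and the positivity of the Perron eigenvalue of $Y$; those observations are correct and are also used in the original argument. The genuine gap is the elimination step. Your plan there rests on two ingredients that are not supplied: the claim that interlacing on $3\times3$ and $4\times4$ principal submatrices, together with the facet inequalities of $\mathbb{U}_1$, suffices to kill every pattern other than $H$ and $C_0$; and the treatment of the repeated-eigenvalue case via an ad hoc rank-one $Y$. Neither is substantiated, and in fact interlacing alone does not rule out several of the patterns that arise --- the original proof needs the orthogonal-similarity reductions precisely because some patterns survive all submatrix/spectral tests and must instead be shown to be orthogonally similar to positive matrices. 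Your detour through $Y=p(A)$, while correct when the spectrum is simple, does not by itself yield usable constraints on the zero pattern of $A$ beyond what the entrywise equations $AY=YA$ already give, and you do not indicate how to extract those constraints. As written, the proposal identifies the right starting point but does not contain the mechanism that actually carries the classification through.
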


\section{Main Theorem} \label{sec:main_theorem}

The following are the two main Theorems.
\begin{theorem} \label{th:main_nnc}
Let $ \sigma = \left( \lambda_1, \lambda_2, \dots, \lambda_5 \right) $ be a list of monotonically decreasing real numbers. If a nonnegative symmetric $ 5 \times 5 $ matrix has a spectrum $ \sigma $ and $ \sum_{i=1}^{5} \lambda_{i} \geq \frac{1}{2} \lambda_1 $ then $ \lambda_3 \leq \sum_{i=1}^{5} \lambda_{i} $.
\end{theorem}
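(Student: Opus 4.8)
Write $t=\tr A=\sum_{i=1}^{5}\lambda_i\ge 0$ and set $\rho=\lambda_1=\rho(A)$. Two reductions are immediate: if $\lambda_3\le 0$ then $\lambda_3\le 0\le t$, and if $\lambda_1\le t$ then $\lambda_3\le\lambda_1\le t$. So one may assume $\lambda_3>0$ and $\tfrac12\rho\le t<\rho$. The whole strategy is to produce inside $A$ either a $3\times3$ principal submatrix of spectral radius at most $t$ (Observation~\ref{obs:3x3_sufficient_condition}) or a $4\times4$ principal submatrix whose second largest eigenvalue is at most $t$ (Observation~\ref{obs:4x4_sufficient_condition}); either one forces $\lambda_3\le t$.

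One case falls out cleanly. If $\lambda_4\ge 0$, delete any one row and the corresponding column of $A$ to obtain a nonnegative symmetric $B=A[\,\overline r\,]$ with eigenvalues $\mu_1\ge\mu_2\ge\mu_3\ge\mu_4$. Perron--Frobenius applied to $B$ gives $\mu_1=\rho(B)\ge|\mu_4|$, so $\mu_1+\mu_4\ge 0$, while interlacing gives $\mu_3\ge\lambda_4\ge 0$; hence $\mu_2\le\mu_1+\mu_2+\mu_3+\mu_4=\tr B=t-a_{rr}\le t$, and Observation~\ref{obs:4x4_sufficient_condition} finishes this case. Note that this argument does not use the hypothesis $t\ge\tfrac12\rho$.

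The remaining case $\lambda_4<0$ is the crux: here $A$ has exactly three positive and two negative eigenvalues, and both the hypothesis $t\ge\tfrac12\rho$ and the McDonald--Neumann inequality $\lambda_1+\lambda_3+\lambda_4\ge 0$ are needed. Normalise $\rho=1$ and suppose, for contradiction, that $\lambda_3>t$. A key observation is that the normalised list $(1,\lambda_2,\lambda_3,\lambda_4,\lambda_5)$ then lies in $\mathbb{U}_1$: a short computation shows that its barycentric coordinates with respect to $\mathbf{c},\mathbf{d},\mathbf{e},\mathbf{i},\mathbf{l}$ are $t$, $\lambda_2-\lambda_3$, $\lambda_4-\lambda_5$, $2(\lambda_3-t)$ and $2(1+\lambda_5)$, all nonnegative by $t\ge 0$, monotonicity, and the Perron property, with the coordinates of $\mathbf{c}$ and $\mathbf{i}$ strictly positive since $t\ge\tfrac12>0$ and $\lambda_3>t$ — so the list avoids the faces $\mathbb{F}_1$ and $\mathbb{F}_2$. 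I would then reduce to an extreme matrix by a standard downward-shift argument: the set of $\varepsilon\ge 0$ for which $(1-\varepsilon,\lambda_2-\varepsilon,\dots,\lambda_5-\varepsilon)$ is the spectrum of a nonnegative symmetric matrix is a closed interval $[0,\varepsilon^{*}]$ (closed because realisability is a closed condition; an interval because adding a positive multiple of $I$ to a realising matrix realises a higher shift), so $\sigma^{*}:=(1,\lambda_2,\dots,\lambda_5)-\varepsilon^{*}\mathbf{1}$ is realised by some $A^{*}$ while no further shift is, whence $A^{*}$ is not orthogonally similar to a positive symmetric matrix and Theorem~\ref{th:extreme_L} applies. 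Since $\lambda_3>t$ and the Perron property persist along the shift, $\sigma^{*}$ is again of the above form and hence lies in $\mathbb{U}_1$, except for the trace-zero endpoint, which is covered by the solution of the trace-zero case in \cite{RefWorks:79}; so by Theorem~\ref{th:extreme_LM} the matrix $A^{*}$ has, up to permutation, pattern $H$ or $C_0$.

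What remains — and what I expect to be the technical heart and the main obstacle — is to show that a nonnegative symmetric matrix with pattern $H$ (respectively $C_0$) and satisfying the running hypotheses cannot have $\lambda_3>\tr$; concretely, for each pattern one wants to locate a $3\times3$ or $4\times4$ principal submatrix to which Observation~\ref{obs:3x3_sufficient_condition} or~\ref{obs:4x4_sufficient_condition} applies. Because each pattern carries several free entries and one sign-indefinite ``$*$'' entry, and because the bound has the constant $\tfrac12$ built into it, this cannot be done uniformly: it requires a case split according to the relative sizes of the entries and, where the relevant submatrices are $4\times4$, an analysis of their characteristic polynomials — which is why the introduction promises numerical estimates for roots of low-degree polynomials. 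Making the bookkeeping of these sub-cases close, together with the degenerate boundaries of the shift-down reduction, is where the real difficulty lies; the interlacing and Perron--Frobenius ingredients assembled above are routine by comparison.
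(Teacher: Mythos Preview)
Your shift-down reduction to an extreme matrix has a genuine gap that breaks the argument. After shifting by $\varepsilon^*$ and renormalising, the extreme matrix $A^*$ has trace $t^{**}=(t-5\varepsilon^*)/(1-\varepsilon^*)$, and there is no reason for $t^{**}\ge\tfrac12$ to persist: indeed if $t=\tfrac12$ exactly and $\varepsilon^*>0$ then $t^{**}<\tfrac12$. So the ``running hypothesis'' $t\ge\tfrac12\rho$ is in general lost for $A^*$. This matters because the statement you then want --- that a nonnegative symmetric matrix of pattern $H$ or $C_0$ cannot have $\lambda_3>\tr$ --- is \emph{false} without that hypothesis: the counterexample displayed immediately after Theorem~\ref{th:main_nnc} in the paper has (up to permutation) pattern $C_0$, trace $\tfrac14<\tfrac12\lambda_1$, and $\lambda_3>\tr$. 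More conceptually, $\mathbb{U}_1$ is precisely the region where $\lambda_3>\tr$, and Loewy--McDonald show that every spectrum in $\mathbb{U}_1$ \emph{is} realised by a matrix of pattern $H$ or $C_0$; so once you have landed $A^*$ in $\mathbb{U}_1$ with one of these patterns, there is nothing left to contradict.

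The paper circumvents this by a different reduction that keeps the trace hypothesis intact. Instead of shifting towards an extreme matrix, it normalises to $\tr=\tfrac12$, works on the compact set $\mathbb{M}$ of such matrices with $\rho\le1$, and maximises $\lambda_3$ there; a trace-preserving eigenvalue perturbation shows the maximiser cannot be orthogonally similar to a positive matrix, so Theorem~\ref{th:extreme_L} applies. The maximiser is then not extreme in Laffey's sense and need not have a zero on the diagonal, so Theorem~\ref{th:extreme_LM} cannot be quoted as a black box; the paper re-runs its pattern analysis, obtaining $H$ or the slightly larger pattern $C$ (with a $*$ at position $(3,3)$) rather than $C_0$. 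The technical Sections~\ref{sec:pattern_H} and~\ref{sec:pattern_C} then prove $\lambda_3\le\tfrac12$ for these two patterns \emph{under the standing normalisation $\tr=\tfrac12$}, which is exactly the hypothesis your shift-down discards. Your easy-case reductions ($\lambda_3\le0$, $\lambda_4\ge0$, reducibility implicit) and your barycentric verification that $\lambda_3>t$ places the normalised spectrum in $\mathbb{U}_1$ are correct and match the paper's setup; it is only the extreme-matrix step that fails.
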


In the next Theorem we solve SNIEP for $ n = 5 $ under the additional assumption $ \sum_{i=1}^{5} \lambda_{i} \geq \frac{1}{2} \lambda_1 $.

\begin{theorem}
Let $ \sigma = \left( \lambda_1, \lambda_2, \dots, \lambda_5 \right) $ be a list of monotonically decreasing real numbers such that $ \sum_{i=1}^{5} \lambda_{i} \geq \frac{1}{2} \lambda_1 $. Necessary and sufficient conditions for $ \sigma $ to be the spectrum of a nonnegative symmetric $ 5 \times 5 $ matrix are:
\begin{enumerate}

\item $ \lambda_1 = \max_{\lambda \in \sigma} \left| \lambda \right| $, \label{eq:Perron_Frobenius_condition}

\item $ \lambda_2 + \lambda_5 \leq \sum_{i=1}^{5} \lambda_{i} $, \label{eq:McDonald_Neumann_condition}

\item $ \lambda_3 \leq \sum_{i=1}^{5} \lambda_{i} $. \label{eq:new_condition}

\end{enumerate}
\end{theorem}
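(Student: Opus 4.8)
\textbf{Necessity} requires no new argument. A nonnegative symmetric matrix has spectral radius equal to its algebraically largest eigenvalue, so $\lambda_1=\rho(A)=\max_i|\lambda_i|$, which is~\ref{eq:Perron_Frobenius_condition}. Because $\sum_{i=1}^5\lambda_i-(\lambda_2+\lambda_5)=\lambda_1+\lambda_3+\lambda_4$, condition~\ref{eq:McDonald_Neumann_condition} is merely a restatement of the inequality $\lambda_1+\lambda_3+\lambda_4\ge0$ from~\cite{RefWorks:45}, and condition~\ref{eq:new_condition} is exactly Theorem~\ref{th:main_nnc}, whose hypothesis $\sum_{i=1}^5\lambda_i\ge\frac12\lambda_1$ is in force. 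So the whole content is the \textbf{sufficiency} direction.

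\textbf{Reduction to two patterns.} Let $\mathcal D$ be the set of monotone lists satisfying~\ref{eq:Perron_Frobenius_condition}--\ref{eq:new_condition} and $\sum_{i=1}^5\lambda_i\ge\frac12\lambda_1$, and $\mathcal R$ the (closed) set of SNIEP-realizable lists, so that $\mathcal R\subseteq\mathcal D$ by necessity and we want equality. Both sets are cones and both are closed under the Perron shift $(\lambda_1,\dots,\lambda_5)\mapsto(\lambda_1+t,\lambda_2,\dots,\lambda_5)$, $t\ge0$: on the matrix side this is $A\mapsto A+t\,vv^{\mathsf T}$ with $v\ge0$ a unit Perron vector, which stays nonnegative symmetric and only moves the Perron root, and on the $\mathcal D$ side one checks that each of the three inequalities, and $\sum\lambda_i-\frac12\lambda_1$, is only helped. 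Furthermore, repeatedly subtracting small multiples of $(1,1,1,1,1)$ from the spectrum of a realizer until it ceases to be realizable shows that every list in $\mathcal R$ is the spectrum of a symmetric extreme matrix plus a nonnegative multiple of $(1,1,1,1,1)$. Feeding these invariances into the existing partial solutions of SNIEP for $n=5$ --- in particular Spector's trace-zero result~\cite{RefWorks:79} and the results of~\cite{RefWorks:36} on the faces $\mathbb F_1,\mathbb F_2$ of $\mathbb U$ --- the reduction section should bring $\mathcal D\subseteq\mathcal R$ down to the statement that every symmetric extreme matrix with spectrum in $\mathbb U_1=\mathbb U\setminus(\mathbb F_1\cup\mathbb F_2)$ can be exhibited; Theorem~\ref{th:extreme_LM} then says that any such matrix has, up to permutation, the zero pattern $H$ or the zero pattern $C_0$, so only these two patterns need to be realized.

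\textbf{Realizing the patterns.} The plan for each of the two patterns, carried out in Sections~\ref{sec:pattern_H} and~\ref{sec:pattern_C}, is to write down a symmetric matrix with that pattern whose nonzero entries are nonnegative unknowns, and then solve the five equations $P_M(\lambda)=\prod_{i=1}^5(\lambda-\lambda_i)$ over the relevant part of $\mathbb U_1$. To make this tractable one exploits an automorphism of the pattern --- the off-diagonal graph of $H$ has the automorphism interchanging indices $2\leftrightarrow5$ and $1\leftrightarrow3$, and $C_0$, a $5$-cycle bearing loops, has an analogous reflection --- and restricts to a matrix invariant under it, which reduces the unknowns to roughly the number of equations. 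Solving then leaves at most one auxiliary parameter $s$, with every matrix entry an explicit algebraic function of $s$ and of $\lambda_1,\dots,\lambda_5$; eliminating $s$ from the requirement that all entries be simultaneously nonnegative turns the realizability of $\sigma$ by that pattern into a single polynomial inequality of degree at most $4$ in $s$, which is settled by a sign analysis at the endpoints of the admissible $s$-interval together with the intermediate value theorem. Roots of the degree-$\le4$ polynomials that arise are bounded numerically to accuracy $10^{-10}$ in the appendices. Checking that the part of $\mathbb U_1$ realized via $H$ and the part realized via $C_0$ together cover all of $\mathbb U_1$ then finishes the proof.

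\textbf{Main obstacle.} The genuinely hard part is the last one: carrying out the two eliminations explicitly, keeping $s$ in the window that makes \emph{every} entry nonnegative, and above all verifying that the $H$-region and the $C_0$-region leave no gap --- especially along the boundary strata of $\mathbb U_1$ (two of the $\lambda_i$ equal, one of conditions~\ref{eq:Perron_Frobenius_condition}--\ref{eq:new_condition} tight, or $\sum\lambda_i=\frac12\lambda_1$), which must be treated one by one and matched. A secondary subtlety is the reduction itself: delineating precisely which extreme lists are already handled in the literature and justifying, via closedness of $\mathcal R$ and the behaviour of the Perron shift, that the rest are governed by Theorem~\ref{th:extreme_LM}, hence by just the two patterns.
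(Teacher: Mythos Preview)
Your necessity paragraph is fine, but the sufficiency plan goes wrong in two related ways.

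First, the paper's proof of sufficiency is elementary and does not touch extreme matrices or the patterns $H$, $C_0$ at all. It splits on the sign of $\lambda_3$. If $\lambda_3\ge0$, condition~\ref{eq:new_condition} gives $\lambda_1+\lambda_2+\lambda_4+\lambda_5\ge0$; together with~\ref{eq:Perron_Frobenius_condition} this makes $(\lambda_1,\lambda_2,\lambda_4,\lambda_5)$ realizable by a nonnegative symmetric $4\times4$ matrix $A$ (Loewy--London for RNIEP, plus RNIEP${}={}$SNIEP for $n\le4$), and then $(\lambda_3)\oplus A$ realizes $\sigma$. If $\lambda_3\le0$, one invokes Fiedler's Theorems~2.4 and~2.5 from~\cite{RefWorks:59} (using~\ref{eq:McDonald_Neumann_condition} in the form $\lambda_1+\lambda_3+\lambda_4\ge0$ when $\lambda_2\ge0$). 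That is the entire argument; Sections~\ref{sec:pattern_H} and~\ref{sec:pattern_C} play no role here.

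Second, your proposed route has a genuine logical gap. Theorem~\ref{th:extreme_LM} says: \emph{if} $A$ is a symmetric extreme matrix with spectrum in $\mathbb U_1$, \emph{then} its pattern is $H$ or $C_0$. This presupposes a realizing matrix and constrains its shape; it cannot be used to manufacture one. So ``reduce to extreme matrices, then Theorem~\ref{th:extreme_LM} tells us only two patterns need to be realized'' is circular for the existence direction. Relatedly, you have misread what Sections~\ref{sec:pattern_H} and~\ref{sec:pattern_C} do: they do \emph{not} solve $P_M(\lambda)=\prod(\lambda-\lambda_i)$ to construct realizers. They prove that any matrix with pattern $H$ or $C$, trace $\tfrac12$, and $\rho\le1$ must have $\lambda_3\le\tfrac12$ --- that is, they establish the \emph{necessary} condition of Theorem~\ref{th:main_nnc}, which you are entitled to cite as a black box. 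The heavy pattern analysis lives entirely inside the proof of Theorem~\ref{th:main_nnc}; the theorem you were asked to prove sits on top of it and is short.
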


\begin{proof}
Condition~\ref{eq:Perron_Frobenius_condition} is the Perron-Frobenius condition, and~\ref{eq:McDonald_Neumann_condition} is the McDonald-Neumann condition (Lemma~4.1 in~\cite{RefWorks:45}). Note that the latter condition was originally stated in the case of irreducible matrices, but this is not needed. Condition~\ref{eq:new_condition} is necessary by Theorem~\ref{th:main_nnc}. Therefore, the conditions are necessary. We show the conditions are sufficient by considering two cases:
\begin{enumerate}

\item $ \lambda_3 \leq 0 $.

By condition~\ref{eq:Perron_Frobenius_condition} $ \lambda_1 \geq 0 $, and by assumption $ \sum_{i=1}^{5} \lambda_{i} \geq \frac{1}{2} \lambda_1 \geq 0 $.

If $ \lambda_2 \leq 0 $, then by Theorem~2.4 in~\cite{RefWorks:59} there is a nonnegative symmetric $ 5 \times 5 $ matrix realizing $ \sigma $.

Otherwise, $ \lambda_2 \geq 0 $. By condition~\ref{eq:McDonald_Neumann_condition} we have $ \lambda_1 + \lambda_3 + \lambda_4 \geq 0 $. Therefore, by Theorem~2.4 and Theorem~2.5 in~\cite{RefWorks:59} there is a nonnegative symmetric $ 5 \times 5 $ matrix realizing $ \sigma $ (see~\cite{RefWorks:16} for details). Note, that
this can also be proved by applying Theorem~5 in~\cite{RefWorks:79} for the case $ n = 5 $.

\item $ \lambda_3 \geq 0 $.

Let $ \sigma' = \left( \lambda_1, \lambda_2, \lambda_4, \lambda_5 \right) $. By condition \ref{eq:new_condition} we have $ \lambda_1 + \lambda_2 + \lambda_4 + \lambda_ 5 \geq 0 $, and by condition~\ref{eq:Perron_Frobenius_condition} also $ \lambda_1 = \max_{\lambda \in \sigma'} \left| \lambda \right| $. Therefore, by Theorem~3 in~\cite{RefWorks:39} and as RNIEP and SNIEP are the same for $ n = 4 $, there is a nonnegative symmetric $ 4 \times 4 $ matrix $A$ realizing $ \sigma' $. Then $ M = \left( \lambda_3 \right) \bigoplus A $ is a nonnegative symmetric $ 5 \times 5 $ matrix realizing $ \sigma $.

\end{enumerate}
\end{proof}

Note that the condition $ \sum_{i=1}^{5} \lambda_{i} \geq \frac{1}{2} \lambda_1 $ in Theorem~\ref{th:main_nnc} is essential. For example, the matrix
\[ \left(
\begin{array}{ccccc}
\frac{2}{25} & \frac{ \sqrt{130} }{25} & \frac{ \sqrt{130} }{25} & 0 & 0 \\ \noalign{\medskip}
\frac{ \sqrt{130} }{25} & 0 & 0 & 0 & \frac{ 3 \sqrt{70} }{50} \\ \noalign{\medskip}
\frac{ \sqrt{130} }{25} & 0 & 0 & \frac{ 3 \sqrt{70} }{50} & 0 \\ \noalign{\medskip}
0 & 0 & \frac{ 3 \sqrt{70} }{50} & \frac{17}{200} & \frac{91}{200} \\ \noalign{\medskip}
0 & \frac{ 3 \sqrt{70} }{50} & 0 & \frac{91}{200} & \frac{17}{200}
\end{array}
\right) \]
has a spectrum $ \left( 1, \frac{35}{100}, \frac{34}{100}, - \frac{72}{100}, - \frac{72}{100} \right) $ and its trace is $ \frac{25}{100} $. Here $ \lambda_3 > \sum_{i=1}^{5} \lambda_{i} $.

Let $M$ be a nonnegative symmetric $ 5 \times 5 $ matrix with a spectrum $ \sigma = \left( \lambda_1, \lambda_2, \dots, \lambda_5 \right) $ and $ \lambda_1 \geq \lambda_2 \geq \dots \geq \lambda_5 $. In what follows we point out in detail several cases where Theorem~\ref{th:main_nnc} holds, thus allowing us to make additional assumptions when we get to the proof of this Theorem.

\paragraph{Sign of eigenvalues.} Obviously, when $ \lambda_3 \leq 0 $ Theorem~\ref{th:main_nnc} holds. {\em Therefore, we may assume that $ \lambda_3 > 0 $ and so $ \lambda_1 \geq \lambda_2 > 0 $}. If $ \lambda_4 \geq 0 $ then the four largest eigenvalues of $M$ are nonnegative. By the Perron-Frobenius Theorem $ \lambda_1 + \lambda_5 \geq 0 $ so $ \lambda_3 \leq \sum_{i=2}^{4} \lambda_{i} \leq \sum_{i=1}^{5} \lambda_{i} $ and Theorem~\ref{th:main_nnc} holds. {\em Therefore, we may assume that $ \lambda_4 < 0 $ and so $ \lambda_5 < 0 $}.


\paragraph{Trace.} If $ \tr \left( M \right) = 0 $ and $ \lambda_1 > 0 $ then $ \sum_{i=1}^{5} \lambda_{i} = 0 < \frac{1}{2} \lambda_1 $, so the conditions of Theorem~\ref{th:main_nnc} are not met. {\em Therefore, we may also assume that $ \tr \left( M \right) > 0 $}.

\paragraph{Normalization.} If $ \lambda_1 > 0 $ and $ t = \sum_{i=1}^{5} \lambda_{i} \geq \frac{1}{2} \lambda_1 $, then $ M' = \frac{1}{2 t} M $ is well-defined. Obviously $ M' $ is a nonnegative symmetric $ 5 \times 5 $ matrix with eigenvalues $ \frac{1}{2 t} \lambda_1 \geq \frac{1}{2 t} \lambda_2 \geq \dots \geq \frac{1}{2 t} \lambda_5 $. We know that $ t \geq \frac{1}{2} \lambda_1 $ and so $ \rho \left( M' \right) = \frac{1}{2 t} \lambda_1 \leq 1 $. Also, $ \tr \left( M' \right) = \frac{1}{2 t} t = \frac{1}{2} $. {\em Hence, it suffices to prove Theorem~\ref{th:main_nnc} for matrices whose spectral radius is at most $1$ and whose trace is $ \frac{1}{2} $}.

\paragraph{Reducible matrices.} Assume for the moment that $M$ is reducible. We have, up to permutation similarity, the following cases:
\begin{enumerate}

\item $ M = A \bigoplus B $, where $A$ is a $ 1 \times 1 $ matrix and $B$ is a $ 4 \times 4 $ nonnegative symmetric matrix. \label{case:reducible_1}

Therefore, $ m_{11} $ is a nonnegative eigenvalue of $M$, and by assumption $ m_{11} \in \left\{ \lambda_1, \lambda_2, \lambda_3 \right\} $. We consider the different cases:
\begin{enumerate}

\item $ m_{11} = \lambda_1 $.

$B$ is a nonnegative symmetric matrix with eigenvalues $ \lambda_2 \geq \lambda_3 \geq \lambda_4 \geq \lambda_5 $, so $ \lambda_3 \leq \lambda_1 \leq \lambda_1 + \tr \left( B \right) = \sum_{i=1}^{5} \lambda_{i} $.

\item $ m_{11} = \lambda_2 $.

$B$ is a nonnegative symmetric matrix with eigenvalues $ \lambda_1 \geq \lambda_3 \geq \lambda_4 \geq \lambda_5 $, so $ \lambda_3 \leq \lambda_2 \leq \lambda_2 + \tr \left( B \right) = \sum_{i=1}^{5} \lambda_{i} $.

\item $ m_{11} = \lambda_3 $.

$B$ is a nonnegative symmetric matrix with eigenvalues $ \lambda_1 \geq \lambda_2 \geq \lambda_4 \geq \lambda_5 $ and so $ \lambda_3 \leq \lambda_3 + \tr \left( B \right) = \sum_{i=1}^{5} \lambda_{i} $.

\end{enumerate}

\item $ M = A \bigoplus B $, where $A$ is a $ 2 \times 2 $ irreducible nonnegative symmetric matrix and $B$ is a $ 3 \times 3 $ nonnegative symmetric matrix. \label{case:reducible_2}

Let $ \mu_1 \geq \mu_2 $ be the eigenvalues of $A$ and let $ \eta_1 \geq \eta_2 \geq \eta_3 $ be the eigenvalues of $B$. By the Perron-Frobenius Theorem $ \mu_1 $ and $ \eta_1 $ are nonnegative, so by assumption $ \mu_1, \eta_1 \in \left\{ \lambda_1, \lambda_2, \lambda_3 \right\} $. We consider the different cases:
\begin{enumerate}

\item $ \mu_1 = \lambda_1 $.

\begin{enumerate}

\item $ \eta_1 = \lambda_2 $. \label{case:reducible_eta_1_eq_lambda_2}

If $ \mu_2 = \lambda_3 $ then $ \lambda_3 = \mu_2 \leq \tr \left( A \right) \leq \tr \left( A \right) + \tr \left( B \right) = \sum_{i=1}^{5} \lambda_{i} $. Else, $ \eta_2 = \lambda_3 $. By the Perron-Frobenius Theorem $ \eta_1 + \eta_3 \geq 0 $. Therefore, $ \lambda_3 = \eta_2 \leq \tr \left( B \right) \leq \tr \left( A \right) + \tr \left( B \right) = \sum_{i=1}^{5} \lambda_{i} $.

\item $ \eta_1 = \lambda_3 $ (and so $ \mu_2 = \lambda_2 $).

We have $ \lambda_3 \leq \lambda_2 = \mu_2 \leq \tr \left( A \right) \leq \tr \left( A \right) + \tr \left( B \right) = \sum_{i=1}^{5} \lambda_{i} $.

\end{enumerate}

\item $ \mu_1 = \lambda_2 $ (and so $ \eta_1 = \lambda_1 $).

Same as case~\ref{case:reducible_eta_1_eq_lambda_2}.

\item $ \mu_1 = \lambda_3 $ (and so $ \eta_1 = \lambda_1 $ and $ \eta_2 = \lambda_2 $).

By the Perron-Frobenius Theorem $ \eta_1 + \eta_3 \geq 0 $. Therefore, we get $ \lambda_3 \leq \lambda_2 = \eta_2 \leq \tr \left( B \right) \leq \tr \left( A \right) + \tr \left( B \right) = \sum_{i=1}^{5} \lambda_{i} $.

\end{enumerate}

\item $ M = A \bigoplus B $, where $A$ is a $ 3 \times 3 $ irreducible nonnegative symmetric matrix and $B$ is a $ 2 \times 2 $ nonnegative symmetric matrix.

If $B$ is reducible then case~\ref{case:reducible_1} applies. Otherwise, case~\ref{case:reducible_2} applies.

\item $ M = A \bigoplus B $, where $A$ is a $ 4 \times 4 $ irreducible nonnegative symmetric matrix and $B$ is a $ 1 \times 1 $ matrix.

Case~\ref{case:reducible_1} applies.

\end{enumerate}
Therefore, Theorem~\ref{th:main_nnc} holds for any reducible $M$, and so {\em we can assume that $M$ is irreducible}.

\paragraph{Matrices orthogonally similar to a positive symmetric matrix.} Let $ \mathbb{M} $ be the set of nonnegative symmetric matrices meeting the conditions of Theorem~\ref{th:main_nnc} and having $ \lambda_1 \leq 1 $ and trace $ \frac{1}{2} $. Let $f$ be the function that maps each element of $ \mathbb{M} $ to its third largest eigenvalue. $ \mathbb{M} $ is a compact set and $f$ is a continuous function. Therefore, $f$ attains a maximum on $ \mathbb{M} $. Let $ M_0 \in \mathbb{M} $ be a matrix at which the maximum of $f$ is attained and let its spectrum be $ \sigma = \left( \lambda_1, \lambda_2, \lambda_3, \lambda_4, \lambda_5 \right) $. Suppose that $ M_0 $ is orthogonally similar to a positive symmetric matrix $ P_0 $. $ M_0 $ and $ P_0 $ have the same spectrum, so $ P_0 \in \mathbb{M} $ and $ f \left( M_0 \right) = f \left( P_0 \right) $. Hence, the maximum of $f$ is attained at $ P_0 $ as well. Let $Q$ be an orthogonal $ 5 \times 5 $ matrix such that $ Q^T P_0 Q = \diag \left( \sigma \right) $. By the Perron-Frobenius Theorem we know that $ \lambda_1 > \lambda_2 $ and $ \lambda_1 + \lambda_5 > 0 $. Let $ \varepsilon_0 = \frac{1}{2} \min \left\{ \lambda_1 - \lambda_2, \lambda_1 + \lambda_5 \right\} $, and for each $ 0 < \varepsilon \leq \varepsilon_0 $ let $ \sigma_\varepsilon = \left( \lambda_1, \lambda_2 + \varepsilon, \lambda_3 + \varepsilon, \lambda_4 - \varepsilon, \lambda_5 - \varepsilon \right) $. Furthermore, let $ D_\varepsilon = \diag \left( \sigma_\varepsilon \right) $. Then, for sufficiently small $ \varepsilon $, the matrix $ P_\varepsilon = Q D_\varepsilon Q^T $ is a positive symmetric matrix and $ P_\varepsilon \in \mathbb{M} $. However, $ f \left( P_\varepsilon \right) = \lambda_3 + \varepsilon > \lambda_3 = f \left( P_0 \right) $, which contradicts the assumption that $f$ attains its maximum value at $ P_0 $. We conclude that $ M_0 $ is not orthogonally similar to a positive symmetric matrix. Therefore, if Theorem~\ref{th:main_nnc} is valid for matrices not orthogonally similar to a positive symmetric matrix, it is also valid for matrices that are orthogonally similar to a positive symmetric matrix. {\em Hence, we may also assume that $M$ is not orthogonally similar to a positive symmetric matrix}.

\paragraph{Zero patterns.} The proof of Theorem~\ref{th:extreme_LM}, as given in~\cite{RefWorks:36}, considers the possible zero patterns extreme matrices with spectrum in $ \mathbb{U}_1 $ can have, and rules out any pattern that either meets the condition $ \lambda_3 \leq \sum_{i=1}^{5} \lambda_{i} $ (and so cannot have a spectrum in $ \mathbb{U}_1 $, in light of Remark~4 in~\cite{RefWorks:36}) or is orthogonally similar to a positive symmetric matrix (and so cannot be extreme).

{\em We shall follow the proof of Theorem~\ref{th:extreme_LM} but instead of considering extreme matrices whose spectrum is in $ \mathbb{U}_1 $ we consider irreducible, positive trace, nonnegative symmetric matrices not orthogonally similar to a positive symmetric matrix. Our goal is to find what zero patterns are left after ruling out patterns that either meet the condition $ \lambda_3 \leq \sum_{i=1}^{5} \lambda_{i} $ (and so Theorem~\ref{th:main_nnc} holds for such patterns), are reducible or are orthogonally similar to a positive symmetric matrix (and so contradict the assumptions)}.

Observation~2 of~\cite{RefWorks:36} lists five square matrices $ S_i $, $ i = 1, 2, 3, 4, 5 $, such that if any of them is a principal sub-matrix of $M$ then $ \lambda_3 \leq \sum_{i=1}^{5} \lambda_{i} $, and so Theorem~\ref{th:main_nnc} holds. {\em Hence, we may assume that none of these $ S_i $'s is a principal sub-matrix of $M$}.

Section~4 of~\cite{RefWorks:36} begins by observing necessary conditions for a potential extreme matrix $A$ with eigenvalues in $ \mathbb{U}_1 $. We check which of these conditions can be assumed in our case as well (calling the matrix $A$ and keeping the numbering as in~\cite{RefWorks:36}):
\begin{enumerate}[(i)]

\item $A$ has at least one positive main diagonal element.

By assumption $ \tr \left( A \right) > 0 $.

\item $A$ has two positive off-diagonal entries in every row. \label{cond:two_positive_in_every_row}

We repeat the discussion of this condition in~\cite{RefWorks:36} with some modifications.

By assumption, $A$ is irreducible, so it must have at least one nonzero off-diagonal element in every row. Suppose $A$ contains a row with only one nonzero element in an off-diagonal position. Then, without loss of generality, we can assume that the first row of $A$ looks like $ \left( a_{11}, 0, 0, 0, a_{15} \right) $, where $ a_{11} \geq 0 $ and $ a_{15} > 0 $. Let $ B = A[1,2,3,4] $. Then
\[ B = \left(
\begin{array}{ccccc}
a_{11} & 0 & 0 & 0 \\ \noalign{\medskip}
0 & * & * & * \\ \noalign{\medskip}
0 & * & * & * \\ \noalign{\medskip}
0 & * & * & *
\end{array}
\right) , \]
where $*$ indicates a zero or a positive element. Let $ \left( \mu_1, \mu_2, \mu_3, \mu_4 \right) $ be the spectrum of $B$, where $ \mu_1 \geq \mu_2 \geq \mu_3 \geq \mu_4 $. As $ a_{11} $ is an eigenvalue of $B$, there exists $ j \in \left\{ 1, 2, 3, 4 \right\} $ such that $ \mu_j = a_{11} $. By a previous assumption, $ \lambda_4 < 0 $. Therefore, by the interlacing property we have $ \mu_4 \leq \lambda_4 < 0 \leq a_{11} $, and so $ \mu_4 \neq a_{11} $.

If $ \mu_1 = a_{11} $ or $ \mu_2 = a_{11} $, then by the interlacing property we get $ \lambda_3 \leq \mu_2 \leq a_{11} \leq \tr \left( A \right) $, so Theorem~\ref{th:main_nnc} holds in these cases.

We are left with the case $ \mu_3 = a_{11} $. Let $ C = A[2,3,4] = B[2,3,4] $. The eigenvalues of $C$ are $ \mu_1 $, $ \mu_2 $, $ \mu_4 $ and we have $ \mu_1 \geq \mu_2 \geq \mu_3 = a_{11} \geq 0 > \mu_4 $. By the Perron-Frobenius
Theorem, applied to $C$, $ \mu_1 + \mu_4 \geq 0 $. Hence, $ \lambda_3 \leq \mu_2 \leq \tr \left( C \right) \leq a_{11} + \tr \left( C \right) = \tr \left( B \right) \leq \tr \left( A \right) $, so Theorem~\ref{th:main_nnc} holds in this case as well.

Therefore, we may assume that $A$ has two positive off-diagonal entries in every row.

\item $A$ has at least one zero on the main diagonal. \label{cond:zero_on_main_diagonal}

This condition can no longer be assumed!

\item $A$ has a zero in every row. \label{cond:zero_in_every_row}

By assumption, $A$ is not orthogonally similar to a positive symmetric matrix. The discussion of this condition in~\cite{RefWorks:36}, in particular the existence of $Y$, is valid in our case by use of Theorem~\ref{th:extreme_L}.

\item $Y$ (of Theorem~\ref{th:extreme_L}) has a nonzero in every row.

By assumption, $A$ is irreducible, so the discussion of this condition in~\cite{RefWorks:36} is valid in our case as well.

\end{enumerate}

Therefore, we need to check where condition~\eqref{cond:zero_on_main_diagonal} is used in Section~4 of~\cite{RefWorks:36} and find the implications of not meeting this condition. There are three occurrences of this condition. The first is in Case I, just before item (a), where the discussion leads to the pattern
\[ \left(
\begin{array}{ccccc}
+ & + & + & + & 0 \\ \noalign{\medskip}
+ & * & * & + & + \\ \noalign{\medskip}
+ & * & * & + & + \\ \noalign{\medskip}
+ & + & + & + & 0 \\ \noalign{\medskip}
0 & + & + & 0 & +
\end{array}
\right) , \]
where $+$ indicates a positive element and $*$ indicates a zero or a positive element. Because we can no longer assume condition~\eqref{cond:zero_on_main_diagonal}, the additional case that needs to be considered is when $ a_{22} > 0 $ and $ a_{33} > 0 $, which gives the pattern
\[ \left(
\begin{array}{ccccc}
+ & + & + & + & 0 \\ \noalign{\medskip}
+ & + & * & + & + \\ \noalign{\medskip}
+ & * & + & + & + \\ \noalign{\medskip}
+ & + & + & + & 0 \\ \noalign{\medskip}
0 & + & + & 0 & +
\end{array}
\right) . \]
If $ a_{22} > a_{33} $ then by a $ 2,3 \,\, OS $ (a kind of orthogonal similarity defined in~\cite{RefWorks:36}) we can make $ a_{23} > 0 $, and if $ a_{22} < a_{33} $ then by a $ 3,2 \,\, OS $ we can make $ a_{23} > 0 $. In both cases $A$ has a positive row, contradicting condition~\eqref{cond:zero_in_every_row}. Therefore, it suffices to check the case where $ a_{22} = a_{33} $ and $ a_{23} = 0 $. We have
\[ A = \left(
\begin{array}{ccccc}
a_{11} & a_{12} & a_{13} & a_{14} & 0 \\ \noalign{\medskip}
a_{12} & a_{22} & 0 & a_{24} & a_{25} \\ \noalign{\medskip}
a_{13} & 0 & a_{22} & a_{34} & a_{35} \\ \noalign{\medskip}
a_{14} & a_{24} & a_{34} & a_{44} & 0 \\ \noalign{\medskip}
0 & a_{25} & a_{35} & 0 & a_{55}
\end{array}
\right) ,
Y = \left(
\begin{array}{ccccc}
0 & 0 & 0 & 0 & y_{15} \\ \noalign{\medskip}
0 & 0 & y_{23} & 0 & 0 \\ \noalign{\medskip}
0 & y_{23} & 0 & 0 & 0 \\ \noalign{\medskip}
0 & 0 & 0 & 0 & y_{45} \\ \noalign{\medskip}
y_{15} & 0 & 0 & y_{45} & 0
\end{array}
\right) . \]
Letting $ A Y = \left( z_{ij} \right) $, $ Y A = \left( w_{ij} \right) $, we get by Theorem~\ref{th:extreme_L} the following equations:
\begin{align}
a_{13} y_{23} = z_{12} &= w_{12} = a_{25} y_{15} , \label{eq:z_12_eq_w_12_LM} \\
a_{12} y_{23} = z_{13} &= w_{13} = a_{35} y_{15} , \label{eq:z_13_eq_w_13_LM} \\
a_{11} y_{15} + a_{14} y_{45} = z_{15} &= w_{15} = a_{55} y_{15} , \label{eq:z_15_eq_w_15_LM} \\
a_{25} y_{45} = z_{24} &= w_{24} = a_{34} y_{23} , \label{eq:z_24_eq_w_24_LM} \\
a_{12} y_{15} + a_{24} y_{45} = z_{25} &= w_{25} = a_{35} y_{23} , \label{eq:z_25_eq_w_25_LM} \\
a_{35} y_{45} = z_{34} &= w_{34} = a_{24} y_{23} , \label{eq:z_34_eq_w_34_LM} \\
a_{14} y_{15} + a_{44} y_{45} = z_{45} &= w_{45} = a_{55} y_{45} . \label{eq:z_45_eq_w_45_LM}
\end{align}
Note that by \eqref{eq:z_12_eq_w_12_LM}, \eqref{eq:z_13_eq_w_13_LM} and \eqref{eq:z_24_eq_w_24_LM} either all the $ y_{ij} $'s, which are not known in advance to be zero, are zero or all of them are nonzero. As $Y$ is nonzero we conclude that they are all nonzero. Therefore, these $ y_{ij} $'s are positive and
\[ \alpha = \frac{ y_{15} }{ y_{23} }, \quad \beta = \frac{ y_{45} }{ y_{23} } \]
are well-defined and positive.

By \eqref{eq:z_12_eq_w_12_LM}, \eqref{eq:z_13_eq_w_13_LM}, \eqref{eq:z_24_eq_w_24_LM}, \eqref{eq:z_34_eq_w_34_LM} we get
\[ a_{13} = \alpha a_{25}, \quad a_{12} = \alpha a_{35}, \quad a_{34} = \beta a_{25}, \quad a_{24} = \beta a_{35} \]
respectively. By \eqref{eq:z_25_eq_w_25_LM} we get
\[ a_{35} = \alpha a_{12} + \beta a_{24} = \alpha^2 a_{35} + \beta^2 a_{35} \]
and therefore, $ \alpha^2 + \beta^2 = 1 $. By \eqref{eq:z_15_eq_w_15_LM} we get
\[ a_{55} = a_{11} + \frac{\beta}{\alpha} a_{14} , \]
and by \eqref{eq:z_45_eq_w_45_LM} we get
\[ a_{44} = a_{55} - \frac{\alpha}{\beta} a_{14} = a_{11} + \frac{ \beta^2 - \alpha^2 }{ \alpha \beta } a_{14} . \]
Therefore,
\[ A = \left(
\begin{array}{ccccc}
a_{11} & \alpha a_{35} & \alpha a_{25} & a_{14} & 0 \\ \noalign{\medskip}
\alpha a_{35} & a_{22} & 0 & \beta a_{35} & a_{25} \\ \noalign{\medskip}
\alpha a_{25} & 0 & a_{22} & \beta a_{25} & a_{35} \\ \noalign{\medskip}
a_{14} & \beta a_{35} & \beta a_{25} & a_{11} + \frac{ \beta^2 - \alpha^2 }{ \alpha \beta } a_{14} & 0 \\ \noalign{\medskip}
0 & a_{25} & a_{35} & 0 & a_{11} + \frac{\beta}{\alpha} a_{14}
\end{array}
\right) . \]
As by assumption, $A$ is not orthogonally similar to a positive symmetric matrix we further investigate this pattern. We perform a $ 2,3 \,\, OS $ on $A$. Let
\begin{align*}
X &= \left(
\begin{array}{cc}
c & -s \\ \noalign{\medskip}
s & c
\end{array}
\right)
\left(
\begin{array}{ccc}
\alpha a_{35} & \beta a_{35} & a_{25} \\ \noalign{\medskip}
\alpha a_{25} & \beta a_{25} & a_{35}
\end{array}
\right) \\
 &=
\left(
\begin{array}{ccc}
\alpha \left( c a_{35} - s a_{25} \right) & \beta \left( c a_{35} - s a_{25} \right) & c a_{25} - s a_{35} \\ \noalign{\medskip}
\alpha \left( c a_{25} + s a_{35} \right) & \beta \left( c a_{25} + s a_{35} \right) & c a_{35} + s a_{25}
\end{array}
\right) = \left( x_{ij} \right) ,
\end{align*}
where
\[ c = \cos \theta, \quad s = \sin \theta \]
for some yet to be chosen $ \theta $. If $ a_{35} = a_{25} $ then choosing $ \theta = \frac{\pi}{4} $ makes the first row of $X$ zero and the second row positive, and so the generated matrix that is orthogonally similar to $A$ has four zeros in its second row. Therefore, $A$ is reducible, contrary to our assumption.

If $ a_{35} < a_{25} $ then there exists $ 0 < \theta < \frac{\pi}{4} $ such that $ x_{11} = x_{12} = 0 $ and the remaining $ x_{ij} $'s are positive. Therefore, the generated matrix that is orthogonally similar to $A$ has the form
\[ A' = \left(
\begin{array}{ccccc}
a_{11} & 0 & + & a_{14} & 0 \\ \noalign{\medskip}
0 & a_{22} & 0 & 0 & + \\ \noalign{\medskip}
+ & 0 & a_{22} & + & + \\ \noalign{\medskip}
a_{14} & 0 & + & a_{11} + \frac{ \beta^2 - \alpha^2 }{ \alpha \beta } a_{14} & 0 \\ \noalign{\medskip}
0 & + & + & 0 & a_{11} + \frac{\beta}{\alpha} a_{14}
\end{array}
\right) . \]
As $A'$ is irreducible and has only a single positive off-diagonal element in the second row, this contradicts condition~\eqref{cond:two_positive_in_every_row}.

If $ a_{35} > a_{25} $ then there exists $ 0 < \theta < \frac{\pi}{4} $ such that $ x_{13} = 0 $ and the remaining $ x_{ij} $'s are positive. Therefore, the generated matrix that is orthogonally similar to $A$ has the form
\[ A'' = \left(
\begin{array}{ccccc}
a_{11} & + & + & a_{14} & 0 \\ \noalign{\medskip}
+ & a_{22} & 0 & + & 0 \\ \noalign{\medskip}
+ & 0 & a_{22} & + & + \\ \noalign{\medskip}
a_{14} & + & + & a_{11} + \frac{ \beta^2 - \alpha^2 }{ \alpha \beta } a_{14} & 0 \\ \noalign{\medskip}
0 & 0 & + & 0 & a_{11} + \frac{\beta}{\alpha} a_{14}
\end{array}
\right) . \]
As $A''$ is irreducible and has only a single positive off-diagonal element in the fifth row, this contradicts condition~\eqref{cond:two_positive_in_every_row}.

To conclude, the first occurrence of condition~\eqref{cond:zero_on_main_diagonal} contradicts our assumptions, so it does not affect the patterns found in the proof of Theorem~\ref{th:extreme_LM}.

The second occurrence of condition~\eqref{cond:zero_on_main_diagonal} is in Case I(b) in Section~4 of~\cite{RefWorks:36}. There, the pattern
\[ \left(
\begin{array}{ccccc}
+ & + & + & + & 0 \\ \noalign{\medskip}
+ & 0 & 0 & + & + \\ \noalign{\medskip}
+ & 0 & + & + & + \\ \noalign{\medskip}
+ & + & + & + & 0 \\ \noalign{\medskip}
0 & + & + & 0 & +
\end{array}
\right) \]
is considered. By doing a $ 3,2 \,\, OS $ we make $ a_{22} > 0 $ and $ a_{23} > 0 $, and so the matrix has a positive row, contradicting condition~\eqref{cond:zero_in_every_row}. As before, this does not affect the patterns found in the proof of Theorem~\ref{th:extreme_LM}.

The third occurrence of condition~\eqref{cond:zero_on_main_diagonal} is in Case IV in Section~4 of~\cite{RefWorks:36}. As we can no longer assume the main diagonal of $A$ has at least one zero element we get a slightly different pattern than the one called PIV in~\cite{RefWorks:36}. The new pattern is
\[ \left(
\begin{array}{ccccc}
+ & + & + & 0 & 0 \\ \noalign{\medskip}
+ & * & 0 & 0 & + \\ \noalign{\medskip}
+ & 0 & * & + & 0 \\ \noalign{\medskip}
0 & 0 & + & * & + \\ \noalign{\medskip}
0 & + & 0 & + & *
\end{array}
\right) , \]
and by orthogonal similarity with the permutation matrix $ I_3 \bigoplus \left(
\begin{array}{cc}
0 & 1 \\ \noalign{\medskip}
1 & 0
\end{array}
\right) $ we get the pattern
\[ C = \left(
\begin{array}{ccccc}
+ & + & + & 0 & 0 \\ \noalign{\medskip}
+ & * & 0 & + & 0 \\ \noalign{\medskip}
+ & 0 & * & 0 & + \\ \noalign{\medskip}
0 & + & 0 & * & + \\ \noalign{\medskip}
0 & 0 & + & + & *
\end{array}
\right) . \]
As pattern PIV is pattern $ C_0 $ in Theorem~\ref{th:extreme_LM}, we get that under our assumptions, it suffices to prove Theorem~\ref{th:main_nnc} for matrices of patterns $H$ and $C$, whose spectral radius is at most $1$ and whose trace is $ \frac{1}{2} $.

\section{Pattern $H$} \label{sec:pattern_H}

Let
\[ A = \left(
\begin{array}{ccccc}
\frac{1}{2} - t - s & a_{12} & a_{13} & 0 & 0 \\ \noalign{\medskip}
a_{12} & 0 & 0 & a_{24} & a_{25} \\ \noalign{\medskip}
a_{13} & 0 & t & 0 & a_{35} \\ \noalign{\medskip}
0 & a_{24} & 0 & s & a_{45} \\ \noalign{\medskip}
0 & a_{25} & a_{35} & a_{45} & 0
\end{array}
\right) , \]
where all the $ a_{ij} $'s, $s$ and $t$ are nonnegative and $ t + s \leq \frac{1}{2} $. Note that $A$ is in the closure of the set of matrices whose pattern is $H$, with a trace of $ \frac{1}{2} $. Let $ \left( \lambda_1, \lambda_2, \dots, \lambda_5 \right) $ be the spectrum of $A$, where $ \lambda_1 \geq \lambda_2 \geq \dots \geq \lambda_5 $.

\begin{lemma} \label{lem:symmetry_H}
\begin{enumerate}

\item \label{case:symmetry_H_1}
Let
\[ P = \left(
\begin{array}{ccccc}
0 & 0 & 1 & 0 & 0 \\ \noalign{\medskip}
0 & 0 & 0 & 0 & 1 \\ \noalign{\medskip}
1 & 0 & 0 & 0 & 0 \\ \noalign{\medskip}
0 & 0 & 0 & 1 & 0 \\ \noalign{\medskip}
0 & 1 & 0 & 0 & 0
\end{array}
\right) . \]
Then
\[ P A P^{-1} = \left(
\begin{array}{ccccc}
t & a_{35} & a_{13} & 0 & 0 \\ \noalign{\medskip}
a_{35} & 0 & 0 & a_{45} & a_{25} \\ \noalign{\medskip}
a_{13} & 0 & \frac{1}{2} - t - s & 0 & a_{12} \\ \noalign{\medskip}
0 & a_{45} & 0 & s & a_{24} \\ \noalign{\medskip}
0 & a_{25} & a_{12} & a_{24} & 0
\end{array}
\right) . \]

\item  \label{case:symmetry_H_2}
Suppose we are given functions $ \phi_1, \phi_2, \dots, \phi_p $ and $ \psi_1, \psi_2, \dots, \psi_q $ of the entries of $A$ such that $ \phi_i > 0 $ (or $ \phi_i \geq 0 $), $ i = 1, 2, \dots, p $ imply $ \psi_j > 0 $ (or $ \psi_j \geq 0 $), $ j = 1, 2, \dots, q $. Then the same holds true if we replace the occurrence of the entries of $A$ in the $ \phi_i $'s and $ \psi_j $'s as follows: $ a_{11} = \frac{1}{2} - t - s $ by $ a_{33} = t $, $ a_{12} $ by $ a_{35} $, $ a_{24} $ by $ a_{45} $, $ a_{33} = t $ by $ a_{11} = \frac{1}{2} - t - s $, $ a_{35} $ by $ a_{12} $, $ a_{45} $ by $ a_{24} $, respectively, keeping $ a_{13} $, $ a_{44} = s $ and $ a_{25} $ unchanged.

\end{enumerate}
\end{lemma}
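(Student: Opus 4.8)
The plan is to dispatch part~\ref{case:symmetry_H_1} by a direct computation and then derive part~\ref{case:symmetry_H_2} as a formal consequence of it.

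First I would observe that $P$ is the permutation matrix of the involution $\pi=(1\,3)(2\,5)$, that is $Pe_j=e_{\pi(j)}$, so $P$ is orthogonal and $P^{-1}=P^{T}=P$. Hence $\bigl(PAP^{-1}\bigr)_{ij}=a_{\pi(i)\,\pi(j)}$ for all $i,j$, and since $A$ is symmetric it suffices to read off the fifteen entries on and above the diagonal. Substituting $\pi(1)=3$, $\pi(2)=5$, $\pi(3)=1$, $\pi(4)=4$, $\pi(5)=2$ gives, for instance, $(1,1)\mapsto a_{33}=t$, $(1,2)\mapsto a_{35}$, $(1,3)\mapsto a_{13}$, $(3,3)\mapsto a_{11}=\tfrac12-t-s$, $(3,5)\mapsto a_{12}$, $(2,4)\mapsto a_{45}$, $(2,5)\mapsto a_{25}$, $(4,4)\mapsto s$, $(4,5)\mapsto a_{24}$, and each of the remaining entries lands on a zero of $A$. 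Assembling these yields exactly the displayed matrix; this step is purely mechanical.

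For part~\ref{case:symmetry_H_2} the key point, which part~\ref{case:symmetry_H_1} makes transparent, is that $A':=PAP^{-1}$ is again a matrix of the very same form as $A$. Indeed $A'$ is nonnegative and symmetric, $\tr(A')=\tr(A)=\tfrac12$, and setting $t'=\tfrac12-t-s\ge0$, $s'=s\ge0$ one sees that $A'$ has the pattern-$H$ shape with parameters $t',s'$ obeying $t'+s'=\tfrac12-t\le\tfrac12$; moreover, comparing the two displays, the entries of $A'$ written in this standard parametrization are obtained from those of $A$ by precisely the renaming $\tau$ of the statement ($a_{11}\leftrightarrow a_{33}$, that is $\tfrac12-t-s\leftrightarrow t$; $a_{12}\leftrightarrow a_{35}$; $a_{24}\leftrightarrow a_{45}$; and $a_{13}$, $a_{44}=s$, $a_{25}$ left fixed). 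Consequently, for any function $\chi$ of the entries of a pattern-$H$ matrix one has the identity $\chi(A')=\tau(\chi)(A)$, where $\tau(\chi)$ denotes $\chi$ after this substitution of entry names.

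Granting this identity the conclusion is immediate. Suppose the implication ``$\phi_i>0$ for all $i$'' $\Rightarrow$ ``$\psi_j>0$ for all $j$'' holds for every matrix of the form of $A$, and let $A$ be such a matrix with $\tau(\phi_i)(A)>0$ for all $i$. Then $\phi_i(A')>0$ for all $i$ by the identity, and since $A'$ is itself a matrix of that form, the hypothesised implication applied to $A'$ gives $\psi_j(A')>0$ for all $j$, whence $\tau(\psi_j)(A)>0$ for all $j$ by the identity again. The same argument works verbatim with ``$\ge0$'' in place of ``$>0$'', and since $\tau$ is an involution no separate converse is needed. I do not anticipate a genuine obstacle here: the only points requiring care are the index bookkeeping in part~\ref{case:symmetry_H_1} and the observation that the transformed matrix $A'$ still satisfies all the standing constraints (nonnegativity, trace $\tfrac12$, $t'+s'\le\tfrac12$), which is exactly what licenses applying the hypothesised implication to it.
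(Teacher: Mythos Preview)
Your proposal is correct and takes essentially the same approach as the paper, which simply records that part~\ref{case:symmetry_H_1} is obvious and that part~\ref{case:symmetry_H_2} follows from it. You have spelled out the mechanics (that $P$ is the permutation matrix of the involution $(1\,3)(2\,5)$, so $(PAP^{-1})_{ij}=a_{\pi(i)\pi(j)}$) and made explicit why the hypothesised implication transfers under the substitution $\tau$: because $A'=PAP^{-1}$ is again a pattern-$H$ matrix satisfying the standing constraints, and evaluating any entry-function $\chi$ on $A'$ equals evaluating $\tau(\chi)$ on $A$.
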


\begin{proof}
\eqref{case:symmetry_H_1} is obvious and \eqref{case:symmetry_H_2} follows from \eqref{case:symmetry_H_1}.
\end{proof}
The use of Lemma~\ref{lem:symmetry_H} will become clear in the corollaries of this section.

For convenience of notation we name the following expressions:
\begin{align*}
S_1 &= P_{A[2,4,5]} \left( 1 \right) = 1 - s - {a_{24}}^2 - \left( 1 - s \right) {a_{25}}^2 - {a_{45}}^2 - 2 a_{24} a_{25} a_{45} , \\
S_2 &= -8 \cdot P_{A[2,4,5]} \left( \frac{1}{2} \right) \\
 &= -1 + 2 s + 4 {a_{24}}^2 + 4 \left( 1 - 2 s \right) {a_{25}}^2 + 4 {a_{45}}^2 + 16 a_{24} a_{25} a_{45} .
\end{align*}
Note that as $ s \leq \frac{1}{2} $ and all the $ a_{ij} $'s are nonnegative $ S_1 $ is monotonically decreasing in $ a_{25} $ and $ S_2 $ is monotonically increasing in $ a_{25} $. It can be checked that
\begin{align}
P_{A[1,2,4,5]} \left( 1 \right) &= \frac{1}{2} \left( 1 + 2 t + 2 s \right) S_1 + {a_{12}}^2 \left( {a_{45}}^2 + s - 1 \right) , \label{eq:P_A1245_lambda_1} \\
P_{A[1,2,4,5]} \left( \frac{1}{2} \right) &= -\frac{1}{8} \left( t + s \right) S_2 + \frac{1}{4} {a_{12}}^2 \left( 4 {a_{45}}^2 + 2 s - 1 \right) . \label{eq:P_A1245_lambda_1_2}
\end{align}

\begin{lemma} \label{lem:necessary_conditions_H}
If $ \rho \left( A \right) \leq 1 $ then
\begin{equation}
S_1 \geq 0 . \label{eq:S_1_geq_0}
\end{equation}
\end{lemma}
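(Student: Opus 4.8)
The plan is to deduce this immediately from the submatrix spectral radius bound of Observation~\ref{obs:spectral_radius} together with the fact that $S_1$ is the characteristic polynomial of a $3 \times 3$ symmetric nonnegative matrix evaluated at a point lying weakly above all of its eigenvalues.

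First I would recall that $S_1 = P_{A[2,4,5]}\left( 1 \right) = \det\left( I_3 - A[2,4,5] \right)$, and that $A[2,4,5]$ is a nonnegative symmetric $3 \times 3$ matrix, being a principal submatrix of $A$. Let $\mu_1 \geq \mu_2 \geq \mu_3$ be its (real) eigenvalues. Then
\[
S_1 = \prod_{i=1}^{3} \left( 1 - \mu_i \right).
\]
By Observation~\ref{obs:spectral_radius}, $\rho\left( A[2,4,5] \right) \leq \rho\left( A \right) \leq 1$, and since $A[2,4,5]$ is symmetric and nonnegative its largest eigenvalue equals its spectral radius, so $\mu_1 = \rho\left( A[2,4,5] \right) \leq 1$. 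Hence $\mu_i \leq \mu_1 \leq 1$ for each $i$, so every factor $1 - \mu_i$ is nonnegative, and therefore $S_1 \geq 0$.

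There is essentially no substantive obstacle here; the only points requiring a moment's care are the sign convention (the characteristic polynomial $P_M\left( \lambda \right) = \det\left( \lambda I_n - M \right)$ is monic, so its value at any $\lambda$ exceeding all real roots is nonnegative) and the fact that for a nonnegative symmetric matrix the spectral radius is attained by the largest (rather than the most negative) eigenvalue, which is the Perron--Frobenius property already invoked elsewhere in the paper. I would state the argument in these few lines and not belabor the explicit expression for $S_1$, since the positivity is structural rather than computational.
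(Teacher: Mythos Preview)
Your proof is correct but takes a genuinely different, and more direct, route than the paper's. The paper argues by contradiction through the $4 \times 4$ submatrix: it first uses $\rho(A[4,5]) \leq 1$ to establish the auxiliary inequality $1 - s - a_{45}^2 \geq 0$, and then observes that if $S_1 < 0$ the explicit formula \eqref{eq:P_A1245_lambda_1} for $P_{A[1,2,4,5]}(1)$ would force $P_{A[1,2,4,5]}(1) < 0$, contradicting Observation~\ref{obs:spectral_radius}. Your argument bypasses both the $2 \times 2$ and the $4 \times 4$ steps entirely by working directly with $A[2,4,5]$: since $S_1 = P_{A[2,4,5]}(1)$ and all eigenvalues of $A[2,4,5]$ lie at or below $1$ by Observation~\ref{obs:spectral_radius} and Perron--Frobenius, the product $\prod_i (1-\mu_i)$ is nonnegative. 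This is shorter and conceptually cleaner; the paper's detour through \eqref{eq:P_A1245_lambda_1} buys the intermediate bound $1 - s - a_{45}^2 \geq 0$, but that bound is in any case recovered later (in sharper form) from $S_1 \geq 0$ itself in Lemma~\ref{lem:more_necessary_conditions_H}.
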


\begin{proof}
The eigenvalues of $ A[4,5] $ are $ \frac{1}{2} s \pm \frac{1}{2} \sqrt{ s^2 + 4 {a_{45}}^2 } $. By Observation~\ref{obs:spectral_radius} we have $ \rho \left( A[4,5] \right) \leq 1 $, so $ \frac{1}{2} s + \frac{1}{2} \sqrt{ s^2 + 4 {a_{45}}^2 } \leq 1 $. Therefore,
\begin{equation}
1 - s - {a_{45}}^2 \geq 0 . \label{eq:a_45_and_s_nonnegative}
\end{equation}
Assume $ S_1 < 0 $. By \eqref{eq:P_A1245_lambda_1} and \eqref{eq:a_45_and_s_nonnegative} we get $ P_{A[1,2,4,5]} \left( 1 \right) < 0 $. By Observation~\ref{obs:negative_cp}, $ \rho \left( A[1,2,4,5] \right) > 1 $, which contradicts Observation~\ref{obs:spectral_radius}. Therefore, $ S_1 \geq 0 $ and so \eqref{eq:S_1_geq_0} holds.
\end{proof}

\begin{lemma} \label{lem:sufficient_conditions_H}
If any of the following conditions hold then $ \lambda_3 \leq \frac{1}{2} $.
\begin{enumerate}
\item $ a_{24} \leq \frac{1}{2} \sqrt{ 1 - 2 s } $,
\item $ a_{45} \leq \frac{1}{2} \sqrt{ 1 - 2 s } $,
\item $ a_{13} \leq \frac{1}{2} \sqrt{ 2 \left( 1 - 2 t \right) \left( t + s \right) } $.
\end{enumerate}
\end{lemma}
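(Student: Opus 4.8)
\emph{Proposal.} The plan is to prove each of the three implications by exhibiting a small principal submatrix of $A$ whose relevant eigenvalue is at most $\tfrac12$, and then applying Observation~\ref{obs:3x3_sufficient_condition} (for a $3\times3$ submatrix) or Observation~\ref{obs:4x4_sufficient_condition} (for a $4\times4$ submatrix). First a reduction: the similarity $A\mapsto PAP^{-1}$ of Lemma~\ref{lem:symmetry_H}\eqref{case:symmetry_H_1} interchanges $a_{24}$ with $a_{45}$ while fixing $s$ and the spectrum, so it suffices to prove the implication for the hypothesis $a_{45}\le\tfrac12\sqrt{1-2s}$; the implication for $a_{24}\le\tfrac12\sqrt{1-2s}$ then follows, and this transfer is precisely the kind formalized in Lemma~\ref{lem:symmetry_H}\eqref{case:symmetry_H_2}. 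So only the second and third hypotheses need a direct argument.

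For the third hypothesis, consider $A[1,3,4]$. Since $a_{14}=a_{34}=0$ it is block diagonal, splitting into the $1\times1$ block $(s)$ and the $2\times2$ block on rows and columns $1,3$, whose eigenvalues are $\tfrac12\bigl[(\tfrac12-s)\pm\sqrt{(\tfrac12-s-2t)^2+4a_{13}^2}\,\bigr]$ (the trace of that block being $\tfrac12-s$). Using $s\le\tfrac12$, the largest eigenvalue of $A[1,3,4]$ is $\le\tfrac12$ iff $\sqrt{(\tfrac12-s-2t)^2+4a_{13}^2}\le\tfrac12+s$, which upon squaring (both sides nonnegative) is equivalent to $4a_{13}^2\le(\tfrac12+s)^2-(\tfrac12-s-2t)^2=2(1-2t)(t+s)$ — exactly the third hypothesis. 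Observation~\ref{obs:3x3_sufficient_condition} now gives $\lambda_3\le\tfrac12$.

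Now assume $a_{45}\le\tfrac12\sqrt{1-2s}$, i.e. $4a_{45}^2+2s-1\le0$. If $\rho(A[2,4,5])\le\tfrac12$ we are done by Observation~\ref{obs:3x3_sufficient_condition}; so assume $\rho(A[2,4,5])>\tfrac12$. Let $\eta_1\ge\eta_2\ge\eta_3$ be the eigenvalues of the nonnegative matrix $A[2,4,5]$, whose trace is $s$. Perron--Frobenius gives $\eta_1+\eta_3\ge0$, hence $\eta_2\le s\le\tfrac12$ and $\eta_3\le\tfrac12$; so in $P_{A[2,4,5]}(\tfrac12)=(\tfrac12-\eta_1)(\tfrac12-\eta_2)(\tfrac12-\eta_3)$ the first factor is negative and the other two nonnegative, whence $P_{A[2,4,5]}(\tfrac12)\le0$, i.e. $S_2\ge0$. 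Feeding $S_2\ge0$, $t+s\ge0$ and $4a_{45}^2+2s-1\le0$ into identity~\eqref{eq:P_A1245_lambda_1_2} yields $P_{A[1,2,4,5]}(\tfrac12)\le0$. Meanwhile the eigenvalues of $A[4,5]$ are $\tfrac12\bigl(s\pm\sqrt{s^2+4a_{45}^2}\,\bigr)$ with $s^2+4a_{45}^2\le s^2+(1-2s)=(1-s)^2$, so $\rho(A[4,5])\le\tfrac12$; since $A[4,5]$ is a $2\times2$ principal submatrix of the $4\times4$ matrix $A[1,2,4,5]$, eigenvalue interlacing gives $\mu_3\le\rho(A[4,5])\le\tfrac12$, where $\mu_1\ge\mu_2\ge\mu_3\ge\mu_4$ are the eigenvalues of $A[1,2,4,5]$. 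I claim $\mu_2\le\tfrac12$, which by Observation~\ref{obs:4x4_sufficient_condition} gives $\lambda_3\le\tfrac12$: if $\mu_2>\tfrac12$ then $\mu_3\le\tfrac12<\mu_2\le\mu_1$, so Observation~\ref{obs:negative_cp} applies to $A[1,2,4,5]$ with $\alpha=\tfrac12$, and since $P_{A[1,2,4,5]}(\tfrac12)\le0$ we either get the contradiction $\mu_2<\tfrac12<\mu_1$ (when the value is strictly negative) or else $P_{A[1,2,4,5]}(\tfrac12)=0$.

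The main obstacle in a full write-up is exactly this last boundary case: $P_{A[1,2,4,5]}(\tfrac12)=0$ while $\mu_2>\tfrac12$. Then $\tfrac12$ is an eigenvalue of $A[1,2,4,5]$ and equality holds in $\rho(A[4,5])\le\tfrac12$, forcing $4a_{45}^2=1-2s$ and hence $(t+s)S_2=0$ by~\eqref{eq:P_A1245_lambda_1_2}. I expect this highly constrained configuration to be dispatched either by a short direct computation on the resulting sparse pattern, or by approximating $A$ from the interior of the relevant parameter region and passing to the limit (a continuity argument in the spirit of the ``matrices orthogonally similar to a positive symmetric matrix'' paragraph of Section~\ref{sec:main_theorem}).
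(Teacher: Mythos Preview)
Your treatment of the third hypothesis via $A[1,3,4]$ is exactly the paper's argument and is correct. Your symmetry reduction between the first two hypotheses is also the paper's idea (though the paper proves case~1 directly and deduces case~2, while you do the reverse).

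The gap is in your direct argument for $a_{45}\le\tfrac12\sqrt{1-2s}$: you work with the $4\times4$ block $A[1,2,4,5]$ and are left with an unresolved boundary case. This detour is unnecessary. You yourself observe that $\rho(A[4,5])\le\tfrac12$ under the hypothesis; now simply adjoin row and column~$1$ (or~$3$) instead of both~$1$ and~$2$. Since $a_{14}=a_{15}=0$, the $3\times3$ block
\[
A[1,4,5]=\begin{pmatrix}\tfrac12-t-s&0&0\\0&s&a_{45}\\0&a_{45}&0\end{pmatrix}
\]
is block diagonal with eigenvalues $\tfrac12-t-s\le\tfrac12$ and $\tfrac12\bigl(s\pm\sqrt{s^2+4a_{45}^2}\,\bigr)$, and the hypothesis gives $\rho(A[1,4,5])\le\tfrac12$ directly; then Observation~\ref{obs:3x3_sufficient_condition} finishes. (Equivalently, prove case~1 first using $A[2,3,4]$, whose off-diagonal pattern is just as sparse, and deduce case~2 by Lemma~\ref{lem:symmetry_H}; this is what the paper does.) Once you use this $3\times3$ block, the identity~\eqref{eq:P_A1245_lambda_1_2}, the sign analysis of $S_2$, and the entire boundary discussion become superfluous.
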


\begin{proof}
We check each condition.
\begin{enumerate}

\item The eigenvalues of $ A[2,3,4] $ are $ t, \frac{1}{2} s \pm \frac{1}{2} \sqrt{ s^2 + 4 {a_{24}}^2 } $. We know that $ t \leq \frac{1}{2} $. If $ a_{24} \leq \frac{1}{2} \sqrt{ 1 - 2 s } $ then $ \frac{1}{2} s + \frac{1}{2} \sqrt{ s^2 + 4 {a_{24}}^2 } \leq \frac{1}{2} $, and so $ \rho \left( A[2,3,4] \right) \leq \frac{1}{2} $. By Observation~\ref{obs:3x3_sufficient_condition} we have $ \lambda_3 \leq \frac{1}{2} $.

\item Follows from the previous case by application of Lemma~\ref{lem:symmetry_H}.

\item The eigenvalues of $ A[1,3,4] $ are $ s, \frac{1}{4} - \frac{1}{2} s 
\pm \frac{1}{4} \sqrt{ \left( 1 - 2 s - 4 t \right)^2 + 16 {a_{13}}^2 } $ and we know that $ s \leq \frac{1}{2} $. If $ a_{13} \leq \frac{1}{2} \sqrt{ 2 \left( 1 - 2 t \right) \left( t + s \right) } $ then
\[ \frac{1}{4} - \frac{1}{2} s + \frac{1}{4} \sqrt{ \left( 1 - 2 s - 4 t \right)^2 + 16 {a_{13}}^2 } \leq \frac{1}{2} , \]
and so $ \rho \left( A[1,3,4] \right) \leq \frac{1}{2} $. By Observation~\ref{obs:3x3_sufficient_condition} we have $ \lambda_3 \leq \frac{1}{2} $.

\end{enumerate}
\end{proof}

\begin{lemma} \label{lem:more_necessary_conditions_H}
If $ \rho \left( A \right) \leq 1 $, $ a_{24} > 0 $ and $ a_{45} > 0 $ then
\begin{equation}
a_{12} \leq \frac{1}{2} \sqrt{ \frac{ 2 \left( 1 + 2 t + 2 s \right) S_1 }{ 1 - s - {a_{45}}^2 } } \label{eq:a_12_upper_bound}
\end{equation}
and
\begin{equation}
a_{35} \leq \sqrt{ \frac{ \left( 1 - t \right) S_1 }{ 1 - s - {a_{24}}^2 } } . \label{eq:a_35_upper_bound}
\end{equation}
Moreover, if
\begin{equation}
a_{24} > \frac{1}{2} \sqrt{ 1 - 2 s } \label{eq:a_24_lower_bound}
\end{equation}
and
\begin{equation}
a_{45} > \frac{1}{2} \sqrt{ 1 - 2 s } \label{eq:a_45_lower_bound}
\end{equation}
hold, then
\begin{equation}
a_{24} < \frac{1}{2} \sqrt{ 3 - 2 s } , \label{eq:a_24_upper_bound}
\end{equation}
\begin{equation}
a_{45} < \frac{1}{2} \sqrt{ 3 - 2 s } , \label{eq:a_45_upper_bound}
\end{equation}
\begin{equation}
a_{25} < \frac{1}{ 2 \left( 1 - s \right) } , \label{eq:a_25_upper_bound}
\end{equation}
and
\begin{equation}
S_2 > 0 . \label{eq:S_2_positive}
\end{equation}
\end{lemma}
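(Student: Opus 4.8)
The plan is to squeeze all eight inequalities out of the single fact $S_1 \ge 0$ (Lemma~\ref{lem:necessary_conditions_H}), the identity \eqref{eq:P_A1245_lambda_1}, the explicit formulas for $S_1$ and $S_2$, and the comparison $\rho(A[\cdots]) \le \rho(A) \le 1$ from Observation~\ref{obs:spectral_radius}; the symmetry recorded in Lemma~\ref{lem:symmetry_H} will then halve the work.

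First I would upgrade \eqref{eq:a_45_and_s_nonnegative} to the \emph{strict} inequality $1 - s - a_{45}^2 > 0$: if $1 - s - a_{45}^2 = 0$, substituting $a_{45}^2 = 1-s$ into the defining formula for $S_1$ collapses it to $-a_{24}^2 - (1-s)a_{25}^2 - 2a_{24}a_{25}a_{45} \le -a_{24}^2 < 0$ (here $a_{24} > 0$ is used), contradicting $S_1 \ge 0$; symmetrically $1 - s - a_{24}^2 > 0$, using $a_{45} > 0$. This is exactly what makes the quotients in \eqref{eq:a_12_upper_bound} and \eqref{eq:a_35_upper_bound} meaningful. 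Next, Observation~\ref{obs:spectral_radius} gives $\rho(A[1,2,4,5]) \le 1$, hence $P_{A[1,2,4,5]}(1) \ge 0$ by the contrapositive of the first assertion of Observation~\ref{obs:negative_cp}. Substituting into \eqref{eq:P_A1245_lambda_1} yields $a_{12}^2(1 - s - a_{45}^2) \le \tfrac12(1+2t+2s)S_1$; dividing by $1-s-a_{45}^2 > 0$ and taking square roots (legitimate since $S_1 \ge 0$) gives \eqref{eq:a_12_upper_bound}. For \eqref{eq:a_35_upper_bound} I would just apply the bound \eqref{eq:a_12_upper_bound} to $PAP^{-1}$, which by Lemma~\ref{lem:symmetry_H} is again a matrix of the same shape with parameters $t' = \tfrac12 - t - s$, $s' = s$ and off-diagonal entries relabeled as prescribed there: this sends $a_{12} \mapsto a_{35}$, leaves $S_1$ and $1-s-a_{24}^2$ unchanged, and turns $\tfrac12(1+2t+2s)$ into $\tfrac12(1+2t'+2s') = 1-t$, which is precisely \eqref{eq:a_35_upper_bound}.

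For the ``moreover'' part, assume in addition $4a_{24}^2 > 1-2s$ and $4a_{45}^2 > 1-2s$ (recall $s \le \tfrac12$). Then \eqref{eq:S_2_positive} is immediate: regrouping the defining expression for $S_2$ as $(4a_{24}^2 - 1 + 2s) + 4(1-2s)a_{25}^2 + 4a_{45}^2 + 16a_{24}a_{25}a_{45}$ displays a strictly positive term followed by nonnegative ones. For \eqref{eq:a_24_upper_bound}, drop the nonnegative summands $(1-s)a_{25}^2$ and $2a_{24}a_{25}a_{45}$ from $S_1 \ge 0$ to get $a_{24}^2 \le 1 - s - a_{45}^2 < 1 - s - \tfrac14(1-2s) = \tfrac14(3-2s)$; inequality \eqref{eq:a_45_upper_bound} is its mirror image (or apply Lemma~\ref{lem:symmetry_H}). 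Finally, for \eqref{eq:a_25_upper_bound} I would keep all of $S_1$, rewriting $S_1 \ge 0$ as $(1-s)a_{25}^2 + 2a_{24}a_{45}\,a_{25} + a_{24}^2 + a_{45}^2 \le 1-s$; using $a_{24}^2 + a_{45}^2 > \tfrac12(1-2s)$ and $2a_{24}a_{45} > \tfrac12(1-2s)$ (the case $a_{25} = 0$ being trivial) this reduces to $(1-s)a_{25}^2 + \tfrac12(1-2s)a_{25} < \tfrac12$.

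The one genuinely load-bearing observation is that the function $x \mapsto (1-s)x^2 + \tfrac12(1-2s)x$ is strictly increasing on $[0,\infty)$ and equals $\tfrac12$ exactly at $x = \tfrac1{2(1-s)}$, so the last displayed inequality forces $a_{25} < \tfrac1{2(1-s)}$. Apart from that and the strictness $1 - s - a_{45}^2 > 0$ (where the hypothesis $a_{24} > 0$ is essential, and without which \eqref{eq:a_12_upper_bound} would be vacuous), everything is routine bookkeeping, so I do not expect a real obstacle.
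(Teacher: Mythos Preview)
Your proof is correct and follows essentially the same route as the paper: strict positivity of the denominators from $S_1\ge 0$ together with $a_{24},a_{45}>0$, then \eqref{eq:a_12_upper_bound} from $P_{A[1,2,4,5]}(1)\ge 0$ and \eqref{eq:P_A1245_lambda_1}, then \eqref{eq:a_35_upper_bound} via Lemma~\ref{lem:symmetry_H}, and the remaining four inequalities by bounding $S_1$ and $S_2$ using \eqref{eq:a_24_lower_bound}--\eqref{eq:a_45_lower_bound}. Your argument for $S_2>0$ (grouping off the single strictly positive term $4a_{24}^2+2s-1$) is in fact a shade simpler than the paper's, which substitutes both lower bounds to reach $(1+2a_{25})^2(1-2s)\ge 0$; and your derivation of \eqref{eq:a_25_upper_bound} via monotonicity of $x\mapsto (1-s)x^2+\tfrac12(1-2s)x$ is the direct version of the paper's contradiction argument, leading to the identical quadratic inequality.
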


\begin{proof}
As $ s \leq \frac{1}{2} $ and all the $ a_{ij} $'s are nonnegative, $ {a_{24}}^2 + {a_{45}}^2 \leq 1 - s $ follows immediately from \eqref{eq:S_1_geq_0}. Together with \eqref{eq:a_24_lower_bound} we get \eqref{eq:a_45_upper_bound}, and together with \eqref{eq:a_45_lower_bound} we get \eqref{eq:a_24_upper_bound}. Also, as $ a_{45} > 0 $, we have
\begin{equation}
1 - s - {a_{24}}^2 \geq {a_{45}}^2 > 0 . \label{eq:a_24_and_s_positive}
\end{equation}
Similarly, as $ a_{24} > 0 $, we have
\begin{equation}
1 - s - {a_{45}}^2 \geq {a_{24}}^2 > 0 . \label{eq:a_45_and_s_positive}
\end{equation}
By Observation~\ref{obs:spectral_radius}, $ \rho \left( A[1,2,4,5] \right) \leq 1 $ and by Observation~\ref{obs:negative_cp}, $ P_{A[1,2,4,5]} \left( 1 \right) \geq 0 $. By \eqref{eq:P_A1245_lambda_1} we get \eqref{eq:a_12_upper_bound} and by Lemma~\ref{lem:symmetry_H} we get \eqref{eq:a_35_upper_bound}. Note that by \eqref{eq:S_1_geq_0}, \eqref{eq:a_24_and_s_positive} and \eqref{eq:a_45_and_s_positive} the values inside the square roots of \eqref{eq:a_12_upper_bound} and \eqref{eq:a_35_upper_bound} are nonnegative.

By Observation~\ref{obs:spectral_radius}, $ \rho \left( A[1,2,4,5] \right) \leq 1 $. If $ a_{25} \geq \frac{1}{ 2 \left( 1 - s \right) } $ then by \eqref{eq:a_24_lower_bound} and \eqref{eq:a_45_lower_bound}
\begin{align*}
 S_1 &= 1 - s - {a_{24}}^2 - \left( 1 - s \right) {a_{25}}^2 - {a_{45}}^2 - 2 a_{24} a_{25} a_{45} \\
 &< 1 - s - \left( 1 - s \right) {a_{25}}^2 - 2 \left( \frac{1}{2} \sqrt{ 1 - 2 s } \right)^2 - 2 a_{25} \left( \frac{1}{2} \sqrt{ 1 - 2 s } \right)^2 \\
 &= \frac{1}{2} \left( 1 + a_{25} \right) \left( 1 - 2 \left( 1 - s \right) a_{25} \right) \\
 &< \frac{1}{2} \left( 1 + a_{25} \right) \left( 1 - 2 \left( 1 - s \right) \frac{1}{ 2 \left( 1 - s \right) } \right) = 0 .
\end{align*}
This contradicts \eqref{eq:S_1_geq_0} and so \eqref{eq:a_25_upper_bound} holds.

By \eqref{eq:a_24_lower_bound}, \eqref{eq:a_45_lower_bound} and as $ s \leq \frac{1}{2} $ we have
\begin{align*}
 S_2 &= -1 + 2 s + 4 {a_{24}}^2 + 4 \left( 1 - 2 s \right) {a_{25}}^2 + 4 {a_{45}}^2 + 16 a_{24} a_{25} a_{45} \\
 &> -1 + 2 s + 8 \left( \frac{1}{2} \sqrt{ 1 - 2 s } \right)^2 + 4 \left( 1 - 2 s \right) {a_{25}}^2 + 16 \left( \frac{1}{2} \sqrt{ 1 - 2 s } \right)^2 a_{25} \\
 &= { \left( 1 + 2 a_{25} \right)}^2 \left( 1 - 2 s \right) \geq 0
\end{align*}
and so \eqref{eq:S_2_positive} holds.
\end{proof}

\begin{lemma} \label{lem:a_12_sufficient_condition}
If $ \rho \left( A \right) \leq 1 $, \eqref{eq:a_24_lower_bound}, \eqref{eq:a_45_lower_bound} and 
\[ a_{12} \leq \frac{1}{2} \sqrt{ \frac{2 \left( t + s \right) S_2 }{ 4 {a_{45}}^2 + 2 s - 1 } } \]
hold, then $ \lambda_3 \leq \frac{1}{2} $.
\end{lemma}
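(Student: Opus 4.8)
The plan is to bound the second largest eigenvalue $\mu_2$ of the $4\times 4$ principal submatrix $A[1,2,4,5]$ by $\tfrac12$; then $\lambda_3\le\tfrac12$ follows at once from Observation~\ref{obs:4x4_sufficient_condition}. Throughout write $\mu_1\ge\mu_2\ge\mu_3\ge\mu_4$ for the eigenvalues of $A[1,2,4,5]$ and $\nu_1\ge\nu_2\ge\nu_3$ for those of its principal submatrix $A[2,4,5]$. Note first that \eqref{eq:a_24_lower_bound} and \eqref{eq:a_45_lower_bound} force $a_{24},a_{45}>0$, so Lemma~\ref{lem:more_necessary_conditions_H} applies and gives $S_2>0$; moreover \eqref{eq:a_45_lower_bound} gives $4a_{45}^2+2s-1>0$. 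Hence the expression under the square root in the hypothesis on $a_{12}$ is nonnegative, squaring is legitimate, and we obtain $a_{12}^2\,(4a_{45}^2+2s-1)\le\tfrac12(t+s)S_2$. Substituting this into the identity~\eqref{eq:P_A1245_lambda_1_2} yields $P_{A[1,2,4,5]}\!\left(\tfrac12\right)\le0$.

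Next I would show $\mu_3<\tfrac12$. Since $S_2>0$ we have $P_{A[2,4,5]}\!\left(\tfrac12\right)<0$, so by Observation~\ref{obs:negative_cp} $\rho(A[2,4,5])=\nu_1>\tfrac12$; in particular $\tfrac12$ is not an eigenvalue of $A[2,4,5]$, so $\nu_2\neq\tfrac12$. On the other hand $A[2,4,5]$ is nonnegative symmetric with trace $s\le\tfrac12$, so the Perron-Frobenius Theorem gives $\nu_1+\nu_3\ge0$ and therefore $\nu_2\le\nu_1+\nu_2+\nu_3=s\le\tfrac12$. Combining, $\nu_2<\tfrac12$, and by eigenvalue interlacing between $A[1,2,4,5]$ and its principal submatrix $A[2,4,5]$ we get $\mu_3\le\nu_2<\tfrac12$, hence also $\mu_4<\tfrac12$.

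Finally, write $P_{A[1,2,4,5]}(\lambda)=(\lambda-\mu_1)(\lambda-\mu_2)(\lambda-\mu_3)(\lambda-\mu_4)$. At $\lambda=\tfrac12$ the last two factors are strictly positive, so $P_{A[1,2,4,5]}\!\left(\tfrac12\right)\le0$ forces $(\tfrac12-\mu_1)(\tfrac12-\mu_2)\le0$; since $\mu_1\ge\mu_2$, this is impossible unless $\tfrac12-\mu_2\ge0$, i.e.\ $\mu_2\le\tfrac12$. (Equivalently one may invoke the last sentence of Observation~\ref{obs:negative_cp} with $\alpha=\mu_3$ in the case $P_{A[1,2,4,5]}\!\left(\tfrac12\right)<0$, and note that if $P_{A[1,2,4,5]}\!\left(\tfrac12\right)=0$ then $\tfrac12$ must be $\mu_1$ or $\mu_2$.) Then Observation~\ref{obs:4x4_sufficient_condition} gives $\lambda_3\le\tfrac12$. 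The one point requiring care is the middle step: we need the \emph{strict} bound $\mu_3<\tfrac12$, since if $\mu_3=\tfrac12$ the sign of $P_{A[1,2,4,5]}\!\left(\tfrac12\right)$ would no longer pin down $\mu_2$; this is exactly why we appeal to the strict inequality $S_2>0$ supplied by Lemma~\ref{lem:more_necessary_conditions_H} rather than merely $S_2\ge0$.
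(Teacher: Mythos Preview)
Your proof is correct and follows essentially the same strategy as the paper: use \eqref{eq:P_A1245_lambda_1_2} together with $S_2>0$ to get $P_{A[1,2,4,5]}(\tfrac12)\le0$, establish the strict bound $\mu_3<\tfrac12$, and conclude $\mu_2\le\tfrac12$ via Observation~\ref{obs:negative_cp} and then $\lambda_3\le\tfrac12$ via Observation~\ref{obs:4x4_sufficient_condition}. The only difference is in the interlacing step used to prove $\mu_3<\tfrac12$: the paper interlaces with $A[1,4,5]$ (whose eigenvalues are explicit) to get $\mu_3\le\tfrac12-t-s$ and then rules out equality by a trace argument, whereas you interlace with $A[2,4,5]$ and use $S_2>0$ directly to see that $\tfrac12$ is not an eigenvalue there, giving $\nu_2<\tfrac12$ and hence $\mu_3\le\nu_2<\tfrac12$; both are equally valid, and your route is arguably a touch cleaner since it reuses $S_2>0$ rather than introducing a separate trace contradiction.
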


\begin{proof}
Let $ \left( \mu_1, \mu_2, \mu_3, \mu_4 \right) $ be the spectrum of $ A[1,2,4,5] $, where $ \mu_1 \geq \mu_2 \geq \mu_3 \geq \mu_4 $. The eigenvalues of $ A[1,4,5] $ are $ \frac{1}{2} - t - s, \frac{1}{2} s \pm \frac{1}{2} \sqrt{ s^2 + 4 {a_{45}}^2 } $. By \eqref{eq:a_45_lower_bound} we have $ \frac{1}{2} s + \frac{1}{2} \sqrt{ s^2 + 4 {a_{45}}^2 } > \frac{1}{2} \geq \frac{1}{2} - t - s $. Hence, by the eigenvalue interlacing property we have that
\[ \mu_4 \leq \frac{1}{2} s - \frac{1}{2} \sqrt{ s^2 + 4 {a_{45}}^2 } \leq \mu_3 \leq \frac{1}{2} - t - s \leq \mu_2 \leq \frac{1}{2} s + \frac{1}{2} \sqrt{ s^2 + 4 {a_{45}}^2 } \leq \mu_1 . \]
In particular, $ \mu_1 > \frac{1}{2} $. Note that $ \mu_3 < \frac{1}{2} $, since otherwise $ s = t = 0 $ and then
\[ \frac{1}{2} = \frac{1}{2} - t = \tr \left( A[1,2,4,5] \right) = \mu_1 + \mu_2 + \mu_3 + \mu_4 \geq \mu_2 + \mu_3 \geq 2 \mu_3 = 1 . \]

By Lemma~\ref{lem:more_necessary_conditions_H} we have \eqref{eq:S_2_positive}. Assume that $ a_{12} \leq \frac{1}{2} \sqrt{ \frac{2 \left( t + s \right) S_2 }{ 4 {a_{45}}^2 + 2 s - 1 } } $ and note that the value inside the square root is nonnegative by \eqref{eq:S_2_positive} and \eqref{eq:a_45_lower_bound}. By \eqref{eq:P_A1245_lambda_1_2}, $ P_{A[1,2,4,5]} \left( \frac{1}{2} \right) \leq 0 $. If $ P_{A[1,2,4,5]} \left( \frac{1}{2} \right) = 0 $ then, as $ \mu_3 < \frac{1}{2} < \mu_1 $, we have $ \mu_2 = \frac{1}{2} $. If $ P_{A[1,2,4,5]} \left( \frac{1}{2} \right) < 0 $ then by Observation~\ref{obs:negative_cp} we have $ \mu_2 < \frac{1}{2} $. In both cases, by Observation~\ref{obs:4x4_sufficient_condition} we have $ \lambda_3 \leq \frac{1}{2} $.
\end{proof}

\begin{corollary} \label{cor:a_35_sufficient_condition}
If $ \rho \left( A \right) \leq 1 $, \eqref{eq:a_24_lower_bound}, \eqref{eq:a_45_lower_bound} and 
\[ a_{35} \leq \frac{1}{2} \sqrt{ \frac{ \left( 1 - 2 t \right) S_2 }{ 4 {a_{24}}^2 + 2 s - 1 } } \]
hold, then $ \lambda_3 \leq \frac{1}{2} $.
\end{corollary}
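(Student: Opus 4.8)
The plan is to derive this corollary from Lemma~\ref{lem:a_12_sufficient_condition} by invoking the symmetry recorded in part~\eqref{case:symmetry_H_2} of Lemma~\ref{lem:symmetry_H}. First I would observe that the hypotheses of Lemma~\ref{lem:a_12_sufficient_condition} are all conditions of the form $\phi_i \geq 0$ or $\phi_i > 0$ on the entries of $A$: one writes $\rho(A) \leq 1$ as $1 - \rho(A) \geq 0$, keeps \eqref{eq:a_24_lower_bound} and \eqref{eq:a_45_lower_bound}, and rewrites the bound on $a_{12}$ in the obvious way. The conclusion $\lambda_3 \leq \frac{1}{2}$ is again such a condition, namely $\frac{1}{2} - \lambda_3 \geq 0$. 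Hence Lemma~\ref{lem:symmetry_H}\eqref{case:symmetry_H_2} applies, and it remains only to check that the prescribed substitution --- interchange $a_{11} = \frac{1}{2} - t - s$ with $a_{33} = t$, $a_{12}$ with $a_{35}$, $a_{24}$ with $a_{45}$, and leave $a_{13}$, $a_{44} = s$, $a_{25}$ unchanged --- turns the hypotheses and conclusion of Lemma~\ref{lem:a_12_sufficient_condition} into exactly those of the corollary.

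Carrying out the substitution is routine. The conditions $\rho(A) \leq 1$ and $\lambda_3 \leq \frac{1}{2}$ are invariant, since by part~\eqref{case:symmetry_H_1} of Lemma~\ref{lem:symmetry_H} the transformed matrix is $P A P^{-1}$, which is orthogonally similar to $A$ and hence has the same spectrum. The pair \eqref{eq:a_24_lower_bound}, \eqref{eq:a_45_lower_bound} is interchanged with itself, because the substitution swaps $a_{24}$ and $a_{45}$ and fixes $s$. Finally, in the bound $a_{12} \leq \frac{1}{2} \sqrt{ 2 \left( t + s \right) S_2 / \left( 4 {a_{45}}^2 + 2 s - 1 \right) }$ we have $a_{12} \mapsto a_{35}$ and $a_{45} \mapsto a_{24}$, so $4 {a_{45}}^2 + 2 s - 1 \mapsto 4 {a_{24}}^2 + 2 s - 1$; the substitution sends $t \mapsto \frac{1}{2} - t - s$ and fixes $s$, so $t + s \mapsto \frac{1}{2} - t$ and therefore $2 \left( t + s \right) \mapsto 1 - 2 t$; and $S_2$ is unchanged. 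Thus the bound becomes $a_{35} \leq \frac{1}{2} \sqrt{ \left( 1 - 2 t \right) S_2 / \left( 4 {a_{24}}^2 + 2 s - 1 \right) }$, which is precisely the hypothesis of the corollary.

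There is no real obstacle here; the entire argument is the bookkeeping above. The one point worth pausing on is the invariance of $S_2$ under the substitution, but from the explicit formula $S_2 = -1 + 2 s + 4 {a_{24}}^2 + 4 \left( 1 - 2 s \right) {a_{25}}^2 + 4 {a_{45}}^2 + 16 a_{24} a_{25} a_{45}$ this is immediate, as $S_2$ is symmetric in $a_{24}$ and $a_{45}$ and involves $a_{25}$ and $s$ only through terms the substitution leaves alone. One may also note in passing that the substitution respects the standing constraints $a_{ij} \geq 0$, $t \geq 0$, $s \geq 0$, $t + s \leq \frac{1}{2}$, so that the transformed matrix is again in the closure of pattern~$H$ with trace $\frac{1}{2}$, although this is not needed for the purely formal application of Lemma~\ref{lem:symmetry_H}.
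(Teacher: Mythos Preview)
Your proposal is correct and is exactly the paper's approach: the paper's proof is the single line ``Direct application of Lemma~\ref{lem:symmetry_H},'' and your argument simply spells out the bookkeeping behind that line, verifying that the substitution swaps the hypotheses of Lemma~\ref{lem:a_12_sufficient_condition} into those of the corollary and leaves the spectrum (hence $\rho(A)$ and $\lambda_3$) and $S_2$ invariant.
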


\begin{proof}
Direct application of Lemma~\ref{lem:symmetry_H}.
\end{proof}

\begin{lemma} \label{lem:a_45_better_lower_bound}
If $ \rho \left( A \right) \leq 1 $, \eqref{eq:a_24_lower_bound}, \eqref{eq:a_45_lower_bound} and
\begin{equation}
a_{12} > \frac{1}{2} \sqrt{ \frac{2 \left( t + s \right) S_2 }{ 4 {a_{45}}^2 + 2 s - 1 } } \label{eq:a_12_lower_bound}
\end{equation}
hold, then
\begin{equation}
a_{24} < \frac{1}{2} \sqrt{ \frac{ \left( 3 - 2 s \right) \left( 1 + 3 t + 3 s - 2 \sqrt{ \left( 1 + 2 t + 2 s \right) \left( t + s \right) } \right) }{ 1 + t + s } } \label{eq:a_24_better_upper_bound}
\end{equation}
and
\begin{equation}
a_{45} > \sqrt{ \frac{1}{2} \left( 1 - s \right) - \frac{1}{4} \sqrt{ 1 - \frac{ \left( 1 - 2 s \right) \left( 3 - 2 s \right) \left( t + s \right) }{ 1 + t + s } } } . \label{eq:a_45_better_lower_bound}
\end{equation}
\end{lemma}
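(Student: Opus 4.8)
The plan is to pin $a_{12}$ between the lower bound \eqref{eq:a_12_lower_bound} that is assumed and the upper bound \eqref{eq:a_12_upper_bound}. The latter applies here because \eqref{eq:a_24_lower_bound} and \eqref{eq:a_45_lower_bound} force $a_{24},a_{45}>0$, so Lemma~\ref{lem:more_necessary_conditions_H} is available (in particular it also supplies $S_2>0$, $a_{24},a_{45}<\tfrac12\sqrt{3-2s}$, \eqref{eq:a_45_and_s_positive}, etc.). Squaring \eqref{eq:a_12_lower_bound} and \eqref{eq:a_12_upper_bound} and clearing the denominators $4a_{45}^2+2s-1$ (positive by \eqref{eq:a_45_lower_bound}) and $1-s-a_{45}^2$ (positive by \eqref{eq:a_45_and_s_positive}) gives
\[ \frac{(t+s)S_2}{4a_{45}^2+2s-1} < \frac{(1+2t+2s)S_1}{1-s-a_{45}^2}. \]
The left side is non-decreasing in $a_{25}$ and the right side is non-increasing in $a_{25}$ (because $S_2$ is non-decreasing and $S_1$ non-increasing in $a_{25}$, the denominators being constant in $a_{25}$ and positive); since $a_{25}\ge 0$, the inequality survives setting $a_{25}=0$, i.e.
\[ \frac{(t+s)\bigl(4a_{24}^2+4a_{45}^2-(1-2s)\bigr)}{4a_{45}^2-(1-2s)} < \frac{(1+2t+2s)\bigl(1-s-a_{24}^2-a_{45}^2\bigr)}{1-s-a_{45}^2}. \]

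Next I would substitute $m=t+s$, $x=4a_{45}^2-(1-2s)$, $y=4(1-s-a_{45}^2)$, noting $x>0$, $y>0$ and $x+y=3-2s$; a direct computation then reduces the displayed inequality to
\[ 4a_{24}^2\bigl(my+(1+2m)x\bigr) < (1+m)xy. \]
From this, \eqref{eq:a_24_better_upper_bound} follows by writing $a_{24}^2<\tfrac14(1+m)xy/\bigl(my+(1+2m)x\bigr)$ and maximizing the right-hand side under $x+y=3-2s$; substituting $y=3-2s-x$, this is a one-variable calculus problem whose unique interior critical point is $x^{\ast}=(3-2s)\bigl(\sqrt{m(1+2m)}-m\bigr)/(1+m)$, at which $my+(1+2m)x$ collapses to $(3-2s)\sqrt{m(1+2m)}$ and the value simplifies to $(3-2s)\bigl(1+3m-2\sqrt{m(1+2m)}\bigr)/\bigl(4(1+m)\bigr)$; taking square roots and substituting back $m=t+s$ yields exactly \eqref{eq:a_24_better_upper_bound}. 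For \eqref{eq:a_45_better_lower_bound} I would instead invoke the lower bound $a_{24}^2>\tfrac14(1-2s)$ from \eqref{eq:a_24_lower_bound}, which turns the displayed inequality into $(1-2s)\bigl(my+(1+2m)x\bigr)<(1+m)xy$; substituting $y=3-2s-x$, dividing by $1+m$, and using that the coefficient of $x$ simplifies (since $(1-2s)-(3-2s)=-2$) gives the quadratic inequality $x^2-2x+\tfrac{(1-2s)(3-2s)(t+s)}{1+t+s}<0$, hence $x>1-\sqrt{1-\tfrac{(1-2s)(3-2s)(t+s)}{1+t+s}}$, which is \eqref{eq:a_45_better_lower_bound} once $x$ is rewritten in terms of $a_{45}$.

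The main obstacle is the optimization in the $a_{24}$-branch: verifying that $\max_{0<x<3-2s}\tfrac{xy}{my+(1+2m)x}$ has precisely the claimed closed form. The differentiation is routine, but the algebraic collapse at the critical point needs care to get the radicals to match \eqref{eq:a_24_better_upper_bound} on the nose. A secondary bookkeeping point is to confirm that every radicand appearing is nonnegative: the arguments of the inner square roots are nonnegative because, e.g., the quadratic $x^2-2x+\tfrac{(1-2s)(3-2s)(t+s)}{1+t+s}$ having a root forces its discriminant to be positive, and the outer radicand in \eqref{eq:a_45_better_lower_bound} is nonnegative because $2(1-s)\ge 1$ dominates the inner square root, which is at most $1$.
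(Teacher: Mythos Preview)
Your argument is correct. The opening moves---sandwiching $a_{12}$ between \eqref{eq:a_12_lower_bound} and \eqref{eq:a_12_upper_bound}, then exploiting the monotonicity of $S_1,S_2$ in $a_{25}$ to set $a_{25}=0$---are exactly what the paper does. Where you diverge is in the algebra that follows. The paper keeps the variables $(a_{45}^2,a_{24}^2)$ and studies the resulting expression as a quadratic $f_1(x,y)$ in $x=a_{45}^2$: the bound \eqref{eq:a_24_better_upper_bound} comes from the discriminant $g_1(y)$ of $f_1$ in $x$ having to be positive, and \eqref{eq:a_45_better_lower_bound} comes from showing the smaller root $r_1^-(y)$ is monotone in $y$ and then evaluating at $y=\tfrac14(1-2s)$. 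Your substitution $x=4a_{45}^2-(1-2s)$, $y=4(1-s-a_{45}^2)$ with $x+y=3-2s$ linearizes the inequality in $a_{24}^2$, after which \eqref{eq:a_24_better_upper_bound} is a one-variable maximization and \eqref{eq:a_45_better_lower_bound} follows by a direct substitution of $a_{24}^2>\tfrac14(1-2s)$, with no monotonicity-of-roots argument needed. Your route is shorter and the algebraic collapse at the critical point $x^\ast$ does go through exactly as you indicate (writing $r=\sqrt{m(1+2m)}$, one finds $(r-m)(1+2m-r)=r(1+3m-2r)$); the paper's route, by contrast, packages the same information in a way that is reused verbatim via the symmetry Lemma~\ref{lem:symmetry_H} to get Corollary~\ref{cor:a_24_better_lower_bound}.
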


\begin{proof}
Assume \eqref{eq:a_12_lower_bound}. By Lemma~\ref{lem:more_necessary_conditions_H} we have \eqref{eq:a_12_upper_bound}. Also, $ S_1 $ is monotonically decreasing in $ a_{25} $ and $ S_2 $ is monotonically increasing in $ a_{25} $. Therefore,
\begin{align*}
& \frac{1}{2} \sqrt{ \frac{2 \left( t + s \right) \left( -1 + 2 s + 4 {a_{24}}^2 + 4 {a_{45}}^2 \right) }{ 4 {a_{45}}^2 + 2 s - 1 } } \leq \frac{1}{2} \sqrt{ \frac{2 \left( t + s \right) S_2 }{ 4 {a_{45}}^2 + 2 s - 1 } } < a_{12} \\
 & \quad \leq \frac{1}{2} \sqrt{ \frac{ 2 \left( 1 + 2 t + 2 s \right) S_1 }{ 1 - s - {a_{45}}^2 } } \leq \frac{1}{2} \sqrt{ \frac{ 2 \left( 1 + 2 t + 2 s \right) \left( 1 - s - {a_{24}}^2 - {a_{45}}^2 \right) }{ 1 - s - {a_{45}}^2 } } .
\end{align*}
Equivalently,
\begin{equation}
f_1 \left( {a_{45}}^2, {a_{24}}^2 \right) < 0 , \label{eq:f_1_at_a_45_sq}
\end{equation}
where
\begin{align*}
f_1 &: \mathbb{R} \times D \to \mathbb{R} , \quad D = \left[ \frac{1}{4} - \frac{1}{2} s, \frac{3}{4} - \frac{1}{2} s \right] , \\
f_1 \left( x, y \right) &= 4 C_1 x^2 + C_2 \left( y \right) x + C_3 \left( y \right) , \\
C_1 &= 1 + t + s , \\
C_2 \left( y \right) &= \left( 1 + t + s \right) \left( 4 y + 6 s - 5 \right) , \\
C_3 \left( y \right) &= \left( -1 + 2 t + 4 s \right) y + \left( 1 - s \right) \left( 1 - 2 s \right) \left( 1 + t + s \right) .
\end{align*}
By Lemma~\ref{lem:more_necessary_conditions_H} we have \eqref{eq:a_24_upper_bound} and \eqref{eq:a_45_upper_bound}. Note that $ {a_{45}}^2, {a_{24}}^2 \in D $ by \eqref{eq:a_24_lower_bound}, \eqref{eq:a_45_lower_bound}, \eqref{eq:a_24_upper_bound} and \eqref{eq:a_45_upper_bound}, and that $ 0 \in D $ only when $ s = \frac{1}{2} $.
$ C_1 > 0 $ since $ t \geq 0 $ and $ s \geq 0 $. As $ s \leq \frac{1}{2} $ we have
\[ 4 y + 6 s - 5 \leq 4 \left( \frac{3}{4} - \frac{1}{2} s \right) + 6 s - 5 = 4 s - 2 \leq 0 . \]
Therefore, $ C_2 \left( y \right) \leq 0 $ and in particular $ C_2 \left( {a_{24}}^2 \right) < 0 $.
If $ -1 + 2 t + 4 s \geq 0 $ then as $ t \geq 0 $ and $ 0 \leq s \leq \frac{1}{2} $,
\begin{align*}
C_3 \left( y \right) & \geq \left( -1 + 2 t + 4 s \right) \left( \frac{1}{4} - \frac{1}{2} s \right) + \left( 1 - s \right) \left( 1 - 2 s \right) \left( 1 + t + s \right) \\
 &= \frac{1}{4} \left( 1 - 2 s \right) \left( 3 - 2 s \right) \left( 1 + 2 t + 2 s \right) \geq 0 .
\end{align*}
Note that in particular $ C_3 \left( {a_{24}}^2 \right) > 0 $ in this case. Indeed, if $ s = \frac{1}{2} $ then $ t = 0 $ and so by \eqref{eq:a_24_lower_bound} the left term of $ C_3 $ is positive, and when $ s < \frac{1}{2} $ the right term of $ C_3 $ is positive. If $ -1 + 2 t + 4 s < 0 $ then as $ t \geq 0 $ and $ 0 \leq s \leq \frac{1}{2} $,
\begin{align*}
C_3 \left( y \right) & \geq \left( -1 + 2 t + 4 s \right) \left( \frac{3}{4} - \frac{1}{2} s \right) + \left( 1 - s \right) \left( 1 - 2 s \right) \left( 1 + t + s \right) \\
 &= \frac{3}{2} s \left( 1 - 2 s \right) + \frac{1}{2} t \left( 5 - 8 s \right) + 2 s^3 + 2 t s^2 + \frac{1}{4} \geq \frac{1}{4} .
\end{align*}
Therefore, $ C_3 \left( y \right) \geq 0 $ and in particular $ C_3 \left( {a_{24}}^2 \right) > 0 $.

Given $ y \in D $, the roots of $ f_1 \left( x, y \right) $ are
\begin{align*}
r_1^{\pm} \left( y \right) &= -\frac{ C_2 \left( y \right) }{ 8 C_1 } \pm \frac{ \sqrt{ \left( C_2 \left( y \right) \right)^2 - 16 C_1 C_3 \left( y \right) } }{ 8 C_1 } \\
 &= -\frac{1}{8} \left( 4 y + 6 s - 5 \right) \pm \frac{1}{8} \sqrt{ \frac{ g_1 \left( y \right) }{ 1 + t + s } } ,
\end{align*}
where
\begin{equation*}
g_1 \left( y \right) = 16 C_1 y^2 - 8 \left( 3 - 2 s \right) \left( 1 + 3 t + 3 s \right) y + \left( 3 - 2 s \right)^2 C_1 .
\end{equation*}
The roots of $ g_1 \left( y \right) $ are
\[ q_1^{\pm} = q_1^{\pm} \left( s, t \right) = \frac{ \left( 3 - 2 s \right) \left( 1 + 3 t + 3 s \pm 2 \sqrt{ \left( 1 + 2 t + 2 s \right) \left( t + s \right) } \right) }{ 4 \left( 1 + t + s \right) } \]
and therefore real. Also, $ q_1^+ \geq q_1^- $. Moreover, $ q_1^+ > 0 $ and $ q_1^+ q_1^- = \frac{ \left( 3 - 2 s \right)^2 }{16} > 0 $, so $ q_1^- > 0 $. Furthermore, $ \left( q_1^+ \right)^2 \geq q_1^+ q_1^- $ and so $ q_1^+ \geq \frac{ \left( 3 - 2 s \right) }{4} \geq y $ for any $ y \in D $. As $ C_1 > 0 $, $ g_1 \left( y \right) \leq 0 $ if and only if $ q_1^- \leq y \leq q_1^+ $. By \eqref{eq:f_1_at_a_45_sq} we know that $ f_1 \left( x, {a_{24}}^2 \right) $ has two distinct real roots and so $ g_1 \left( {a_{24}}^2 \right) > 0 $. As $ {a_{24}}^2 \leq q_1^+ $ we conclude that $ a_{24} < \sqrt{ q_1^- } $, which is exactly \eqref{eq:a_24_better_upper_bound}.

Note that
\begin{align*}
& \frac{ \partial }{ \partial y } f_1 \left( x, y \right) = 4 \left( 1 + t + s \right) x - 1 + 2 t + 4 s , \\
& \left. \frac{ \partial }{ \partial y } f_1 \left( x, y \right) \right|_{ x = \frac{1}{4} - \frac{1}{2} s } = \left( 3 - 2 s \right) \left( t + s \right) \geq 0 ,
\end{align*}
so $ f_1 \left( x, y \right) $ is monotonically increasing in $y$ when $ x \geq \frac{1}{4} - \frac{1}{2} s $.

From now on assume that $ y \leq {a_{24}}^2 $. Therefore, $ y < q_1^- $ and so $ g_1 \left( y \right) > 0 $. Hence, $ r_1^{\pm} \left( y \right) $ are distinct real numbers. As $ C_1 > 0 $, $ C_2 \left( y \right) \leq 0 $ and $ C_3 \left( y \right) \geq 0 $, we have
\[ r_1^+ \left( y \right) > -\frac{1}{8} \left( 4 y + 6 s - 5 \right) > r_1^- \left( y \right) \geq -\frac{ C_2 \left( y \right) }{ 8 C_1 } - \frac{ \sqrt{ \left( C_2 \left( y \right) \right)^2 } }{ 8 C_1 } = 0 \]
and $ f_1 \left( x, y \right) < 0 $ if and only if $ r_1^- \left( y \right) < x < r_1^+ \left( y \right) $. In particular, as $ r_1^{\pm} \left( {a_{24}}^2 \right) $ are real, $ r_1^- \left( {a_{24}}^2 \right) \geq 0 $ and $ f_1 \left( {a_{45}}^2, {a_{24}}^2 \right) < 0 $, we must have $ a_{45} > \sqrt{ r_1^- \left( {a_{24}}^2 \right) } $.

By $ y \leq {a_{24}}^2 \leq \frac{3}{4} - \frac{1}{2} s $ we get $ \frac{1}{4} - \frac{1}{2} s \leq -\frac{1}{8} \left( 4 y + 6 s - 5 \right) < r_1^+ \left( y \right) $. As
\[ f_1 \left( \frac{1}{4} - \frac{1}{2} s, y \right) = \left( 3 - 2 s \right) \left( t + s \right) y \geq 0 \]
we must have $ r_1^- \left( y \right) \geq \frac{1}{4} - \frac{1}{2} s $.

Let $ \frac{1}{4} - \frac{1}{2} s \leq y_1 \leq y_2 \leq {a_{24}}^2 $. As $ r_1^- \left( y_2 \right) \geq \frac{1}{4} - \frac{1}{2} s $ we have
\[ f_1 \left( r_1^- \left( y_2 \right), y_1 \right) \leq f_1 \left( r_1^- \left( y_2 \right), y_2 \right) = 0 \]
and so, $ r_1^- \left( y_2 \right) \geq r_1^- \left( y_1 \right) $. Therefore, $ r_1^- \left( y \right) $ is monotonically increasing in $y$ when $ \frac{1}{4} - \frac{1}{2} s \leq y \leq {a_{24}}^2 $. We conclude that
\begin{align*}
a_{45} &> \sqrt{ r_1^- \left( {a_{24}}^2 \right) } \geq \sqrt{ r_1^- \left( \frac{1}{4} - \frac{1}{2} s \right) } \\
 &= \sqrt{ \frac{1}{2} \left( 1 - s \right) - \frac{1}{4} \sqrt{ 1 - \frac{ \left( 1 - 2 s \right) \left( 3 - 2 s \right) \left( t + s \right) }{ 1 + t + s } } } ,
\end{align*}
which is exactly \eqref{eq:a_45_better_lower_bound}. Note that we already found that $ r_1^- \left( \frac{1}{4} - \frac{1}{2} s \right) \geq 0 $ so the value inside the inner square root is nonnegative.
\end{proof}

\begin{corollary} \label{cor:a_24_better_lower_bound}
If $ \rho \left( A \right) \leq 1 $, \eqref{eq:a_24_lower_bound}, \eqref{eq:a_45_lower_bound} and
\begin{equation}
a_{35} > \frac{1}{2} \sqrt{ \frac{ \left( 1 - 2 t \right) S_2 }{ 4 {a_{24}}^2 + 2 s - 1 } } \label{eq:a_35_lower_bound}
\end{equation}
hold, then
\begin{equation}
a_{45} < \frac{1}{2} \sqrt{ \frac{ \left( 3 - 2 s \right) \left( 5 - 6 t - 4 \sqrt{ \left( 1 - t \right) \left( 1 - 2 t \right) } \right) }{ 3 - 2 t } } \label{eq:a_45_better_upper_bound}
\end{equation}
and
\begin{equation}
a_{24} > \sqrt{ \frac{1}{2} \left( 1 - s \right) - \frac{1}{4} \sqrt{ 1 - \frac{ \left( 1 - 2 s \right) \left( 3 - 2 s \right) \left( 1 - 2 t \right) }{ 3 - 2 t } } } . \label{eq:a_24_better_lower_bound}
\end{equation}
\end{corollary}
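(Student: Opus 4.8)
The plan is to deduce Corollary~\ref{cor:a_24_better_lower_bound} from Lemma~\ref{lem:a_45_better_lower_bound} using the pattern symmetry recorded in Lemma~\ref{lem:symmetry_H}. Instead of working with $A$ directly, I would apply Lemma~\ref{lem:a_45_better_lower_bound} to the permutation-conjugate $PAP^{-1}$. By part~\eqref{case:symmetry_H_1} of Lemma~\ref{lem:symmetry_H}, the matrix $PAP^{-1}$ again has pattern $H$, now with the parameter pair $(t,s)$ replaced by $\left(\tfrac{1}{2}-t-s,\,s\right)$ and the off-diagonal entries relabelled by $a_{12}\leftrightarrow a_{35}$ and $a_{24}\leftrightarrow a_{45}$, while $a_{13}$, $a_{25}$ and the $(4,4)$-entry $s$ remain fixed; equivalently, this is the substitution described in part~\eqref{case:symmetry_H_2}.

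The first step is to verify that, under this substitution, the three hypotheses of Corollary~\ref{cor:a_24_better_lower_bound} coincide with the three hypotheses of Lemma~\ref{lem:a_45_better_lower_bound}. Conjugation preserves the spectrum, so $\rho\left(PAP^{-1}\right)=\rho(A)\le 1$. Since $s$ is unchanged, conditions~\eqref{eq:a_24_lower_bound} and~\eqref{eq:a_45_lower_bound} merely swap roles. For the remaining hypothesis, observe that $S_2$ is symmetric in $a_{24}$ and $a_{45}$ and hence invariant under the substitution, while $t+s$ becomes $\left(\tfrac{1}{2}-t-s\right)+s=\tfrac{1}{2}-t$; consequently $\tfrac{1}{2}\sqrt{2(t+s)S_2/(4a_{45}^2+2s-1)}$ transforms into $\tfrac{1}{2}\sqrt{(1-2t)S_2/(4a_{24}^2+2s-1)}$, so~\eqref{eq:a_12_lower_bound} maps precisely to~\eqref{eq:a_35_lower_bound}.

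The second step is to translate the two conclusions of Lemma~\ref{lem:a_45_better_lower_bound} back through the substitution. Writing $(t',s')=\left(\tfrac{1}{2}-t-s,\,s\right)$, one has $1+2t'+2s'=2(1-t)$, $t'+s'=\tfrac{1}{2}-t$ and $1+t'+s'=\tfrac{3}{2}-t$; substituting these and clearing the resulting factors of $2$ inside the radicals turns the bound~\eqref{eq:a_24_better_upper_bound} on the new $a_{24}$ (which is the old $a_{45}$) into~\eqref{eq:a_45_better_upper_bound}, and the bound~\eqref{eq:a_45_better_lower_bound} on the new $a_{45}$ (the old $a_{24}$) into~\eqref{eq:a_24_better_lower_bound}. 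I do not anticipate a genuine obstacle: the only thing requiring care is the algebraic bookkeeping---keeping the factors of $2$ and the nested square roots aligned through the parameter change---and no new mathematical content enters, which is exactly why the statement is packaged as a corollary whose proof is a direct appeal to Lemma~\ref{lem:symmetry_H}.
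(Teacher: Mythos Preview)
Your proposal is correct and follows exactly the paper's approach: the paper's proof is simply ``Direct application of Lemma~\ref{lem:symmetry_H}'', and you have spelled out precisely this application, verifying that the substitution $t\mapsto\tfrac12-t-s$, $a_{12}\leftrightarrow a_{35}$, $a_{24}\leftrightarrow a_{45}$ carries the hypotheses and conclusions of Lemma~\ref{lem:a_45_better_lower_bound} to those of the corollary.
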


\begin{proof}
Direct application of Lemma~\ref{lem:symmetry_H}.
\end{proof}

\begin{lemma} \label{lem:spectral_radius_of_A[1,2,3,5]}
If $ t \leq \frac{1}{2} - t - s $, $ t + s > 0 $, \eqref{eq:S_1_geq_0}, \eqref{eq:a_12_upper_bound}, \eqref{eq:a_35_upper_bound}, \eqref{eq:a_24_lower_bound}, \eqref{eq:a_45_lower_bound}, \eqref{eq:a_25_upper_bound}, \eqref{eq:a_12_lower_bound}, \eqref{eq:a_45_better_lower_bound}, \eqref{eq:a_35_lower_bound}, \eqref{eq:a_45_better_upper_bound} and
\begin{equation}
a_{13} > \frac{1}{2} \sqrt{ 2 \left( 1 - 2 t \right) \left( t + s \right) } \label{eq:a_13_lower_bound}
\end{equation}
hold, then $ \rho \left( A[1,2,3,5] \right) > 1 $.
\end{lemma}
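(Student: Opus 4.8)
The plan is to prove $P_{A[1,2,3,5]}(1) < 0$; the desired conclusion $\rho\left(A[1,2,3,5]\right) > 1$ then follows from Observation~\ref{obs:negative_cp}. I would begin by writing out $M := A[1,2,3,5]$, which is itself a nonnegative symmetric matrix since $\frac{1}{2} - t - s \geq 0$, and computing $\det\left(I_4 - M\right)$ by cofactor expansion. The support of $M$ is the $4$-cycle on the vertices $\{1,2,5,3\}$ with edge weights $a_{12}, a_{25}, a_{35}, a_{13}$, together with the diagonal entries $a_{11} = \frac{1}{2} - t - s$ and $a_{33} = t$; the expansion gives
\[ P_{A[1,2,3,5]}(1) = \left( \tfrac{1}{2} + t + s \right)\left( 1 - t \right)\left( 1 - {a_{25}}^2 \right) - \left( 1 - t \right){a_{12}}^2 - \left( \tfrac{1}{2} + t + s \right){a_{35}}^2 - {a_{13}}^2 + \left( a_{12} a_{35} - a_{13} a_{25} \right)^2 . \]

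Next I would record the sign facts I need. From \eqref{eq:a_25_upper_bound} and $s \leq \frac{1}{2}$ we get $a_{25} < \frac{1}{2(1-s)} \leq 1$, hence $1 - {a_{25}}^2 > 0$; and from \eqref{eq:a_35_upper_bound} together with \eqref{eq:S_1_geq_0} (which yields $S_1 \leq 1 - s - {a_{24}}^2 - {a_{45}}^2$) and $a_{24}, a_{45} > 0$ (from \eqref{eq:a_24_lower_bound} and \eqref{eq:a_45_lower_bound}) we get ${a_{35}}^2 < 1 - t$. Bounding the cross term by $\left( a_{12}a_{35} - a_{13}a_{25} \right)^2 \leq {a_{12}}^2 {a_{35}}^2 + {a_{13}}^2 {a_{25}}^2$ and regrouping leads to
\[ P_{A[1,2,3,5]}(1) \leq \left( \tfrac{1}{2} + t + s \right)\left( 1 - t \right)\left( 1 - {a_{25}}^2 \right) - {a_{12}}^2\left( 1 - t - {a_{35}}^2 \right) - \left( \tfrac{1}{2} + t + s \right){a_{35}}^2 - {a_{13}}^2\left( 1 - {a_{25}}^2 \right) , \]
in which the coefficients of ${a_{12}}^2$ and of ${a_{13}}^2$ are now negative, so I may substitute the lower bounds \eqref{eq:a_12_lower_bound} and \eqref{eq:a_13_lower_bound}. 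The $a_{13}$-contribution combines with the first term via the identity $\left( \frac{1}{2} + t + s \right)\left( 1 - t \right) - \frac{1}{2}\left( 1 - 2t \right)\left( t + s \right) = \frac{1+s}{2}$, so the task reduces to showing
\[ \left( 1 - {a_{25}}^2 \right)\tfrac{1+s}{2} < \frac{(t+s) S_2}{2\left( 4{a_{45}}^2 + 2s - 1 \right)}\left( 1 - t - {a_{35}}^2 \right) + \left( \tfrac{1}{2} + t + s \right){a_{35}}^2 , \]
where $4{a_{45}}^2 + 2s - 1 > 0$ by \eqref{eq:a_45_lower_bound} and $S_2 > 0$ by \eqref{eq:S_2_positive}.

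It then remains to dispatch ${a_{35}}^2$, $a_{25}$, $a_{24}$ and $a_{45}$. Since $1 - {a_{25}}^2$ appears only on the smaller (left) side, \eqref{eq:a_25_upper_bound} may be used to enlarge it; the coefficient of ${a_{35}}^2$ on the right, $\left( \frac{1}{2} + t + s \right) - \frac{(t+s) S_2}{2(4{a_{45}}^2 + 2s - 1)}$, can have either sign, so a short case analysis picks whichever of the lower bound \eqref{eq:a_35_lower_bound} or the already-established fact ${a_{35}}^2 < 1 - t$ makes the right-hand side smallest. After this, and after using ${a_{24}}^2 + {a_{45}}^2 \leq 1 - s$ (from \eqref{eq:S_1_geq_0}) together with the two-sided bound on $a_{45}$ coming from \eqref{eq:a_45_lower_bound}, \eqref{eq:a_45_better_lower_bound} and \eqref{eq:a_45_better_upper_bound} and the hypotheses $t + s > 0$ and $t \leq \frac{1}{2} - t - s$, the claim becomes a polynomial inequality in $s$ and $t$ alone, to be verified on the small region of the $(s,t)$-plane on which \eqref{eq:a_45_better_lower_bound} and \eqref{eq:a_45_better_upper_bound} are simultaneously satisfiable. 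The main obstacle is precisely this last step: keeping the case split on the sign of the ${a_{35}}^2$-coefficient organized and then checking the resulting $(s,t)$-inequality over the feasible region, whose boundary curves are zeros of polynomials of degree at most $4$ and are handled numerically to accuracy $10^{-10}$ as announced in the introduction. Everything upstream of that is routine algebra, and I would expect the detailed verification of the $(s,t)$-inequality to be deferred to an appendix.
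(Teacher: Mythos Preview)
Your plan is the paper's plan: show $P_{A[1,2,3,5]}(1)<0$ by successively replacing $a_{13},a_{25},a_{12},a_{35}$ by their extremal values, then deal with what is left in terms of $a_{24},a_{45},s,t$ and finally push the residual verification to an appendix. Two places where your write-up can be tightened to match the paper's cleaner execution:

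\emph{No case split on the $a_{35}^2$ coefficient is needed.} Once you have reduced to
\[
f_2\bigl(a_{12},\check a_{13},0,a_{35}\bigr)=\tfrac12\bigl(2a_{12}^2-1-2t-2s\bigr)a_{35}^2+(t-1)a_{12}^2+\tfrac12(1+s),
\]
the coefficient of $a_{35}^2$ is $a_{12}^2-\bigl(\tfrac12+t+s\bigr)$, and \eqref{eq:a_12_upper_bound} together with $S_1\le 1-s-a_{45}^2$ gives $a_{12}^2\le\tfrac12(1+2t+2s)$, so this coefficient is nonpositive. Likewise the coefficient of $a_{12}^2$ is $t-1+a_{35}^2\le 0$ (you already observed this). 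Hence both $a_{12}$ and $a_{35}$ can be replaced by their lower bounds simultaneously, with no sign ambiguity. Your preliminary bound $(a_{12}a_{35}-a_{13}a_{25})^2\le a_{12}^2a_{35}^2+a_{13}^2a_{25}^2$ is harmless but unnecessary: substituting $\check a_{13}$ first and then $a_{25}=0$ (not \eqref{eq:a_25_upper_bound}, just $a_{25}\ge 0$) kills the cross term exactly.

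\emph{The reduced inequality is not yet in $s,t$ alone.} After the above substitutions you land on a rational expression $f_3\bigl(a_{45}^2,a_{24}^2,s,t\bigr)$ that still depends on $a_{24}$ and $a_{45}$. The paper does not try to bound $a_{24}$ separately; instead it views $f_3$ as a quadratic in $y=a_{24}^2$ with leading coefficient $-16(3-2t)(t+s)<0$, and shows that its discriminant $g_2(x)$ (a quadratic in $x=a_{45}^2$) is nonpositive throughout $x\in[x_{\min}(s,t),x_{\max}(s,t)]$, the interval coming from \eqref{eq:a_45_better_lower_bound} and \eqref{eq:a_45_better_upper_bound}. Only the two endpoint inequalities $x_{\min}\ge q_2^-$ and $x_{\max}\le q_2^+$ (the roots of $g_2$) are what get deferred to the appendices. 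Your sketch of the last step would benefit from naming this discriminant manoeuvre explicitly.
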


\begin{proof}
It can be checked that
\begin{equation*}
P_{A[1,2,3,5]} \left( 1 \right) = f_2 \left( a_{12}, a_{13}, a_{25}, a_{35} \right) ,
\end{equation*}
where
\begin{align*}
f_2 \left( x, y, z, w \right) =& \left( -1 + z^2 \right) y^2 - 2 x z w y - \frac{1}{2} \left( 1 - t \right) \left( 1 + 2 t + 2 s \right) z^2 \\
 &- \frac{1}{2} \left( t - 1 + w^2 \right) \left( 1 + 2 t + 2 s -2 x^2 \right) .
\end{align*}
By \eqref{eq:a_25_upper_bound} the coefficient of $ {a_{13}}^2 $ in $ f_2 \left( a_{12}, a_{13}, a_{25}, a_{35} \right) $ is negative and as the $ a_{ij} $'s are nonnegative the coefficient of $ a_{13} $ in $ f_2 \left( a_{12}, a_{13}, a_{25}, a_{35} \right) $ is nonpositive. Let \[ \check{a}_{13} = \frac{1}{2} \sqrt{ 2 \left( 1 - 2 t \right) \left( t + s \right) } . \] By \eqref{eq:a_13_lower_bound} we have
\begin{align*}
P_{A[1,2,3,5]} \left( 1 \right) <& f_2 \left( a_{12}, \check{a}_{13}, a_{25}, a_{35} \right) \\
 =& - \frac{1}{2} \left( 1 + s \right) {a_{25}}^2 - a_{12} a_{35} \sqrt{ 2 \left( 1 - 2 t \right) \left( t + s \right) } a_{25} \\
 &+ \left( t - 1 + {a_{35}}^2 \right) {a_{12}}^2 + \frac{1}{2} \left( 1 + s \right) - \frac{1}{2} \left( 1 + 2 t + 2 s \right) {a_{35}}^2 .
\end{align*}
Clearly, the coefficients of the powers of $ a_{25} $ in $ f_2 \left( a_{12}, \check{a}_{13}, a_{25}, a_{35} \right) $ are nonpositive. Therefore, we have
\begin{align*}
f_2 \left( a_{12}, \check{a}_{13}, a_{25}, a_{35} \right) \leq& f_2 \left( a_{12}, \check{a}_{13}, 0, a_{35} \right) \\
 =& \left( t - 1 + {a_{35}}^2 \right) {a_{12}}^2 - \frac{1}{2} \left( 1 + 2 t + 2 s \right) {a_{35}}^2 + \frac{1}{2} \left( 1 + s \right) \\
 =& \frac{1}{2} \left( 2 {a_{12}}^2 - 1 - 2 t - 2 s \right) {a_{35}}^2 + \left( t - 1 \right) {a_{12}}^2 + \frac{1}{2} \left( 1 + s \right) .
\end{align*}
By \eqref{eq:a_35_upper_bound}, \eqref{eq:S_1_geq_0} and by the definition of $ S_1 $ we have $ {a_{35}}^2 \leq 1 - t $, so the coefficient of $ {a_{12}}^2 $ in $ f_2 \left( a_{12}, \check{a}_{13}, 0, a_{35} \right) $ is nonpositive. By \eqref{eq:a_12_upper_bound}, \eqref{eq:S_1_geq_0} and by the definition of $ S_1 $ we have $ {a_{12}}^2 \leq \frac{1}{2} \left( 1 + 2 t + 2 s \right) $, so the coefficient of $ {a_{35}}^2 $ in $ f_2 \left( a_{12}, \check{a}_{13}, 0, a_{35} \right) $ is nonpositive. Let
\begin{align*}
\check{a}_{12} &= \frac{1}{2} \sqrt{ \frac{2 \left( t + s \right) \left( -1 + 2 s + 4 {a_{24}}^2 + 4 {a_{45}}^2 \right) }{ 4 {a_{45}}^2 + 2 s - 1 } } , \\
\check{a}_{35} &= \frac{1}{2} \sqrt{ \frac{ \left( 1 - 2 t \right) \left( -1 + 2 s + 4 {a_{24}}^2 + 4 {a_{45}}^2 \right) }{ 4 {a_{24}}^2 + 2 s - 1 } }
\end{align*}
and note that these expressions are well-defined due to \eqref{eq:a_24_lower_bound} and \eqref{eq:a_45_lower_bound}. By \eqref{eq:a_12_lower_bound} and as $ S_2 $ is monotonically decreasing in $ a_{25} $ we have $ a_{12} > \check{a}_{12} $. Similarly, by \eqref{eq:a_35_lower_bound} we have $ a_{35} > \check{a}_{35} $. Hence,
\begin{align*}
f_2 \left( a_{12}, \check{a}_{13}, 0, a_{35} \right) \leq& f_2 \left( \check{a}_{12}, \check{a}_{13}, 0, \check{a}_{35} \right) \\
 =& \left( t - 1 + {\check{a}_{35}}^2 \right) {\check{a}_{12}}^2 + \frac{1}{2} \left( 1 + s \right) - \frac{1}{2} \left( 1 + 2 t + 2 s \right) {\check{a}_{35}}^2 \\
 =& \frac{ f_3 \left( {a_{45}}^2, {a_{24}}^2, s, t \right) }{ 8 \left( 4 {a_{24}}^2 + 2 s - 1 \right) \left( 4 {a_{45}}^2 + 2 s - 1 \right) } ,
\end{align*}
where
\begin{align*}
f_3 \left( x, y, s, t \right) =& -4 \left( \left( 1 - 2 t \right) \left( 1 + t + s \right) x + \left( 3 - 2 t \right) \left( t + s \right) y \right) \left( 4 x + 4 y + 2 s - 1 \right) \\
 &- \left( 1 - 2 s \right) \left( 6 t^2 - 3 \left( 1 - 2 s \right) t - 5 s - 1 \right) \left( 4 x + 4 y + 2 s - 1 \right) \\
 &+ 4 \left( 1 + s \right) \left( 4 x + 2 s - 1 \right) \left( 4 y + 2 s - 1 \right) .
\end{align*}
Our goal is to show that $ f_3 \left( {a_{45}}^2, {a_{24}}^2, s, t \right) \leq 0 $. If we do that we will have $ f_2 \left( \check{a}_{12}, \check{a}_{13}, 0, \check{a}_{35} \right) \leq 0 $. In order to do that we bound the maximum value of $ f_3 \left( x, y, s, t \right) $ over the relevant range of its parameters. Based on \eqref{eq:a_45_better_lower_bound} and \eqref{eq:a_45_better_upper_bound} we define
\begin{align*}
x_{min} \left( s, t \right) &= \frac{1}{2} \left( 1 - s \right) - \frac{1}{4} \sqrt{ 1 - \frac{ \left( 1 - 2 s \right) \left( 3 - 2 s \right) \left( t + s \right) }{ 1 + t + s } } , \\
x_{max} \left( s, t \right) &= \frac{ \left( 3 - 2 s \right) \left( 5 - 6 t - 4 \sqrt{ \left( 1 - t \right) \left( 1 - 2 t \right) } \right) }{ 4 \left( 3 - 2 t \right) } ,
\end{align*}
and
\[ D_{s, t} = \{ \left( x, y \right) \mid x_{min} \left( s, t \right) \leq x \leq x_{max} \left( s, t \right) \} . \]
It suffices to show that for any $ \left( x, y \right) \in D_{ s, t } $ we have $ f_3 \left( x, y, s, t \right) \leq 0 $.

Note that $ f_3 $ is a smooth function over $ \mathbb{R}^4 $. Further note that by assumption $ t \leq \frac{1}{2} - t - s $, or equivalently, that $ t \leq \frac{1}{4} - \frac{1}{2} s $.

We begin by observing that
\[ f_3 \left( x, y, \frac{1}{2}, 0 \right) = -24 \left( x - y \right)^2 \leq 0 . \]
Therefore, from now on we assume that $ s < \frac{1}{2} $. Also, $ f_3 $ may be regarded as a quadratic in $y$, and the coefficient of $ y^2 $ is $ -16 \left( 3 - 2 t \right) \left( t + s \right) < 0 $. As such it has no real roots for given $x$, $s$ and $t$ when its discriminant is negative. In this case $ f_3 $ is negative for all $y$ values. When the discriminant is zero $ f_3 $ has a single real root, and this means that $ f_3 $ is nonpositive for all $y$ values.
The discriminant of $ f_3 $, regarded as a quadratic in $x$, is
\[ g_2 \left( x \right) = 256 \left( 3 + 2 s \right) C_4 x^2 - 128 \left( 1 - 2 s \right) C_5 x + 16 \left( 1 - 2 s \right)^2 C_6 , \]
where
\begin{align*}
C_4 &= 16 t^2 - 8 \left( 1 - 2 s \right) t + 3 \left( 1 - 2 s \right) , \\
C_5 &= 16 t^3 + 56 t^2 s + 40 t s^2 + 28 t^2 - 12 s^2 + 12 t s - 12 t + 9 , \\
C_6 &= \left( 4 t^2 + 4 t s + 2 s + 3 \right)^2 .
\end{align*}
Note that $ C_4 $, as a quadratic in $t$, has a minimum at $ t = \frac{ 1 - 2 s }{4} $. The coefficient of $ x^2 $ in $ g_2 \left( x \right) $ is positive because
\begin{align*}
C_4 &= 16 t^2 - 8 \left( 1 - 2 s \right) t + 3 \left( 1 - 2 s \right) \\
 &\geq 16 \left( \frac{ 1 - 2 s }{4} \right)^2 - 8 \left( 1 - 2 s \right) \left( \frac{ 1 - 2 s }{4} \right) + 3 \left( 1 - 2 s \right) \\
 &= 2 \left( 1 + s \right) \left( 1 - 2 s \right) > 0 .
\end{align*}
Therefore, $ g_2 \left( x \right) < 0 $ when $ q_2^- < x < q_2^+ $, where
\begin{align*}
q_2^\pm &= q_2^\pm \left( s, t \right) = \frac{ \left( 1 - 2 s \right) C_5 }{ 4 \left( 3 + 2 s \right) C_4 } \pm \frac{ \left( 1 - 2 s \right) \sqrt{ {C_5}^2 - \left( 3 + 2 s \right) C_4 C_6 } }{ 4 \left( 3 + 2 s \right) C_4 } \\
 &= \frac{ \left( 1 - 2 s \right) C_5 }{ 4 \left( 3 + 2 s \right) C_4 } \pm \frac{ 2 \left( 1 - 2 t \right) \left( 1 - 2 s \right) \sqrt{ \left( 3 - 2 t \right) \left( 1 + s \right) \left( t + s \right)^3 } }{ \left( 3 + 2 s \right) C_4 } .
\end{align*}
Hence, when $ q_2^- < x < q_2^+ $ we have $ f_3 \left( x, y, s, t \right) < 0 $ and when $ q_2^- \leq x \leq q_2^+ $ we have $ f_3 \left( x, y, s, t \right) \leq 0 $. In Appendix~\ref{app:xmin_geq_q2} we prove\footnote{The proofs are given in the Appendices as they are very technical and lengthy.} that $ q_2^- \leq x_{min} \left( s, t \right) $ and in Appendix~\ref{app:q2_qeq_xmax} we prove that $ x_{max} \left( s, t \right) \leq q_2^+ $. This implies that $ f_3 \left( x, y, s, t \right) \leq 0 $ for any $ \left( x, y \right) \in D_{ s, t } $.

To conclude, we showed that
\begin{align*}
P_{A[1,2,3,5]} \left( 1 \right) &= f_2 \left( a_{12}, a_{13}, a_{25}, a_{35} \right) < f_2 \left( a_{12}, \check{a}_{13}, a_{25}, a_{35} \right) \\
 &\leq f_2 \left( a_{12}, \check{a}_{13}, 0, a_{35} \right) \leq f_2 \left( \check{a}_{12}, \check{a}_{13}, 0, \check{a}_{35} \right) \leq 0 .
\end{align*}
By Observation~\ref{obs:negative_cp} we get that $ \rho \left( A[1,2,3,5] \right) > 1 $.
\end{proof}

\begin{corollary} \label{cor:spectral_radius_of_A[1,2,3,5]}
If $ t \geq \frac{1}{2} - t - s $, $ t < \frac{1}{2} $, \eqref{eq:S_1_geq_0}, \eqref{eq:a_12_upper_bound}, \eqref{eq:a_35_upper_bound}, \eqref{eq:a_24_lower_bound}, \eqref{eq:a_45_lower_bound}, \eqref{eq:a_25_upper_bound}, \eqref{eq:a_12_lower_bound}, \eqref{eq:a_24_better_upper_bound}, \eqref{eq:a_35_lower_bound}, \eqref{eq:a_24_better_lower_bound} and \eqref{eq:a_13_lower_bound} hold, then $ \rho \left( A[1,2,3,5] \right) > 1 $.
\end{corollary}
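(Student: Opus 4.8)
The plan is to derive the corollary from Lemma~\ref{lem:spectral_radius_of_A[1,2,3,5]} by invoking the symmetry recorded in Lemma~\ref{lem:symmetry_H}, in exactly the same spirit as the proofs of Corollaries~\ref{cor:a_35_sufficient_condition} and~\ref{cor:a_24_better_lower_bound}. Conjugating $A$ by the permutation matrix $P$ of Lemma~\ref{lem:symmetry_H} again produces a matrix of the pattern-$H$ parametrized form, but now with parameters $\widetilde{t} = \tfrac12 - t - s$ and $\widetilde{s} = s$ and with the entries relabelled according to $a_{12}\leftrightarrow a_{35}$ and $a_{24}\leftrightarrow a_{45}$, while $a_{13}$ and $a_{25}$ are left unchanged. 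First I would note that the permutation underlying $P$ interchanges the indices $1\leftrightarrow 3$ and $2\leftrightarrow 5$ and fixes $4$, hence carries the index set $\{1,2,3,5\}$ onto itself; consequently $PAP^{-1}[1,2,3,5]$ is permutation-similar to $A[1,2,3,5]$, so both $P_{A[1,2,3,5]}(1)$ and $\rho(A[1,2,3,5])$ are invariant under the relabelling.

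The next step is the routine bookkeeping: under the substitution $(t,s)\mapsto(\tfrac12-t-s,\,s)$ together with the above entry swaps, one verifies that each hypothesis of the corollary is the image of a hypothesis of Lemma~\ref{lem:spectral_radius_of_A[1,2,3,5]}. Indeed $t\geq\tfrac12-t-s$ becomes $\widetilde{t}\leq\tfrac12-\widetilde{t}-\widetilde{s}$ and $t<\tfrac12$ becomes $\widetilde{t}+\widetilde{s}>0$; the relations \eqref{eq:S_1_geq_0}, \eqref{eq:a_25_upper_bound} and \eqref{eq:a_13_lower_bound} are fixed, since $S_1$, $S_2$, $a_{13}$, $a_{25}$ and $s$ are all invariant and $2(1-2\widetilde{t})(\widetilde{t}+\widetilde{s})=2(1-2t)(t+s)$; the pair \eqref{eq:a_12_upper_bound},~\eqref{eq:a_35_upper_bound} is interchanged, as is the pair \eqref{eq:a_24_lower_bound},~\eqref{eq:a_45_lower_bound}, and likewise the pair \eqref{eq:a_12_lower_bound},~\eqref{eq:a_35_lower_bound}; and, using the elementary identities $\widetilde{t}+\widetilde{s}=\tfrac12(1-2t)$, $1+\widetilde{t}+\widetilde{s}=\tfrac12(3-2t)$, $1+2\widetilde{t}+2\widetilde{s}=2(1-t)$ and $(1-\widetilde{t})(1-2\widetilde{t})=(t+s)(1+2t+2s)$, the bounds \eqref{eq:a_24_better_upper_bound} and \eqref{eq:a_24_better_lower_bound} turn into \eqref{eq:a_45_better_upper_bound} and \eqref{eq:a_45_better_lower_bound}, respectively.

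With these identifications in hand, the implication established in the proof of Lemma~\ref{lem:spectral_radius_of_A[1,2,3,5]} --- that its hypotheses force $P_{A[1,2,3,5]}(1)<0$ --- applies to $PAP^{-1}$ by Lemma~\ref{lem:symmetry_H}, and, by the invariance noted in the first step, it yields $P_{A[1,2,3,5]}(1)<0$ for $A$ itself; Observation~\ref{obs:negative_cp} then gives $\rho(A[1,2,3,5])>1$. I do not anticipate a conceptual obstacle, as the entire argument is just the symmetry of Lemma~\ref{lem:symmetry_H}; the only point that requires care is checking that the rather opaque radical expressions in \eqref{eq:a_24_better_upper_bound} and \eqref{eq:a_24_better_lower_bound} really do match \eqref{eq:a_45_better_upper_bound} and \eqref{eq:a_45_better_lower_bound} after the substitution $(t,s)\mapsto(\tfrac12-t-s,s)$ and after cancelling the powers of $2$ that it introduces, so that the transformed hypotheses coincide exactly with those of the corollary.
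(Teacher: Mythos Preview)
Your proposal is correct and is exactly the approach taken in the paper, whose entire proof reads ``Direct application of Lemma~\ref{lem:symmetry_H}.'' You have simply made explicit the bookkeeping that the paper leaves to the reader: the parameter substitution $(t,s)\mapsto(\tfrac12-t-s,s)$, the entry swaps $a_{12}\leftrightarrow a_{35}$ and $a_{24}\leftrightarrow a_{45}$, the verification that each hypothesis of the corollary matches one of Lemma~\ref{lem:spectral_radius_of_A[1,2,3,5]} under this symmetry, and the observation that the permutation fixes the index set $\{1,2,3,5\}$ so that the conclusion $\rho(A[1,2,3,5])>1$ is itself invariant.
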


\begin{proof}
Direct application of Lemma~\ref{lem:symmetry_H}.
\end{proof}

\begin{theorem} \label{th:pattern_H}
If $ \rho \left( A \right) \leq 1 $ then $ \lambda_3 \leq \frac{1}{2} $.
\end{theorem}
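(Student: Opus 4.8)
The plan is a proof by contradiction. Assume $ \rho \left( A \right) \leq 1 $ and $ \lambda_3 > \frac{1}{2} $, and derive a contradiction; since $ s, t \geq 0 $ and $ s + t \leq \frac{1}{2} $, the contradiction must be produced uniformly over this whole parameter range. The mechanism is that $ \lambda_3 > \frac{1}{2} $, fed through the sufficient-condition lemmas of this section in contrapositive form, squeezes every relevant entry of $ A $ into a narrow interval, and these intervals are mutually exclusive.

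First I would collect the constraints that $ \lambda_3 > \frac{1}{2} $ and $ \rho \left( A \right) \leq 1 $ force on the entries of $ A $. Since $ \lambda_3 > \frac{1}{2} $, Lemma~\ref{lem:sufficient_conditions_H} yields \eqref{eq:a_24_lower_bound}, \eqref{eq:a_45_lower_bound} and \eqref{eq:a_13_lower_bound}; in particular $ a_{24}, a_{45} > 0 $. Then Lemma~\ref{lem:necessary_conditions_H} yields \eqref{eq:S_1_geq_0}, and Lemma~\ref{lem:more_necessary_conditions_H} (whose hypotheses are now all in place) yields \eqref{eq:a_12_upper_bound}, \eqref{eq:a_35_upper_bound}, \eqref{eq:a_24_upper_bound}, \eqref{eq:a_45_upper_bound}, \eqref{eq:a_25_upper_bound} and \eqref{eq:S_2_positive}. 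Again using $ \lambda_3 > \frac{1}{2} $, Lemma~\ref{lem:a_12_sufficient_condition} yields \eqref{eq:a_12_lower_bound} and Corollary~\ref{cor:a_35_sufficient_condition} yields \eqref{eq:a_35_lower_bound}. Feeding \eqref{eq:a_12_lower_bound} into Lemma~\ref{lem:a_45_better_lower_bound} gives the sharpened bounds \eqref{eq:a_24_better_upper_bound} and \eqref{eq:a_45_better_lower_bound}, while feeding \eqref{eq:a_35_lower_bound} into Corollary~\ref{cor:a_24_better_lower_bound} gives \eqref{eq:a_45_better_upper_bound} and \eqref{eq:a_24_better_lower_bound}. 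At this stage every numbered inequality occurring in the hypotheses of Lemma~\ref{lem:spectral_radius_of_A[1,2,3,5]} and of Corollary~\ref{cor:spectral_radius_of_A[1,2,3,5]} is in hand.

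It then remains to split on $ s $ and $ t $. If $ s + t = 0 $, then $ s = t = 0 $, the right-hand side of \eqref{eq:a_45_better_upper_bound} equals $ \frac{1}{2} $, so \eqref{eq:a_45_better_upper_bound} reads $ a_{45} < \frac{1}{2} $, contradicting \eqref{eq:a_45_lower_bound}. If $ t = \frac{1}{2} $, which forces $ s = 0 $, then the right-hand side of \eqref{eq:a_24_better_upper_bound} equals $ \frac{1}{2} $, so \eqref{eq:a_24_better_upper_bound} reads $ a_{24} < \frac{1}{2} $, contradicting \eqref{eq:a_24_lower_bound}. In the remaining cases $ s + t > 0 $ and $ t < \frac{1}{2} $: if $ t \leq \frac{1}{2} - t - s $ I apply Lemma~\ref{lem:spectral_radius_of_A[1,2,3,5]}, and if $ t \geq \frac{1}{2} - t - s $ I apply Corollary~\ref{cor:spectral_radius_of_A[1,2,3,5]} (on the common boundary $ 2t + s = \frac{1}{2} $ either one is applicable), in each case obtaining $ \rho \left( A[1,2,3,5] \right) > 1 $, which contradicts $ \rho \left( A \right) \leq 1 $ by Observation~\ref{obs:spectral_radius}. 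As these four cases exhaust $ \{ (s,t) : s, t \geq 0, \ s + t \leq \frac{1}{2} \} $ and each ends in a contradiction, we conclude that $ \lambda_3 \leq \frac{1}{2} $.

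The real content of the theorem sits in the lemmas it invokes, so what demands care here is the bookkeeping: one must verify that the long lists of numbered hypotheses of Lemma~\ref{lem:spectral_radius_of_A[1,2,3,5]} and Corollary~\ref{cor:spectral_radius_of_A[1,2,3,5]} are genuinely all available (this is exactly why every one of those inequalities is established in the preceding paragraph) and that the four case conditions really do cover the whole parameter region. The one real subtlety is that Lemma~\ref{lem:spectral_radius_of_A[1,2,3,5]} is stated only for $ s + t > 0 $ and Corollary~\ref{cor:spectral_radius_of_A[1,2,3,5]} only for $ t < \frac{1}{2} $, so the degenerate corners $ s = t = 0 $ and $ t = \frac{1}{2} $ escape both of them; fortunately, evaluated at exactly these corners the refined bounds \eqref{eq:a_45_better_upper_bound} and \eqref{eq:a_24_better_upper_bound} collapse to $ \frac{1}{2} $ and so directly contradict \eqref{eq:a_45_lower_bound} and \eqref{eq:a_24_lower_bound}.
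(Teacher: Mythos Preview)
Your proof is correct and follows essentially the same approach as the paper's: assume $\lambda_3 > \tfrac{1}{2}$, use the contrapositives of Lemmas~\ref{lem:sufficient_conditions_H}, \ref{lem:a_12_sufficient_condition} and Corollary~\ref{cor:a_35_sufficient_condition} to trigger the chain of refined bounds, dispose of the degenerate corners $s=t=0$ and $t=\tfrac{1}{2}$ directly, and in the generic case invoke Lemma~\ref{lem:spectral_radius_of_A[1,2,3,5]} or Corollary~\ref{cor:spectral_radius_of_A[1,2,3,5]} to contradict $\rho(A)\leq 1$. The only cosmetic difference is that in the two corner cases you pair the refined upper bounds \eqref{eq:a_45_better_upper_bound}, \eqref{eq:a_24_better_upper_bound} against the original lower bounds \eqref{eq:a_45_lower_bound}, \eqref{eq:a_24_lower_bound}, whereas the paper pairs them against the refined lower bounds \eqref{eq:a_45_better_lower_bound}, \eqref{eq:a_24_better_lower_bound}; both pairings collapse to $\tfrac{1}{2} < \cdot < \tfrac{1}{2}$ and work equally well.
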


\begin{proof}
If any of the conditions in Lemma~\ref{lem:sufficient_conditions_H} hold we are done. Otherwise, we have \eqref{eq:a_24_lower_bound}, \eqref{eq:a_45_lower_bound} and \eqref{eq:a_13_lower_bound}. By Lemma~\ref{lem:necessary_conditions_H} we have \eqref{eq:S_1_geq_0} and by Lemma~\ref{lem:more_necessary_conditions_H} we have \eqref{eq:a_35_upper_bound}, \eqref{eq:a_12_upper_bound}, \eqref{eq:a_25_upper_bound}. If the conditions of Lemma~\ref{lem:a_12_sufficient_condition} hold or the conditions of Corollary~\ref{cor:a_35_sufficient_condition} hold we are done. Otherwise, we have \eqref{eq:a_12_lower_bound} and \eqref{eq:a_35_lower_bound}.
Therefore, by Lemma~\ref{lem:a_45_better_lower_bound} we have \eqref{eq:a_24_better_upper_bound} and \eqref{eq:a_45_better_lower_bound}, and by Corollary~\ref{cor:a_24_better_lower_bound} we have \eqref{eq:a_45_better_upper_bound} and \eqref{eq:a_24_better_lower_bound}. If $ s = t = 0 $ we get by \eqref{eq:a_45_better_lower_bound} and \eqref{eq:a_45_better_upper_bound} that $ \frac{1}{2} < a_{45} < \frac{1}{2} $, which is a contradiction. If $ t = \frac{1}{2} $ then $ s = 0 $, and we get by \eqref{eq:a_24_better_upper_bound} and \eqref{eq:a_24_better_lower_bound} that $ \frac{1}{2} < a_{24} < \frac{1}{2} $, which is a contradiction. Otherwise, by either Lemma~\ref{lem:spectral_radius_of_A[1,2,3,5]} or Corollary~\ref{cor:spectral_radius_of_A[1,2,3,5]} we have $ \rho \left( A[1,2,3,5] \right) > 1 $, which contradicts Observation~\ref{obs:spectral_radius}.
\end{proof}

We actually proved a stronger result:
\begin{theorem}
If every principal $ 4 \times 4 $ submatrix $M$ of $A$ has $ \rho \left( M \right) \leq 1 $ then $ \lambda_3 \leq \frac{1}{2} $.
\end{theorem}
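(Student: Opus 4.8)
Since $A$ is $5\times 5$, Observation~\ref{obs:spectral_radius} shows that the hypothesis here is weaker than $\rho(A)\le 1$, so this is genuinely a strengthening of Theorem~\ref{th:pattern_H}. The plan is to re-run the proof of Theorem~\ref{th:pattern_H} essentially verbatim, after first noting that that proof never uses the full strength of $\rho(A)\le 1$: every appeal to this hypothesis is routed through Observation~\ref{obs:spectral_radius} applied to a principal submatrix of $A$ of order at most $4$. The first step is the elementary reduction that, if every $4\times 4$ principal submatrix of $A$ has spectral radius at most $1$, then so does every principal submatrix of order at most $4$, since each such submatrix is a principal submatrix of one of the five $4\times 4$ principal submatrices of $A$, within which Observation~\ref{obs:spectral_radius} applies.

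Next I would walk through the chain of auxiliary results feeding the proof of Theorem~\ref{th:pattern_H} and record which submatrices carry the spectral bound. Lemma~\ref{lem:sufficient_conditions_H} (and Observations~\ref{obs:3x3_sufficient_condition} and~\ref{obs:4x4_sufficient_condition}) need no spectral normalization at all, only the explicit form of $A$. Lemma~\ref{lem:necessary_conditions_H} uses Observation~\ref{obs:spectral_radius} only for $A[4,5]$ and $A[1,2,4,5]$; Lemma~\ref{lem:more_necessary_conditions_H} only for $A[1,2,4,5]$ and, via the symmetry of Lemma~\ref{lem:symmetry_H}, for $A[2,3,4,5]$; Lemma~\ref{lem:a_12_sufficient_condition} and Corollary~\ref{cor:a_35_sufficient_condition} only through Lemma~\ref{lem:more_necessary_conditions_H} together with interlacing inside $A[1,2,4,5]$ (resp. $A[2,3,4,5]$); and Lemma~\ref{lem:a_45_better_lower_bound} and Corollary~\ref{cor:a_24_better_lower_bound} again only through Lemma~\ref{lem:more_necessary_conditions_H}. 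Lemma~\ref{lem:spectral_radius_of_A[1,2,3,5]} and Corollary~\ref{cor:spectral_radius_of_A[1,2,3,5]} assume no spectral bound whatsoever: they merely deduce $\rho(A[1,2,3,5])>1$ from inequalities among the entries of $A$. Since $A[1,2,3,5]$ is one of the five $4\times 4$ principal submatrices, the concluding contradiction in the proof of Theorem~\ref{th:pattern_H} is already a contradiction under the new hypothesis. The symmetry invoked throughout is the permutation $(1\,3)(2\,5)$ of Lemma~\ref{lem:symmetry_H}, which maps the set of $4\times 4$ principal submatrices to itself, so it introduces no new bookkeeping.

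Putting these observations together, I would conclude that substituting ``every $4\times 4$ principal submatrix of $A$ has spectral radius at most $1$'' for ``$\rho(A)\le 1$'' leaves every step of the proof of Theorem~\ref{th:pattern_H} intact, so $\lambda_3\le\frac12$ follows exactly as before. The only real work — and the one place one must be careful — is the bookkeeping: checking that no subsidiary lemma, and none of the polynomial estimates deferred to the appendices, secretly invokes a bound on $\rho(A)$ itself or on any $5\times 5$ quantity. In fact those estimates rely only on $s,t\ge 0$, $t+s\le\frac12$ and the entrywise nonnegativity of $A$, all of which are part of the standing setup for pattern $H$ and are untouched by the change of hypothesis, so this presents no genuine obstacle.
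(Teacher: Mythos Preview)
Your proposal is correct and is exactly the approach the paper takes: the paper simply remarks ``We actually proved a stronger result'' immediately after Theorem~\ref{th:pattern_H}, with no separate argument, because the proof of that theorem only ever invokes $\rho(\cdot)\le 1$ on principal submatrices of order at most $4$. Your explicit bookkeeping of which submatrices appear in each lemma is precisely the verification the paper leaves implicit.
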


\section{Pattern $C$} \label{sec:pattern_C}

Let
\[ B = \left(
\begin{array}{ccccc}
b_{11} & b_{12} & b_{13} & 0 & 0 \\ \noalign{\medskip}
b_{12} & b_{22} & 0 & b_{24} & 0 \\ \noalign{\medskip}
b_{13} & 0 & b_{33} & 0 & b_{35} \\ \noalign{\medskip}
0 & b_{24} & 0 & b_{44} & b_{45} \\ \noalign{\medskip}
0 & 0 & b_{35} & b_{45} & b_{55}
\end{array}
\right) , \]
where all the $ b_{ij} $'s are nonnegative and $ \tr \left( B \right) = \frac{1}{2} $. Note that $B$ is in the closure of the set of matrices whose pattern is $C$, with a trace of $ \frac{1}{2} $. Let $ \left( \lambda_1, \lambda_2, \dots, \lambda_5 \right) $ be the spectrum of $B$, where $ \lambda_1 \geq \lambda_2 \geq \dots \geq \lambda_5 $.

Let
\[ Q = \left(
\begin{array}{ccccc}
0 & 1 & 0 & 0 & 0 \\ \noalign{\medskip}
0 & 0 & 0 & 1 & 0 \\ \noalign{\medskip}
1 & 0 & 0 & 0 & 0 \\ \noalign{\medskip}
0 & 0 & 0 & 0 & 1 \\ \noalign{\medskip}
0 & 0 & 1 & 0 & 0
\end{array}
\right) . \]
Then $ Q B Q^{-1} $, $ Q^2 B Q^{-2} $, $ Q^3 B Q^{-3} $, $ Q^4 B Q^{-4} $ are respectively
\begin{align*}
\left(
\begin{array}{ccccc}
b_{22} & b_{24} & b_{12} & 0 & 0 \\ \noalign{\medskip}
b_{24} & b_{44} & 0 & b_{45} & 0 \\ \noalign{\medskip}
b_{12} & 0 & b_{11} & 0 & b_{13} \\ \noalign{\medskip}
0 & b_{45} & 0 & b_{55} & b_{35} \\ \noalign{\medskip}
0 & 0 & b_{13} & b_{35} & b_{33}
\end{array}
\right) ,
\left(
\begin{array}{ccccc}
b_{44} & b_{45} & b_{24} & 0 & 0 \\ \noalign{\medskip}
b_{45} & b_{55} & 0 & b_{35} & 0 \\ \noalign{\medskip}
b_{24} & 0 & b_{22} & 0 & b_{12} \\ \noalign{\medskip}
0 & b_{35} & 0 & b_{33} & b_{13} \\ \noalign{\medskip}
0 & 0 & b_{12} & b_{13} & b_{11}
\end{array}
\right) , \\
\left(
\begin{array}{ccccc}
b_{55} & b_{35} & b_{45} & 0 & 0 \\ \noalign{\medskip}
b_{35} & b_{33} & 0 & b_{13} & 0 \\ \noalign{\medskip}
b_{45} & 0 & b_{44} & 0 & b_{24} \\ \noalign{\medskip}
0 & b_{13} & 0 & b_{11} & b_{12} \\ \noalign{\medskip}
0 & 0 & b_{24} & b_{12} & b_{22}
\end{array}
\right) ,
\left(
\begin{array}{ccccc}
b_{33} & b_{13} & b_{35} & 0 & 0 \\ \noalign{\medskip}
b_{13} & b_{11} & 0 & b_{12} & 0 \\ \noalign{\medskip}
b_{35} & 0 & b_{55} & 0 & b_{45} \\ \noalign{\medskip}
0 & b_{12} & 0 & b_{22} & b_{24} \\ \noalign{\medskip}
0 & 0 & b_{45} & b_{24} & b_{44}
\end{array}
\right) .
\end{align*}
These matrices, when considering $ b_{11}, b_{12}, \dots, b_{45}, b_{55} $ as symbols, have the same zero pattern as $B$. For each given main diagonal element of $B$ and each given main diagonal location there is $ i \in \{ 0, 1, 2 ,3, 4 \} $ such that the given element appears in the given location in $ Q^i B Q^{-i} $. Therefore, without loss of generality, we may assume:

\begin{assumption} \label{assmp:min_diagonal}
$ b_{22} $ is the smallest main diagonal element of $B$ and in particular $ b_{22} \leq \frac{1}{10} $.
\end{assumption}

Note that when $ b_{22} = 0 $ and $ b_{55} = 0 $, $B$ is in fact $A$ of Section~\ref{sec:pattern_H} with $ a_{25} = 0 $. Similarly, when $ b_{22} = 0 $ and $ b_{33} = 0 $ the matrix $ Q^3 B Q^{-3} $ is matrix $A$ with $ a_{25} = 0 $, and when $ b_{11} = 0 $ and $ b_{44} = 0 $ the matrix $ Q^4 B Q^{-4} $ is matrix $A$ with $ a_{25} = 0 $. Therefore, Theorem~\ref{th:pattern_H} applies in these cases. When any of the main diagonal elements of $B$ is $ \frac{1}{2} $ the rest of the main diagonal elements are zero, so Theorem~\ref{th:pattern_H} applies in these cases as well. Together with Assumption~\ref{assmp:min_diagonal} we can therefore assume:

\begin{assumption} \label{assmp:max_diagonal}
$ b_{11} + b_{44} > 0 $, $ b_{11} < \frac{1}{2} $, $ 0 < b_{33} < \frac{1}{2} $, $ b_{44} < \frac{1}{2} $, $ 0 < b_{55} < \frac{1}{2} $.
\end{assumption}

Let
\[ P = \left(
\begin{array}{ccccc}
0 & 0 & 0 & 1 & 0 \\ \noalign{\medskip}
0 & 1 & 0 & 0 & 0 \\ \noalign{\medskip}
0 & 0 & 0 & 0 & 1 \\ \noalign{\medskip}
1 & 0 & 0 & 0 & 0 \\ \noalign{\medskip}
0 & 0 & 1 & 0 & 0
\end{array}
\right) . \]
Then
\[ P B P^{-1} = \left(
\begin{array}{ccccc}
b_{44} & b_{24} & b_{45} & 0 & 0 \\ \noalign{\medskip}
b_{24} & b_{22} & 0 & b_{12} & 0 \\ \noalign{\medskip}
b_{45} & 0 & b_{55} & 0 & b_{35} \\ \noalign{\medskip}
0 & b_{12} & 0 & b_{11} & b_{13} \\ \noalign{\medskip}
0 & 0 & b_{35} & b_{13} & b_{33}
\end{array}
\right) . \]
The zeros exhibited in $B$ and in $ P B P^{-1} $ appear in the same locations. Therefore, by Assumption~\ref{assmp:max_diagonal} we may also assume:
\begin{assumption} \label{assmp:b_11_geq_b_44}
$ b_{11} \geq b_{44} $, $ b_{11} > 0 $.
\end{assumption}

If any of the off-diagonal elements of $B$ is zero then we can find a principal $ 3 \times 3 $ submatrix which is diagonal and so its eigenvalues are three main diagonal elements of $B$ (e.g., when $ b_{12} = 0 $ we take $ B[1,2,5] $). Therefore, this submatrix has a spectral radius at most $ \frac{1}{2} $ and so, by Observation~\ref{obs:3x3_sufficient_condition}, $ \lambda_3 \leq \frac{1}{2} $. We can therefore assume that:
\begin{assumption} \label{assmp:non_diagonal}
$ b_{12} > 0 $, $ b_{13} > 0 $, $ b_{24} > 0 $, $ b_{35} > 0 $, $ b_{45} > 0 $.
\end{assumption}

In light of the discussion in Section~\ref{sec:main_theorem} we may also assume that:
\begin{assumption} \label{assmp:not_similar_to_positive}
$B$ is not orthogonally similar to a positive symmetric matrix.
\end{assumption}

\begin{lemma} \label{lem:not_similar_to_positive_C}
$B$ has the following properties:
\begin{align}
b_{12} & \geq b_{35} , \label{eq:b_12_geq_b_35} \\
b_{24} & \geq b_{35} , \label{eq:b_24_geq_b_35} \\
b_{45} & \geq b_{13} . \nonumber
\end{align}
Moreover, if $ b_{33} \geq b_{44} $ then $ b_{24} \geq b_{13} $.
\end{lemma}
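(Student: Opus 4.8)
The plan is to invoke Laffey's theorem (Theorem~\ref{th:extreme_L}): by Assumption~\ref{assmp:not_similar_to_positive}, $B$ is not orthogonally similar to a positive symmetric matrix, so there is a nonzero nonnegative symmetric $Y = (y_{ij})$ with $BY = YB$ and $b_{ij} y_{ij} = 0$ for all $i,j$. Assumptions~\ref{assmp:max_diagonal}, \ref{assmp:b_11_geq_b_44} and \ref{assmp:non_diagonal} give $b_{11}, b_{33}, b_{55} > 0$ and $b_{12}, b_{13}, b_{24}, b_{35}, b_{45} > 0$, so the support of $Y$ is confined to the positions $(1,4), (1,5), (2,3), (3,4), (2,5)$ (and their transposes) together with $(2,2)$ and $(4,4)$. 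Since $B$ and $Y$ are symmetric, $BY = YB$ is equivalent to the symmetry of $BY$, i.e.\ to the ten scalar equations $(BY)_{ij} = (BY)_{ji}$, $1 \le i < j \le 5$, which I would write out explicitly in terms of the $b$'s and $y$'s.

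Next I would determine which entries of $Y$ are positive. The equations at $(1,3)$ and $(3,5)$ are $b_{12} y_{23} = b_{35} y_{15}$ and $b_{13} y_{15} = b_{45} y_{34}$, so $y_{15}, y_{23}, y_{34}$ are either all zero or all positive; if all zero, the equations at $(2,3)$, $(3,4)$, $(1,2)$, $(2,4)$ successively force $y_{25} = y_{14} = y_{22} = y_{44} = 0$, hence $Y = 0$, a contradiction. So $y_{15}, y_{23}, y_{34} > 0$. Then the equation at $(4,5)$, $b_{24} y_{25} = b_{35} y_{34} + b_{45} y_{44}$, forces $y_{25} > 0$, and the equation at $(1,2)$, $b_{24} y_{14} = b_{12} y_{22} + b_{13} y_{23} \ge b_{13} y_{23} > 0$, forces $y_{14} > 0$.

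With these positivity facts in hand, each assertion of the lemma follows from one or two of the ten equations together with the relevant diagonal assumption. The equation at $(2,5)$, after using $y_{15} = (b_{12}/b_{35}) y_{23}$, rearranges to $(b_{12}^2 - b_{35}^2) y_{23}/b_{35} = (b_{55} - b_{22}) y_{25} \ge 0$ (Assumption~\ref{assmp:min_diagonal}), giving $b_{12} \ge b_{35}$. Multiplying $b_{24} y_{34} \ge b_{35} y_{25}$ (the equation at $(2,3)$, using $b_{33} \ge b_{22}$) by $b_{24} y_{25} \ge b_{35} y_{34}$ (the equation at $(4,5)$) and cancelling the positive factor $y_{25} y_{34}$ gives $b_{24} \ge b_{35}$. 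The equation at $(1,4)$, using $b_{11} \ge b_{44}$ (Assumption~\ref{assmp:b_11_geq_b_44}) and $y_{34} = (b_{13}/b_{45}) y_{15}$, gives $b_{45} y_{15} \ge b_{13} y_{34} = (b_{13}^2/b_{45}) y_{15}$, hence $b_{45} \ge b_{13}$. Finally, if $b_{33} \ge b_{44}$ then the equation at $(3,4)$ gives $b_{24} y_{23} \ge b_{13} y_{14}$, and multiplying by $b_{24} y_{14} \ge b_{13} y_{23}$ (the equation at $(1,2)$) and cancelling $y_{14} y_{23} > 0$ yields $b_{24} \ge b_{13}$. I expect the only real difficulty to be bookkeeping: writing the ten commutation equations correctly and carefully justifying the positivity of $y_{14}$ and $y_{25}$, on which the two ``multiply a pair of inequalities'' steps depend; a couple of degenerate subcases (e.g.\ $b_{55} = b_{22}$, forcing equality $b_{12} = b_{35}$) also need a line each.
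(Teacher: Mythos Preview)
Your proof is correct and follows the same strategy as the paper: apply Theorem~\ref{th:extreme_L} to obtain the commuting matrix $Y$, determine which entries of $Y$ must be positive, and read off each inequality from one or two commutation equations combined with the appropriate diagonal assumption. The main difference is organizational. The paper splits into three cases according to whether $b_{22}$ and $b_{44}$ vanish (so that $y_{22}$ and $y_{44}$ are or are not forced to be zero), and in each case derives the inequalities by direct substitution. You instead keep $y_{22},y_{44}\ge 0$ throughout and absorb them via the two ``multiply a pair of inequalities and cancel'' steps (for $b_{24}\ge b_{35}$ and $b_{24}\ge b_{13}$). This unifies the three cases and is slightly cleaner; in particular your observation that the $(4,5)$ equation $b_{24}y_{25}=b_{35}y_{34}+b_{45}y_{44}$ always forces $y_{25}>0$ makes the paper's separate treatment of $y_{25}=0$ in its second case unnecessary. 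The degenerate subcases you flag (e.g.\ $b_{55}=b_{22}$) require no extra work: your inequalities already include the equality case.
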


\begin{proof}
By Assumption~\ref{assmp:not_similar_to_positive} and by Theorem~\ref{th:extreme_L} there exists a $ 5 \times 5 $ nonnegative symmetric and nonzero matrix $Y$ such that $ B Y = Y B $ and $ b_{ij} y_{ij} = 0 $ for all $i$, $j$. By Assumption~\ref{assmp:non_diagonal} we deduce that $ y_{12} = y_{13} = y_{24} = y_{35} = y_{45} = 0 $.
By Assumption~\ref{assmp:min_diagonal}, if there is a zero diagonal element of $B$ then $ b_{22} = 0 $ as well. By Assumption~\ref{assmp:max_diagonal} we have $ b_{33} > 0 $ and $ b_{55} > 0 $, and by Assumption~\ref{assmp:b_11_geq_b_44} we have $ b_{11} > 0 $. Therefore, we are only left with the following cases:
\begin{enumerate}

\item $B$ has a positive main diagonal. \label{case:positive_diagonal}
Therefore,
\[ Y = \left(
\begin{array}{ccccc}
0 & 0 & 0 & y_{14} & y_{15} \\ \noalign{\medskip}
0 & 0 & y_{23} & 0 & y_{25} \\ \noalign{\medskip}
0 & y_{23} & 0 & y_{34} & 0 \\ \noalign{\medskip}
y_{14} & 0 & y_{34} & 0 & 0 \\ \noalign{\medskip}
y_{15} & y_{25} & 0 & 0 & 0
\end{array}
\right) .
\]
Letting $ B Y = \left( z_{ij} \right) $, $ Y B = \left( w_{ij} \right) $ we get the following equations:
\begin{align}
b_{13} y_{23} = z_{12} &= w_{12} = b_{24} y_{14} , \label{eq:z_12_eq_w_12_positive} \\
b_{12} y_{23} = z_{13} &= w_{13} = b_{35} y_{15} , \label{eq:z_13_eq_w_13_positive} \\
b_{11} y_{14} + b_{13} y_{34} = z_{14} &= w_{14} = b_{44} y_{14} + b_{45} y_{15} , \label{eq:z_14_eq_w_14_positive} \\
b_{22} y_{23} + b_{24} y_{34} = z_{23} &= w_{23} = b_{33} y_{23} + b_{35} y_{25} , \label{eq:z_23_eq_w_23_positive} \\
b_{12} y_{14} = z_{24} &= w_{24} = b_{45} y_{25} , \label{eq:z_24_eq_w_24_positive} \\
b_{12} y_{15} + b_{22} y_{25} = z_{25} &= w_{25} = b_{35} y_{23} + b_{55} y_{25} , \label{eq:z_25_eq_w_25_positive} \\
b_{13} y_{14} + b_{33} y_{34} = z_{34} &= w_{34} = b_{24} y_{23} + b_{44} y_{34} , \label{eq:z_34_eq_w_34_positive} \\
b_{13} y_{15} = z_{35} &= w_{35} = b_{45} y_{34} , \label{eq:z_35_eq_w_35_positive} \\
b_{24} y_{25} = z_{45} &= w_{45} = b_{35} y_{34} . \label{eq:z_45_eq_w_45_positive}
\end{align}
Due to Assumption~\ref{assmp:non_diagonal} we may divide by the off-diagonal elements of $B$. Note that by \eqref{eq:z_24_eq_w_24_positive}, \eqref{eq:z_12_eq_w_12_positive}, \eqref{eq:z_13_eq_w_13_positive} and \eqref{eq:z_35_eq_w_35_positive} either all the $ y_{ij} $'s, which are not known in advance to be zero, are zero or all of them are nonzero. As $Y$ is nonzero we conclude that they are all nonzero. Therefore, these $ y_{ij} $'s are positive.

By \eqref{eq:z_25_eq_w_25_positive} and \eqref{eq:z_13_eq_w_13_positive} we have
\begin{equation*}
b_{55} - b_{22} = \frac{ b_{12} y_{15} - b_{35} y_{23} }{ y_{25} } = \frac{ \left( {b_{12}}^2 - {b_{35}}^2 \right) y_{23} }{ b_{35} y_{25} } ,
\end{equation*}
and so by Assumption~\ref{assmp:min_diagonal} and Assumption~\ref{assmp:non_diagonal}, \eqref{eq:b_12_geq_b_35} follows.

By \eqref{eq:z_23_eq_w_23_positive} and \eqref{eq:z_45_eq_w_45_positive} we have
\begin{equation*}
b_{33} - b_{22} = \frac{ b_{24} y_{34} - b_{35} y_{25} }{ y_{23} } = \frac{ \left( {b_{24}}^2 - {b_{35}}^2 \right) y_{25} }{ b_{35} y_{23} } ,
\end{equation*}
and so by Assumption~\ref{assmp:min_diagonal} and Assumption~\ref{assmp:non_diagonal}, \eqref{eq:b_24_geq_b_35} follows.

By \eqref{eq:z_14_eq_w_14_positive} and \eqref{eq:z_35_eq_w_35_positive} we have
\begin{equation*}
b_{11} - b_{44} = \frac{ b_{45} y_{15} - b_{13} y_{34} }{ y_{14} } = \frac{ \left( {b_{45}}^2 - {b_{13}}^2 \right) y_{34} }{ b_{13} y_{14} } ,
\end{equation*}
and so by Assumption~\ref{assmp:b_11_geq_b_44} and Assumption~\ref{assmp:non_diagonal}, $ b_{45} \geq b_{13} $.

By \eqref{eq:z_34_eq_w_34_positive} and \eqref{eq:z_12_eq_w_12_positive} we have
\begin{equation*}
b_{33} - b_{44} = \frac{ b_{24} y_{23} - b_{13} y_{14} }{ y_{34} } = \frac{ \left( {b_{24}}^2 - {b_{13}}^2 \right) y_{23} }{ b_{24} y_{34} } .
\end{equation*}
If $ b_{33} \geq b_{44} $ then by Assumption~\ref{assmp:non_diagonal}, $ b_{24} \geq b_{13} $.

\item The only zero main diagonal element of $B$ is $ b_{22} $. \label{case:b_22_zero}
Therefore,
\[ B = \left(
\begin{array}{ccccc}
b_{11} & b_{12} & b_{13} & 0 & 0 \\ \noalign{\medskip}
b_{12} & 0 & 0 & b_{24} & 0 \\ \noalign{\medskip}
b_{13} & 0 & b_{33} & 0 & b_{35} \\ \noalign{\medskip}
0 & b_{24} & 0 & b_{44} & b_{45} \\ \noalign{\medskip}
0 & 0 & b_{35} & b_{45} & b_{55}
\end{array}
\right) ,
Y = \left(
\begin{array}{ccccc}
0 & 0 & 0 & y_{14} & y_{15} \\ \noalign{\medskip}
0 & y_{22} & y_{23} & 0 & y_{25} \\ \noalign{\medskip}
0 & y_{23} & 0 & y_{34} & 0 \\ \noalign{\medskip}
y_{14} & 0 & y_{34} & 0 & 0 \\ \noalign{\medskip}
y_{15} & y_{25} & 0 & 0 & 0
\end{array}
\right) .
\]
Letting $ B Y = \left( z_{ij} \right) $, $ Y B = \left( w_{ij} \right) $ we get \eqref{eq:z_13_eq_w_13_positive}, \eqref{eq:z_14_eq_w_14_positive}, \eqref{eq:z_34_eq_w_34_positive}, \eqref{eq:z_35_eq_w_35_positive}, \eqref{eq:z_45_eq_w_45_positive} and the following equations:
\begin{align}
b_{12} y_{22} + b_{13} y_{23} = z_{12} &= w_{12} = b_{24} y_{14} , \label{eq:z_12_eq_w_12_b_22_zero} \\
b_{24} y_{34} = z_{23} &= w_{23} = b_{33} y_{23} + b_{35} y_{25} , \label{eq:z_23_eq_w_23_b_22_zero} \\
b_{12} y_{14} = z_{24} &= w_{24} = b_{24} y_{22} + b_{45} y_{25} , \label{eq:z_24_eq_w_24_b_22_zero} \\
b_{12} y_{15} = z_{25} &= w_{25} = b_{35} y_{23} + b_{55} y_{25} . \label{eq:z_25_eq_w_25_b_22_zero}
\end{align}
Due to Assumption~\ref{assmp:non_diagonal} we may divide by the off-diagonal elements of $B$. If either of $ y_{14} $, $ y_{15} $, $ y_{23} $, $ y_{34} $ is zero then by \eqref{eq:z_12_eq_w_12_b_22_zero}, \eqref{eq:z_13_eq_w_13_positive}, \eqref{eq:z_35_eq_w_35_positive}, \eqref{eq:z_25_eq_w_25_b_22_zero} and \eqref{eq:z_34_eq_w_34_positive} we get that all the $ y_{ij} $'s, which are not known in advance to be zero, are zero, which contradicts $Y$ being nonzero. Therefore, $ y_{14} $, $ y_{15} $, $ y_{23} $ and $ y_{34} $ are positive.

If $ y_{25} = 0 $ then by \eqref{eq:z_13_eq_w_13_positive}, \eqref{eq:z_25_eq_w_25_b_22_zero} and by Assumption~\ref{assmp:non_diagonal} we get $ b_{12} = b_{35} $, so \eqref{eq:b_12_geq_b_35} holds. Else, $ y_{25} > 0 $. By \eqref{eq:z_25_eq_w_25_b_22_zero} and \eqref{eq:z_13_eq_w_13_positive} we have
\begin{equation*}
b_{55} = \frac{ b_{12} y_{15} - b_{35} y_{23} }{ y_{25} } = \frac{ \left( {b_{12}}^2 - {b_{35}}^2 \right) y_{23} }{ b_{35} y_{25} } ,
\end{equation*}
and so by Assumption~\ref{assmp:non_diagonal}, \eqref{eq:b_12_geq_b_35} follows.

By \eqref{eq:z_23_eq_w_23_b_22_zero} and \eqref{eq:z_45_eq_w_45_positive} we have
\begin{equation*}
b_{33} = \frac{ b_{24} y_{34} - b_{35} y_{25} }{ y_{23} } = \frac{ \left( {b_{24}}^2 - {b_{35}}^2 \right) y_{25} }{ b_{35} y_{23} } ,
\end{equation*}
and so by Assumption~\ref{assmp:non_diagonal}, \eqref{eq:b_24_geq_b_35} follows.

By identical arguments to the ones in case \ref{case:positive_diagonal}, $ b_{45} \geq b_{13} $.

By \eqref{eq:z_34_eq_w_34_positive} and \eqref{eq:z_12_eq_w_12_b_22_zero} we have
\begin{equation}
b_{33} - b_{44} = \frac{ b_{24} y_{23} - b_{13} y_{14} }{ y_{34} } = \frac{ \left( {b_{24}}^2 - {b_{13}}^2 \right) y_{23} }{ b_{24} y_{34} } - \frac{ b_{12} b_{13} y_{22} }{ b_{24} y_{34} } . \label{eq:b_33_m_b_44}
\end{equation}
If $ b_{33} \geq b_{44} $ then the left term of the right-hand side of \eqref{eq:b_33_m_b_44} is nonnegative and so by Assumption~\ref{assmp:non_diagonal}, $ b_{24} \geq b_{13} $.

\item The only zero main diagonal elements of $B$ are $ b_{22} $ and $ b_{44} $. Therefore,
\[ B = \left(
\begin{array}{ccccc}
b_{11} & b_{12} & b_{13} & 0 & 0 \\ \noalign{\medskip}
b_{12} & 0 & 0 & b_{24} & 0 \\ \noalign{\medskip}
b_{13} & 0 & b_{33} & 0 & b_{35} \\ \noalign{\medskip}
0 & b_{24} & 0 & 0 & b_{45} \\ \noalign{\medskip}
0 & 0 & b_{35} & b_{45} & b_{55}
\end{array}
\right) ,
Y = \left(
\begin{array}{ccccc}
0 & 0 & 0 & y_{14} & y_{15} \\ \noalign{\medskip}
0 & y_{22} & y_{23} & 0 & y_{25} \\ \noalign{\medskip}
0 & y_{23} & 0 & y_{34} & 0 \\ \noalign{\medskip}
y_{14} & 0 & y_{34} & y_{44} & 0 \\ \noalign{\medskip}
y_{15} & y_{25} & 0 & 0 & 0
\end{array}
\right) .
\]
Letting $ B Y = \left( z_{ij} \right) $, $ Y B = \left( w_{ij} \right) $ we get \eqref{eq:z_13_eq_w_13_positive}, \eqref{eq:z_35_eq_w_35_positive}, \eqref{eq:z_12_eq_w_12_b_22_zero}, \eqref{eq:z_23_eq_w_23_b_22_zero}, \eqref{eq:z_25_eq_w_25_b_22_zero} and the following equations:
\begin{align}
b_{11} y_{14} + b_{13} y_{34} = z_{14} &= w_{14} = b_{45} y_{15} , \label{eq:z_14_eq_w_14_b_22_b_44_zero} \\
b_{12} y_{14} + b_{24} y_{44} = z_{24} &= w_{24} = b_{24} y_{22} + b_{45} y_{25} , \label{eq:z_24_eq_w_24_b_22_b_44_zero} \\
b_{13} y_{14} + b_{33} y_{34} = z_{34} &= w_{34} = b_{24} y_{23} , \label{eq:z_34_eq_w_34_b_22_b_44_zero} \\
b_{24} y_{25} = z_{45} &= w_{45} = b_{35} y_{34} + b_{45} y_{44} . \label{eq:z_45_eq_w_45_b_22_b_44_zero}
\end{align}
Due to Assumption~\ref{assmp:non_diagonal} we may divide by the off-diagonal elements of $B$. If either of $ y_{14} $, $ y_{15} $, $ y_{23} $, $ y_{34} $ is zero then by \eqref{eq:z_12_eq_w_12_b_22_zero}, \eqref{eq:z_13_eq_w_13_positive}, \eqref{eq:z_35_eq_w_35_positive}, \eqref{eq:z_25_eq_w_25_b_22_zero}, \eqref{eq:z_14_eq_w_14_b_22_b_44_zero}, \eqref{eq:z_24_eq_w_24_b_22_b_44_zero} and by Assumption~\ref{assmp:max_diagonal} we get that all the $ y_{ij} $'s, which are not known in advance to be zero, are zero, which contradicts $Y$ being nonzero. Therefore, $ y_{14} $, $ y_{15} $, $ y_{23} $ and $ y_{34} $ are positive.

By identical arguments to the ones in case \ref{case:b_22_zero}, \eqref{eq:b_12_geq_b_35} follows. By \eqref{eq:z_23_eq_w_23_b_22_zero} and \eqref{eq:z_45_eq_w_45_b_22_b_44_zero} we have
\begin{equation}
b_{33} = \frac{ b_{24} y_{34} - b_{35} y_{25} }{ y_{23} } = \frac{ \left( {b_{24}}^2 - {b_{35}}^2 \right) y_{34} }{ b_{24} y_{23} } - \frac{ b_{35} b_{45} y_{44} }{ b_{24} y_{23} } . \label{eq:b_33}
\end{equation}
As $ b_{33} \geq 0 $, the left term of the right-hand side of \eqref{eq:b_33} is nonnegative and so by Assumption~\ref{assmp:non_diagonal}, \eqref{eq:b_24_geq_b_35} follows.

By \eqref{eq:z_14_eq_w_14_b_22_b_44_zero} and \eqref{eq:z_35_eq_w_35_positive} we have
\begin{equation*}
b_{11} = \frac{ b_{45} y_{15} - b_{13} y_{34} }{ y_{14} } = \frac{ \left( {b_{45}}^2 - {b_{13}}^2 \right) y_{34} }{ b_{13} y_{14} } ,
\end{equation*}
and so by Assumption~\ref{assmp:non_diagonal}, $ b_{45} \geq b_{13} $.

By \eqref{eq:z_34_eq_w_34_b_22_b_44_zero} and \eqref{eq:z_12_eq_w_12_b_22_zero} we have
\begin{equation}
b_{33} = \frac{ b_{24} y_{23} - b_{13} y_{14} }{ y_{34} } = \frac{ \left( {b_{24}}^2 - {b_{13}}^2 \right) y_{23} }{ b_{24} y_{34} } - \frac{ b_{12} b_{13} y_{22} }{ b_{24} y_{34} } . \label{eq:b_33_m_b_44_2}
\end{equation}
As $ b_{33} \geq 0 $, the left term of the right-hand side of \eqref{eq:b_33_m_b_44_2} is nonnegative and so by Assumption~\ref{assmp:non_diagonal}, $ b_{24} \geq b_{13} $.

\end{enumerate}
\end{proof}

\begin{lemma} \label{lem:4x4_conditions}
Let
\[ M = \left(
\begin{array}{ccccc}
x_1 & y_1 & y_2 & 0 \\ \noalign{\medskip}
y_1 & x_2 & 0 & y_3 \\ \noalign{\medskip}
y_2 & 0 & x_3 & 0 \\ \noalign{\medskip}
0 & y_3 & 0 & x_4
\end{array}
\right) , \]
where $ \tr \left( M \right) \leq \frac{1}{2} $, $ 0 \leq x_i < \frac{1}{2} $ for $ i = 1, 2, 3, 4 $ and $ y_j > 0 $ for $ j = 1, 2, 3 $. Let $ \left( \mu_1, \mu_2, \mu_3, \mu_4 \right) $ be the spectrum of $M$, where $ \mu_1 \geq \mu_2 \geq \mu_3 \geq \mu_4 $.
\begin{enumerate}

\item If $ \rho \left( M \right) \leq 1 $ then
\begin{equation}
y_2 < \sqrt{ \left( 1 - x_1 \right) \left( 1 - x_3 \right) } \label{eq:y_2_upper_bound}
\end{equation}
and
\begin{equation}
\ y_3 \leq \sqrt{ \left( 1 - x_4 \right) \left( 1 - x_2 - \frac{ \left( 1 - x_3 \right) {y_1}^2 }{ \left( 1 - x_1 \right) \left( 1 - x_3 \right)- {y_2}^2 } \right) } \label{eq:y_3_upper_bound}
\end{equation}
hold.

\item If
\begin{equation}
y_2 \leq \frac{1}{2} \sqrt{ \left( 1 - 2 x_1 \right) \left( 1 - 2 x_3 \right) } \label{eq:y_2_2nd_upper_bound}
\end{equation}
holds, then $ \mu_2 \leq \frac{1}{2} $.

\item If
\begin{equation}
y_2 > \frac{1}{2} \sqrt{ \left( 1 - 2 x_1 \right) \left( 1 - 2 x_3 \right) } \label{eq:y_2_lower_bound}
\end{equation}
and
\begin{equation}
y_3 \leq \frac{1}{2} \sqrt{ \left( 1 - 2 x_4 \right) \left( 1 - 2 x_2 + \frac{ 4 \left( 1 - 2 x_3 \right) {y_1}^2 }{ 4 {y_2}^2 - \left( 1 - 2 x_1 \right) \left( 1 - 2 x_3 \right) } \right) } \label{eq:y_3_2nd_upper_bound}
\end{equation}
hold, then $ \mu_2 \leq \frac{1}{2} $.

\end{enumerate}
\end{lemma}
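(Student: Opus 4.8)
The plan is to exploit the sparsity of $M$: its underlying graph is the path through the vertices $3,1,2,4$ in that order (edges $\{1,3\}$, $\{1,2\}$, $\{2,4\}$ with weights $y_2$, $y_1$, $y_3$), so that $M$ is, up to a permutation, a Jacobi matrix. Expanding $\det(\lambda I_4-M)$ along the third row, which has only two nonzero entries, gives the factorization
\[ P_M(\lambda)=\bigl[(\lambda-x_1)(\lambda-x_3)-y_2^{2}\bigr]\bigl[(\lambda-x_2)(\lambda-x_4)-y_3^{2}\bigr]-(\lambda-x_3)(\lambda-x_4)\,y_1^{2}, \]
i.e. $P_M(\lambda)=P_{M[1,3]}(\lambda)\,P_{M[2,4]}(\lambda)-(\lambda-x_3)(\lambda-x_4)y_1^{2}$, which is the main computational tool. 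I will also use that, since $(x_1-x_3)^2+(1-2x_1)(1-2x_3)=(1-x_1-x_3)^2$ with $1-x_1-x_3>0$, the larger eigenvalue of the $2\times2$ block $M[1,3]$ is $\le\tfrac12$ exactly when \eqref{eq:y_2_2nd_upper_bound} holds and is $>\tfrac12$ exactly when \eqref{eq:y_2_lower_bound} holds; the same computation with $1$ in place of $\tfrac12$ gives $\rho(M[1,3])\le1\iff y_2^{2}\le(1-x_1)(1-x_3)$. Note also that $M$ is nonnegative and symmetric, so the Observations of Section~\ref{sec:preliminaries_nnc} apply to it.

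For part~1, Observation~\ref{obs:spectral_radius} gives $\rho(M[1,3])\le\rho(M)\le1$, whence $y_2^{2}\le(1-x_1)(1-x_3)$. This is strict: if $\rho(M[1,3])=1$ then, $M[1,3]$ being nonnegative and irreducible (as $y_2>0$), it has a positive eigenvector $(u_1,u_3)$ for the eigenvalue $1$, and padding it by zeros to $v=(u_1,0,u_3,0)^{T}$ one computes $v^{T}Mv=\|v\|^{2}$; since $\mu_1(M)\ge\mu_1(M[1,3])=1$ forces $\mu_1(M)=1$, $v$ must be a top eigenvector of $M$, contradicting $(Mv)_2=y_1u_1>0=v_2$. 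Hence $A:=(1-x_1)(1-x_3)-y_2^{2}>0$, which is \eqref{eq:y_2_upper_bound}. Next, $\rho(M)\le1$ gives $P_M(1)\ge0$ by Observation~\ref{obs:negative_cp}; substituting $\lambda=1$ in the factorization and dividing by $A>0$ rearranges to exactly \eqref{eq:y_3_upper_bound} (the radicand there is then automatically $\ge y_3^{2}\ge0$).

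For part~2, \eqref{eq:y_2_2nd_upper_bound} says the larger eigenvalue of $M[1,3]$ is $\le\tfrac12$; since also $x_4<\tfrac12$, the block-diagonal submatrix $M[1,3,4]=M[1,3]\oplus(x_4)$ has largest eigenvalue $\le\tfrac12$, and eigenvalue interlacing of $M$ with $M[1,3,4]$ gives $\mu_2\le\mu_1(M[1,3,4])\le\tfrac12$. For part~3, \eqref{eq:y_2_lower_bound} says the larger eigenvalue of $M[1,3]$ exceeds $\tfrac12$, so $\mu_1\ge\mu_1(M[1,3])>\tfrac12$ and $A':=(\tfrac12-x_1)(\tfrac12-x_3)-y_2^{2}<0$; moreover $M[1,4]=\diag(x_1,x_4)$ has both entries $<\tfrac12$, so by Observation~\ref{obs:3x3_sufficient_condition} (with $n=4$) $\mu_3<\tfrac12$. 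Substituting $\lambda=\tfrac12$ in the factorization and rearranging, now dividing by $A'<0$ (which reverses the inequality and turns $-(\tfrac12-x_1)(\tfrac12-x_3)+y_2^2$ into the positive denominator $4y_2^2-(1-2x_1)(1-2x_3)$ after clearing factors of $4$), shows \eqref{eq:y_3_2nd_upper_bound} is equivalent to $P_M(\tfrac12)\le0$. If $P_M(\tfrac12)<0$ then, were $\mu_2\ge\tfrac12$, Observation~\ref{obs:negative_cp} with $\alpha=\tfrac12$ (legitimate since $\mu_3<\tfrac12\le\mu_2\le\mu_1$) and $\lambda=\tfrac12$ would give $\mu_2<\tfrac12<\mu_1$, a contradiction; and if $P_M(\tfrac12)=0$ then $\tfrac12$ is an eigenvalue of $M$, which, since $\mu_1>\tfrac12>\mu_3\ge\mu_4$, forces $\mu_2=\tfrac12$. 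Either way $\mu_2\le\tfrac12$.

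I expect the main obstacle to be bookkeeping rather than ideas: whether the rearrangement of the determinant identity into the stated bound on $y_3$ preserves or reverses an inequality is governed by the sign of $A$ versus $A'$, and one must check repeatedly that the radicands stay nonnegative (in part~3 the denominator $4y_2^{2}-(1-2x_1)(1-2x_3)$ is positive precisely because of \eqref{eq:y_2_lower_bound}, and in part~1 the relevant radicand is $\ge y_3^2$). The one genuinely non-computational point is the strictness argument in part~1; it is also worth noting that $\rho(M)\le1$ is invoked only in part~1, which is exactly why parts~2 and~3 must proceed via the auxiliary submatrices $M[1,3,4]$ and $M[1,4]$ together with the evaluation of $P_M$ at $\tfrac12$, rather than at $1$.
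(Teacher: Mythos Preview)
Your proof is correct and follows essentially the same strategy as the paper: evaluate $P_M$ at $1$ and at $\tfrac12$, and use interlacing with principal submatrices to locate $\mu_1$, $\mu_2$, $\mu_3$. The tactical details differ slightly---you obtain strictness in \eqref{eq:y_2_upper_bound} via a Perron eigenvector argument on $M[1,3]$ where the paper instead shows $P_{M[1,2,3]}(1)<0$, and you bound $\mu_3<\tfrac12$ via the $2\times2$ submatrix $M[1,4]$ where the paper uses interlacing with $M[1,3,4]$---but the overall architecture is identical, and your path-matrix factorization of $P_M(\lambda)$ is a clean way to organize the same computation.
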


\begin{proof}
\begin{enumerate}

\item It can be checked that
\[ P_{M[1,2,3]} \left( 1 \right) = - \left( 1 - x_3 \right) {y_1}^2 - \left( 1 - x_2 \right) \left( {y_2}^2 - \left( 1 - x_1 \right) \left( 1 - x_3 \right) \right) . \]
If $ y_2 \geq \sqrt{ \left( 1 - x_1 \right) \left( 1 - x_3 \right) } $ then, as $ x_2 < \frac{1}{2} $, $ x_3 < \frac{1}{2} $ and $ y_1 > 0 $, we have $ P_{M[1,2,3]} \left( 1 \right) < 0 $. By Observation~\ref{obs:negative_cp} $ \rho \left( M[1,2,3] \right) > 1 $, which contradicts Observation~\ref{obs:spectral_radius}. Therefore, \eqref{eq:y_2_upper_bound} holds.

It can be checked that
\begin{align*}
P_M & \left( 1 \right) = - \left( \left( 1 - x_1 \right) \left( 1 - x_3 \right)- {y_2}^2 \right) {y_3}^2 \\
 &+ \left( 1 - x_4 \right) \left( \left( 1 - x_2 \right) \left( \left( 1 - x_1 \right) \left( 1 - x_3 \right)- {y_2}^2 \right) - \left( 1 - x_3 \right) {y_1}^2 \right) .
\end{align*}
By \eqref{eq:y_2_upper_bound} the coefficient of $ {y_3}^2 $ is negative. If
\[ \ y_3 > \sqrt{ \left( 1 - x_4 \right) \left( 1 - x_2 - \frac{ \left( 1 - x_3 \right) {y_1}^2 }{ \left( 1 - x_1 \right) \left( 1 - x_3 \right)- {y_2}^2 } \right) } \]
we get that $ P_M \left( 1 \right) < 0 $. By Observation~\ref{obs:negative_cp} $ \rho \left( M \right) > 1 $, which contradicts the assumption. Therefore, \eqref{eq:y_3_upper_bound} holds. Note that the denominator in \eqref{eq:y_3_upper_bound} is positive by \eqref{eq:y_2_upper_bound}, and the entire value inside the square root of \eqref{eq:y_3_upper_bound} is nonnegative because $ P_{M[1,2,3]} \left( 1 \right) \geq 0 $ and $ x_4 < \frac{1}{2} $.

\item The eigenvalues of $ M[1,3,4] $ are $ x_4, \frac{1}{2} \left( x_1 + x_3 \right) \pm \frac{1}{2} \sqrt{ \left( x_1 - x_3 \right)^2 + 4 {y_2}^2 } $. By assumption $ x_4 < \frac{1}{2} $. If \eqref{eq:y_2_2nd_upper_bound} holds then
\[ \frac{1}{2} \left( x_1 + x_3 \right) + \frac{1}{2} \sqrt{ \left( x_1 - x_3 \right)^2 + 4 {y_2}^2 } \leq \frac{1}{2} , \]
and so $ \rho \left( M[1,3,4] \right) \leq \frac{1}{2} $. By the eigenvalue interlacing property we have $ \mu_2 \leq \frac{1}{2} $.

\item By \eqref{eq:y_2_lower_bound} we have
\begin{equation}
\frac{1}{2} \left( x_1 + x_3 \right) + \frac{1}{2} \sqrt{ \left( x_1 - x_3 \right)^2 + 4 {y_2}^2 } > \frac{1}{2} . \label{eq:ev_M[1,3,4]}
\end{equation}
As $ x_4 \geq 0 $, $ M[1,3,4] $ has at least two nonnegative eigenvalues. If all the eigenvalues of $ M[1,3,4] $ are nonnegative then, as $ M[1,3,4] $ is nonnegative and
\[ \tr \left( M[1,3,4] \right) = x_1 + x_3 + x_4 \leq \tr \left( M \right) \leq \frac{1}{2} , \]
we also have $ \rho \left( M[1,3,4] \right) \leq \frac{1}{2} $, contradicting \eqref{eq:ev_M[1,3,4]}. Therefore,
\begin{align*}
\frac{1}{2} \left( x_1 + x_3 \right) + \frac{1}{2} \sqrt{ \left( x_1 - x_3 \right)^2 + 4 {y_2}^2 } &> \frac{1}{2} > x_4 \geq 0 \\
 > \frac{1}{2} \left( x_1 + x_3 \right) &- \frac{1}{2} \sqrt{ \left( x_1 - x_3 \right)^2 + 4 {y_2}^2 } . 
\end{align*}
Hence, by the eigenvalue interlacing property we have that
\begin{align*}
\mu_4 &\leq \frac{1}{2} \left( x_1 + x_3 \right) - \frac{1}{2} \sqrt{ \left( x_1 - x_3 \right)^2 + 4 {y_2}^2 } \leq \mu_3 \\
 &\leq x_4 \leq \mu_2 \leq \frac{1}{2} \left( x_1 + x_3 \right) + \frac{1}{2} \sqrt{ \left( x_1 - x_3 \right)^2 + 4 {y_2}^2 } \leq \mu_1 .
\end{align*}
Note that $ \mu_3 < \frac{1}{2} $, since otherwise $ x_4 \geq \frac{1}{2} $, contradicting the assumptions.

It can be checked that
\begin{align*}
P_M & \left( \frac{1}{2} \right) = \frac{1}{4} \left( 4 {y_2}^2 - \left( 1 - 2 x_1 \right) \left( 1 - 2 x_3 \right) \right) {y_3}^2 \\
 &- \frac{1}{16} \left( 1 - 2 x_4 \right) \left( \left( 1 - 2 x_2 \right) \left( 4 {y_2}^2 - \left( 1 - 2 x_1 \right) \left( 1 - 2 x_3 \right) \right) + 4 \left( 1 - 2 x_3 \right) {y_1}^2 \right) .
\end{align*}
By \eqref{eq:y_2_lower_bound} the coefficient of $ {y_3}^2 $ is positive, and so by \eqref{eq:y_3_2nd_upper_bound} (which is well-defined by \eqref{eq:y_2_lower_bound}) we have $ P_M \left( \frac{1}{2} \right) \leq 0 $. If $ P_M \left( \frac{1}{2} \right) = 0 $ then, as $ \mu_3 < \frac{1}{2} < \mu_1 $, we have $ \mu_2 = \frac{1}{2} $. If $ P_M \left( \frac{1}{2} \right) < 0 $ then by Observation~\ref{obs:negative_cp} we have $ \mu_2 < \frac{1}{2} $.

\end{enumerate}
\end{proof}

\begin{lemma} \label{lem:b_12_and_b_45_conditions}
\begin{enumerate}

\item If $ \rho \left( B \right) \leq 1 $ then
\begin{equation}
b_{12} < \sqrt{ \left( 1 - b_{11} \right) \left( 1 - b_{22} \right) } \label{eq:b_12_upper_bound}
\end{equation}
and
\begin{equation}
b_{45} \leq \sqrt{ \left( 1 - b_{55} \right) \left( 1 - b_{44} - \frac{ \left( 1 - b_{11} \right) {b_{24}}^2 }{ \left( 1 - b_{11} \right) \left( 1 - b_{22} \right) - {b_{12}}^2 } \right) } . \label{eq:b_45_upper_bound}
\end{equation}

\item If
\begin{equation}
b_{12} \leq \frac{1}{2} \sqrt{ \left( 1 - 2 b_{11} \right) \left( 1 - 2 b_{22} \right) } \label{eq:b_12_sufficient_upper_bound}
\end{equation}
holds, then $ \lambda_3 \leq \frac{1}{2} $.

\item If
\begin{equation}
b_{12} > \frac{1}{2} \sqrt{ \left( 1 - 2 b_{11} \right) \left( 1 - 2 b_{22} \right) } \label{eq:b_12_lower_bound}
\end{equation}
and
\begin{equation}
b_{45} \leq \frac{1}{2} \sqrt{ \left( 1 - 2 b_{55} \right) \left( 1 - 2 b_{44} + \frac{ 4 \left( 1 - 2 b_{11} \right) {b_{24}}^2 }{ 4 {b_{12}}^2 - \left( 1 - 2 b_{11} \right) \left( 1 - 2 b_{22} \right) } \right) } \label{eq:b_45_sufficient_upper_bound}
\end{equation}
hold, then $ \lambda_3 \leq \frac{1}{2} $.

\end{enumerate}
\end{lemma}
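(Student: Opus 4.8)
The plan is to reduce the whole lemma to Lemma~\ref{lem:4x4_conditions} applied to a single, carefully chosen $4 \times 4$ principal submatrix of $B$. Take the principal submatrix $B[1,2,4,5]$ and reorder its rows and columns so that the original indices appear in the order $2,4,1,5$; this permutation similarity leaves the spectrum unchanged and produces
\[
M' = \left(
\begin{array}{cccc}
b_{22} & b_{24} & b_{12} & 0 \\ \noalign{\medskip}
b_{24} & b_{44} & 0 & b_{45} \\ \noalign{\medskip}
b_{12} & 0 & b_{11} & 0 \\ \noalign{\medskip}
0 & b_{45} & 0 & b_{55}
\end{array}
\right) ,
\]
which is exactly the matrix $M$ of Lemma~\ref{lem:4x4_conditions} under the substitution $x_1 = b_{22}$, $x_2 = b_{44}$, $x_3 = b_{11}$, $x_4 = b_{55}$, $y_1 = b_{24}$, $y_2 = b_{12}$, $y_3 = b_{45}$. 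Since $M'$ and $B[1,2,4,5]$ are similar, they have the same eigenvalues, and $B[1,2,4,5]$ is a $4 \times 4$ (that is, $(n-1)\times(n-1)$) principal submatrix of the $5 \times 5$ matrix $B$.

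Next I would verify the standing hypotheses of Lemma~\ref{lem:4x4_conditions}. We have $\tr \left( M' \right) = b_{11} + b_{22} + b_{44} + b_{55} = \frac{1}{2} - b_{33} \leq \frac{1}{2}$; the diagonal entries $b_{22}, b_{44}, b_{11}, b_{55}$ all lie in $\left[ 0, \frac{1}{2} \right)$ by the nonnegativity of $B$ together with Assumptions~\ref{assmp:min_diagonal} and~\ref{assmp:max_diagonal}; and $y_1 = b_{24}$, $y_2 = b_{12}$, $y_3 = b_{45}$ are positive by Assumption~\ref{assmp:non_diagonal}. These hold unconditionally, which is what parts~2 and~3 need; for part~1 we additionally use that $\rho \left( B \right) \leq 1$ implies $\rho \left( M' \right) = \rho \left( B[1,2,4,5] \right) \leq 1$ by Observation~\ref{obs:spectral_radius}.

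With this dictionary the three parts follow mechanically. For part~1, Lemma~\ref{lem:4x4_conditions}(1) gives \eqref{eq:y_2_upper_bound} and \eqref{eq:y_3_upper_bound}, which under the substitution are \eqref{eq:b_12_upper_bound} and \eqref{eq:b_45_upper_bound} verbatim — here one only notes that the denominator $\left( 1 - x_1 \right)\left( 1 - x_3 \right) - {y_2}^2$ is symmetric in the unordered pair $\{ x_1, x_3 \} = \{ b_{22}, b_{11} \}$, so it matches the form written in the statement. For part~2, the hypothesis \eqref{eq:b_12_sufficient_upper_bound} is precisely \eqref{eq:y_2_2nd_upper_bound}, so Lemma~\ref{lem:4x4_conditions}(2) gives that the second largest eigenvalue of $B[1,2,4,5]$ is at most $\frac{1}{2}$, and Observation~\ref{obs:4x4_sufficient_condition} yields $\lambda_3 \leq \frac{1}{2}$. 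For part~3, the hypotheses \eqref{eq:b_12_lower_bound} and \eqref{eq:b_45_sufficient_upper_bound} are \eqref{eq:y_2_lower_bound} and \eqref{eq:y_3_2nd_upper_bound} (again using that $\left( 1 - 2 b_{11} \right)\left( 1 - 2 b_{22} \right)$ is symmetric in $b_{11}, b_{22}$), so Lemma~\ref{lem:4x4_conditions}(3) and Observation~\ref{obs:4x4_sufficient_condition} finish in the same way.

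There is no genuine analytic obstacle here — all the work has already been done in Lemma~\ref{lem:4x4_conditions}. The one place requiring care is bookkeeping: choosing $B[1,2,4,5]$ rather than, say, $B[1,2,3,5]$ (which would yield bounds on $b_{35}$ instead of $b_{45}$), fixing the correct permutation $2,4,1,5$, and checking that the assignment of the $x_i$'s and $y_j$'s makes every square-root expression line up exactly with \eqref{eq:b_12_upper_bound}--\eqref{eq:b_45_sufficient_upper_bound}, which it does because the relevant quantities are symmetric in the pair $b_{11}, b_{22}$.
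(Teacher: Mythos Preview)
Your proposal is correct and takes essentially the same approach as the paper: both reduce to Lemma~\ref{lem:4x4_conditions} by applying the permutation $2,4,1,5$ to the principal submatrix $B[1,2,4,5]$, verify the standing hypotheses via Assumptions~\ref{assmp:min_diagonal}, \ref{assmp:max_diagonal}, \ref{assmp:non_diagonal} and Observation~\ref{obs:spectral_radius}, and then invoke Observation~\ref{obs:4x4_sufficient_condition} for parts~2 and~3. The paper additionally remarks up front that \eqref{eq:b_45_upper_bound} is well-defined via $P_{B[1,2,4]}(1)\geq 0$, but this is already contained in the proof of Lemma~\ref{lem:4x4_conditions}, so your version loses nothing.
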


\begin{proof}
Note first that \eqref{eq:b_45_upper_bound} is well-defined since
\[ \left( \left( 1 - b_{11} \right) \left( 1 - b_{22} \right) - {b_{12}}^2 \right) \left(1 - b_{44} \right) - \left( 1 - b_{11} \right) {b_{24}}^2 = P_{B[1,2,4]} \left( 1 \right) \geq 0 , \]
where the inequality follows from $ \rho \left( B \right) \leq 1 $ and Observation~\ref{obs:spectral_radius}.

Let
\[ P = \left(
\begin{array}{ccccc}
0 & 1 & 0 & 0 \\ \noalign{\medskip}
0 & 0 & 1 & 0 \\ \noalign{\medskip}
1 & 0 & 0 & 0 \\ \noalign{\medskip}
0 & 0 & 0 & 1
\end{array}
\right) , \quad
M = P \cdot B[1,2,4,5] \cdot P^{-1} = \left(
\begin{array}{ccccc}
b_{22} & b_{24} & b_{12} & 0 \\ \noalign{\medskip}
b_{24} & b_{44} & 0 & b_{45} \\ \noalign{\medskip}
b_{12} & 0 & b_{11} & 0 \\ \noalign{\medskip}
0 & b_{45} & 0 & b_{55}
\end{array}
\right). \]
Then $ \tr \left( M \right) \leq \tr \left( B \right) = \frac{1}{2} $. By Observation~\ref{obs:spectral_radius}, $ \rho \left( M \right) \leq 1 $. The results follow by Lemma~\ref{lem:4x4_conditions}, as $M$ has the required pattern and due to Assumption~\ref{assmp:min_diagonal}, Assumption~\ref{assmp:max_diagonal} and Assumption~\ref{assmp:non_diagonal}, and by using Observation~\ref{obs:4x4_sufficient_condition}.
\end{proof}

\begin{lemma} \label{lem:b_24_and_b_13_conditions}
\begin{enumerate}

\item If $ \rho \left( B \right) \leq 1 $ then
\begin{equation}
b_{24} < \sqrt{ \left( 1 - b_{22} \right) \left( 1 - b_{44} \right) } \label{eq:b_24_upper_bound}
\end{equation}
and
\begin{equation}
b_{13} \leq \sqrt{ \left( 1 - b_{33} \right) \left( 1 - b_{11} - \frac{ \left( 1 - b_{44} \right) {b_{12}}^2 }{ \left( 1 - b_{22} \right) \left( 1 - b_{44} \right) - {b_{24}}^2 } \right) } . \label{eq:b_13_upper_bound}
\end{equation}

\item If
\begin{equation}
b_{24} \leq \frac{1}{2} \sqrt{ \left( 1 - 2 b_{22} \right) \left( 1 - 2 b_{44} \right) } \label{eq:b_24_sufficient_upper_bound}
\end{equation}
holds, then $ \lambda_3 \leq \frac{1}{2} $.

\item If
\begin{equation}
b_{24} > \frac{1}{2} \sqrt{ \left( 1 - 2 b_{22} \right) \left( 1 - 2 b_{44} \right) } \label{eq:b_24_lower_bound}
\end{equation}
and
\begin{equation}
b_{13} \leq \frac{1}{2} \sqrt{ \left( 1 - 2 b_{33} \right) \left( 1 - 2 b_{11} + \frac{ 4 \left( 1 - 2 b_{44} \right) {b_{12}}^2 }{ 4 {b_{24}}^2 - \left( 1 - 2 b_{22} \right) \left( 1 - 2 b_{44} \right) } \right) } \label{eq:b_13_sufficient_upper_bound}
\end{equation}
hold, then $ \lambda_3 \leq \frac{1}{2} $.

\end{enumerate}
\end{lemma}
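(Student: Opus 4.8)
The plan is to mirror the proof of Lemma~\ref{lem:b_12_and_b_45_conditions}, this time applying Lemma~\ref{lem:4x4_conditions} to the principal submatrix $B[1,2,3,4]$ after permuting it into the pattern required there. Concretely, I would set
\[ P = \left(
\begin{array}{cccc}
0 & 1 & 0 & 0 \\ \noalign{\medskip}
1 & 0 & 0 & 0 \\ \noalign{\medskip}
0 & 0 & 0 & 1 \\ \noalign{\medskip}
0 & 0 & 1 & 0
\end{array}
\right) , \quad
M = P \cdot B[1,2,3,4] \cdot P^{-1} = \left(
\begin{array}{cccc}
b_{22} & b_{12} & b_{24} & 0 \\ \noalign{\medskip}
b_{12} & b_{11} & 0 & b_{13} \\ \noalign{\medskip}
b_{24} & 0 & b_{44} & 0 \\ \noalign{\medskip}
0 & b_{13} & 0 & b_{33}
\end{array}
\right) . \]
Then $M$ has exactly the shape of the matrix in Lemma~\ref{lem:4x4_conditions}, under the identification $x_1 = b_{22}$, $x_2 = b_{11}$, $x_3 = b_{44}$, $x_4 = b_{33}$, $y_1 = b_{12}$, $y_2 = b_{24}$, $y_3 = b_{13}$, so that $(1-x_1)(1-x_3) = (1-b_{22})(1-b_{44})$ and the denominators appearing in the statement line up with those of Lemma~\ref{lem:4x4_conditions}.

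Next I would verify the hypotheses of Lemma~\ref{lem:4x4_conditions} for $M$. We have $ \tr(M) = b_{11} + b_{22} + b_{33} + b_{44} \leq \tr(B) = \frac{1}{2} $ because $ b_{55} \geq 0 $; the four diagonal entries lie in $ [0, \frac{1}{2}) $ by Assumptions~\ref{assmp:min_diagonal} and~\ref{assmp:max_diagonal}; and $ b_{12}, b_{24}, b_{13} > 0 $ by Assumption~\ref{assmp:non_diagonal}. Moreover, if $ \rho(B) \leq 1 $ then $ \rho(M) = \rho(B[1,2,3,4]) \leq 1 $ by Observation~\ref{obs:spectral_radius}. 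As in Lemma~\ref{lem:b_12_and_b_45_conditions}, \eqref{eq:b_13_upper_bound} is well-defined: its denominator is positive by \eqref{eq:b_24_upper_bound}, and the radicand is nonnegative since $ \left( (1-b_{22})(1-b_{44}) - {b_{24}}^2 \right)(1-b_{11}) - (1-b_{44}) {b_{12}}^2 = P_{B[1,2,4]}(1) \geq 0 $ (by $\rho(B) \leq 1$ and Observation~\ref{obs:spectral_radius}) and $ b_{33} < \frac{1}{2} $.

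It then remains only to quote Lemma~\ref{lem:4x4_conditions}: part~1, rewritten in the present notation, is precisely \eqref{eq:b_24_upper_bound} and \eqref{eq:b_13_upper_bound}; part~2 says \eqref{eq:b_24_sufficient_upper_bound} implies $ \mu_2 \leq \frac{1}{2} $; and part~3 says \eqref{eq:b_24_lower_bound} together with \eqref{eq:b_13_sufficient_upper_bound} implies $ \mu_2 \leq \frac{1}{2} $. Since $ \mu_2 $ is the second largest eigenvalue of a $ 4 \times 4 $ principal submatrix of $B$, Observation~\ref{obs:4x4_sufficient_condition} converts each of the last two into $ \lambda_3 \leq \frac{1}{2} $. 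I do not expect a genuine obstacle here: the mathematical content is entirely in Lemma~\ref{lem:4x4_conditions}, and the only thing to be careful about is the bookkeeping, namely choosing the submatrix $B[1,2,3,4]$ and the permutation $P$ so that the parameter roles match and the three conclusions of Lemma~\ref{lem:4x4_conditions} come out verbatim as stated, with $b_{22}$ placed in the $x_1$-slot and $b_{44}$ in the $x_3$-slot so that the leading products $(1-b_{22})(1-b_{44})$ on the right-hand sides appear.
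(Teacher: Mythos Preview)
Your proposal is correct and essentially identical to the paper's own proof: the paper uses the same permutation $P$, forms the same $M = P \cdot B[1,2,3,4] \cdot P^{-1}$, checks well-definedness of \eqref{eq:b_13_upper_bound} via $P_{B[1,2,4]}(1) \geq 0$, and then invokes Lemma~\ref{lem:4x4_conditions} together with Observation~\ref{obs:4x4_sufficient_condition} and Assumptions~\ref{assmp:min_diagonal}, \ref{assmp:max_diagonal}, \ref{assmp:non_diagonal}.
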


\begin{proof}
Note first that \eqref{eq:b_13_upper_bound} is well-defined since
\[ \left( \left( 1 - b_{22} \right) \left( 1 - b_{44} \right) - {b_{24}}^2 \right) \left(1 - b_{11} \right) - \left( 1 - b_{44} \right) {b_{12}}^2 = P_{B[1,2,4]} \left( 1 \right) \geq 0 , \]
where the inequality follows from $ \rho \left( B \right) \leq 1 $ and Observation~\ref{obs:spectral_radius}.

Let 
\[ P = \left(
\begin{array}{ccccc}
0 & 1 & 0 & 0 \\ \noalign{\medskip}
1 & 0 & 0 & 0 \\ \noalign{\medskip}
0 & 0 & 0 & 1 \\ \noalign{\medskip}
0 & 0 & 1 & 0
\end{array}
\right) , \quad
M = P \cdot B[1,2,3,4] \cdot P^{-1} = \left(
\begin{array}{ccccc}
b_{22} & b_{12} & b_{24} & 0 \\ \noalign{\medskip}
b_{12} & b_{11} & 0 & b_{13} \\ \noalign{\medskip}
b_{24} & 0 & b_{44} & 0 \\ \noalign{\medskip}
0 & b_{13} & 0 & b_{33}
\end{array}
\right). \]
Then $ \tr \left( M \right) \leq \tr \left( B \right) = \frac{1}{2} $. By Observation~\ref{obs:spectral_radius}, $ \rho \left( M \right) \leq 1 $. The results follow by Lemma~\ref{lem:4x4_conditions}, as $M$ has the required pattern and due to Assumption~\ref{assmp:min_diagonal}, Assumption~\ref{assmp:max_diagonal} and Assumption~\ref{assmp:non_diagonal}, and by using Observation~\ref{obs:4x4_sufficient_condition}.
\end{proof}

\begin{lemma} \label{lem:b_35_sufficient_condition_2345}
\begin{enumerate}

\item If
\[ b_{35} \leq \frac{1}{2} \sqrt{ \left( 1 - 2 b_{33} \right) \left( 1 - 2 b_{55} \right) } \]
holds, then $ \lambda_3 \leq \frac{1}{2} $.

\item If \eqref{eq:b_24_lower_bound} and
\[ b_{35} \leq \frac{1}{2} \sqrt{ \left( 1 - 2 b_{33} \right) \left( 1 - 2 b_{55} + \frac{ 4 \left( 1 - 2 b_{22} \right) {b_{45}}^2 }{ 4 {b_{24}}^2 - \left( 1 - 2 b_{22} \right) \left( 1 - 2 b_{44} \right) } \right) } \]
hold, then $ \lambda_3 \leq \frac{1}{2} $.

\end{enumerate}
\end{lemma}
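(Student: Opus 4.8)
The plan is to follow the pattern of the proofs of Lemmas~\ref{lem:b_12_and_b_45_conditions} and~\ref{lem:b_24_and_b_13_conditions}: apply Lemma~\ref{lem:4x4_conditions} to a permutation-similar copy of the principal submatrix $B[2,3,4,5]$ and then invoke Observation~\ref{obs:4x4_sufficient_condition}. The weighted graph underlying $B[2,3,4,5]$ is the path on the vertices $2,4,5,3$ (with edge weights $b_{24},b_{45},b_{35}$), which is isomorphic to the path $x_3-x_1-x_2-x_4$ underlying the matrix $M$ of Lemma~\ref{lem:4x4_conditions}. There are exactly two such isomorphisms, and the two parts of the present lemma use one each. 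In both cases $\tr\left(B[2,3,4,5]\right)=b_{22}+b_{33}+b_{44}+b_{55}\le\tr(B)=\frac12$, by Assumptions~\ref{assmp:min_diagonal} and~\ref{assmp:max_diagonal} these four diagonal entries lie in $[0,\frac12)$, and by Assumption~\ref{assmp:non_diagonal} the three off-diagonal entries are positive, so the structural hypotheses of Lemma~\ref{lem:4x4_conditions} hold. Note that parts (2) and (3) of Lemma~\ref{lem:4x4_conditions} do not require $\rho\le1$, which is why the present lemma carries no such hypothesis.

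For part~1 I would take the permutation matrix realizing the reordering $(5,4,3,2)$, so that $M:=P\,B[2,3,4,5]\,P^{-1}$ has $(x_1,x_2,x_3,x_4)=(b_{55},b_{44},b_{33},b_{22})$ and $(y_1,y_2,y_3)=(b_{45},b_{35},b_{24})$. With this identification the hypothesis $b_{35}\le\frac12\sqrt{(1-2b_{33})(1-2b_{55})}$ is exactly condition~\eqref{eq:y_2_2nd_upper_bound} for $M$; Lemma~\ref{lem:4x4_conditions}(2) then gives $\mu_2\le\frac12$ for $M$, hence for $B[2,3,4,5]$, and Observation~\ref{obs:4x4_sufficient_condition} yields $\lambda_3\le\frac12$.

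For part~2 I would take instead the other isomorphism, the reordering $(4,5,2,3)$, so that $M:=P\,B[2,3,4,5]\,P^{-1}$ has $(x_1,x_2,x_3,x_4)=(b_{44},b_{55},b_{22},b_{33})$ and $(y_1,y_2,y_3)=(b_{45},b_{24},b_{35})$; now $y_2=b_{24}$. Then~\eqref{eq:b_24_lower_bound} is precisely~\eqref{eq:y_2_lower_bound} for $M$, and the displayed upper bound on $b_{35}$ is precisely~\eqref{eq:y_3_2nd_upper_bound} for $M$ (well-defined because~\eqref{eq:b_24_lower_bound} makes the relevant denominator positive); so Lemma~\ref{lem:4x4_conditions}(3) gives $\mu_2\le\frac12$, and Observation~\ref{obs:4x4_sufficient_condition} again yields $\lambda_3\le\frac12$.

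There is no real obstacle here. The only thing needing care is the bookkeeping of the two relabelings: inside the square roots the entries playing the roles of $x_1$ and $x_3$ are ``doubled'' whereas those in the roles of $x_2$ and $x_4$ are not, so for each part one must use the relabeling for which the hypotheses of Lemma~\ref{lem:4x4_conditions} match those of the present lemma symbol for symbol. In the final write-up I would, exactly as in Lemmas~\ref{lem:b_12_and_b_45_conditions} and~\ref{lem:b_24_and_b_13_conditions}, just exhibit the two permutation matrices together with the resulting matrices $M$ and invoke Lemma~\ref{lem:4x4_conditions} and Observation~\ref{obs:4x4_sufficient_condition}.
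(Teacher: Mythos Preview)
Your proposal is correct and matches the paper's proof essentially line for line: the paper applies Lemma~\ref{lem:4x4_conditions} to $B[2,3,4,5]$ via exactly the two permutations you describe (the reversal $(5,4,3,2)$ for part~1 and the reordering $(4,5,2,3)$ for part~2), verifies the structural hypotheses via Assumptions~\ref{assmp:min_diagonal}, \ref{assmp:max_diagonal} and~\ref{assmp:non_diagonal}, and concludes with Observation~\ref{obs:4x4_sufficient_condition}. Nothing further is needed.
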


\begin{proof}
\begin{enumerate}

\item Let
\begin{align*}
P &= \left(
\begin{array}{ccccc}
0 & 0 & 0 & 1 \\ \noalign{\medskip}
0 & 0 & 1 & 0 \\ \noalign{\medskip}
0 & 1 & 0 & 0 \\ \noalign{\medskip}
1 & 0 & 0 & 0
\end{array}
\right) , \\
M &= P \cdot B[2,3,4,5] \cdot P^{-1} = \left(
\begin{array}{ccccc}
b_{55} & b_{45} & b_{35} & 0 \\ \noalign{\medskip}
b_{45} & b_{44} & 0 & b_{24} \\ \noalign{\medskip}
b_{35} & 0 & b_{33} & 0 \\ \noalign{\medskip}
0 & b_{24} & 0 & b_{22}
\end{array}
\right) .
\end{align*}
Then $ \tr \left( M \right) \leq \tr \left( B \right) = \frac{1}{2} $. The result follows by Lemma~\ref{lem:4x4_conditions}, as $M$ has the required pattern and due to Assumption~\ref{assmp:min_diagonal}, Assumption~\ref{assmp:max_diagonal} and Assumption~\ref{assmp:non_diagonal}, and by using Observation~\ref{obs:4x4_sufficient_condition}.

\item Let
\begin{align*}
P &= \left(
\begin{array}{ccccc}
0 & 0 & 1 & 0 \\ \noalign{\medskip}
0 & 0 & 0 & 1 \\ \noalign{\medskip}
1 & 0 & 0 & 0 \\ \noalign{\medskip}
0 & 1 & 0 & 0
\end{array}
\right) , \\
M &= P \cdot B[2,3,4,5] \cdot P^{-1} = \left(
\begin{array}{ccccc}
b_{44} & b_{45} & b_{24} & 0 \\ \noalign{\medskip}
b_{45} & b_{55} & 0 & b_{35} \\ \noalign{\medskip}
b_{24} & 0 & b_{22} & 0 \\ \noalign{\medskip}
0 & b_{35} & 0 & b_{33}
\end{array}
\right) .
\end{align*}
Then $ \tr \left( M \right) \leq \tr \left( B \right) = \frac{1}{2} $. The result follows by Lemma~\ref{lem:4x4_conditions}, as $M$ has the required pattern and due to Assumption~\ref{assmp:min_diagonal}, Assumption~\ref{assmp:max_diagonal} and Assumption~\ref{assmp:non_diagonal}, and by using Observation~\ref{obs:4x4_sufficient_condition}.

\end{enumerate}
\end{proof}

\begin{lemma} \label{lem:b_35_sufficient_condition_1235}
If \eqref{eq:b_12_lower_bound} and
\[ b_{35} \leq \frac{1}{2} \sqrt{ \left( 1 - 2 b_{55} \right) \left( 1 - 2 b_{33} + \frac{ 4 \left( 1 - 2 b_{22} \right) {b_{13}}^2 }{ 4 {b_{12}}^2 - \left( 1 - 2 b_{11} \right) \left( 1 - 2 b_{22} \right) } \right) } \]
hold, then $ \lambda_3 \leq \frac{1}{2} $.
\end{lemma}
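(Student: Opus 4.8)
The plan is to reduce this to part~3 of Lemma~\ref{lem:4x4_conditions}, applied to a suitable permutation of the principal submatrix $B[1,2,3,5]$, exactly in the spirit of the treatment of $b_{12}$, $b_{13}$, $b_{24}$ and $b_{35}$ in Lemmas~\ref{lem:b_12_and_b_45_conditions}--\ref{lem:b_35_sufficient_condition_2345}. First I would write down $B[1,2,3,5]$ and note that, among its off-diagonal entries, only $b_{12}$ (linking the local copies of rows $1$ and $2$), $b_{13}$ (linking $1$ and $3$) and $b_{35}$ (linking $3$ and $5$) occur by Assumption~\ref{assmp:non_diagonal}; the underlying graph is the path $2-1-3-5$. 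Taking the permutation that relabels the local indices $(1,2,3,5)$ via $1\mapsto 1$, $3\mapsto 2$, $2\mapsto 3$, $5\mapsto 4$ turns $B[1,2,3,5]$ into
\[ M = \begin{pmatrix} b_{11} & b_{13} & b_{12} & 0 \\ b_{13} & b_{33} & 0 & b_{35} \\ b_{12} & 0 & b_{22} & 0 \\ 0 & b_{35} & 0 & b_{55} \end{pmatrix}, \]
which has precisely the pattern of the matrix in Lemma~\ref{lem:4x4_conditions} with $x_1=b_{11}$, $x_2=b_{33}$, $x_3=b_{22}$, $x_4=b_{55}$, $y_1=b_{13}$, $y_2=b_{12}$, $y_3=b_{35}$.

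Next I would check the standing hypotheses of Lemma~\ref{lem:4x4_conditions}: $\tr(M)=b_{11}+b_{22}+b_{33}+b_{55}\le\tr(B)=\frac12$ because $b_{44}\ge 0$; each $x_i$ lies in $[0,\frac12)$ by Assumptions~\ref{assmp:min_diagonal} and~\ref{assmp:max_diagonal} (in particular $b_{22}\le\frac1{10}$); and $y_1,y_2,y_3>0$ by Assumption~\ref{assmp:non_diagonal}. Since we only invoke part~3 of Lemma~\ref{lem:4x4_conditions}, no bound on $\rho(B)$ is required, which is why the statement does not assume one. With the above identification of the entries, hypothesis~\eqref{eq:b_12_lower_bound} is literally condition~\eqref{eq:y_2_lower_bound} for $M$, namely $y_2>\tfrac12\sqrt{(1-2x_1)(1-2x_3)}$, and the displayed bound on $b_{35}$ in the statement is literally condition~\eqref{eq:y_3_2nd_upper_bound} for $M$. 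Part~3 of Lemma~\ref{lem:4x4_conditions} then gives $\mu_2\le\frac12$, where $\mu_2$ is the second largest eigenvalue of $M$, hence of $B[1,2,3,5]$. Finally, Observation~\ref{obs:4x4_sufficient_condition} yields $\lambda_3\le\frac12$.

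There is no genuine obstacle here; the only point requiring care is the choice of the permutation, that is, verifying that the assignment $x_1=b_{11}$, $x_3=b_{22}$ (so that $(1-2x_1)(1-2x_3)=(1-2b_{11})(1-2b_{22})$), together with $x_2=b_{33}$, $x_4=b_{55}$, $y_1=b_{13}$, $y_2=b_{12}$, $y_3=b_{35}$, is the one that makes the two technical conditions of Lemma~\ref{lem:4x4_conditions} coincide exactly with~\eqref{eq:b_12_lower_bound} and the displayed bound on $b_{35}$. Everything else is a direct application of the already-established Lemma~\ref{lem:4x4_conditions} and Observation~\ref{obs:4x4_sufficient_condition}.
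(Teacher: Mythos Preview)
Your proposal is correct and is essentially identical to the paper's own proof: the paper conjugates $B[1,2,3,5]$ by the permutation matrix swapping local positions $2$ and $3$ to obtain exactly your matrix $M$, identifies $(x_1,x_2,x_3,x_4,y_1,y_2,y_3)=(b_{11},b_{33},b_{22},b_{55},b_{13},b_{12},b_{35})$, and then invokes part~3 of Lemma~\ref{lem:4x4_conditions} together with Observation~\ref{obs:4x4_sufficient_condition}. Your write-up is in fact more explicit than the paper's in checking the standing hypotheses of Lemma~\ref{lem:4x4_conditions}.
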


\begin{proof}
Let
\[ P = \left(
\begin{array}{ccccc}
1 & 0 & 0 & 0 \\ \noalign{\medskip}
0 & 0 & 1 & 0 \\ \noalign{\medskip}
0 & 1 & 0 & 0 \\ \noalign{\medskip}
0 & 0 & 0 & 1
\end{array}
\right) , \quad
M = P \cdot B[1,2,3,5] \cdot P^{-1} = \left(
\begin{array}{ccccc}
b_{11} & b_{13} & b_{12} & 0 \\ \noalign{\medskip}
b_{13} & b_{33} & 0 & b_{35} \\ \noalign{\medskip}
b_{12} & 0 & b_{22} & 0 \\ \noalign{\medskip}
0 & b_{35} & 0 & b_{55}
\end{array}
\right) . \]
Then $ \tr \left( M \right) \leq \tr \left( B \right) = \frac{1}{2} $. The result follows by Lemma~\ref{lem:4x4_conditions}, as $M$ has the required pattern and due to Assumption~\ref{assmp:min_diagonal}, Assumption~\ref{assmp:max_diagonal} and Assumption~\ref{assmp:non_diagonal}, and by using Observation~\ref{obs:4x4_sufficient_condition}.
\end{proof}

\begin{lemma} \label{lem:b_45_better_upper_bound}
If $ \rho \left( B \right) \leq 1 $, \eqref{eq:b_24_geq_b_35}, \eqref{eq:b_24_lower_bound} and
\begin{equation}
b_{35} > \frac{1}{2} \sqrt{ \left( 1 - 2 b_{33} \right) \left( 1 - 2 b_{55} + \frac{ 4 \left( 1 - 2 b_{22} \right) {b_{45}}^2 }{ 4 {b_{24}}^2 - \left( 1 - 2 b_{22} \right) \left( 1 - 2 b_{44} \right) } \right) } \label{eq:b_35_lower_bound_2345}
\end{equation}
hold, then
\begin{equation}
b_{45} < \frac{1}{2} \sqrt{ \left( \frac{ 4 {b_{24}}^2 }{ 1 - 2 b_{22} } - \left( 1 - 2 b_{44} \right) \right) \left( \frac{ 4 {b_{24}}^2 }{ 1 - 2 b_{33} } - \left( 1 - 2 b_{55} \right) \right) } . \label{eq:b_45_better_upper_bound}
\end{equation}
\end{lemma}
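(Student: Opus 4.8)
The plan is to reduce \eqref{eq:b_45_better_upper_bound} to a short manipulation of the hypothesis \eqref{eq:b_35_lower_bound_2345}, using \eqref{eq:b_24_geq_b_35} to eliminate $b_{35}$ from the picture. First I would record the positivity facts that make every division and square root below legitimate: by Assumption~\ref{assmp:min_diagonal} and Assumption~\ref{assmp:max_diagonal} the quantities $1-2b_{22}$, $1-2b_{33}$ and $1-2b_{55}$ are strictly positive, and by \eqref{eq:b_24_lower_bound} the quantity
\[ \Delta := 4{b_{24}}^2 - \left( 1 - 2 b_{22} \right)\left( 1 - 2 b_{44} \right) \]
is strictly positive. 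In particular the fraction appearing in \eqref{eq:b_35_lower_bound_2345} is well-defined, and the first factor on the right-hand side of \eqref{eq:b_45_better_upper_bound} equals $\frac{4{b_{24}}^2}{1-2b_{22}} - \left(1-2b_{44}\right) = \frac{\Delta}{1-2b_{22}} > 0$.

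Next I would square \eqref{eq:b_35_lower_bound_2345} and invoke \eqref{eq:b_24_geq_b_35} (so ${b_{24}}^2 \geq {b_{35}}^2 \geq 0$) to get
\[ {b_{24}}^2 \;>\; \frac{1}{4}\left( 1 - 2 b_{33} \right)\left( 1 - 2 b_{55} + \frac{ 4 \left( 1 - 2 b_{22} \right) {b_{45}}^2 }{ \Delta } \right) . \]
Dividing through by $\frac{1}{4}\left(1-2b_{33}\right) > 0$ and subtracting $\left(1-2b_{55}\right)$ yields
\[ \frac{ 4 {b_{24}}^2 }{ 1 - 2 b_{33} } - \left( 1 - 2 b_{55} \right) \;>\; \frac{ 4 \left( 1 - 2 b_{22} \right) {b_{45}}^2 }{ \Delta } \;\geq\; 0 , \]
which in particular shows the second factor on the right-hand side of \eqref{eq:b_45_better_upper_bound} is positive. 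Multiplying this last inequality by $\frac{\Delta}{1-2b_{22}} = \frac{4{b_{24}}^2}{1-2b_{22}} - \left(1-2b_{44}\right) > 0$ gives
\[ \left( \frac{ 4 {b_{24}}^2 }{ 1 - 2 b_{22} } - \left( 1 - 2 b_{44} \right) \right)\left( \frac{ 4 {b_{24}}^2 }{ 1 - 2 b_{33} } - \left( 1 - 2 b_{55} \right) \right) \;>\; 4 {b_{45}}^2 , \]
and since $b_{45} \geq 0$, taking square roots yields exactly \eqref{eq:b_45_better_upper_bound}.

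There is essentially no genuine obstacle here; the only points requiring care are bookkeeping ones. One must cite the correct assumptions to justify that each denominator ($1-2b_{22}$, $1-2b_{33}$, and $\Delta$) is strictly positive before dividing, and one must remember to use \eqref{eq:b_24_geq_b_35} from Lemma~\ref{lem:not_similar_to_positive_C} --- this is the single non-routine ingredient, since replacing ${b_{35}}^2$ by ${b_{24}}^2$ is precisely what lets the bound factor as a product, and because the identity $4{b_{24}}^2 - \left(1-2b_{22}\right)\left(1-2b_{44}\right) = \left(1-2b_{22}\right)\bigl(\frac{4{b_{24}}^2}{1-2b_{22}} - \left(1-2b_{44}\right)\bigr)$ is what produces the first factor of \eqref{eq:b_45_better_upper_bound}. (The hypothesis $\rho\left(B\right) \leq 1$ is not needed beyond what is already encoded in the standing assumptions of this section, but it is harmless to keep it for uniformity with the neighbouring lemmas.)
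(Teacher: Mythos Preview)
Your proof is correct and follows essentially the same route as the paper's own proof: chain \eqref{eq:b_24_geq_b_35} with \eqref{eq:b_35_lower_bound_2345} to obtain $b_{24} > \frac{1}{2}\sqrt{(1-2b_{33})(1-2b_{55}+\frac{4(1-2b_{22})b_{45}^2}{\Delta})}$, and then rearrange. The paper's version is terser---it simply says ``and so \eqref{eq:b_45_better_upper_bound} follows''---whereas you spell out the intermediate algebraic steps and the positivity checks explicitly; but the underlying argument is identical.
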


\begin{proof}
Note that by \eqref{eq:b_24_lower_bound}, \eqref{eq:b_35_lower_bound_2345} is well-defined. By \eqref{eq:b_24_geq_b_35} and \eqref{eq:b_35_lower_bound_2345} we get
\[ b_{24} \geq b_{35} > \frac{1}{2} \sqrt{ \left( 1 - 2 b_{33} \right) \left( 1 - 2 b_{55} + \frac{ 4 \left( 1 - 2 b_{22} \right) {b_{45}}^2 }{ 4 {b_{24}}^2 - \left( 1 - 2 b_{22} \right) \left( 1 - 2 b_{44} \right) } \right) } \]
and so \eqref{eq:b_45_better_upper_bound} follows. Note that the value inside the square root of \eqref{eq:b_45_better_upper_bound} is well-defined by Assumption~\ref{assmp:min_diagonal} and Assumption~\ref{assmp:max_diagonal}, and nonnegative by \eqref{eq:b_24_geq_b_35}, \eqref{eq:b_24_lower_bound} and \eqref{eq:b_35_lower_bound_2345}.
\end{proof}

\begin{lemma} \label{lem:b_13_better_upper_bound}
If $ \rho \left( B \right) \leq 1 $, \eqref{eq:b_12_geq_b_35}, \eqref{eq:b_12_lower_bound} and
\begin{equation}
b_{35} > \frac{1}{2} \sqrt{ \left( 1 - 2 b_{55} \right) \left( 1 - 2 b_{33} + \frac{ 4 \left( 1 - 2 b_{22} \right) {b_{13}}^2 }{ 4 {b_{12}}^2 - \left( 1 - 2 b_{11} \right) \left( 1 - 2 b_{22} \right) } \right) } \label{eq:b_35_lower_bound_1235}
\end{equation}
hold, then
\begin{equation}
b_{13} < \frac{1}{2} \sqrt{ \left( \frac{ 4 {b_{12}}^2 }{ 1 - 2 b_{22} } - \left( 1 - 2 b_{11} \right) \right) \left( \frac{ 4 {b_{12}}^2 }{ 1 - 2 b_{55} } - \left( 1 - 2 b_{33} \right) \right) } . \label{eq:b_13_better_upper_bound}
\end{equation}
\end{lemma}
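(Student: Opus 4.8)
The plan is to argue exactly as in the proof of Lemma~\ref{lem:b_45_better_upper_bound}, with the roles of $ b_{24} $, $ b_{44} $, $ b_{33} $ played here by $ b_{12} $, $ b_{11} $, $ b_{55} $ and with $ b_{22} $, $ b_{35} $ kept fixed. In fact the statement is Lemma~\ref{lem:b_45_better_upper_bound} applied to the matrix $ P B P^{-1} $, where $P$ is the permutation matrix of the paragraph preceding Assumption~\ref{assmp:b_11_geq_b_44}: under that similarity the diagonal of $B$ is merely permuted (so Assumptions~\ref{assmp:min_diagonal} and~\ref{assmp:max_diagonal} persist), $ b_{12} $ moves into the $ (2,4) $ slot and $ b_{13} $ into the $ (4,5) $ slot, so hypotheses \eqref{eq:b_12_geq_b_35}, \eqref{eq:b_12_lower_bound}, \eqref{eq:b_35_lower_bound_1235} become precisely \eqref{eq:b_24_geq_b_35}, \eqref{eq:b_24_lower_bound}, \eqref{eq:b_35_lower_bound_2345} for $ P B P^{-1} $, while \eqref{eq:b_13_better_upper_bound} becomes \eqref{eq:b_45_better_upper_bound}. (Assumption~\ref{assmp:b_11_geq_b_44} is not preserved, but the proof of Lemma~\ref{lem:b_45_better_upper_bound} never uses it.) For definiteness I would nonetheless spell out the short computation directly.

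First I would note that \eqref{eq:b_35_lower_bound_1235} is well-defined: by Assumptions~\ref{assmp:min_diagonal} and~\ref{assmp:max_diagonal} the quantities $ 1 - 2 b_{11} $, $ 1 - 2 b_{22} $, $ 1 - 2 b_{33} $, $ 1 - 2 b_{55} $ are all positive, and by \eqref{eq:b_12_lower_bound} the denominator $ 4 b_{12}^2 - \left( 1 - 2 b_{11} \right) \left( 1 - 2 b_{22} \right) $ is positive, so the expression under the radical in \eqref{eq:b_35_lower_bound_1235} is a sum of nonnegative terms.

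Chaining \eqref{eq:b_12_geq_b_35} with \eqref{eq:b_35_lower_bound_1235} gives
\[ b_{12} \geq b_{35} > \frac{1}{2} \sqrt{ \left( 1 - 2 b_{55} \right) \left( 1 - 2 b_{33} + \frac{ 4 \left( 1 - 2 b_{22} \right) b_{13}^2 }{ 4 b_{12}^2 - \left( 1 - 2 b_{11} \right) \left( 1 - 2 b_{22} \right) } \right) } . \]
Squaring the outer inequality, dividing by $ 1 - 2 b_{55} > 0 $, multiplying through by the positive quantity $ 4 b_{12}^2 - \left( 1 - 2 b_{11} \right) \left( 1 - 2 b_{22} \right) = \left( 1 - 2 b_{22} \right) \left( \frac{ 4 b_{12}^2 }{ 1 - 2 b_{22} } - \left( 1 - 2 b_{11} \right) \right) $, and cancelling the common factor $ 1 - 2 b_{22} $, I obtain
\[ 4 b_{13}^2 < \left( \frac{ 4 b_{12}^2 }{ 1 - 2 b_{22} } - \left( 1 - 2 b_{11} \right) \right) \left( \frac{ 4 b_{12}^2 }{ 1 - 2 b_{55} } - \left( 1 - 2 b_{33} \right) \right) , \]
which is exactly \eqref{eq:b_13_better_upper_bound}.

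Finally I would check that the bound is not vacuous, i.e.\ that the right-hand side of \eqref{eq:b_13_better_upper_bound} is nonnegative: the first factor is positive by \eqref{eq:b_12_lower_bound}, and the second factor is nonnegative because \eqref{eq:b_12_geq_b_35} and \eqref{eq:b_35_lower_bound_1235} force $ 4 b_{12}^2 \geq \left( 1 - 2 b_{55} \right) \left( 1 - 2 b_{33} \right) $, the extra summand inside \eqref{eq:b_35_lower_bound_1235} being nonnegative. There is no real obstacle here: the content is the same elementary sign-tracking already carried out in Lemma~\ref{lem:b_45_better_upper_bound}, the only point requiring care being to keep every division and every square root legitimate.
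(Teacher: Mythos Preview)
Your proposal is correct and takes essentially the same approach as the paper: chain \eqref{eq:b_12_geq_b_35} with \eqref{eq:b_35_lower_bound_1235}, square, and solve for $b_{13}^2$, checking along the way that all divisions and radicals are legitimate via \eqref{eq:b_12_lower_bound} and the standing diagonal assumptions. Your additional observation that the lemma is literally Lemma~\ref{lem:b_45_better_upper_bound} transported through the permutation similarity $P B P^{-1}$ is correct and a nice structural remark the paper does not make explicit.
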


\begin{proof}
Note that by \eqref{eq:b_12_lower_bound}, \eqref{eq:b_35_lower_bound_1235} is well-defined. By \eqref{eq:b_12_geq_b_35} and \eqref{eq:b_35_lower_bound_1235} we get
\[ b_{12} \geq b_{35} > \frac{1}{2} \sqrt{ \left( 1 - 2 b_{55} \right) \left( 1 - 2 b_{33} + \frac{ 4 \left( 1 - 2 b_{22} \right) {b_{13}}^2 }{ 4 {b_{12}}^2 - \left( 1 - 2 b_{11} \right) \left( 1 - 2 b_{22} \right) } \right) } \]
and so \eqref{eq:b_13_better_upper_bound} follows. Note that the value inside the square root of \eqref{eq:b_13_better_upper_bound} is well-defined by Assumption~\ref{assmp:min_diagonal} and Assumption~\ref{assmp:max_diagonal}, and nonnegative by \eqref{eq:b_12_geq_b_35}, \eqref{eq:b_12_lower_bound} and \eqref{eq:b_35_lower_bound_1235}.
\end{proof}

\begin{lemma} \label{lem:b_24_better_lower_bound}
If $ \rho \left( B \right) \leq 1 $, \eqref{eq:b_24_geq_b_35}, \eqref{eq:b_12_lower_bound}, \eqref{eq:b_24_lower_bound}, \eqref{eq:b_45_better_upper_bound},
\begin{equation}
b_{35} > \frac{1}{2} \sqrt{ \left( 1 - 2 b_{33} \right) \left( 1 - 2 b_{55} \right) } \label{eq:b_35_lower_bound}
\end{equation}
and
\begin{align}
b_{45} &> \frac{1}{2} \sqrt{ \left( 1 - 2 b_{55} \right) \left( 1 - 2 b_{44} + \frac{ 4 \left( 1 - 2 b_{11} \right) {b_{24}}^2 }{ 4 {b_{12}}^2 - \left( 1 - 2 b_{11} \right) \left( 1 - 2 b_{22} \right) } \right) } \label{eq:b_45_lower_bound}
\end{align}
hold, then
\begin{equation}
b_{24} > \frac{1}{2} \sqrt{ 1 + 2 b_{11} + 4 b_{22} b_{44} + 4 b_{33} b_{55} } . \label{eq:b_24_better_lower_bound}
\end{equation}
\end{lemma}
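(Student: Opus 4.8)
The plan is to begin with a reformulation of the conclusion. Since $\tr\left(B\right) = \frac{1}{2}$ we have $\sum_{i=1}^{5}\left(1-2b_{ii}\right) = 5 - 2\tr\left(B\right) = 4$, and expanding $4b_{ii}b_{jj} = \left(1-2b_{ii}\right)\left(1-2b_{jj}\right)$ gives the identity
\[ 1 + 2b_{11} + 4b_{22}b_{44} + 4b_{33}b_{55} = 4 - \sum_{i=1}^{5}\left(1-2b_{ii}\right) + \left(1-2b_{22}\right)\left(1-2b_{44}\right) + \left(1-2b_{33}\right)\left(1-2b_{55}\right) . \]
Using $\sum_{i=1}^{5}\left(1-2b_{ii}\right) = 4$ this collapses to $\left(1-2b_{22}\right)\left(1-2b_{44}\right) + \left(1-2b_{33}\right)\left(1-2b_{55}\right)$, so \eqref{eq:b_24_better_lower_bound} is equivalent to $4{b_{24}}^2 > \left(1-2b_{22}\right)\left(1-2b_{44}\right) + \left(1-2b_{33}\right)\left(1-2b_{55}\right)$.

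Next I would exploit the two-sided bound on $b_{45}$ carried by the hypotheses: \eqref{eq:b_45_lower_bound} bounds $4{b_{45}}^2$ from below and \eqref{eq:b_45_better_upper_bound} bounds it from above. The radicands of both are nonnegative (this is exactly what \eqref{eq:b_24_geq_b_35}, \eqref{eq:b_24_lower_bound}, \eqref{eq:b_35_lower_bound}, $\rho\left(B\right)\le 1$ and Assumptions~\ref{assmp:min_diagonal}--\ref{assmp:non_diagonal} guarantee, just as in Lemmas~\ref{lem:b_45_better_upper_bound} and~\ref{lem:b_24_and_b_13_conditions}), so squaring both and chaining them is legitimate. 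The summand $\left(1-2b_{44}\right)\left(1-2b_{55}\right)$ then cancels, leaving
\[ \frac{4\left(1-2b_{11}\right)\left(1-2b_{55}\right){b_{24}}^2}{4{b_{12}}^2-\left(1-2b_{11}\right)\left(1-2b_{22}\right)} < \frac{\left(4{b_{24}}^2\right)^2}{\left(1-2b_{22}\right)\left(1-2b_{33}\right)} - \frac{4{b_{24}}^2\left(1-2b_{55}\right)}{1-2b_{22}} - \frac{4{b_{24}}^2\left(1-2b_{44}\right)}{1-2b_{33}} . \]
Dividing through by $4{b_{24}}^2 > 0$ (positive by Assumption~\ref{assmp:non_diagonal}) and then multiplying by $\left(1-2b_{22}\right)\left(1-2b_{33}\right) > 0$ (positive by Assumptions~\ref{assmp:min_diagonal} and~\ref{assmp:max_diagonal}) rearranges to
\[ 4{b_{24}}^2 > \left(1-2b_{22}\right)\left(1-2b_{44}\right) + \left(1-2b_{33}\right)\left(1-2b_{55}\right) + \frac{\left(1-2b_{11}\right)\left(1-2b_{22}\right)\left(1-2b_{33}\right)\left(1-2b_{55}\right)}{4{b_{12}}^2-\left(1-2b_{11}\right)\left(1-2b_{22}\right)} . \]

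Finally I would observe that the last summand is strictly positive: its denominator is positive by \eqref{eq:b_12_lower_bound}, and each factor $1-2b_{ii}$ in its numerator is positive by Assumptions~\ref{assmp:min_diagonal} and~\ref{assmp:max_diagonal}. Dropping it yields $4{b_{24}}^2 > \left(1-2b_{22}\right)\left(1-2b_{44}\right) + \left(1-2b_{33}\right)\left(1-2b_{55}\right)$, which by the first paragraph is precisely \eqref{eq:b_24_better_lower_bound}. The whole argument is essentially bookkeeping with signs; the only genuinely non-obvious step is the trace identity of the first paragraph, which replaces the unwieldy right-hand side of \eqref{eq:b_24_better_lower_bound} by $\left(1-2b_{22}\right)\left(1-2b_{44}\right) + \left(1-2b_{33}\right)\left(1-2b_{55}\right)$; after that the two-sided $b_{45}$ bounds deliver the estimate with strict room to spare, so the main point requiring care is simply to confirm that every quantity under a square root in \eqref{eq:b_45_lower_bound} and \eqref{eq:b_45_better_upper_bound} is nonnegative so that the squarings are valid.
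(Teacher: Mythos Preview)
Your proof is correct and follows essentially the same route as the paper: both chain the lower bound \eqref{eq:b_45_lower_bound} and the upper bound \eqref{eq:b_45_better_upper_bound} on $b_{45}$, use the trace identity $\sum_{i}(1-2b_{ii})=4$ to recast the target, and conclude by observing that the residual term involving $4{b_{12}}^2-\left(1-2b_{11}\right)\left(1-2b_{22}\right)$ is nonnegative. The only difference is presentational---you spell out the algebra and establish the trace identity up front, whereas the paper compresses the manipulation into the single equivalent inequality \eqref{eq:b_12_b_24_relation} and reads off the conclusion from the sign of its left-hand side.
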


\begin{proof}
Note that \eqref{eq:b_45_lower_bound} is well-defined by \eqref{eq:b_12_lower_bound}. Also note that by \eqref{eq:b_24_geq_b_35}, \eqref{eq:b_24_lower_bound} and \eqref{eq:b_35_lower_bound}, \eqref{eq:b_45_better_upper_bound} is well-defined. By \eqref{eq:b_45_better_upper_bound} and \eqref{eq:b_45_lower_bound} we have
\begin{align*}
\frac{1}{2} \sqrt{ \left( 1 - 2 b_{55} \right) \left( 1 - 2 b_{44} + \frac{ 4 \left( 1 - 2 b_{11} \right) {b_{24}}^2 }{ 4 {b_{12}}^2 - \left( 1 - 2 b_{11} \right) \left( 1 - 2 b_{22} \right) } \right) } < b_{45} \qquad \\
 < \frac{1}{2} \sqrt{ \left( \frac{ 4 {b_{24}}^2 }{ 1 - 2 b_{22} } - \left( 1 - 2 b_{44} \right) \right) \left( \frac{ 4 {b_{24}}^2 }{ 1 - 2 b_{33} } - \left( 1 - 2 b_{55} \right) \right) } ,
\end{align*}
which, as $ \sum_{i=1}^{5} b_{ii} = \tr \left( B \right) = \frac{1}{2} $, is equivalent to
\begin{equation}
\frac{ 1 - 2 b_{11} }{ 4 {b_{12}}^2 - \left( 1 - 2 b_{11} \right) \left( 1 - 2 b_{22} \right) } < \frac{ 4 {b_{24}}^2 - 1 - 2 b_{11} - 4 b_{22} b_{44} - 4 b_{33} b_{55} }{ \left( 1 - 2 b_{22} \right) \left( 1 - 2 b_{33} \right) \left( 1 - 2 b_{55} \right) } . \label{eq:b_12_b_24_relation}
\end{equation}
The left-hand side of \eqref{eq:b_12_b_24_relation} is nonnegative by \eqref{eq:b_12_lower_bound}. Therefore, \eqref{eq:b_24_better_lower_bound} follows.
\end{proof}

\begin{lemma} \label{lem:b_12_better_lower_bound}
If $ \rho \left( B \right) \leq 1 $, \eqref{eq:b_12_geq_b_35}, \eqref{eq:b_12_lower_bound}, \eqref{eq:b_24_lower_bound}, \eqref{eq:b_13_better_upper_bound}, \eqref{eq:b_35_lower_bound} and
\begin{align}
b_{13} &> \frac{1}{2} \sqrt{ \left( 1 - 2 b_{33} \right) \left( 1 - 2 b_{11} + \frac{ 4 \left( 1 - 2 b_{44} \right) {b_{12}}^2 }{ 4 {b_{24}}^2 - \left( 1 - 2 b_{22} \right) \left( 1 - 2 b_{44} \right) } \right) } \label{eq:b_13_lower_bound}
\end{align}
hold, then
\begin{equation}
b_{12} > \frac{1}{2} \sqrt{ 1 + 2 b_{44} + 4 b_{11} b_{22} + 4 b_{33} b_{55} } . \label{eq:b_12_better_lower_bound}
\end{equation}
\end{lemma}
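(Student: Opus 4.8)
The plan is to mirror the proof of Lemma~\ref{lem:b_24_better_lower_bound} with the roles of $b_{13}$ and $b_{45}$ interchanged. Here the two hypotheses \eqref{eq:b_13_better_upper_bound} and \eqref{eq:b_13_lower_bound} supply an upper and a lower bound on $b_{13}$ --- exactly the part played by \eqref{eq:b_45_better_upper_bound} and \eqref{eq:b_45_lower_bound} for $b_{45}$ there --- and squeezing them will force the claimed lower bound on $b_{12}$.

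First I would record that both bounds are well-defined. The denominator $4b_{24}^2-(1-2b_{22})(1-2b_{44})$ occurring in \eqref{eq:b_13_lower_bound} is positive by \eqref{eq:b_24_lower_bound}, so its radicand --- the positive quantity $1-2b_{33}$ times a sum of nonnegative terms --- is nonnegative. For \eqref{eq:b_13_better_upper_bound}, Assumptions~\ref{assmp:min_diagonal} and~\ref{assmp:max_diagonal} give $1-2b_{22}>0$ and $1-2b_{55}>0$; \eqref{eq:b_12_lower_bound} gives $4b_{12}^2>(1-2b_{11})(1-2b_{22})$, making the factor $\tfrac{4b_{12}^2}{1-2b_{22}}-(1-2b_{11})$ positive; and \eqref{eq:b_12_geq_b_35} together with \eqref{eq:b_35_lower_bound} gives $4b_{12}^2\ge 4b_{35}^2>(1-2b_{33})(1-2b_{55})$, making the factor $\tfrac{4b_{12}^2}{1-2b_{55}}-(1-2b_{33})$ positive.

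Next, since $b_{13}$ is pinched strictly between the two nonnegative quantities in \eqref{eq:b_13_lower_bound} and \eqref{eq:b_13_better_upper_bound}, squaring both (the factors $\tfrac14$ cancel) and chaining yields an inequality between the two radicands. Expanding the product on the upper side, cancelling the common summand $(1-2b_{11})(1-2b_{33})$ from both sides, dividing through by $4b_{12}^2>0$, and using $\sum_{i=1}^{5}b_{ii}=\tfrac12$ to rewrite $(1-2b_{11})(1-2b_{22})+(1-2b_{33})(1-2b_{55})=1+2b_{44}+4b_{11}b_{22}+4b_{33}b_{55}$, this reduces --- just as in Lemma~\ref{lem:b_24_better_lower_bound} --- to
\[
\frac{(1-2b_{33})(1-2b_{44})}{4b_{24}^2-(1-2b_{22})(1-2b_{44})}<\frac{4b_{12}^2-1-2b_{44}-4b_{11}b_{22}-4b_{33}b_{55}}{(1-2b_{22})(1-2b_{55})}.
\]

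The left-hand side of this inequality is positive: its numerator is positive by Assumption~\ref{assmp:max_diagonal} and its denominator is positive by \eqref{eq:b_24_lower_bound}. Hence the right-hand side is positive, and since $(1-2b_{22})(1-2b_{55})>0$ this gives $4b_{12}^2>1+2b_{44}+4b_{11}b_{22}+4b_{33}b_{55}$, which is exactly \eqref{eq:b_12_better_lower_bound}. The only step that demands any care is the algebraic reduction above --- checking that the expansion of the upper radicand, the cancellation of $(1-2b_{11})(1-2b_{33})$, and the division by $4b_{12}^2$ really collapse the chained radical inequality to the displayed form --- but this is literally the computation already carried out for $b_{24}$ in the previous lemma, so I anticipate no genuine obstacle.
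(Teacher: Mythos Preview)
Your proof is correct and follows essentially the same approach as the paper: chain \eqref{eq:b_13_lower_bound} and \eqref{eq:b_13_better_upper_bound}, square, cancel $(1-2b_{11})(1-2b_{33})$, divide by $4b_{12}^2$, and use the trace identity to obtain the displayed inequality whose left-hand side is positive. The only cosmetic difference is that the paper writes the reduced inequality with the factor $(1-2b_{33})$ in the denominator on the right rather than the numerator on the left, which is equivalent since $1-2b_{33}>0$.
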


\begin{proof}
Note that \eqref{eq:b_13_lower_bound} is well-defined by \eqref{eq:b_24_lower_bound}. Also note that by \eqref{eq:b_12_geq_b_35}, \eqref{eq:b_12_lower_bound} and \eqref{eq:b_35_lower_bound}, \eqref{eq:b_13_better_upper_bound} is well-defined. By \eqref{eq:b_13_better_upper_bound} and \eqref{eq:b_13_lower_bound} we have
\begin{align*}
\frac{1}{2} \sqrt{ \left( 1 - 2 b_{33} \right) \left( 1 - 2 b_{11} + \frac{ 4 \left( 1 - 2 b_{44} \right) {b_{12}}^2 }{ 4 {b_{24}}^2 - \left( 1 - 2 b_{22} \right) \left( 1 - 2 b_{44} \right) } \right) } < b_{13} \qquad \\
 < \frac{1}{2} \sqrt{ \left( \frac{ 4 {b_{12}}^2 }{ 1 - 2 b_{22} } - \left( 1 - 2 b_{11} \right) \right) \left( \frac{ 4 {b_{12}}^2 }{ 1 - 2 b_{55} } - \left( 1 - 2 b_{33} \right) \right) } ,
\end{align*}
which, as $ \sum_{i=1}^{5} b_{ii} = \tr \left( B \right) = \frac{1}{2} $, is equivalent to
\begin{equation}
\frac{ 1 - 2 b_{44} }{ 4 {b_{24}}^2 - \left( 1 - 2 b_{22} \right) \left( 1 - 2 b_{44} \right) } < \frac{ 4 {b_{12}}^2 - 1 - 2 b_{44} - 4 b_{11} b_{22} - 4 b_{33} b_{55} }{ \left( 1 - 2 b_{22} \right) \left( 1 - 2 b_{33} \right) \left( 1 - 2 b_{55} \right) } . \label{eq:b_24_b_12_relation}
\end{equation}
The left-hand side of \eqref{eq:b_24_b_12_relation} is nonnegative by \eqref{eq:b_24_lower_bound}. Therefore, \eqref{eq:b_12_better_lower_bound} follows.
\end{proof}

\begin{lemma} \label{lem:b_24_better_upper_bound}
If $ \rho \left( B \right) \leq 1 $, \eqref{eq:b_12_upper_bound}, \eqref{eq:b_45_upper_bound}, \eqref{eq:b_12_lower_bound} and \eqref{eq:b_45_lower_bound} hold, then
\begin{equation}
b_{24} < \frac{ \sqrt{ \left( 3 - 2 b_{11} - 2 b_{22} \right) \left( 3 - 2 b_{44} - 2 b_{55} \right) } }{ 2 \left( 2 \sqrt{ \left( 1 - b_{11} \right) \left( 1 - b_{55} \right) } + \sqrt{ \left( 1 - 2 b_{11} \right) \left( 1 - 2 b_{55} \right) } \right) } . \label{eq:b_24_better_upper_bound}
\end{equation}
\end{lemma}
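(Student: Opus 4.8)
The plan is to square the two-sided bounds on $b_{45}$, use them to eliminate $b_{45}$, and then eliminate $b_{12}$ via its two-sided bounds. First I would check that every denominator and radicand in sight is well-defined and of the correct sign: by \eqref{eq:b_12_upper_bound} and \eqref{eq:b_12_lower_bound} the quantities $(1-b_{11})(1-b_{22})-{b_{12}}^2$ and $4{b_{12}}^2-(1-2b_{11})(1-2b_{22})$ are strictly positive; the identity $\bigl((1-b_{11})(1-b_{22})-{b_{12}}^2\bigr)(1-b_{44})-(1-b_{11}){b_{24}}^2=P_{B[1,2,4]}(1)\ge 0$ (using $\rho(B)\le 1$ and Observation~\ref{obs:spectral_radius}) together with Assumption~\ref{assmp:max_diagonal} makes the radicand of \eqref{eq:b_45_upper_bound} nonnegative, and the radicand of \eqref{eq:b_45_lower_bound} is then automatically positive since each of its factors is. Squaring \eqref{eq:b_45_upper_bound} and \eqref{eq:b_45_lower_bound}, chaining the two resulting inequalities and collecting the ${b_{24}}^2$ terms, with $w:={b_{12}}^2$, gives
\[ {b_{24}}^2\,g(w)<(1-b_{55})(1-b_{44})-\tfrac14(1-2b_{55})(1-2b_{44}), \]
where
\[ g(w)=\frac{(1-2b_{11})(1-2b_{55})}{4w-(1-2b_{11})(1-2b_{22})}+\frac{(1-b_{11})(1-b_{55})}{(1-b_{11})(1-b_{22})-w}. \]

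Next I would bound $g(w)$ from below by a quantity free of $w$. The crucial observation is the splitting
\begin{align*}
&\bigl(4w-(1-2b_{11})(1-2b_{22})\bigr)+4\bigl((1-b_{11})(1-b_{22})-w\bigr) \\
&\qquad =4(1-b_{11})(1-b_{22})-(1-2b_{11})(1-2b_{22})=3-2b_{11}-2b_{22},
\end{align*}
which lets me apply Cauchy--Schwarz in Engel form, $\frac{x_1^2}{a_1}+\frac{x_2^2}{a_2}\ge\frac{(x_1+x_2)^2}{a_1+a_2}$, with $a_1=4w-(1-2b_{11})(1-2b_{22})$ and $a_2=4\bigl((1-b_{11})(1-b_{22})-w\bigr)$ (both positive by \eqref{eq:b_12_lower_bound} and \eqref{eq:b_12_upper_bound}), and $x_1=\sqrt{(1-2b_{11})(1-2b_{55})}$, $x_2=2\sqrt{(1-b_{11})(1-b_{55})}$, to conclude
\[ g(w)\ge\frac{\bigl(\sqrt{(1-2b_{11})(1-2b_{55})}+2\sqrt{(1-b_{11})(1-b_{55})}\bigr)^2}{3-2b_{11}-2b_{22}}; \]
positivity of $1-2b_{11},\,1-2b_{55},\,1-b_{11},\,1-b_{55}$ (Assumption~\ref{assmp:max_diagonal}) is exactly what legitimizes this choice of $x_i$.

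Finally I would substitute the elementary identity $(1-b_{55})(1-b_{44})-\tfrac14(1-2b_{55})(1-2b_{44})=\tfrac14(3-2b_{44}-2b_{55})$ into the first inequality, combine it with the lower bound for $g(w)$, cancel the positive factor $3-2b_{11}-2b_{22}$, and take square roots — using that $3-2b_{11}-2b_{22}$ and $3-2b_{44}-2b_{55}$ are positive by Assumption~\ref{assmp:max_diagonal} — which yields precisely \eqref{eq:b_24_better_upper_bound}. I do not anticipate a real obstacle: the work is short once the algebraic splitting $3-2b_{11}-2b_{22}=a_1+a_2$ is spotted, and the step most prone to slips is keeping the chain of inequalities correctly oriented while combining the upper bound on ${b_{45}}^2$ with the lower one and while tracking which denominators are strictly positive.
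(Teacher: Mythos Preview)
Your proof is correct and is genuinely cleaner than the paper's. Both arguments start the same way: chain \eqref{eq:b_45_upper_bound} and \eqref{eq:b_45_lower_bound} to obtain $b_{24}^2\,g(w)<\tfrac14(3-2b_{44}-2b_{55})$ with $w=b_{12}^2$; in the paper's notation $g(w)$ is precisely the denominator of $f_4(w)$, so your inequality is the same as the paper's $b_{24}^2<f_4(b_{12}^2)$. The difference is in how the dependence on $w$ is removed. The paper treats this as a calculus problem: it differentiates $f_4$, argues there is a unique critical point $x_0$ in the admissible interval, solves for $x_0$, and evaluates $f_4(x_0)$ explicitly. You instead bound $g(w)$ from below in one stroke via Cauchy--Schwarz in Engel form (Titu's lemma), using the key algebraic splitting $a_1+a_2=3-2b_{11}-2b_{22}$. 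Since Titu's lower bound is attained exactly when $x_1/a_1=x_2/a_2$, your bound coincides with the paper's $f_4(x_0)$, so nothing is lost. Your route avoids the monotonicity and uniqueness discussion entirely, is shorter, and makes it transparent why the final expression has the form it does; the paper's calculus approach is more mechanical but also identifies the extremizing $b_{12}^2$ explicitly, which is not needed for the lemma itself.
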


\begin{proof}
First note that \eqref{eq:b_45_upper_bound} is well-defined by \eqref{eq:b_12_upper_bound} and due to $ \rho \left( B \right) \leq 1 $, and that \eqref{eq:b_45_lower_bound} is well-defined by \eqref{eq:b_12_lower_bound} and due to Assumption~\ref{assmp:max_diagonal}. By \eqref{eq:b_45_upper_bound} and \eqref{eq:b_45_lower_bound} we get
\begin{align*}
\frac{1}{2} \sqrt{ \left( 1 - 2 b_{55} \right) \left( 1 - 2 b_{44} + \frac{ 4 \left( 1 - 2 b_{11} \right) {b_{24}}^2 }{ 4 {b_{12}}^2 - \left( 1 - 2 b_{11} \right) \left( 1 - 2 b_{22} \right) } \right) } < b_{45} \quad \\
\leq \sqrt{ \left( 1 - b_{55} \right) \left( 1 - b_{44} - \frac{ \left( 1 - b_{11} \right) {b_{24}}^2 }{ \left( 1 - b_{11}\right) \left( 1 - b_{22} \right) - {b_{12}}^2 } \right) } .
\end{align*}
Equivalently,
\begin{equation}
{b_{24}}^2 < f_4 \left( {b_{12}}^2 \right) , \label{eq:f_4_relation}
\end{equation}
where
\[ f_4 : E \to \mathbb{R} , \quad E = \left( \frac{1}{4} R_{1,2}, Q_{1,2} \right) , \]
\[ f_4 \left( x \right) = \frac{ Q_{4,5} - \frac{1}{4} R_{4,5} }{ \frac{ Q_{1,5} }{ Q_{1,2} - x } + \frac{ R_{1,5} }{ 4 x - R_{1,2} } } \]
and
\begin{align*}
Q_{i,j} &= \left( 1 - b_{ii} \right) \left( 1 - b_{jj} \right) , \\
R_{i,j} &= \left( 1 - 2 b_{ii} \right) \left( 1 - 2 b_{jj} \right) .
\end{align*}
Note that
\[ Q_{1,2} - \frac{1}{4} R_{1,2} = \frac{1}{4} \left( 3 - 2 b_{11} - 2 b_{22} \right) > 0 \]
so $E$ is well-defined. Also note that by \eqref{eq:b_12_upper_bound} and \eqref{eq:b_12_lower_bound} $ {b_{12}}^2 \in E $.

We have
\[ \frac{d}{dx} f_4 \left( x \right) = - \frac{ \left( Q_{4,5} - \frac{1}{4} R_{4,5} \right) \left( \frac{ Q_{1,5} }{ \left( Q_{1,2} - x \right)^2 } - \frac{ 4 R_{1,5} }{ \left( 4 x - R_{1,2} \right)^2 } \right) }{ \left( \frac{ Q_{1,5} }{ Q_{1,2} - x } + \frac{ R_{1,5} }{ 4 x - R_{1,2} } \right)^2 } , \]
which is well-defined for $ x \in E $. Note that $ Q_{4,5} - \frac{1}{4} R_{4,5} > 0 $. As $ \frac{ Q_{1,5} }{ \left( Q_{1,2} - x \right)^2 } $ is strictly increasing and $ \frac{ 4 R_{1,5} }{ \left( 4 x - R_{1,2} \right)^2 } $ is strictly decreasing in $ x \in E $, and as
\[ \lim_{ x \to Q_{1,2} } \frac{ Q_{1,5} }{ \left( Q_{1,2} - x \right)^2 } = \lim_{ x \to \frac{1}{4} R_{1,2} } \frac{ 4 R_{1,5} }{ \left( 4 x - R_{1,2} \right)^2 } = \infty , \]
there is exactly one $ x_0 \in E $ for which $ \frac{d}{dx} f_4 \left( x_0 \right) = 0 $. Also, $ f_4 \left( x \right) $ is increasing when $ x \in \left( \frac{1}{4} R_{1,2} , x_0 \right) $ and decreasing when $ x \in \left( x_0, Q_{1,2} \right) $, and $ f_4 \left( x \right) \leq f_4 \left( x_0 \right) $ for every $ x \in E $. In particular, $ f_4 \left( {b_{12}}^2 \right) \leq f_4 \left( x_0 \right) $. As we are looking for $ x_0 \in E $, a solution of
\[ \frac{ Q_{1,5} }{ \left( Q_{1,2} - x_0 \right)^2 } = \frac{ 4 R_{1,5} }{ \left( 4 x_0 - R_{1,2} \right)^2 } \]
is also a solution of
\[ \frac{ \sqrt{ Q_{1,5} } }{ Q_{1,2} - x_0 } = \frac{ 2 \sqrt{ R_{1,5} } }{ 4 x_0 - R_{1,2} } . \]
Solving, we get
\[ x_0 = \frac{ \sqrt{ Q_{1,5} } R_{1,2} + 2 \sqrt{ R_{1,5} } Q_{1,2} }{ 4 \sqrt{ Q_{1,5} } + 2 \sqrt{ R_{1,5} } } \]
and
\begin{equation}
f_4 \left( x_0 \right) = \frac{ \left( 4 Q_{4,5} - R_{4,5} \right) \left( 4 Q_{1,2} - R_{1,2} \right) }{ 4 \left( 2 \sqrt{ Q_{1,5} } + \sqrt{ R_{1,5} } \right)^2 } \label{eq:max_f_4} .
\end{equation}
By \eqref{eq:f_4_relation} and \eqref{eq:max_f_4} we get \eqref{eq:b_24_better_upper_bound}.
\end{proof}

\begin{lemma} \label{lem:b_12_better_upper_bound}
If $ \rho \left( B \right) \leq 1 $, \eqref{eq:b_24_upper_bound}, \eqref{eq:b_13_upper_bound}, \eqref{eq:b_24_lower_bound} and \eqref{eq:b_13_lower_bound} hold, then
\begin{equation}
b_{12} < \frac{ \sqrt{ \left( 3 - 2 b_{11} - 2 b_{33} \right) \left( 3 - 2 b_{22} - 2 b_{44} \right) } }{ 2 \left( 2 \sqrt{ \left( 1 - b_{33} \right) \left( 1 - b_{44} \right) } + \sqrt{ \left( 1 - 2 b_{33} \right) \left( 1 - 2 b_{44} \right) } \right) } . \label{eq:b_12_better_upper_bound}
\end{equation}
\end{lemma}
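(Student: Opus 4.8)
The plan is to run the argument of Lemma~\ref{lem:b_24_better_upper_bound} essentially verbatim, with the principal submatrix $B[1,2,3,4]$ in the role played there by $B[1,2,4,5]$. Recall from the proof of Lemma~\ref{lem:b_24_and_b_13_conditions} that a permutation conjugate of $B[1,2,3,4]$ has the shape of the matrix $M$ in Lemma~\ref{lem:4x4_conditions} with $(x_1,x_2,x_3,x_4)=(b_{22},b_{11},b_{44},b_{33})$ and $(y_1,y_2,y_3)=(b_{12},b_{24},b_{13})$. Under this dictionary \eqref{eq:b_13_upper_bound} is the instance of \eqref{eq:y_3_upper_bound} and \eqref{eq:b_13_lower_bound} is the negation of \eqref{eq:y_3_2nd_upper_bound}, so the hypotheses squeeze $b_{13}$ from above and below exactly as \eqref{eq:b_45_upper_bound} and \eqref{eq:b_45_lower_bound} squeezed $b_{45}$ in the earlier lemma.

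First I would record the well-definedness: the radicand in \eqref{eq:b_13_upper_bound} is nonnegative because $P_{B[1,2,4]}(1)\ge 0$ (from $\rho(B)\le 1$ and Observation~\ref{obs:spectral_radius}); the radicand in \eqref{eq:b_13_lower_bound} is nonnegative by \eqref{eq:b_24_lower_bound}; and $(1-b_{22})(1-b_{44})-{b_{24}}^2>0$ by \eqref{eq:b_24_upper_bound} while $4{b_{24}}^2-(1-2b_{22})(1-2b_{44})>0$ by \eqref{eq:b_24_lower_bound}. Squaring both \eqref{eq:b_13_upper_bound} and \eqref{eq:b_13_lower_bound}, chaining through ${b_{13}}^2$, and rearranging (legitimate since the two denominators just mentioned are positive) gives, with $Q_{i,j}=(1-b_{ii})(1-b_{jj})$ and $R_{i,j}=(1-2b_{ii})(1-2b_{jj})$ as in the proof of Lemma~\ref{lem:b_24_better_upper_bound},
\[
{b_{12}}^2<\tilde f\bigl({b_{24}}^2\bigr),\qquad
\tilde f(x)=\frac{Q_{1,3}-\tfrac14 R_{1,3}}{\dfrac{Q_{3,4}}{Q_{2,4}-x}+\dfrac{R_{3,4}}{4x-R_{2,4}}},
\]
where $\tilde f$ is defined on $\tilde E=\bigl(\tfrac14 R_{2,4},\,Q_{2,4}\bigr)$. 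This interval is nonempty since $Q_{2,4}-\tfrac14 R_{2,4}=\tfrac14(3-2b_{22}-2b_{44})>0$ by Assumption~\ref{assmp:min_diagonal} and Assumption~\ref{assmp:max_diagonal}, and ${b_{24}}^2\in\tilde E$ by \eqref{eq:b_24_upper_bound} and \eqref{eq:b_24_lower_bound}.

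It then remains to maximize $\tilde f$ over $\tilde E$, which I would do exactly as $f_4$ was maximized in Lemma~\ref{lem:b_24_better_upper_bound}: the denominator of $\tilde f$ is the sum of a strictly increasing and a strictly decreasing term on $\tilde E$, each tending to $\infty$ at one endpoint, so $\tilde f$ attains a unique interior maximum at
\[
x_0=\frac{\sqrt{Q_{3,4}}\,R_{2,4}+2\sqrt{R_{3,4}}\,Q_{2,4}}{4\sqrt{Q_{3,4}}+2\sqrt{R_{3,4}}},\qquad
\tilde f(x_0)=\frac{\bigl(4Q_{1,3}-R_{1,3}\bigr)\bigl(4Q_{2,4}-R_{2,4}\bigr)}{4\bigl(2\sqrt{Q_{3,4}}+\sqrt{R_{3,4}}\bigr)^2}.
\]
Since $4Q_{1,3}-R_{1,3}=3-2b_{11}-2b_{33}$ and $4Q_{2,4}-R_{2,4}=3-2b_{22}-2b_{44}$, taking square roots in ${b_{12}}^2<\tilde f({b_{24}}^2)\le\tilde f(x_0)$ produces precisely \eqref{eq:b_12_better_upper_bound}.

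I do not expect a genuine obstacle: the statement is the exact analogue of Lemma~\ref{lem:b_24_better_upper_bound} with the index set $\{1,2,4,5\}$ relabelled to $\{1,2,3,4\}$ (via $1\mapsto4$, $2\mapsto2$, $4\mapsto1$, $5\mapsto3$), and the only thing needing attention — as in that lemma — is tracking the sign and nonvanishing conditions that make the squarings, the cross-multiplication, and the identification of the location of the maximum valid. Accordingly I would either spell out those few lines of algebra or simply write ``by the same computation as in the proof of Lemma~\ref{lem:b_24_better_upper_bound}, applied to $B[1,2,3,4]$ in place of $B[1,2,4,5]$.''
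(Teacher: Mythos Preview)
Your proposal is correct and is essentially the same argument as the paper's. The paper presents it as a formal symbol substitution (replacing $b_{44},b_{24},b_{12},b_{11},b_{13},b_{33}$ by $b_{11},b_{12},b_{24},b_{44},b_{45},b_{55}$ in the hypotheses to reduce to Lemma~\ref{lem:b_24_better_upper_bound}), which is exactly the relabelling $\{1,2,4,5\}\to\{1,2,3,4\}$ you identify; you simply spell out the resulting computation with $B[1,2,3,4]$ in place of invoking the substitution.
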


\begin{proof}
We obtain the result from Lemma~\ref{lem:b_24_better_upper_bound} in the following way:

We formally replace the symbols $ b_{44} $, $ b_{24} $, $ b_{12} $, $ b_{11} $, $ b_{13} $, $ b_{33} $ by $ b_{11} $, $ b_{12} $, $ b_{24} $, $ b_{44} $, $ b_{45} $, $ b_{55} $ respectively (keeping $ b_{22} $ unchanged) into \eqref{eq:b_24_upper_bound}, \eqref{eq:b_13_upper_bound}, \eqref{eq:b_24_lower_bound} and \eqref{eq:b_13_lower_bound}, which are the assumptions of this Lemma. We then get respectively \eqref{eq:b_12_upper_bound}, \eqref{eq:b_45_upper_bound}, \eqref{eq:b_12_lower_bound} and \eqref{eq:b_45_lower_bound}, which are the assumptions of Lemma~\ref{lem:b_24_better_upper_bound}. Therefore, we get \eqref{eq:b_24_better_upper_bound}. By doing the opposite replacement we get \eqref{eq:b_12_better_upper_bound}.
\end{proof}

\begin{theorem} \label{th:pattern_C}
If $ \rho \left( B \right) \leq 1 $ then $ \lambda_3 \leq \frac{1}{2} $
\end{theorem}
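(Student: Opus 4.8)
The plan is to mirror the proof of Theorem~\ref{th:pattern_H}: assume $\rho(B)\le 1$ and repeatedly invoke the sufficient conditions for $\lambda_3\le\tfrac12$ gathered in Lemmas~\ref{lem:b_12_and_b_45_conditions}, \ref{lem:b_24_and_b_13_conditions}, \ref{lem:b_35_sufficient_condition_2345} and~\ref{lem:b_35_sufficient_condition_1235}. If any of them applies we are done, so we may assume every one of them fails. This turns each failed sufficient condition into a lower bound on an off-diagonal entry, while the ``part~1'' statements of Lemmas~\ref{lem:b_12_and_b_45_conditions} and~\ref{lem:b_24_and_b_13_conditions} supply, for free, the matching upper bounds available under $\rho(B)\le1$. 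Concretely we collect \eqref{eq:b_12_upper_bound}, \eqref{eq:b_45_upper_bound}, \eqref{eq:b_12_lower_bound}, \eqref{eq:b_45_lower_bound}; then \eqref{eq:b_24_upper_bound}, \eqref{eq:b_13_upper_bound}, \eqref{eq:b_24_lower_bound}, \eqref{eq:b_13_lower_bound}; and finally \eqref{eq:b_35_lower_bound}, \eqref{eq:b_35_lower_bound_2345}, \eqref{eq:b_35_lower_bound_1235}. All along, Lemma~\ref{lem:not_similar_to_positive_C} provides the structural inequalities \eqref{eq:b_12_geq_b_35}, \eqref{eq:b_24_geq_b_35}, $b_{45}\ge b_{13}$, and $b_{24}\ge b_{13}$ in the case $b_{33}\ge b_{44}$, which the sharpening lemmas require as hypotheses.

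Next I would run the accumulated inequalities through the ``better bound'' lemmas in sequence. Lemma~\ref{lem:b_45_better_upper_bound} yields \eqref{eq:b_45_better_upper_bound} and Lemma~\ref{lem:b_13_better_upper_bound} yields \eqref{eq:b_13_better_upper_bound}; feeding these in, Lemma~\ref{lem:b_24_better_lower_bound} yields \eqref{eq:b_24_better_lower_bound} and Lemma~\ref{lem:b_12_better_lower_bound} yields \eqref{eq:b_12_better_lower_bound}; and Lemmas~\ref{lem:b_24_better_upper_bound} and~\ref{lem:b_12_better_upper_bound} yield the sharpened upper bounds \eqref{eq:b_24_better_upper_bound} and \eqref{eq:b_12_better_upper_bound}. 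The bookkeeping point here is to check that each lemma has all of its hypotheses available exactly when it is invoked; this is routine but must be done carefully, since the hypothesis lists are long. After this step, $b_{24}^2$ is trapped strictly between the quantity in \eqref{eq:b_24_better_lower_bound} and the quantity in \eqref{eq:b_24_better_upper_bound}, and similarly $b_{12}^2$ between \eqref{eq:b_12_better_lower_bound} and \eqref{eq:b_12_better_upper_bound}, while the diagonal entries obey $0\le b_{ii}<\tfrac12$, $\sum b_{ii}=\tfrac12$, $b_{22}\le\tfrac1{10}$ and $b_{11}\ge b_{44}$ (with $b_{11},b_{33},b_{55}>0$).

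The main obstacle is the final step: showing that this configuration is impossible. The lower bounds already force $b_{24}>\tfrac12$ and $b_{12}>\tfrac12$, so the contradiction must come from the sharpened upper bounds; I expect that the $b_{24}$ bound alone will not suffice and that one must combine the $b_{24}$ and $b_{12}$ inequalities, most likely after splitting on $b_{33}\ge b_{44}$ versus $b_{33}<b_{44}$ (so as to exploit the extra inequality $b_{24}\ge b_{13}$ from Lemma~\ref{lem:not_similar_to_positive_C}) and treating the boundary configurations $b_{22}=0$ and $b_{44}=0$ with some care, much as $s=t=0$ and $t=\tfrac12$ were handled at the end of Theorem~\ref{th:pattern_H}. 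Clearing the square-root denominators in \eqref{eq:b_24_better_upper_bound} and \eqref{eq:b_12_better_upper_bound} reduces each case to a polynomial inequality in $b_{11},\dots,b_{55}$ on the simplex-type domain above; verifying these inequalities is elementary but lengthy, so I would defer the computations to an appendix, following the convention already used for Lemma~\ref{lem:spectral_radius_of_A[1,2,3,5]}. The genuine difficulty is thus concentrated in finding the right way to combine the four sharpened bounds and confirming that the resulting polynomial system has no solution on the prescribed domain.
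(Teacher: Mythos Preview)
Your chain of lemma invocations matches the paper's proof exactly, step for step, through the accumulation of \eqref{eq:b_12_upper_bound}--\eqref{eq:b_12_better_upper_bound}. The divergence is entirely in the final contradiction step, and there your plan differs from what the paper actually does.

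The paper does \emph{not} split on $b_{33}\gtrless b_{44}$ and does not try to derive a contradiction solely from the four sharpened bounds on $b_{12}$ and $b_{24}$. Instead it splits on whether $b_{33}\ge\tfrac{26}{100}$ or $b_{55}\ge\tfrac{26}{100}$. In that region (Appendix~\ref{app:b_33_or_b_55_geq_0.26}) one of the pairs \eqref{eq:b_24_better_lower_bound}/\eqref{eq:b_24_better_upper_bound} or \eqref{eq:b_12_better_lower_bound}/\eqref{eq:b_12_better_upper_bound} is shown to be incompatible, roughly along the lines you envision. But in the complementary region $b_{33},b_{55}<\tfrac{26}{100}$ the paper abandons that route entirely: instead (Appendix~\ref{app:b_33_and_b_55_leq_0.26}) it uses \emph{all} of the accumulated lower bounds---on $b_{12},b_{13},b_{24},b_{35},b_{45}$, not just $b_{12}$ and $b_{24}$---to build an entrywise minorant $B_{\min}\le B$ and then shows $P_{B_{\min}}(1)<0$ by a numerical case analysis over nineteen sub-boxes of diagonal values, forcing $\rho(B)\ge\rho(B_{\min})>1$.

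So the ``genuine difficulty'' you flag is real, and your proposed mechanism for it is not the one the paper uses. As a warning that the four $b_{12}/b_{24}$ bounds alone may not suffice: near $(b_{11},b_{22},b_{33},b_{44},b_{55})=(\tfrac12,0,0,0,0)$ the $b_{24}$ pair is compatible (the lower bound is about $\tfrac{1}{\sqrt2}$ while the upper bound is about $\tfrac{\sqrt3}{2}$), so at minimum you would have to interleave the $b_{12}$ and $b_{24}$ pairs, and even then it is not clear a uniform polynomial argument goes through on the whole simplex. The paper's $B_{\min}$ device sidesteps this by bringing $b_{13},b_{35},b_{45}$ back into play and returning to the original hypothesis $\rho(B)\le1$ rather than squeezing two entries.
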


\begin{proof}
By Lemma~\ref{lem:not_similar_to_positive_C} we have \eqref{eq:b_12_geq_b_35}, \eqref{eq:b_24_geq_b_35}. By Lemma~\ref{lem:b_12_and_b_45_conditions} we have \eqref{eq:b_12_upper_bound} and \eqref{eq:b_45_upper_bound}. Also, if \eqref{eq:b_12_sufficient_upper_bound} holds then we are done. Otherwise, we have \eqref{eq:b_12_lower_bound}. If \eqref{eq:b_45_sufficient_upper_bound} holds we are done. Otherwise, we have \eqref{eq:b_45_lower_bound}. By Lemma~\ref{lem:b_24_and_b_13_conditions} we have \eqref{eq:b_24_upper_bound} and \eqref{eq:b_13_upper_bound}. Also, if \eqref{eq:b_24_sufficient_upper_bound} holds then we are done. Otherwise, we have \eqref{eq:b_24_lower_bound}. If \eqref{eq:b_13_sufficient_upper_bound} holds we are done. Otherwise, we have \eqref{eq:b_13_lower_bound}. If the conditions of Lemma~\ref{lem:b_35_sufficient_condition_2345} hold or the conditions of Lemma~\ref{lem:b_35_sufficient_condition_1235} hold we are done. Otherwise, we have \eqref{eq:b_35_lower_bound}, \eqref{eq:b_35_lower_bound_2345} and \eqref{eq:b_35_lower_bound_1235}. By Lemma~\ref{lem:b_45_better_upper_bound} we have \eqref{eq:b_45_better_upper_bound} and by Lemma~\ref{lem:b_13_better_upper_bound} we have \eqref{eq:b_13_better_upper_bound}. By Lemma~\ref{lem:b_24_better_lower_bound} we have \eqref{eq:b_24_better_lower_bound} and by Lemma~\ref{lem:b_12_better_lower_bound} we have \eqref{eq:b_12_better_lower_bound}. By Lemma~\ref{lem:b_24_better_upper_bound} we have \eqref{eq:b_24_better_upper_bound} and by Lemma~\ref{lem:b_12_better_upper_bound} we have \eqref{eq:b_12_better_upper_bound}. If $ b_{33} \geq \frac{26}{100} $ or $ b_{55} \geq \frac{26}{100} $ then by Appendix~\ref{app:b_33_or_b_55_geq_0.26} we get a contradiction. Else, by Appendix~\ref{app:b_33_and_b_55_leq_0.26} we have $ \rho \left( B \right) > 1 $, which contradicts our assumption.
\end{proof}

\begin{appendices}

\section{Proving $ x_{min} \left( s, t \right) \geq q_2^- $} \label{app:xmin_geq_q2}

\begin{lemma}
Let $ 0 \leq s < \frac{1}{2} $, $ 0 \leq t \leq \frac{1}{4} - \frac{1}{2} s $, $ t + s > 0 $ and let $ x_{min} \left( s, t \right) $ and $ q_2^- $ be as defined in Lemma~\ref{lem:spectral_radius_of_A[1,2,3,5]}. Then $ x_{min} \left( s, t \right) \geq q_2^- $.
\end{lemma}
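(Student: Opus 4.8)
The plan is to sidestep a direct comparison of the two nested radicals in $x_{min}(s,t)$ and $q_2^-$, and instead use the quadratic $g_2$ itself. Recall from Lemma~\ref{lem:spectral_radius_of_A[1,2,3,5]} that $g_2(x) = 256(3+2s)C_4 x^2 - 128(1-2s)C_5 x + 16(1-2s)^2 C_6$, that its leading coefficient $256(3+2s)C_4$ is positive (since $3+2s>0$ and $C_4 \ge 2(1+s)(1-2s) > 0$ for $0 \le s < \tfrac12$), and that its roots $q_2^- \le q_2^+$ are real, because the explicit form of $q_2^\pm$ shows $C_5^2 - (3+2s)C_4 C_6 = 64(1-2t)^2(3-2t)(1+s)(t+s)^3 \ge 0$ on the stated range. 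Hence $g_2(x) \le 0$ if and only if $q_2^- \le x \le q_2^+$. Therefore it suffices to prove the single inequality $g_2\bigl(x_{min}(s,t)\bigr) \le 0$: this places $x_{min}(s,t)$ in $[q_2^-, q_2^+]$, and in particular yields $x_{min}(s,t) \ge q_2^-$.

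Next I would substitute $x_{min}(s,t) = \tfrac12(1-s) - \tfrac14\sqrt{U}$ with $U = U(s,t) = 1 - \frac{(1-2s)(3-2s)(t+s)}{1+t+s}$, which is nonnegative on the range (as observed in Lemma~\ref{lem:spectral_radius_of_A[1,2,3,5]}) and whose numerator over $1+t+s$ is a polynomial. Substituting into $g_2$ and collecting terms gives
\[ g_2\bigl(x_{min}(s,t)\bigr) = P_1(s,t) + P_2(s,t)\,\sqrt{U(s,t)}, \]
where $P_2(s,t) = 32\bigl((1-2s)C_5 - 2(1-s)(3+2s)C_4\bigr)$ is a polynomial and $P_1(s,t)$ is rational with denominator a power of $1+t+s$. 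Multiplying through by the positive quantity $1+t+s$ clears all denominators and leaves a single polynomial radical, so the claim becomes a comparison of the shape $\widetilde{P}_1 \le -P_2\sqrt{V}$ with $\widetilde{P}_1, P_2, V$ polynomials and $V \ge 0$. I would then split the triangle $T = \{\,0 \le s < \tfrac12,\ 0 \le t \le \tfrac14 - \tfrac12 s\,\}$ according to the sign of the polynomial $P_2$: on a region where $P_2 \le 0$ the right side is nonnegative, so the inequality is immediate when $\widetilde{P}_1 \le 0$ and otherwise reduces, upon squaring, to $\widetilde{P}_1^{\,2} \le P_2^{\,2} V$; on a region where $P_2 \ge 0$ one instead reduces to $\widetilde{P}_1 \le 0$ together with $\widetilde{P}_1^{\,2} \ge P_2^{\,2} V$.

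It then remains to verify the resulting polynomial inequalities over $T$. The natural device is to straighten the region by setting $t = \bigl(\tfrac14 - \tfrac12 s\bigr)\tau$ with $\tau \in [0,1]$ and $s \in [0,\tfrac12)$, after which each inequality is polynomial in $(s,\tau)$ on a half-open box; one then either rewrites the polynomials in shifted variables so that their coefficients are manifestly of the right sign, or fixes $s$ and analyses the one-variable polynomial in $\tau$ (locating its critical points and bounding the values there, and checking the endpoints $\tau = 0$, i.e. $t=0$, and $\tau = 1$, i.e. $t = \tfrac14 - \tfrac12 s$, together with the degenerate corner $s = \tfrac12$, where necessarily $t=0$, and the excluded limit $t+s \to 0$). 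I expect this last step to be the main obstacle: the degree in $t$ is already fairly large because of the $t^3$ term in $C_5$ and the $(t+s)^3$ in the discriminant, so the inequalities after squaring are bulky, and organising the sign analysis of $P_2$ (which may itself require subdividing $T$) and the ensuing case distinctions is where the real work lies — a finite but lengthy computation, which is why it is deferred to this appendix rather than placed in the body of the paper.
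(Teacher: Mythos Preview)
Your strategy is sound and leads to a valid proof, but it takes a different route from the paper's. The paper does not substitute $x_{min}$ into $g_2$. Instead it first applies the elementary bound $\sqrt{1-\alpha}\le 1-\tfrac{\alpha}{2}$ to the inner radical of $x_{min}$, obtaining the purely rational lower estimate
\[
x_{min}(s,t)\;\ge\;\frac{(1-2s)\bigl((5-2s)(t+s)+2\bigr)}{8(1+t+s)},
\]
and then compares this rational quantity with $q_2^-$. After one rearrangement and one squaring the difference factors as $-(3+2s)(t+s)^2 C_4\,h_2(s,t)\big/\bigl(4(1+t+s)^2\bigr)$, so everything reduces to a sign analysis of two explicit low-degree polynomials $h_1,h_2$ on the triangle; these are handled by noting $h_1$ is concave in $t$ and $h_2$ convex in $t$ and in $s$, and then checking the edges. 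Your route trades that preliminary simplification for a cleaner reduction (just evaluate the quadratic $g_2$ at $x_{min}$), but the polynomial $\widetilde P_1^{\,2}-P_2^{\,2}V$ you must control after squaring will carry the full $U$ and hence be of higher degree, so the convexity/edge-checking step will be bulkier than in the paper. One further point to be aware of: $g_2(x_{min})\le 0$ is strictly stronger than $x_{min}\ge q_2^-$, since it also forces $x_{min}\le q_2^+$; your reduction therefore only goes through if this extra inequality happens to hold on the whole triangle. It does (consistently with Appendix~\ref{app:q2_qeq_xmax}), but you should flag that your sufficient condition is not a priori equivalent to the statement being proved.
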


\begin{proof}
We first note that by the proof of \eqref{eq:a_45_better_lower_bound} in Lemma~\ref{lem:a_45_better_lower_bound} we know that
\[ \frac{ \left( 1 - 2 s \right) \left( 3 - 2 s \right) \left( t + s \right) }{ 1 + t + s } \leq 1 . \]
As $ \sqrt{ 1 - \alpha } \leq 1 - \frac{1}{2} \alpha $ for $ 0 \leq \alpha \leq 1 $ we get
\[ 1 - \frac{1}{2} \frac{ \left( 1 - 2 s \right) \left( 3 - 2 s \right) \left( t + s \right) }{ 1 + t + s } \geq \sqrt{ 1 - \frac{ \left( 1 - 2 s \right) \left( 3 - 2 s \right) \left( t + s \right) }{ 1 + t + s } } . \]
Hence,
\begin{align}
x_{min} \left( s, t \right) &= \frac{1}{2} \left( 1 - s \right) - \frac{1}{4} \sqrt{ 1 - \frac{ \left( 1 - 2 s \right) \left( 3 - 2 s \right) \left( t + s \right) }{ 1 + t + s } } \nonumber \\
 &\geq \frac{1}{2} \left( 1 - s \right) - \frac{1}{4} \left( 1 - \frac{1}{2} \frac{ \left( 1 - 2 s \right) \left( 3 - 2 s \right) \left( t + s \right) }{ 1 + t + s } \right) \nonumber \\
 &= \frac{ \left( 1 - 2 s \right) \left( \left( 5 - 2 s \right) \left( t + s \right) + 2 \right) }{ 8 \left( 1 + t + s \right) } . \label{eq:xmin_geq_xmin2}
\end{align}

Our goal is to show that
\begin{equation}
\frac{ \left( 1 - 2 s \right) \left( \left( 5 - 2 s \right) \left( t + s \right) + 2 \right) }{ 8 \left( 1 + t + s \right) } \geq q_2^- . \label{eq:xmin2_geq_q2}
\end{equation}
This is equivalent to showing that
\begin{equation}
8 \left( 1 - 2 t \right) \sqrt{ \left( 3 - 2 t \right) \left( 1 + s \right) \left( t + s \right)^3 } \geq \frac{ \left( t + s \right) h_1 \left( s, t \right) }{ 2 \left( 1 + t + s \right) } , \label{eq:xmin2_geq_q2_alt1}
\end{equation}
where
\begin{align*}
 h_1 \left( s, t \right) &= \frac{ 2 \left( 1 + t + s \right) }{ \left( t + s \right) } \left( C_5 - \frac{ \left( 3 + 2 s \right) \left( \left( 5 - 2 s \right) \left( t + s \right) + 2 \right) }{ 2 \left( 1 + t + s \right) } C_4 \right) \\
 &= 64 s^2 t^2 + 64 t s^3 - 24 s^3 + 32 t^3 - 16 t s^2 + 48 s t^2 \\
 & \quad + 12 s^2 - 152 t^2 - 168 t s + 78 s + 56 t - 3 .
\end{align*}
Note that $ h_1 $ is defined as a rational function in $s$ and $t$, but the terms $ t + s $ and $ 1 + t + s $ are both positive and appear in the numerator and denominator of $ h_1 $, so can be reduced.

If $ h_1 \left( s, t \right) < 0 $ then obviously \eqref{eq:xmin2_geq_q2} holds, so we want to learn more about the sign of $ h_1 $. We have
\begin{align*}
 \frac{ \partial^2 }{ \partial t^2 } h_1 \left( s, t \right) &= 128 s^2 + 96 s + 192 t - 304 \\
 &\leq 128 s^2 + 96 s + 192 \cdot \left( \frac{1}{4} - \frac{1}{2} s \right) - 304 \\
 &= 128 s^2 - 256 < 128 \cdot \frac{1}{4} - 256 = -224 < 0 ,
\end{align*}
so $ h_1 $ is concave in $t$ (for any fixed $s$ in its domain). Therefore,
\[ \min_{ 0 \leq t \leq \frac{1}{4} - \frac{1}{2} s } h_1 \left( s, t \right) = \min \left\{ h_1 \left( s, 0 \right), h_1 \left( s, \frac{1}{4} - \frac{1}{2} s \right) \right\} . \]
The roots of
\[ h_1 \left( s, 0 \right) = -24 s^3 + 12 s^2 + 78 s - 3 \]
are $ -1.591478567, 0.03825363319, 2.053224934 $. Let $ s_0 $ be the second largest root. Then $ h_1 \left( s, 0 \right) $ is negative when $ 0 \leq s < s_0 $ and nonnegative when $ s_0 \leq s < \frac{1}{2} $. The roots of
\[ h_1 \left( s, \frac{1}{4} - \frac{1}{2} s \right) = -2 \left( 1 + s \right) \left( 8 s^3 - 4 s^2 - 22 s - 1 \right) \]
are $ -1.400220700, -1, -0.04587223942, 1.946092939 $, so $ h_1 \left( s, \frac{1}{4} - \frac{1}{2} s \right) > 0 $ for $ 0 \leq s < \frac{1}{2} $. Therefore, for $ s_0 < s < \frac{1}{2} $ and $ 0 \leq t \leq \frac{1}{4} - \frac{1}{2} s $ we have $ h_1 \left( s, t \right) > 0 $.

Another observation is that if $ h_1 \left( s, 0 \right) < 0 $, then as $ h_1 $ is concave in $t$ and as $ h_1 \left( s, \frac{1}{4} - \frac{1}{2} s \right) > 0 $ there must be a value $ t_s $ with the following properties:
\begin{enumerate}
\item $ h_1 \left( s, t_s \right) = 0 $,
\item $ h_1 \left( s, t \right) < 0 $ for $ 0 \leq t < t_s $,
\item $ h_1 \left( s, t \right) > 0 $ for $ t_s < t \leq \frac{1}{4} - \frac{1}{2} s $,
\item $ h_1 $ is monotonically increasing in $t$ for $ 0 \leq t \leq t_s $.
\end{enumerate}
This means that if $ h_1 \left( s, t \right) < 0 $ then $ h_1 \left( s, \tilde{t} \right) < 0 $ for any $ 0 \leq \tilde{t} \leq t $.

If $ 0 \leq s \leq s_0 $ then, as $ t \leq \frac{1}{4} - \frac{1}{2} s < \frac{3}{8} $, we have
\begin{align*}
 \frac{ \partial^2 }{ \partial s^2 } h_1 \left( s, t \right) &= 128 t^2 - 32 t + 48 \left( 8 t - 3 \right) s + 24 \\
 &\geq 128 t^2 - 32 t + 48 \left( 8 t - 3 \right) s_0 + 24 \\
 &> 128 t^2 - 32 t + 48 \left( 8 t - 3 \right) \cdot \frac{4}{100} + 24 = 128 t^2 - \frac{416}{25} t + \frac{456}{25} > 0 .
\end{align*}
Hence, $ h_1 $ is convex in $s$ when $ 0 \leq s \leq s_0 $. Also, as $ 0 \leq t \leq \frac{1}{4} $,
\[ \left. \frac{ \partial }{ \partial s } h_1 \left( s, t \right) \right|_{ s = 0 } = 48 t^2 - 168 t + 78 \geq 0 - 168 \cdot \frac{1}{4} + 78 = 36 > 0 \]
and so $ h_1 $ is monotonically increasing in $s$ when $ 0 \leq s \leq s_0 $. Therefore,
\[ \min_{ 0 \leq s \leq s_0 } h_1 \left( s, t \right) = h_1 \left( 0, t \right) = 32 t^3 - 152 t^2 + 56 t - 3 , \]
whose roots are $ 0.06482035236, 0.3322609755, 4.352918672 $. Let $ t_0 $ be the smallest root. Therefore, for $ 0 \leq s \leq s_0 $ and $ t_0 < t \leq \frac{1}{4} - \frac{1}{2} s $ we have $ h_1 \left( s, t \right) > 0 $. We conclude that $ h_1 \left( s, t \right) $ can be negative only when $ 0 \leq s < s_0 $ and $ 0 \leq t < t_0 $. Also, if $ h_1 \left( s, t \right) < 0 $ then for any $ 0 \leq \tilde{s} \leq s $ and $ 0 \leq \tilde{t} \leq t $ we have $ h_1 \left( \tilde{s}, \tilde{t} \right) < 0 $.

We now check when
\begin{equation}
8 \left( 1 - 2 t \right) \sqrt{ \left( 3 - 2 t \right) \left( 1 + s \right) \left( t + s \right)^3 } \geq \left| \frac{ \left( t + s \right) h_1 \left( s, t \right) }{ 2 \left( 1 + t + s \right) } \right| \label{eq:xmin2_geq_q2_alt2}
\end{equation}
holds. Note that if \eqref{eq:xmin2_geq_q2_alt2} holds then \eqref{eq:xmin2_geq_q2_alt1} holds as well, and that \eqref{eq:xmin2_geq_q2_alt2} is equivalent to
\begin{align*}
 0 &\leq \left( 8 \left( 1 - 2 t \right) \sqrt{ \left( 3 - 2 t \right) \left( 1 + s \right) \left( t + s \right)^3 } \right)^2 - \left( \frac{ \left( t + s \right) h_1 \left( s, t \right) }{ 2 \left( 1 + t + s \right) } \right)^2 \\
 &= - \frac{ \left( 3 + 2 s \right) \left( t + s \right)^2 C_4 h_2 \left( s, t \right) }{ 4 \left( 1 + t + s \right)^2 } ,
\end{align*}
where
\begin{align*}
 h_2 \left( s, t \right) &= 128 s^4 t + 128 t^2 s^3 - 48 s^4 - 192 t s^3 + 64 s^2 t^2 + 256 s t^3 + 64 t^4 \\
 & \quad + 96 s^3 - 512 t s^2 - 736 s t^2 - 192 t^3 + 232 s^2 + 528 t s + 352 t^2 \\
 & \quad - 136 s - 120 t + 1 .
\end{align*}
In Lemma~\ref{lem:spectral_radius_of_A[1,2,3,5]} we showed that $ C_4 > 0 $. If $ h_2 \left( s, t \right) < 0 $ then obviously \eqref{eq:xmin2_geq_q2_alt2} holds, so we want to learn more about the sign of $ h_2 $. We have
\begin{align*}
 \frac{ \partial^2 }{ \partial t^2 } h_2 \left( s, t \right) &= 256 s^3 + 128 s^2 + 1536 t s + 768 t^2 - 1472 s - 1152 t + 704 , \\
 \frac{ \partial^3 }{ \partial t^3 } h_2 \left( s, t \right) &= 1536 t + 1536 s - 1152 \\
 &\leq 1536 \left( \frac{1}{4} - \frac{1}{2} s \right) + 1536 s - 1152 = -768 \left( 1 - s \right) < 0 .
\end{align*}
Therefore,
\[ \min_{ 0 \leq t \leq \frac{1}{4} - \frac{1}{2} s } \frac{ \partial^2 }{ \partial t^2 } h_2 \left( s, t \right) = \left. \frac{ \partial^2 }{ \partial t^2 } h_2 \left( s, t \right) \right|_{ t = \frac{1}{4} - \frac{1}{2} s } = 256 s^3 - 448 s^2 - 704 s + 464 , \]
which has roots $ -1.333013968, 0.5332693631, 2.549744605 $, and so is positive for $ 0 \leq s < \frac{1}{2} $. We conclude that $ h_2 $ is convex in $t$ (for any fixed $s$ in its domain), which means that
\[ \max_{ 0 \leq t \leq \frac{1}{4} - \frac{1}{2} s } h_2 \left( s, t \right) = \max \left\{ h_2 \left( s, 0 \right), h_2 \left( s, \frac{1}{4} - \frac{1}{2} s \right) \right\} . \]
The roots of
\[ h_2 \left( s, 0 \right) = \left( 1 - 2 s \right) \left( 24 s^3 - 36 s^2 - 134 s + 1 \right) \]
are $ -1.733921023, 0.007447858016, 0.5, 3.226473165 $. Let $ s_1 $ be the second smallest root. Then $ h_2 \left( s, 0 \right) $ is negative when $ s_1 < s < \frac{1}{2} $ and nonnegative when $ 0 \leq s \leq s_1 $. The roots of
\[ h_2 \left( s, \frac{1}{4} - \frac{1}{2} s \right) = \frac{1}{4} \left( 1 - 2 s \right) \left( 16 s^2 + 22 s + 3 \right) \left( 4 s^2 - 8 s - 13 \right) \]
are $ -1.221500234, -1.061552813, -0.1534997659, 0.5, 3.061552813 $. This means that $ h_2 \left( s, \frac{1}{4} - \frac{1}{2} s \right) < 0 $ for $ 0 \leq s < \frac{1}{2} $. Hence, for $ s_1 < s < \frac{1}{2} $ and $ 0 \leq t \leq \frac{1}{4} - \frac{1}{2} s $ we have $ h_2 \left( s, t \right) < 0 $, so \eqref{eq:xmin2_geq_q2} is met.

Also, as $ 0 \leq s < \frac{1}{2} $, we have
\begin{align*}
 \frac{ \partial^2 }{ \partial s^2 } h_2 \left( s, t \right) &= \left( 128 + 768 s \right) t^2 + \left( 1536 s^2 - 1152 s -1024 \right) t \\
& \quad - 576 s^2 + 576 s + 464 , \\
 \frac{ \partial }{ \partial t } \frac{ \partial^2 }{ \partial s^2 } h_2 \left( s, t \right) &= \left( 256 + 1536 s \right) t + 1536 s^2 - 1152 s - 1024 \\
 &\leq \left( 256 + 1536 s \right) \left( \frac{1}{4} - \frac{1}{2} s \right) + 1536 s^2 - 1152 s - 1024 \\
 &= 768 s^2 - 896 s - 960 \\
 &\leq 768 \cdot \left( \frac{1}{2} \right)^2 - 0 - 960 = -768 < 0 .
\end{align*}
Therefore,
\[ \min_{ 0 \leq t \leq \frac{1}{4} - \frac{1}{2} s } \frac{ \partial^2 }{ \partial s^2 } h_2 \left( s, t \right) = \left. \frac{ \partial^2 }{ \partial s^2 } h_2 \left( s, t \right) \right|_{ t = \frac{1}{4} - \frac{1}{2} s } = 8 \left( 3 - 2 s \right) \left( 36 s^2 + 40 s + 9 \right) , \]
which is positive when $ 0 \leq s < \frac{1}{2} $. We conclude that $ h_2 $ is convex in $s$ (for any fixed $t$ in its domain), which means that
\[ \max_{ 0 \leq s \leq \frac{1}{2} - 2 t } h_2 \left( s, t \right) = \max \left\{ h_2 \left( 0, t \right), h_2 \left( \frac{1}{2} - 2 t, t \right) \right\} . \]
The real roots of
\[ h_2 \left( 0, t \right) = 64 t^4 - 192 t^3 + 352 t^2 - 120 t + 1 \]
are $ 0.008546600862, 0.4150148497 $. Let $ t_1 $ be the smaller root. Then $ h_2 \left( 0, t \right) $ is negative when $ t_1 < t \leq \frac{1}{4} $ and nonnegative when $ 0 \leq t \leq t_1 $. The roots of
\[ h_2 \left( \frac{1}{2} - 2 t, t \right) = 16 t \left( 2 t^2 + t - 2 \right) \left( 32 t^2 - 38 t + 9 \right) \]
are $ -1.280776406, 0, 0.3267498830, 0.7807764064, 0.8607501170 $. This means we have $ h_2 \left( \frac{1}{2} - 2 t, t \right) < 0 $ for $ 0 < t \leq \frac{1}{4} $. Hence, for $ 0 \leq s \leq s_1 $ and $ t_1 < t \leq \frac{1}{4} - \frac{1}{2} s $ we have $ h_2 \left( s, t \right) < 0 $, so \eqref{eq:xmin2_geq_q2} is met.

Finally, as $ h_1 \left( \frac{1}{100}, \frac{1}{100} \right) = - \frac{5,283,621}{3,125,000} < 0 $, then for $ 0 \leq s \leq s_1 < \frac{1}{100} $ and $ 0 \leq t \leq t_1 < \frac{1}{100} $ we have $ h_1
\left( s, t \right) < 0 $, so \eqref{eq:xmin2_geq_q2} is met. Therefore, \eqref{eq:xmin2_geq_q2} holds for $ 0 \leq s < \frac{1}{2} $ and $ 0 \leq t \leq \frac{1}{4} - \frac{1}{2} s $. Together with \eqref{eq:xmin_geq_xmin2} we have $ x_{min} \left( s, t \right) \geq q_2^- $.
\end{proof}

\section{Proving $ x_{max} \left( s, t \right) \leq q_2^+ $} \label{app:q2_qeq_xmax}

\begin{lemma}
Let $ 0 \leq s < \frac{1}{2} $, $ 0 \leq t \leq \frac{1}{4} - \frac{1}{2} s $, $ t + s > 0 $ and let $ x_{max} \left( s, t \right) $ and $ q_2^+ $ be as defined in Lemma~\ref{lem:spectral_radius_of_A[1,2,3,5]}. Then $ x_{max} \left( s, t \right) \leq q_2^+ $.
\end{lemma}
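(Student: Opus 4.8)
The plan is to mirror the argument of Appendix~\ref{app:xmin_geq_q2}. First I would replace $ x_{max}(s,t) $ by an explicit rational upper bound. Since $ 0 \le t \le \frac{1}{4} $ gives $ 1 - t > 0 $ and $ 1 - 2t > 0 $, the inequality $ \frac{2uv}{u+v} \le \sqrt{uv} $ (valid for $ u, v > 0 $), applied with $ u = 1-t $ and $ v = 1-2t $, yields $ \sqrt{(1-t)(1-2t)} \ge \frac{2(1-t)(1-2t)}{2-3t} $. Substituting this lower bound into the definition of $ x_{max} $ and using $ (5-6t)(2-3t) - 8(1-t)(1-2t) = 2t^2 - 3t + 2 $ gives
\[
x_{max}(s,t) \;\le\; \frac{(3-2s)\left(2t^2-3t+2\right)}{4\,(3-2t)(2-3t)} \;=:\; \hat{x}_{max}(s,t) ,
\]
where the denominator is positive because $ 0 \le t \le \frac{1}{4} $. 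Hence it suffices to prove $ \hat{x}_{max}(s,t) \le q_2^+(s,t) $.

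Next, following the bookkeeping of Appendix~\ref{app:xmin_geq_q2}, I would split
\[
q_2^+ - \hat{x}_{max} \;=\; \left( \frac{(1-2s)C_5}{4(3+2s)C_4} - \hat{x}_{max} \right) + \frac{2(1-2t)(1-2s)\sqrt{(3-2t)(1+s)(t+s)^3}}{(3+2s)C_4} ,
\]
and, after clearing the positive denominator $ 4(3+2s)C_4(3-2t)(2-3t) $ (recall $ C_4 > 0 $ from Lemma~\ref{lem:spectral_radius_of_A[1,2,3,5]}, and $ 3-2t, 2-3t > 0 $ on the domain), reduce the target inequality to the shape $ A(s,t) + B(s,t)\sqrt{(3-2t)(1+s)(t+s)^3} \ge 0 $ with $ A, B $ explicit polynomials and $ B \ge 0 $ on the triangle $ 0 \le s < \frac{1}{2} $, $ 0 \le t \le \frac{1}{4} - \frac{1}{2}s $. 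If $ A \ge 0 $ we are done immediately; otherwise $ -A $ and $ B\sqrt{\,\cdot\,} $ are both nonnegative, so squaring reduces the claim to a single polynomial inequality $ B^2 (3-2t)(1+s)(t+s)^3 - A^2 \le 0 $, which plays here the role that $ h_2 \le 0 $ played in Appendix~\ref{app:xmin_geq_q2} (with $ A $ playing the role of $ h_1 $).

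To settle the signs of $ A $ and of $ B^2(3-2t)(1+s)(t+s)^3 - A^2 $ on the triangle, I would repeat the convexity device used there: compute the relevant second partial derivatives to show that each of these polynomials is convex or concave in $ t $ for fixed $ s $, and likewise in $ s $ for fixed $ t $, thereby pushing its extreme value onto the boundary edges $ t = 0 $, $ t = \frac{1}{4} - \frac{1}{2}s $, $ s = 0 $ and $ s = \frac{1}{2} - 2t $; on each edge the polynomial collapses to a univariate polynomial of degree at most $ 4 $ whose roots are computed numerically to $ 10^{-10} $, which fixes the sign there. Exactly as in Appendix~\ref{app:xmin_geq_q2}, the edge analyses will leave a small sub-rectangle of the triangle uncovered, and I would dispose of it by a single direct numerical evaluation at one of its corners together with a monotonicity argument propagating the sign over the rectangle.

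The main obstacle is, as in Appendix~\ref{app:xmin_geq_q2}, purely organizational: keeping track of which of the two polynomials is convex, and in which variable, over which portion of the domain, and then carrying out the long but elementary list of degree at most $4$ root computations without error. A secondary point is that the geometric-harmonic mean bound must not be so crude that $ \hat{x}_{max} $ overshoots $ q_2^+ $; a spot check at $ (s,t) = (0, \frac{1}{8}) $ gives $ \hat{x}_{max} \approx 0.278 < q_2^+ \approx 0.311 $, so there is comfortable slack, but should the crude bound ever fail one would substitute the sharper estimate $ \sqrt{(1 - \frac{3}{2}t)^2 - \frac{1}{4}t^2} \ge (1 - \frac{3}{2}t) - \frac{t^2}{8 - 12t} $. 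Everything beyond these points is routine algebra of precisely the kind already relegated to the appendices.
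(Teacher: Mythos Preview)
Your plan is reasonable and would plausibly work, but the paper takes a much shorter route. You bound $x_{\max}$ above by a rational function $\hat{x}_{\max}$ via the harmonic--geometric mean and then intend to rerun the Appendix~\ref{app:xmin_geq_q2} machinery. The paper instead chooses the \emph{linear} intermediate bound $\frac{1}{4} + \frac{1}{3}t - \frac{1}{6}s$: showing $x_{\max} \le \frac{1}{4}+\frac{1}{3}t-\frac{1}{6}s$ reduces after one squaring to $h_4 < 0$ for a trivially bounded quadratic~$h_4$, and showing $\frac{1}{4}+\frac{1}{3}t-\frac{1}{6}s \le q_2^+$ reduces after one squaring to $h_6 \le 0$, where $h_6$ is quadratic in~$t$; convexity in~$t$ is then immediate, the two edge values $t=0$ and $t=\frac{1}{4}-\frac{1}{2}s$ factor cleanly, and no numerical root-finding or sub-rectangle patching is needed at all. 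Your $\hat{x}_{\max}$, though no less sharp numerically, carries the extra denominator $(2-3t)$, so after clearing denominators your $A$ has degree~$5$ in~$t$ and $B^2R - A^2$ has degree around~$10$; you should not expect its second $t$-derivative to keep a fixed sign, so the convexity reduction will not go through as stated without further subdivision or without first extracting the factor~$C_4$ that must appear (as it does in both appendices). One small slip: when $A<0$ the squared inequality should read $B^2 R - A^2 \ge 0$, not $\le 0$; the analogy with ``$h_2 \le 0$'' in Appendix~\ref{app:xmin_geq_q2} is misleading because there the sign of $h_2$ is flipped by the pulled-out negative factor~$-(3+2s)C_4$.
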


\begin{proof}
First we want to show that
\begin{equation}
x_{max} \left( s, t \right) = \frac{ \left( 3 - 2 s \right) \left( 5 - 6 t - 4 \sqrt{ \left( 1 - t \right) \left( 1 - 2 t \right) } \right) }{ 4 \left( 3 - 2 t \right) } \leq \frac{1}{4} + \frac{1}{3} t - \frac{1}{6} s . \label{eq:xmax_leq_xmax2}
\end{equation}
This is equivalent to showing that
\begin{equation}
3 \left( 3 - 2 s \right) \sqrt{ \left( 1 - t \right) \left( 1 - 2 t \right) } \geq h_3 \left( s, t \right) , \label{eq:xmax_leq_xmax2_alt}
\end{equation}
where
\[ h_3 \left( s, t \right) = - \left( 6 - 8 t \right) s + 2 t^2 - 15 t + 9 . \]
As the coefficient of $s$ in $ h_3 $ is negative and $ 0 \leq t \leq \frac{1}{4} $ we have
\begin{align*}
h_3 \left( s, t \right) &\geq - \left( 6 - 8 t \right) \left( \frac{1}{2} - 2 t \right) + 2 t^2 - 15 t + 9 = -14 t^2 + t + 6 \\
 &\geq -14 \cdot \left( \frac{1}{4} \right)^2 + 0 + 6 = \frac{41}{8} > 0 .
\end{align*}
Therefore, \eqref{eq:xmax_leq_xmax2_alt} is equivalent to
\[ 0 \leq \left( 3 \left( 3 - 2 s \right) \sqrt{ \left( 1 - t \right) \left( 1 - 2 t \right) } \right)^2 - \left( h_3 \left( s, t \right) \right)^2 = - t \left( 3 - 2 t \right) h_4 \left( s, t \right) , \]
where
\[ h_4 \left( s, t \right) = 4 s^2 - 16 t s - 2 t^2 + 27 t - 9 . \]
As $ 0 \leq s < \frac{1}{2} $ and $ 0 \leq t \leq \frac{1}{4} $, we have
\[ h_4 \left( s, t \right) \leq 4 s^2 + 27 t - 9 < 4 \cdot \left( \frac{1}{2} \right)^2 + 27 \cdot \frac{1}{4} - 9 = -\frac{5}{4} < 0 , \]
so \eqref{eq:xmax_leq_xmax2} follows.

Our next goal is to show that
\begin{equation}
\frac{1}{4} + \frac{1}{3} t - \frac{1}{6} s \leq q_2^+ . \label{eq:xmax2_leq_q2}
\end{equation}
This is equivalent to showing that
\begin{equation}
6 \left( 1 - 2 t \right) \left( 1 - 2 s \right) \sqrt{ \left( 3 - 2 t \right) \left( 1 + s \right) \left( t + s \right) } \geq h_5 \left( s, t \right) , \label{eq:xmax2_leq_q2_alt1}
\end{equation}
where
\begin{align*}
h_5 \left( s, t \right) &= \frac{ 3 \left( 1 - 2 s \right) }{ 4 \left( t + s \right) } \left( \frac{ \left( 3 + 2 s \right) \left( 3 + 4 t - 2 s \right) }{ 3 \left( 1 - 2 s \right) } C_4 - C_5 \right) \\
 &= 44 t s^2 + 56 s t^2 - 12 s^2 + 36 t^2 - 4 t s + 6 s - 9 t .
\end{align*}
Instead, we check when
\begin{equation}
6 \left( 1 - 2 t \right) \left( 1 - 2 s \right) \sqrt{ \left( 3 - 2 t \right) \left( 1 + s \right) \left( t + s \right) } \geq \left| h_5 \left( s, t \right) \right| \label{eq:xmax2_leq_q2_alt2}
\end{equation}
holds. Note that if \eqref{eq:xmax2_leq_q2_alt2} holds then \eqref{eq:xmax2_leq_q2_alt1} holds as well, and that \eqref{eq:xmax2_leq_q2_alt2} is equivalent to
\begin{align*}
 0 &\leq \left( 6 \left( 1 - 2 t \right) \left( 1 - 2 s \right) \sqrt{ \left( 3 - 2 t \right) \left( 1 + s \right) \left( t + s \right) } \right)^2 - \left( h_5 \left( s, t \right) \right)^2 \\
 &= - \left( 3 + 2 s \right) C_4 h_6 \left( s, t \right) ,
\end{align*}
where
\[ h_6 \left( s, t \right) = \left( 36 s^2 + 44 s + 33 \right) t^2 + \left( - 16 s^2 + 32 s - 12 \right) t + 24 s^2 - 12 s . \]
The coefficient of $ t^2 $ in $ h_6 $ is positive so $ h_6 $ is convex in $t$ (for any fixed $s$ in its domain). Therefore,
\[ \max_{ 0 \leq t \leq \frac{1}{4} - \frac{1}{2} s } h_6 \left( s, t \right) = \max \left\{ h_6 \left( s, 0 \right), h_6 \left( s, \frac{1}{4} - \frac{1}{2} s \right) \right\} . \]
We have
\[ h_6 \left( s, 0 \right) = -12 s \left( 1 - 2 s \right) < 0 \]
and
\[ h_6 \left( s, \frac{1}{4} - \frac{1}{2} s \right) = - \frac{1}{16} \left( 1 - 2 s \right) \left( 72 s^3 + 116 s^2 + 86 s + 15 \right) < 0 , \]
so $ h_6 \left( s, t \right) < 0 $ for $ 0 \leq s < \frac{1}{2} $ and $ 0 \leq t \leq \frac{1}{4} - \frac{1}{2} s $. As we already know that $ C_4 > 0 $, \eqref{eq:xmax2_leq_q2} follows and together with \eqref{eq:xmax_leq_xmax2} we have $ x_{max} \left( s, t \right) \leq q_2^+ $.
\end{proof}

\section{$ b_{33} \geq \frac{26}{100} $ or $ b_{55} \geq \frac{26}{100} $} \label{app:b_33_or_b_55_geq_0.26}

\begin{lemma} \label{lem:b_24_contradiction}
If $ b_{33} \geq \frac{26}{100} $ then \eqref{eq:b_24_better_lower_bound} and \eqref{eq:b_24_better_upper_bound} cannot both hold.
\end{lemma}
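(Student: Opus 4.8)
The plan is to argue by contradiction: assume that \eqref{eq:b_24_better_lower_bound} and \eqref{eq:b_24_better_upper_bound} both hold and derive an absurdity from $ b_{33} \geq \frac{26}{100} $. By Assumption~\ref{assmp:min_diagonal}, Assumption~\ref{assmp:max_diagonal} and Assumption~\ref{assmp:b_11_geq_b_44} every quantity appearing under a square root below is positive, so chaining the two bounds through $ b_{24} $ and squaring yields
\begin{multline*}
\left( 1 + 2 b_{11} + 4 b_{22} b_{44} + 4 b_{33} b_{55} \right) \left( 2 \sqrt{ \left( 1 - b_{11} \right) \left( 1 - b_{55} \right) } + \sqrt{ \left( 1 - 2 b_{11} \right) \left( 1 - 2 b_{55} \right) } \right)^2 \\
< \left( 3 - 2 b_{11} - 2 b_{22} \right) \left( 3 - 2 b_{44} - 2 b_{55} \right) .
\end{multline*}
It therefore suffices to prove that, under our standing assumptions together with $ b_{33} \geq \frac{26}{100} $, the left-hand side of this inequality is always at least the right-hand side.

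First I would record the elementary consequences of the normalization: since $ b_{11} + b_{22} + b_{33} + b_{44} + b_{55} = \frac{1}{2} $ and $ b_{33} \geq \frac{26}{100} $, we get $ b_{11} + b_{22} + b_{44} + b_{55} \leq \frac{24}{100} $, so each of these four entries lies in $ \left[ 0, \frac{24}{100} \right] $, and by Assumption~\ref{assmp:min_diagonal} in fact $ b_{22} \leq \frac{6}{100} $. To remove the two radicals in the second factor I would use the identity
\[ 4 \left( 1 - b_{11} \right) \left( 1 - b_{55} \right) = \left( 1 - 2 b_{11} \right) \left( 1 - 2 b_{55} \right) + \left( 3 - 2 b_{11} - 2 b_{55} \right) , \]
which rewrites $ 2 \sqrt{ \left( 1 - b_{11} \right) \left( 1 - b_{55} \right) } = \sqrt{ P + Q } $ with $ P = \left( 1 - 2 b_{11} \right) \left( 1 - 2 b_{55} \right) $ and $ Q = 3 - 2 b_{11} - 2 b_{55} $, so that the second factor becomes $ 2P + Q + 2 \sqrt{ P \left( P + Q \right) } $. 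Isolating this last radical, splitting on the sign of the resulting right-hand side (the case in which it is nonpositive being immediate), and squaring once more, turns the target inequality into the non-positivity of an explicit polynomial in $ b_{11}, b_{22}, b_{33}, b_{44}, b_{55} $.

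To verify that polynomial inequality I would follow the pattern of Appendices~\ref{app:xmin_geq_q2} and~\ref{app:q2_qeq_xmax}: regard the difference of the two sides as a function of one diagonal variable at a time, compute the relevant second derivatives to decide convexity or concavity on the admissible box $ \left\{ b_{11} + b_{22} + b_{44} + b_{55} \leq \frac{24}{100} \right\} $, and thereby reduce the claim to the vertices and lower-dimensional faces of that box, using $ b_{11} \geq b_{44} $ (Assumption~\ref{assmp:b_11_geq_b_44}) and the minimality of $ b_{22} $ to cut down the faces that actually need checking — numerically the binding configuration is $ b_{22} = b_{11} = b_{44} = 0 $, $ b_{33} = \frac{26}{100} $, $ b_{55} = \frac{24}{100} $. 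On each remaining one- or two-parameter family the inequality reduces to the sign of a low-degree polynomial, which I would certify by locating its roots to accuracy $ 10^{-10} $, exactly as elsewhere in the appendices, and testing the sign between consecutive roots.

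The hard part will be this last step. The combined inequality is asymptotically tight — both sides tend to $ 9 $ as $ b_{33} \to \frac{1}{2} $ — so crude term-by-term estimates of the two factors are not enough; the threshold $ \frac{26}{100} $ must be used in an essential way, and the convexity/root-location bookkeeping has to be carried through carefully (very possibly after splitting the box into sub-boxes) in order to keep the lower-order terms under control while the radical is eliminated by squaring without admitting spurious solutions.
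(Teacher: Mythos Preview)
Your high-level strategy is the paper's: chain \eqref{eq:b_24_better_lower_bound} and \eqref{eq:b_24_better_upper_bound} through $b_{24}^2$, reduce to a polynomial sign condition on the simplex $\{b_{ii}\geq 0,\ \sum b_{ii}=\tfrac12,\ b_{33}\geq\tfrac{26}{100}\}$, and then push to the boundary by one-variable convexity. The paper differs from you in one technical move that materially simplifies the computation. Instead of isolating the cross radical in $\bigl(2\sqrt{(1-b_{11})(1-b_{55})}+\sqrt{(1-2b_{11})(1-2b_{55})}\bigr)^2$ and squaring a second time, the paper first bounds the radical linearly via
\[
1-\tfrac{39}{25}x \;\leq\; \sqrt{(1-x)(1-2x)}\qquad(0\leq x\leq\tfrac14),
\]
applied to $x=b_{11}$ and $x=b_{55}$ (both $\leq\tfrac{24}{100}$ here). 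This replaces the denominator by the rational quantity $\tfrac{1}{625}(5625-7650b_{11}-7650b_{55}+11084\,b_{11}b_{55})$ and turns the target into the non-positivity of a single polynomial $h_9$ that is already at most quadratic in each variable, so the convexity/vertex reduction goes through cleanly (convex in $x=b_{11}$ and $u=b_{44}$, then in $y=b_{22}$, with the binding face $b_{11}=b_{22}=b_{44}=0$, $b_{55}=\tfrac12-b_{33}$, where $h_9$ factors and has the critical root $\approx 0.2582$ explaining the threshold $\tfrac{26}{100}$). Your double-squaring route is valid in principle but produces a higher-degree polynomial and forces the extra sign split you mention; the linear bound on the radical is the device that keeps the bookkeeping short.
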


\begin{proof}
Assume $ b_{33} \geq \frac{26}{100} $ and that \eqref{eq:b_24_better_lower_bound} and \eqref{eq:b_24_better_upper_bound} both hold. Let
\[ F = \left\{ \mathbf{x} = \left( x_1, x_2, x_3, x_4, x_5 \right) \in {\mathbb{R}}^5 \mid \mathbf{x} \geq 0, x_3 \geq \frac{26}{100}, \sum_{i=1}^{5} x_i = \frac{1}{2} \right\} \]
and note that $ \left( b_{11}, b_{22}, b_{33}, b_{44}, b_{55} \right) \in F $.

We find a lower bound for the denominator of \eqref{eq:b_24_better_upper_bound}. Note that $ 1 - \frac{39}{25} x \leq \sqrt{ \left( 1 - x \right) \left( 1 - 2 x \right) } $ for $ 0 \leq x \leq \frac{1}{4} $. As $ b_{33} \geq \frac{26}{100} $ we must have $ b_{11} \leq \frac{24}{100} \leq \frac{1}{4} $ and $ b_{55} \leq \frac{24}{100} \leq \frac{1}{4} $. Therefore,
\begin{align*}
& \left( 2 \sqrt{ \left( 1 - b_{11} \right) \left( 1 - b_{55} \right) } + \sqrt{ \left( 1 - 2 b_{11} \right) \left( 1 - 2 b_{55} \right) } \right)^2 \\
& \qquad = 4 \left( 1 - b_{11} \right) \left( 1 - b_{55} \right) + \left( 1 - 2 b_{11} \right) \left( 1 - 2 b_{55} \right) \\
& \qquad \quad + 4 \sqrt{ \left( 1 - b_{11} \right) \left( 1 - 2 b_{11} \right) } \sqrt{ \left( 1 - b_{55} \right) \left( 1 - 2 b_{55} \right) } \\
& \qquad \geq 4 \left( 1 - b_{11} \right) \left( 1 - b_{55} \right) + \left( 1 - 2 b_{11} \right) \left( 1 - 2 b_{55} \right) \\
& \qquad \quad + 4 \left( 1 - \frac{39}{25} b_{11} \right) \left( 1 - \frac{39}{25} b_{55} \right) \\
& \qquad = \frac{1}{625} \left( 5625 - 7650 b_{11} - 7650 b_{55} + 11084 b_{11} b_{55} \right) > 0
\end{align*}
and so by \eqref{eq:b_24_better_upper_bound}
\begin{align}
{b_{24}}^2 &< \frac{ \left( 3 - 2 b_{11} - 2 b_{22} \right) \left( 3 - 2 b_{44} - 2 b_{55} \right) }{ 4 \left( 2 \sqrt{ \left( 1 - b_{11} \right) \left( 1 - b_{55} \right) } + \sqrt{ \left( 1 - 2 b_{11} \right) \left( 1 - 2 b_{55} \right) } \right)^2 } \nonumber \\
& \leq h_7 \left( b_{11}, b_{22}, b_{33}, b_{44}, b_{55} \right) , \label{eq:b_24_sq_leq_h_7}
\end{align}
where $ h_7 : F \to \mathbb{R} $ is defined by
\[ h_7 \left( x, y, z, u, v \right) = \frac{ 625 \left( 3 - 2 x - 2 y \right) \left( 3 - 2 u - 2 v \right) }{ 4 \left( 5625 - 7650 x - 7650 v + 11084 x v \right) } . \]

By \eqref{eq:b_24_better_lower_bound} and as $ \sum_{i=1}^{5} b_{ii} = \tr \left( B \right) = \frac{1}{2} $
\begin{align}
{b_{24}}^2 > \frac{1}{4} \left( 1 + 2 b_{11} + 4 b_{22} b_{44} + 4 b_{33} b_{55} \right) = h_8 \left( b_{11}, b_{22}, b_{33}, b_{44}, b_{55} \right) , \label{eq:b_24_sq_geq_h_8}
\end{align}
where $ h_8 : F \to \mathbb{R} $ is defined by
\[ h_8 \left( x, y, z, u, v \right) = \frac{1}{4} \left( \left( 1 - 2 y \right) \left( 1 - 2 u \right) + \left( 1 - 2 z \right) \left( 1 - 2 v \right) \right) . \]

By \eqref{eq:b_24_sq_leq_h_7} and \eqref{eq:b_24_sq_geq_h_8} we get that
\[ h_7 \left( b_{11}, b_{22}, b_{33}, b_{44}, b_{55} \right) > h_8 \left( b_{11}, b_{22}, b_{33}, b_{44}, b_{55} \right) . \]
This is equivalent to
\begin{equation}
h_9 \left( b_{11}, b_{22}, b_{33}, b_{44}, b_{55} \right) > 0 , \label{eq:h_9_positive}
\end{equation}
where $ h_9 : F \to \mathbb{R} $ is defined by
\begin{align*}
& h_9 \left( x, y, z, u, v \right) = 625 \left( 3 - 2 x - 2 y \right) \left( 3 - 2 u - 2 v \right) \\
 & \quad - \left( \left( 1 - 2 y \right) \left( 1 - 2 u \right) + \left( 1 - 2 z \right) \left( 1 - 2 v \right) \right) \left( 5625 - 7650 x - 7650 v + 11084 x v \right) .
\end{align*}
By the definition of $F$ we can replace $v$ by $ \frac{1}{2} - x - y - z - u $. Note that
\begin{align*}
\frac{ \partial^2 }{ \partial x^2 } h_9 & \left( x, y, z, u, \frac{1}{2} - x - y - z - u \right) = 17168 + 133008 \left( 1 - 2 z \right) x \\
 & \quad + 88672 y u + 22168 \left( 4 z - 1 \right) \left( 1 - 2 \left( y + z + u \right) \right) \\
 & \geq 17168 + 0 + 0 + 22168 \left( 4 \cdot \frac{26}{100} - 1 \right) \left( 1 - 2 \cdot \frac{1}{2} \right) \\
 & = 17168 > 0 ,
\end{align*}
and so $ h_9 \left( x, y, z, u, \frac{1}{2} - x - y - z - u \right) $ is convex in $x$ (keeping the other variables fixed). Further note that
\begin{align*}
\frac{ \partial^2 }{ \partial u^2 } h_9 & \left( x, y, z, u, \frac{1}{2} - x - y - z - u \right) = 272 \left( 225 - 326 x \right) \left( z - y \right) \\
 &\geq 272 \left( 225 - 326 \cdot \frac{24}{100} \right) \left( \frac{26}{100} - \frac{24}{100} \right) = \frac{498984}{625} > 0
\end{align*}
and so $ h_9 \left( x, y, z, u, \frac{1}{2} - x - y - z - u \right) $ is convex in $u$ (keeping the other variables fixed). Moreover,
\begin{align*}
\frac{ \partial^2 }{ \partial u^2 } h_9 \left( \frac{1}{2} - y - z - u, y, z, u, 0 \right) &= 400 \left( 64 - 153 y \right) \\
 &\geq 400 \left( 64 - 153 \cdot \frac{24}{100} \right) = 10912 > 0
\end{align*}
and so $ h_9 \left( \frac{1}{2} - y - z - u, y, z, u, 0 \right) $ is convex in $u$ (keeping the other variables fixed). Therefore,
\begin{align*}
\max_F & h_9 \left( x, y, z, u, v \right) = \max_{ y, z, u } \max_{ 0 \leq x \leq \frac{1}{2} - y - z - u } h_9 \left( x, y, z, u, \frac{1}{2} - x - y - z - u \right) = \\
& \max_{ y, z, u } \max \left\{ h_9 \left( 0, y, z, u, \frac{1}{2} - y - z - u \right), h_9 \left( \frac{1}{2} - y - z - u, y, z, u, 0 \right) \right\} = \\
& \max \left\{ \max_{ y, z, u } h_9 \left( 0, y, z, u, \frac{1}{2} - y - z - u \right), \right. \\
 & \left. \qquad \, \, \, \, \max_{ y, z, u } h_9 \left( \frac{1}{2} - y - z - u, y, z, u, 0 \right) \right\} = \\
 & \max \left\{ \max_{ y, z } \max_{ 0 \leq u \leq \frac{1}{2} - y - z } h_9 \left( 0, y, z, u, \frac{1}{2} - y - z - u \right), \right. \\
 & \left. \qquad \, \, \, \, \max_{ y, z } \max_{ 0 \leq u \leq \frac{1}{2} - y - z } h_9 \left( \frac{1}{2} - y - z - u, y, z, u, 0 \right) \right\} = \\
 & \max \left\{ \max_{ y, z } \max \left\{ h_9 \left( 0, y, z, 0, \frac{1}{2} - y - z \right), h_9 \left( 0, y, z, \frac{1}{2} - y - z, 0 \right) \right\} \right. , \\
 & \left. \qquad \, \, \, \, \max_{ y, z } \max \left\{ h_9 \left( \frac{1}{2} - y - z, y, z, 0, 0 \right), h_9 \left( 0, y, z, \frac{1}{2} - y - z, 0 \right) \right\} \right\} = \\
 & \max \left\{ \max_{ y, z } h_9 \left( 0, y, z, 0, \frac{1}{2} - y - z \right), \max_{ y, z } h_9 \left( \frac{1}{2} - y - z, y, z, 0, 0 \right) \right. , \\
 & \left. \qquad \, \, \, \, \max_{ y, z } h_9 \left( 0, y, z, \frac{1}{2} - y - z, 0 \right) \right\} .
\end{align*}
We check each expression.

\begin{enumerate}

\item{$ x = 0 $ and $ u = 0 $.}

We have
\begin{align*}
\frac{ \partial^2 }{ \partial y^2 } h_9 \left( 0, y, z, 0, \frac{1}{2} - y - z \right) &= 200 \left( 306 z - 25 \right) \\
 & \geq 200 \left( 306 \cdot \frac{26}{100} - 25 \right) = 10912 > 0
\end{align*}
and so $ h_9 \left( 0, y, z, 0, \frac{1}{2} - y - z \right) $ is convex in $y$ (keeping $z$ fixed). Therefore,
\begin{align*}
 & \max_{ y, z } h_9 \left( 0, y, z, 0, \frac{1}{2} - y - z \right) = \max_z \max_{ 0 \leq y \leq \frac{1}{2} - z } h_9 \left( 0, y, z, 0, \frac{1}{2} - y - z \right) \\
 & \qquad \quad = \max_z \max \left\{ h_9 \left( 0, 0, z, 0, \frac{1}{2} - z \right), h_9 \left( 0, \frac{1}{2} - z, z, 0, 0 \right) \right\} .
\end{align*}

\begin{enumerate}

\item{$ y = 0 $.}

\[ h_9 \left( 0, 0, z, 0, \frac{1}{2} - z \right) = - 150 \left( 1 - 2 z \right) \left( 102 z^2 + 24 z - 13 \right) \leq 0 \]
as it has roots $ -0.4935350885, 0.2582409708, 0.5 $ and $ \frac{26}{100} \leq z \leq \frac{1}{2} $. 
\item{$ y = \frac{1}{2} - z $.} \label{case:x=u=0_y=0.5-z}
\[ h_9 \left( 0, \frac{1}{2} - z, z, 0, 0 \right) = - 1875 \left( 1 - 2 z \right) \leq 0 \]
as $ z \leq \frac{1}{2} $.

\end{enumerate}

\item{$ x = \frac{1}{2} - y - z $ and $ u = 0 $.}

We have
\[ \frac{ \partial^2 }{ \partial y^2 } h_9 \left( \frac{1}{2} - y - z, y, z, 0, 0 \right) = 30600 > 0 \]
and so $ h_9 \left( \frac{1}{2} - y - z, y, z, 0, 0 \right) $ is convex in $y$ (keeping $z$ fixed). Therefore,
\begin{align*}
 & \max_{ y, z } h_9 \left( \frac{1}{2} - y - z, y, z, 0, 0 \right) = \max_z \max_{ 0 \leq y \leq \frac{1}{2} - z } h_9 \left( \frac{1}{2} - y - z, y, z, 0, 0 \right) \\
 & \qquad \quad = \max_z \max \left\{ h_9 \left( \frac{1}{2} - z, 0, z, 0, 0 \right), h_9 \left( 0, \frac{1}{2}- z, z, 0, 0 \right) \right\} .
\end{align*}

\begin{enumerate}

\item{$ y = 0 $.}

\[ h_9 \left( \frac{1}{2} - z, 0, z, 0, 0 \right) = 150 \left( 1 - 2 z \right) \left( 1 - 51 z \right) \leq 0 \]
as $ \frac{1}{51} < \frac{26}{100} \leq z \leq \frac{1}{2} $.

\item{$ y = \frac{1}{2} - z $.}

Same as case \ref{case:x=u=0_y=0.5-z}.

\end{enumerate}

\item{$ x = 0 $ and $ u = \frac{1}{2} - y - z $.}

\[ \frac{ \partial^2 }{ \partial y^2 } h_9 \left( 0, y, z, \frac{1}{2} - y - z, 0 \right) = 40000 > 0 \]
and so $ h_9 \left( 0, y, z, \frac{1}{2} - y - z, 0 \right) $ is convex in $y$ (keeping $z$ fixed). Therefore,
\begin{align*}
 & \max_{ y, z } h_9 \left( 0, y, z, \frac{1}{2} - y - z, 0 \right) = \max_z \max_{ 0 \leq y \leq \frac{1}{2} - z } h_9 \left( 0, y, z, \frac{1}{2} - y - z, 0 \right) \\
 & \qquad \quad = \max_z \max \left\{ h_9 \left( 0, 0, z, \frac{1}{2} - z, 0 \right), h_9 \left( 0, \frac{1}{2} - z, z, 0, 0 \right) \right\} .
\end{align*}

\begin{enumerate}

\item{$ y = 0 $.}

\[ h_9 \left( 0, 0, z, \frac{1}{2} - z, 0 \right) = - 1875 \left( 1 - 2 z \right) \leq 0 \]
as $ z \leq \frac{1}{2} $.

\item{$ y = \frac{1}{2} - z $.}

Same as case \ref{case:x=u=0_y=0.5-z}.

\end{enumerate}

\end{enumerate}

Therefore, we got that $ h_9 \left( x, y, z, u, v \right) \leq 0 $ over $F$, which contradicts \eqref{eq:h_9_positive}.
\end{proof}

\begin{lemma} \label{lem:b_12_contradiction}
If $ b_{55} \geq \frac{26}{100} $ then \eqref{eq:b_12_better_lower_bound} and \eqref{eq:b_12_better_upper_bound} cannot both hold.
\end{lemma}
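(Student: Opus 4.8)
The plan is to deduce this Lemma from Lemma~\ref{lem:b_24_contradiction} by exploiting the same symmetry of the pattern $C$ that was used to pass from Lemma~\ref{lem:b_24_better_upper_bound} to Lemma~\ref{lem:b_12_better_upper_bound}. Concretely, I would apply the orthogonal similarity of $B$ by the permutation matrix $P$ introduced just before Assumption~\ref{assmp:b_11_geq_b_44} (equivalently, the formal replacement interchanging $b_{11}\leftrightarrow b_{44}$, $b_{12}\leftrightarrow b_{24}$, $b_{13}\leftrightarrow b_{45}$, $b_{33}\leftrightarrow b_{55}$, and leaving $b_{22}$ and $b_{35}$ fixed). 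This replacement preserves the zero pattern $C$ and the trace condition $\sum_{i=1}^{5} b_{ii}=\frac12$, it turns the hypothesis $b_{55}\geq\frac{26}{100}$ into $b_{33}\geq\frac{26}{100}$, and — by direct inspection of the formulas — it turns \eqref{eq:b_12_better_lower_bound} into \eqref{eq:b_24_better_lower_bound} and \eqref{eq:b_12_better_upper_bound} into \eqref{eq:b_24_better_upper_bound}.

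Given this, the argument is short: if $b_{55}\geq\frac{26}{100}$ and both \eqref{eq:b_12_better_lower_bound} and \eqref{eq:b_12_better_upper_bound} held, then after the replacement we would obtain a matrix in pattern $C$ satisfying $b_{33}\geq\frac{26}{100}$ together with \eqref{eq:b_24_better_lower_bound} and \eqref{eq:b_24_better_upper_bound}, contradicting Lemma~\ref{lem:b_24_contradiction}. So the body of the proof reduces to: state the replacement, verify the correspondence of the three displayed inequalities, and invoke Lemma~\ref{lem:b_24_contradiction}; no new estimate is needed.

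The point that needs to be checked — and the only real obstacle — is that Lemma~\ref{lem:b_24_contradiction} (and hence its transform) does not secretly rely on any of the standing Assumptions of Section~\ref{sec:pattern_C} that fail to be invariant under the swap $1\leftrightarrow 4$, $3\leftrightarrow 5$; for instance Assumption~\ref{assmp:b_11_geq_b_44} ($b_{11}\geq b_{44}$) and Assumption~\ref{assmp:min_diagonal} ($b_{22}\leq\frac1{10}$) are not preserved. I would therefore reread the proof of Lemma~\ref{lem:b_24_contradiction} and confirm what appears to be the case: it uses only that $(b_{11},b_{22},b_{33},b_{44},b_{55})$ lies in the set $F$ — nonnegativity, trace $\frac12$, and $b_{33}\geq\frac{26}{100}$ — together with \eqref{eq:b_24_better_lower_bound} and \eqref{eq:b_24_better_upper_bound}; the auxiliary bounds $b_{11},b_{55}\leq\frac{24}{100}$ and $b_{22}\leq\frac{24}{100}$ it invokes are consequences of $b_{33}\geq\frac{26}{100}$ and the trace condition, not of the ordering assumptions. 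Since these hypotheses are symmetric under the swap once the defining inequality of $F$ is moved from the third to the fifth coordinate, the optimization over $F$ carried out there transfers verbatim, and the Lemma follows.
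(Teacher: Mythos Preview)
Your proposal is correct and is essentially the same argument the paper gives: the paper's proof also reduces Lemma~\ref{lem:b_12_contradiction} to Lemma~\ref{lem:b_24_contradiction} via the formal replacement $b_{11}\leftrightarrow b_{44}$, $b_{33}\leftrightarrow b_{55}$, $b_{12}\leftrightarrow b_{24}$ (with $b_{22}$ fixed), noting that this sends $b_{55}\ge\frac{26}{100}$, \eqref{eq:b_12_better_lower_bound}, \eqref{eq:b_12_better_upper_bound} to $b_{33}\ge\frac{26}{100}$, \eqref{eq:b_24_better_lower_bound}, \eqref{eq:b_24_better_upper_bound}. Your additional check that the proof of Lemma~\ref{lem:b_24_contradiction} uses only membership in $F$ (nonnegativity, trace $\tfrac12$, $x_3\ge\tfrac{26}{100}$) and not the ordering Assumptions~\ref{assmp:min_diagonal} or~\ref{assmp:b_11_geq_b_44} is a worthwhile observation that the paper leaves implicit.
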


\begin{proof}
The proof is similar to the proof of Lemma~\ref{lem:b_24_contradiction}. Note, in particular, that when we formally replace the symbols $ b_{12} $, $ b_{11} $, $ b_{33} $, $ b_{44} $, $ b_{55} $ by $ b_{24} $, $ b_{44} $, $ b_{55} $, $ b_{11} $, $ b_{33} $ respectively (keeping $ b_{22} $ unchanged) into $ b_{55} \geq \frac{26}{100} $, \eqref{eq:b_12_better_lower_bound} and \eqref{eq:b_12_better_upper_bound} we get $ b_{33} \geq \frac{26}{100} $, \eqref{eq:b_24_better_lower_bound} and \eqref{eq:b_24_better_upper_bound} that are the assumptions of Lemma~\ref{lem:b_24_contradiction}.
\end{proof}

\section{$ b_{33} < \frac{26}{100} $ and $ b_{55} < \frac{26}{100} $} \label{app:b_33_and_b_55_leq_0.26}

In this section we assume \eqref{eq:b_12_geq_b_35}, \eqref{eq:b_24_geq_b_35}, \eqref{eq:b_12_upper_bound}, \eqref{eq:b_45_upper_bound}, \eqref{eq:b_12_lower_bound}, \eqref{eq:b_24_upper_bound}, \eqref{eq:b_13_upper_bound}, \eqref{eq:b_24_lower_bound}, \eqref{eq:b_35_lower_bound_2345}, \eqref{eq:b_45_better_upper_bound}, \eqref{eq:b_35_lower_bound_1235}, \eqref{eq:b_13_better_upper_bound}, \eqref{eq:b_45_lower_bound}, \eqref{eq:b_24_better_lower_bound}, \eqref{eq:b_13_lower_bound}, \eqref{eq:b_12_better_lower_bound}, \eqref{eq:b_24_better_upper_bound}, \eqref{eq:b_12_better_upper_bound}, $ b_{33} < \frac{26}{100} $ and $ b_{55} < \frac{26}{100} $.

Our goal is to show that under the above conditions we have $ \rho \left( B \right) > 1 $. We do this by defining a matrix
\[ B_{min} = \left(
\begin{array}{ccccc}
m_{11} & m_{12} & m_{13} & 0 & 0 \\ \noalign{\medskip}
m_{12} & m_{22} & 0 & m_{24} & 0 \\ \noalign{\medskip}
m_{13} & 0 & m_{33} & 0 & m_{35} \\ \noalign{\medskip}
0 & m_{24} & 0 & m_{44} & m_{45} \\ \noalign{\medskip}
0 & 0 & m_{35} & m_{45} & m_{55}
\end{array}
\right) , \]
such that $ 0 \leq m_{ij} \leq b_{ij} $ for $ 1 \leq i \leq j \leq 5 $, and showing that $ P_{B_{min}} \left( 1 \right) < 0 $. By Observation~\ref{obs:negative_cp} we get that $ \rho \left( B_{min} \right) > 1 $. By the Perron-Frobenius Theorem $ \rho \left( B \right) \geq \rho \left( B_{min} \right) $ and we are done.

In Section~\ref{app:bounds_for_b_ij}, given lower and upper bounds for the main diagonal elements of $B$, we find lower and upper bounds for off-diagonal elements of $B$ over the given range of the main diagonal elements. In Section~\ref{app:negative_cp_B_min} we show that $ P_{B_{min}} \left( 1 \right) < 0 $ by dividing the problem into four main cases and further sub-dividing each case into sub-cases. In each sub-case we define $ B_{min} $ based on the lower bounds derived using the results of Section~\ref{app:bounds_for_b_ij}.

\subsection{Bounds for $ b_{ij} $} \label{app:bounds_for_b_ij}

Let $ 0 \leq m_{ii} \leq M_{ii} \leq \frac{1}{2} $ for $ i = 1, 2, \dots, 5 $ be given and define
\begin{align*}
G &= G \left( m_{11}, \dots, m_{55}, M_{11}, \dots, M_{55} \right) \\
 &= \left\{ \left( x_1, x_2, x_3, x_4, x_5 \right) \in {\mathbb{R}}^5 \mid m_{ii} \leq x_i \leq M_{ii}, i = 1, 2, \dots, 5 \right\} .
\end{align*}
Assume $ \left( b_{11}, b_{22}, b_{33}, b_{44}, b_{55} \right) \in G $. We denote by $ m_{ij} $ ($ M_{ij} $) a lower (upper) bound for $ b_{ij} $ ($ i \neq j$) over $G$. In the following sub-sections we derive such bounds.

\subsubsection{$ b_{12} $ and $ b_{24} $}

Note that the right-hand side of \eqref{eq:b_24_better_lower_bound} and the right-hand side of \eqref{eq:b_12_better_lower_bound} are monotonically increasing in the $ b_{ii} $'s. Therefore,
\begin{align}
m_{12} &= \frac{1}{2} \sqrt{ 1 + 2 m_{44} + 4 m_{11} m_{22} + 4 m_{33} m_{55} } , \label{eq:m_12} \\
m_{24} &= \frac{1}{2} \sqrt{ 1 + 2 m_{11} + 4 m_{22} m_{44} + 4 m_{33} m_{55} } \label{eq:m_24} .
\end{align}

Note also that the right-hand side of \eqref{eq:b_24_better_upper_bound} is monotonically decreasing in $ b_{22} $ and in $ b_{44} $, and the right-hand side of \eqref{eq:b_12_better_upper_bound} is monotonically decreasing in $ b_{11} $ and in $ b_{22} $.

We look at the right-hand side of \eqref{eq:b_24_better_upper_bound} as a function of $ b_{11} $ treating the other $ b_{ii} $'s as constants. By Assumption~\ref{assmp:max_diagonal}, $ b_{11} < \frac{1}{2} $, so we define $ h_{10} : \left[ 0, \frac{1}{2} \right) \to \mathbb{R} $ by
\[ h_{10} \left( x \right) = \frac{ C_7 \sqrt{ C_8 - 2 x } }{ C_9 \sqrt{ 1 - x } + C_{10} \sqrt{ 1 - 2 x } } , \]
where
\begin{align*}
C_7 &= \sqrt{ 3 - 2 b_{44} - 2 b_{55} } \geq \sqrt{2} , \\
C_8 &= 3 - 2 b_{22} \geq 2 , \\
C_9 &= 4 \sqrt{ 1 - b_{55} } \geq \sqrt{8} , \\
C_{10} &= 2 \sqrt{ 1 - 2 b_{55} } \geq 0 .
\end{align*}
Therefore,
\[ \frac{ d }{ d x } h_{10} \left( x \right) = \frac{ C_7 \left( C_9 \left( C_8 - 2 \right) \sqrt{ 1 - 2 x } + 2 C_{10} \left( C_8 - 1 \right) \sqrt{ 1 - x } \right) }{ 2 \sqrt{ \left( C_8 - 2 x \right) \left( 1 - x \right) \left( 1 - 2 x \right) } \left( C_9 \sqrt{ 1 - x } + C_{10} \sqrt{ 1 - 2 x } \right)^2 } \]
is nonnnegative. Note that the denominator is positive as $ x < \frac{1}{2} $. Hence, the right-hand side of \eqref{eq:b_24_better_upper_bound} is increasing in $ b_{11} $. By symmetry, it is also increasing in $ b_{55} $. Similarly, by symmetry, the right-hand side of \eqref{eq:b_12_better_upper_bound} is increasing in $ b_{33} $ and in $ b_{44} $. Therefore,
\begin{align}
M_{12} &= \frac{ \sqrt{ \left( 3 - 2 m_{11} - 2 M_{33} \right) \left( 3 - 2 m_{22} - 2 M_{44} \right) } }{ 2 \left( 2 \sqrt{ \left( 1 - M_{33} \right) \left( 1 - M_{44} \right) } + \sqrt{ \left( 1 - 2 M_{33} \right) \left( 1 - 2 M_{44} \right) } \right) } , \label{eq:M_12} \\
M_{24} &= \frac{ \sqrt{ \left( 3 - 2 M_{11} - 2 m_{22} \right) \left( 3 - 2 m_{44} - 2 M_{55} \right) } }{ 2 \left( 2 \sqrt{ \left( 1 - M_{11} \right) \left( 1 - M_{55} \right) } + \sqrt{ \left( 1 - 2 M_{11} \right) \left( 1 - 2 M_{55} \right) } \right) } \label{eq:M_24} .
\end{align}

\subsubsection{$ b_{13} $ and $ b_{45} $}

Note that the right-hand side of \eqref{eq:b_45_lower_bound} is monotonically decreasing in $ b_{11} $, $ b_{22} $, $ b_{44} $, $ b_{55} $, $ b_{12} $ and monotonically increasing in $ b_{24} $. Similarly, the right-hand side of \eqref{eq:b_13_lower_bound} is monotonically decreasing in $ b_{11} $, $ b_{22} $, $ b_{33} $, $ b_{44} $, $ b_{24} $ and monotonically increasing in $ b_{12} $. Therefore,
\begin{align}
m_{13} &= \frac{1}{2} \sqrt{ \left( 1 - 2 M_{33} \right) \left( 1 - 2 M_{11} + \frac{ 4 \left( 1 - 2 M_{44} \right) {m_{12}}^2 }{ 4 {M_{24}}^2 - \left( 1 - 2 M_{22} \right) \left( 1 - 2 M_{44} \right) } \right) } , \label{eq:m_13} \\
m_{45} &= \frac{1}{2} \sqrt{ \left( 1 - 2 M_{55} \right) \left( 1 - 2 M_{44} + \frac{ 4 \left( 1 - 2 M_{11} \right) {m_{24}}^2 }{ 4 {M_{12}}^2 - \left( 1 - 2 M_{11} \right) \left( 1 - 2 M_{22} \right) } \right) } . \label{eq:m_45}
\end{align}

\subsubsection{$ b_{35} $}

Note that the right-hand side of \eqref{eq:b_35_lower_bound_2345} is monotonically decreasing in $ b_{22} $, $ b_{33} $, $ b_{44} $, $ b_{55} $, $ b_{24} $ and monotonically increasing in $ b_{45} $. Similarly, the right-hand side of \eqref{eq:b_35_lower_bound_1235} is monotonically decreasing in $ b_{11} $, $ b_{22} $, $ b_{33} $, $ b_{55} $, $ b_{12} $ and monotonically increasing in $ b_{13} $. Therefore,
\begin{equation}
m_{35} = \max \left\{ m_{35}^{13}, m_{35}^{45} \right\} , \label{eq:m_35}
\end{equation}
where
\begin{align}
m_{35}^{13} &= \frac{1}{2} \sqrt{ \left( 1 - 2 M_{55} \right) \left( 1 - 2 M_{33} + \frac{ 4 \left( 1 - 2 M_{22} \right) {m_{13}}^2 }{ 4 {M_{12}}^2 - \left( 1 - 2 M_{11} \right) \left( 1 - 2 M_{22} \right) } \right) } , \label{eq:m_35_13} \\
m_{35}^{45} &= \frac{1}{2} \sqrt{ \left( 1 - 2 M_{33} \right) \left( 1 - 2 M_{55} + \frac{ 4 \left( 1 - 2 M_{22} \right) {m_{45}}^2 }{ 4 {M_{24}}^2 - \left( 1 - 2 M_{22} \right) \left( 1 - 2 M_{44} \right) } \right) } . \label{eq:m_35_45}
\end{align}

\subsection{Calculating $ P_{B_{min}} \left( 1 \right) $} \label{app:negative_cp_B_min}

In this section we divide the problem into four main cases based on the relation between some main diagonal elements of $B$. This gives rise to relations between some off-diagonal elements. When needed, we further divide each case into sub-cases based on ranges of some main diagonal elements of $B$, taking into account the relations between some main diagonal elements. In each sub-case we calculate $ m_{ij} $ based on the equations of Section~\ref{app:bounds_for_b_ij}, but in order to avoid complicated expressions and to simplify future calculations, we use a rational lower bound approximation of the $ m_{ij} $'s. Next, if possible, we use the relations among the off-diagonal elements to improve (enlarge) some $ m_{ij} $'s. Finally, we calculate $ P_{B_{min}} \left( 1 \right) = \det \left( I_5 - B_{min} \right) $, where $ B_{min} $ is defined using the (possibly improved) rational lower bounds.

The rational approximation of the $ m_{ij} $'s deserves a more detailed discussion. In the following discussion we assume the $ m_{ii} $'s and the $ M_{ii} $'s in Section~\ref{app:bounds_for_b_ij} are rational. Then the lower bounds of $ m_{12} $ and $ m_{24} $, given by \eqref{eq:m_12} and \eqref{eq:m_24} respectively, are square roots of rational numbers. Checking whether $ \frac{r}{10^n} $ is a rational lower bound approximation of $ \sqrt{ \frac{p}{q} } $ with an accuracy of $ 10^{-n} $, where $q$, $n$ are positive integers and $p$, $r$ are nonnegative integers, is an easy task. We simply need to check that $ \frac{p}{q} - \frac{r^2}{10^{2n}} \geq 0 $ and that $ \frac{\left( r + 1 \right) ^2}{10^{2n}} - \frac{p}{q} \geq 0 $, or equivalently, that $ 10^{2n} p - q r^2 \geq 0 $ and that $ q \left( r + 1 \right)^2 - 10^{2n} p \geq 0 $. This provides a way to check the rational lower bound approximation of $ m_{12} $ and $ m_{24} $ mentioned in the sections to come.

Next, we find an upper bound approximation of $ M_{12} $, such that this upper bound will be a square root of a rational number. Note that by \eqref{eq:M_12} we have
\begin{align*}
{M_{12}}^2 &= \frac{ \left( 3 - 2 m_{11} - 2 M_{33} \right) \left( 3 - 2 m_{22} - 2 M_{44} \right) }{ \left( 2 \left( 2 \sqrt{ \left( 1 - M_{33} \right) \left( 1 - M_{44} \right) } + \sqrt{ \left( 1 - 2 M_{33} \right) \left( 1 - 2 M_{44} \right) } \right) \right)^2 } \\
 &= \frac{ \left( 3 - 2 m_{11} - 2 M_{33} \right) \left( 3 - 2 m_{22} - 2 M_{44} \right) }{ 4 \left( 4 \left( 1 - M_{33} \right) \left( 1 - M_{44} \right) + \sqrt{ r_{12} } + \left( 1 - 2 M_{33} \right) \left( 1 - 2 M_{44} \right) \right) } ,
\end{align*}
where
\begin{align}
r_{12} = 16 \left( 1 - M_{33} \right) \left( 1 - M_{44} \right) \left( 1 - 2 M_{33} \right) \left( 1 - 2 M_{44} \right) \label{eq:r_12}
\end{align}
is a rational number. Let $ \tilde{r}_{12} $ be a rational lower bound approximation of $ \sqrt{ r_{12} } $ with a yet to be determined accuracy. As before, we can easily verify this approximation in the sections to come. Then
\begin{align}
\tilde{M}_{12} = \frac{1}{2} \sqrt{ \frac{ \left( 3 - 2 m_{11} - 2 M_{33} \right) \left( 3 - 2 m_{22} - 2 M_{44} \right) }{ 4 \left( 1 - M_{33} \right) \left( 1 - M_{44} \right) + \tilde{r}_{12} + \left( 1 - 2 M_{33} \right) \left( 1 - 2 M_{44} \right) } } \label{eq:tilde_M_12}
\end{align}
is a square root of a rational number and $ \tilde{M}_{12} \geq M_{12} $. In a similar fashion, let
\begin{align}
r_{24} = 16 \left( 1 - M_{11} \right) \left( 1 - M_{55} \right) \left( 1 - 2 M_{11} \right) \left( 1 - 2 M_{55} \right) \label{eq:r_24}
\end{align}
and let $ \tilde{r}_{24} $ be a rational lower bound approximation of $ \sqrt{ r_{24} } $ with a yet to be determined accuracy. Then
\begin{align}
\tilde{M}_{24} = \frac{1}{2} \sqrt{ \frac{ \left( 3 - 2 M_{11} - 2 m_{22} \right) \left( 3 - 2 m_{44} - 2 M_{55} \right) }{ 4 \left( 1 - M_{11} \right) \left( 1 - M_{55} \right) + \tilde{r}_{24} + \left( 1 - 2 M_{11} \right) \left( 1 - 2 M_{55} \right) } } \label{eq:tilde_M_24}
\end{align}
is a square root of a rational number and $ \tilde{M}_{24} \geq M_{24} $.

Substituting $ \tilde{M}_{24} $ for $ M_{24} $ in \eqref{eq:m_13} we get an even smaller $ m_{13} $, which is square root of a rational number. Similarly, substituting $ \tilde{M}_{12} $ for $ M_{12} $ in \eqref{eq:m_45} we get an even smaller $ m_{45} $, which is square root of a rational number. As before, we can easily verify a rational lower bound approximation of $ m_{13} $ and $ m_{45} $ mentioned in the sections to come.

Finally, using the above mentioned $ m_{13} $ and substituting $ \tilde{M}_{12} $ for $ M_{12} $ in \eqref{eq:m_35_13} we get an even smaller $ m_{35}^{13} $, which is square root of a rational number. Similarly, using the above mentioned $ m_{45} $ and substituting $ \tilde{M}_{24} $ for $ M_{24} $ in \eqref{eq:m_35_45} we get an even smaller $ m_{35}^{45} $, which is square root of a rational number. As before, we can easily verify a rational lower bound approximation of $ m_{35}^{13} $ and $ m_{35}^{45} $ mentioned in the sections to come.

In the following subsections we do some computations using Maple, with the aim of using symbolic calculations whenever possible. We perform the calculation in phases:
\begin{enumerate}

\item We select rational values for the $ m_{ii} $'s and the $ M_{ii} $'s.

\item The formulas for $ m_{12} $ and $ m_{24} $, given by \eqref{eq:m_12} and \eqref{eq:m_24} respectively, involve only the $ m_{ii} $'s. Hence $ m_{12} $ and $ m_{24} $ are square roots of rational numbers. Therefore, $ { m_{12} }^2 $ and $ { m_{24} }^2 $ are rational and are computed precisely. Those exact values are substituted into the formulas of $ m_{13} $, $ m_{45} $, given by \eqref{eq:m_13} and \eqref{eq:m_45} respectively. $ { \tilde{M}_{12} }^2 $ and $ { \tilde{M}_{24} }^2 $ are rational numbers and they are substituted for $ { M_{12} }^2 $ and $ { M_{24} }^2 $ respectively into formulas \eqref{eq:m_13}, \eqref{eq:m_45}, \eqref{eq:m_35_13}, \eqref{eq:m_35_45}. Consequently, $ m_{13} $, $ m_{45} $, $ m_{35}^{13} $ and $ m_{35}^{45} $ are square roots of rational numbers.

\item We calculate lower bound approximations of the $ m_{ij} $'s with an accuracy of $ 10^{-2} $, as indicated before.

\item We potentially improve some $ m_{ij} $'s by using relations between the off-diagonal elements of $B$. Obviously, this keeps the $ m_{ij} $'s rational.

\item We build the matrix $ B_{min} $ using the $ m_{ij} $'s from the previous phase. The computation of $ P_{B_{min}} \left( 1 \right) = \det \left( I_5 - B_{min} \right) $ involves only rational numbers so is computed precisely.

\end{enumerate}

For example, let
\begin{center}
\begin{tabular}{ *{5}{ >{$} l <{$} } }
m_{11} = \frac{1}{100} , & m_{22} = 0 , & m_{33} = \frac{3}{25} , & m_{44} = 0 , & m_{55} = \frac{9}{50} , \\
\addlinespace
M_{11} = \frac{19}{100} , & M_{22} = \frac{1}{15} , & M_{33} = \frac{13}{50} , & M_{44} = \frac{1}{10} , & M_{55} = \frac{21}{100} . \\
\end{tabular}
\end{center}
Taking rational lower bound approximation of $ \sqrt{ r_{12} } $ and $ \sqrt{ r_{24} } $ with an accuracy of $ 10^{-2} $ we get
\begin{center}
\begin{tabular}{ *{3}{ >{$} l <{$} } }
r_{12} = \frac{63,936}{15,625} , & \tilde{r}_{12} = \frac{101}{50} , & \tilde{M}_{12} = \frac{ \sqrt{44,526} }{362} , \\
\addlinespace
r_{24} = \frac{5,752,701}{1,562,500} , & \tilde{r}_{24} = \frac{191}{100} , & \tilde{M}_{24} = \frac{ \sqrt{204,021,627} }{24,146} . \\
\end{tabular}
\end{center}
Next,
\begin{center}
\begin{tabular}{ *{2}{ >{$} l <{$} } }
m_{12} = \frac{ \sqrt{679} }{50} , & m_{13} = \frac{ \sqrt{5,677,681,587,533,610} }{159,907,250} , \\
\addlinespace
m_{24} = \frac{ \sqrt{2766} }{100} , & m_{35}^{13} = \frac{ 3 \sqrt{1,815,792,772,377,013,574,394,490} }{8,919,386,544,125} , \\
\addlinespace
m_{45} = \frac{ \sqrt{737,476,403,275,835} }{55,778,500} , & m_{35}^{45} = \frac{ 3 \sqrt{7,396,556,511,412,986,644,789,619} }{17,838,773,088,250} . \\
\end{tabular}
\end{center}
Taking rational lower bound approximation of the $ m_{ij} $'s with an accuracy of $ 10^{-2} $ we get
\begin{center}
\begin{tabular}{ *{3}{ >{$} l <{$} } }
m_{12} = \frac{13}{25} , & m_{13} = \frac{47}{100} , & m_{24} = \frac{13}{25} , \\
\addlinespace
m_{35}^{13} = \frac{9}{20} , & m_{35}^{45} = \frac{9}{20} , & m_{45} = \frac{12}{25} . \\
\end{tabular}
\end{center}
Therefore, the lower bound matrix is
\[ B_{min} = \left(
\begin{array}{ccccc}
\frac{1}{100} & \frac{13}{25} & \frac{47}{100} & 0 & 0 \\ \noalign{\medskip}
\frac{13}{25} & 0 & 0 & \frac{13}{25} & 0 \\ \noalign{\medskip}
\frac{47}{100} & 0 & \frac{3}{25} & 0 & \frac{9}{20} \\ \noalign{\medskip}
0 & \frac{13}{25} & 0 & 0 & \frac{12}{25} \\ \noalign{\medskip}
0 & 0 & \frac{9}{20} & \frac{12}{25} & \frac{9}{50}
\end{array}
\right) , \]
and
\[ P_{B_{min}} \left( 1 \right) = -\frac{7,419,049}{156,250,000} . \]

\subsubsection{$ b_{55} \geq b_{11} $ and $ b_{44} \geq b_{33} $} \label{sec:main_diag_case_1}

By Assumption~\ref{assmp:min_diagonal}, Assumption~\ref{assmp:b_11_geq_b_44} and by Lemma~\ref{lem:not_similar_to_positive_C} we have
\begin{align}
b_{55} \geq b_{11} \geq b_{44} \geq b_{33} \geq b_{22} , \nonumber \\
b_{12} \geq b_{35}, \quad b_{24} \geq b_{35}, \quad b_{45} \geq b_{13} . \label{eq:b_55_geq_b11_and_b44_geq_b33}
\end{align}
Let $ m_{55} \leq b_{55} \leq M_{55} $. We derive upper and lower bounds for the $ b_{ii} $'s using \eqref{eq:b_55_geq_b11_and_b44_geq_b33}.

\paragraph{Bounds for $ b_{11} $.}

We have
\begin{align*}
b_{11} & \leq b_{11} + b_{22} + b_{33} + b_{44} = \frac{1}{2} - b_{55} \leq \frac{1}{2} - m_{55} , \\
2 b_{11} & \leq b_{11} + b_{55} \leq b_{11} + b_{22} + b_{33} + b_{44} + b_{55} = \frac{1}{2} , \\
4 b_{11} & \geq b_{11} + b_{22} + b_{33} + b_{44} = \frac{1}{2} - b_{55} \geq \frac{1}{2} - M_{55} ,
\end{align*}
and so $ m_{11} \leq b_{11} \leq M_{11} $, where
\begin{align*}
m_{11} &= \max \left\{ 0, \frac{1}{8} - \frac{1}{4} M_{55} \right\} , \\
M_{11} &= \min \left\{ \frac{1}{4}, M_{55}, \frac{1}{2} - m_{55} \right\} .
\end{align*}

\paragraph{Bounds for $ b_{44} $.}

We have
\begin{align*}
2 b_{44} & \leq b_{11} + b_{44} \leq b_{11} + b_{22} + b_{33} + b_{44} = \frac{1}{2} - b_{55} \leq \frac{1}{2} - m_{55} , \\
3 b_{44} & \leq b_{11} + b_{44} + b_{55} \leq b_{11} + b_{22} + b_{33} + b_{44} + b_{55} = \frac{1}{2} ,
\end{align*}
and so, $ m_{44} \leq b_{44} \leq M_{44} $, where
\begin{align*}
m_{44} &= 0 , \\
M_{44} &= \min \left\{ \frac{1}{6}, M_{55}, \frac{1}{4} - \frac{1}{2} m_{55} \right\} .
\end{align*}

\paragraph{Bounds for $ b_{33} $.}

We have
\begin{align*}
3 b_{33} & \leq b_{11} + b_{33} + b_{44} \leq b_{11} + b_{22} + b_{33} + b_{44} = \frac{1}{2} - b_{55} \leq \frac{1}{2} - m_{55} , \\
4 b_{33} & \leq b_{11} + b_{33} + b_{44} + b_{55} \leq b_{11} + b_{22} + b_{33} + b_{44} + b_{55} = \frac{1}{2} ,
\end{align*}
and so $ m_{33} \leq b_{33} \leq M_{33} $, where
\begin{align*}
m_{33} &= 0 , \\
M_{33} &= \min \left\{ \frac{1}{8}, M_{55}, \frac{1}{6} - \frac{1}{3} m_{55} \right\} .
\end{align*}

\paragraph{Bounds for $ b_{22} $.}

We have
\begin{align*}
4 b_{22} & \leq b_{11} + b_{22} + b_{33} + b_{44} = \frac{1}{2} - b_{55} \leq \frac{1}{2} - m_{55} ,
\end{align*}
and so, by Assumption~\ref{assmp:min_diagonal}, $ m_{22} \leq b_{22} \leq M_{22} $, where
\begin{align*}
m_{22} &= 0 , \\
M_{22} &= \min \left\{ \frac{1}{10}, M_{55}, \frac{1}{8} - \frac{1}{4} m_{55} \right\} .
\end{align*}

\paragraph{Sub-ranges.}

We divide the range $ 0 \leq b_{55} \leq \frac{26}{100} $ into sub-ranges.

Table~\ref{tbl:main_diag_case1_sub_range} lists the sub-ranges.
\begin{table}
\begin{center}
\begin{tabular}{ c *{2}{ >{$} c <{$} } }
\toprule
Sub-range & m_{55} & M_{55} \\
\midrule
\addlinespace
1 & 0 & \frac{20}{100} \\
\addlinespace
2 & \frac{20}{100} & \frac{26}{100} \\
\addlinespace
\bottomrule
\end{tabular}
\caption{Case 1 sub-ranges}
\label{tbl:main_diag_case1_sub_range}
\end{center}
\end{table}

Table~\ref{tbl:main_diag_case1_m_ii} lists the $ m_{ii} $'s for each sub-range.
\begin{table}
\begin{center}
\begin{tabular}{ c *{5}{ >{$} c <{$} } }
\toprule
Sub-range & m_{11} & m_{22} & m_{33} & m_{44} & m_{55} \\
\midrule
\addlinespace
1 & \frac{3}{40} & 0 & 0 & 0 & 0 \\
\addlinespace
2 & \frac{3}{50} & 0 & 0 & 0 & \frac{1}{5} \\
\addlinespace
\bottomrule
\end{tabular}
\caption{Case 1 $ m_{ii} $'s}
\label{tbl:main_diag_case1_m_ii}
\end{center}
\end{table}

Table~\ref{tbl:main_diag_case1_M_ii} lists the $ M_{ii} $'s for each sub-range.
\begin{table}
\begin{center}
\begin{tabular}{ c *{5}{ >{$} c <{$} } }
\toprule
Sub-range & M_{11} & M_{22} & M_{33} & M_{44} & M_{55} \\
\midrule
\addlinespace
1 & \frac{1}{5} & \frac{1}{10} & \frac{1}{8} & \frac{1}{6} & \frac{1}{5} \\
\addlinespace
2 & \frac{1}{4} & \frac{3}{40} & \frac{1}{10} & \frac{3}{20} & \frac{13}{50} \\
\addlinespace
\bottomrule
\end{tabular}
\caption{Case 1 $ M_{ii} $'s}
\label{tbl:main_diag_case1_M_ii}
\end{center}
\end{table}

Table~\ref{tbl:main_diag_case1_intermediates} lists the values of intermediate calculations based on \eqref{eq:r_12}, \eqref{eq:tilde_M_12}, \eqref{eq:r_24}, \eqref{eq:tilde_M_24}, where $ \tilde{r}_{12} $ and $ \tilde{r}_{24} $ are calculated with an accuracy of $ 10^{-2} $, for each sub-range.
\begin{table}
\begin{center}
\begin{tabular}{ c *{6}{ >{$} c <{$} } }
\toprule
Sub-range & r_{12} & \tilde{r}_{12} & \tilde{M}_{12} & r_{24} & \tilde{r}_{24} & \tilde{M}_{24} \\
\midrule
\addlinespace
1 & \frac{35}{6} & \frac{241}{100} & \frac{ \sqrt{56,810} }{437} & \frac{2,304}{625} & \frac{48}{25} & \frac{13}{22} \\
\addlinespace
2 & \frac{4,284}{625} & \frac{261}{100} & \frac{ 3 \sqrt{1,252,230} }{6,230} & \frac{1,332}{625} & \frac{29}{20} & \frac{ \sqrt{60,605} }{391} \\
\addlinespace
\bottomrule
\end{tabular}
\caption{Case 1 intermediates}
\label{tbl:main_diag_case1_intermediates}
\end{center}
\end{table}

Table~\ref{tbl:main_diag_case1_m_ij} lists rational lower bound approximation of $ m_{ij} $'s with an accuracy of $ 10^{-2} $ for each sub-range, taking into account the relations in \eqref{eq:b_55_geq_b11_and_b44_geq_b33}. When the relations improve an $ m_{ij} $ the original rational approximation is given in parentheses. We denote by $ * $ the maximal lower bound among $ m_{35}^{13} $ and $ m_{35}^{45} $, which is taken to be $ m_{35} $.
\begin{table}
\begin{center}
\begin{tabular}{ c *{6}{ >{$} c <{$} } }
\toprule
Sub-range & m_{12} & m_{13} & m_{24} & m_{35}^{13} & m_{35}^{45} & m_{45} \\
\midrule
\addlinespace
1 & \frac{53}{100} \left( \frac{1}{2} \right) & \frac{1}{2} & \frac{53}{100} & \frac{53}{100} * & \frac{53}{100} & \frac{1}{2} \left( \frac{49}{100} \right) \\
\addlinespace
2 & \frac{1}{2} & \frac{49}{100} & \frac{13}{25} & \frac{47}{100} * & \frac{23}{50} & \frac{49}{100} \left( \frac{41}{100} \right) \\
\addlinespace
\bottomrule
\end{tabular}
\caption{Case 1 $ m_{ij} $'s}
\label{tbl:main_diag_case1_m_ij}
\end{center}
\end{table}

Table~\ref{tbl:main_diag_case1_P_B_min} lists $ P_{B_{min}} \left( 1 \right) $ for each sub-range.
\begin{table}
\begin{center}
\begin{tabular}{ c *{1}{ >{$} c <{$} } }
\toprule
Sub-range & P_{B_{min}} \left( 1 \right) \\
\midrule
\addlinespace
1 & -\frac{305,646,963}{4,000,000,000} \\
\addlinespace
2 & -\frac{136,143,299}{2,500,000,000} \\
\addlinespace
\bottomrule
\end{tabular}
\caption{Case 1 $ P_{B_{min}} \left( 1 \right) $ }
\label{tbl:main_diag_case1_P_B_min}
\end{center}
\end{table}

\subsubsection{$ b_{11} \geq b_{55} $ and $ b_{44} \geq b_{33} $}

By Assumption~\ref{assmp:min_diagonal}, Assumption~\ref{assmp:b_11_geq_b_44} and by Lemma~\ref{lem:not_similar_to_positive_C} we have
\begin{align}
b_{11} \geq b_{55} \geq b_{22}, \quad b_{11} \geq b_{44} \geq b_{33} \geq b_{22} , \nonumber \\
b_{12} \geq b_{35}, \quad b_{24} \geq b_{35}, \quad b_{45} \geq b_{13} . \label{eq:b_11_geq_b55_and_b44_geq_b33}
\end{align}
Let $ m_{11} \leq b_{11} \leq M_{11} $. We derive upper and lower bounds for the $ b_{ii} $'s using \eqref{eq:b_11_geq_b55_and_b44_geq_b33}.

\paragraph{Bounds for $ b_{44} $.}

We have
\begin{align*}
b_{44} & \leq b_{22} + b_{33} + b_{44} + b_{55} = \frac{1}{2} - b_{11} \leq \frac{1}{2} - m_{11} , \\
2 b_{44} & \leq b_{11} + b_{44} \leq b_{11} + b_{22} + b_{33} + b_{44} + b_{55} = \frac{1}{2} ,
\end{align*}
and so, $ m_{44} \leq b_{44} \leq M_{44} $, where
\begin{align*}
m_{44} &= 0 , \\
M_{44} &= \min \left\{ \frac{1}{4}, M_{11}, \frac{1}{2} - m_{11} \right\} .
\end{align*}

\paragraph{Bounds for $ b_{55} $.}

We have
\begin{align*}
b_{55} & \leq b_{22} + b_{33} + b_{44} + b_{55} = \frac{1}{2} - b_{11} \leq \frac{1}{2} - m_{11} , \\
2 b_{55} & \leq b_{11} + b_{55} \leq b_{11} + b_{22} + b_{33} + b_{44} + b_{55} = \frac{1}{2} ,
\end{align*}
and so $ m_{55} \leq b_{55} \leq M_{55} $, where
\begin{align*}
m_{55} &= 0 , \\
M_{55} &= \min \left\{ \frac{1}{4}, M_{11}, \frac{1}{2} - m_{11} \right\} .
\end{align*}

\paragraph{Bounds for $ b_{33} $.}

We have
\begin{align*}
2 b_{33} & \leq b_{33} + b_{44} \leq b_{22} + b_{33} + b_{44} + b_{55} = \frac{1}{2} - b_{11} \leq \frac{1}{2} - m_{11} , \\
3 b_{33} & \leq b_{11} + b_{33} + b_{44} \leq b_{11} + b_{22} + b_{33} + b_{44} + b_{55} = \frac{1}{2} ,
\end{align*}
and so $ m_{33} \leq b_{33} \leq M_{33} $, where
\begin{align*}
m_{33} &= 0 , \\
M_{33} &= \min \left\{ \frac{1}{6}, M_{11}, \frac{1}{4} - \frac{1}{2} m_{11} \right\} .
\end{align*}

\paragraph{Bounds for $ b_{22} $.}

We have
\begin{align*}
4 b_{22} & \leq b_{22} + b_{33} + b_{44} + b_{55} = \frac{1}{2} - b_{11} \leq \frac{1}{2} - m_{11} ,
\end{align*}
and so, by Assumption~\ref{assmp:min_diagonal}, $ m_{22} \leq b_{22} \leq M_{22} $, where
\begin{align*}
m_{22} &= 0 , \\
M_{22} &= \min \left\{ \frac{1}{10}, M_{11}, \frac{1}{8} - \frac{1}{4} m_{11} \right\} .
\end{align*}

\paragraph{Sub-ranges.}

We divide the range $ 0 \leq b_{11} \leq \frac{1}{2} $ into sub-ranges. Tables~\ref{tbl:main_diag_case2_sub_range}, \ref{tbl:main_diag_case2_m_ii}, \ref{tbl:main_diag_case2_M_ii}, \ref{tbl:main_diag_case2_intermediates}, \ref{tbl:main_diag_case2_m_ij} and \ref{tbl:main_diag_case2_P_B_min} are arranged as in Section~\ref{sec:main_diag_case_1}, except that in Table~\ref{tbl:main_diag_case2_m_ij} we take into account the relations in \eqref{eq:b_11_geq_b55_and_b44_geq_b33}.

\begin{table}
\begin{center}
\begin{tabular}{ c *{2}{ >{$} c <{$} } }
\toprule
Sub-range & m_{11} & M_{11} \\
\midrule
\addlinespace
1 & 0 & \frac{14}{100} \\
\addlinespace
2 & \frac{14}{100} & \frac{20}{100} \\
\addlinespace
3 & \frac{20}{100} & \frac{24}{100} \\
\addlinespace
4 & \frac{24}{100} & \frac{28}{100} \\
\addlinespace
5 & \frac{28}{100} & \frac{36}{100} \\
\addlinespace
6 & \frac{36}{100} & \frac{50}{100} \\
\addlinespace
\bottomrule
\end{tabular}
\caption{Case 2 sub-ranges}
\label{tbl:main_diag_case2_sub_range}
\end{center}
\end{table}

\begin{table}
\begin{center}
\begin{tabular}{ c *{5}{ >{$} c <{$} } }
\toprule
Sub-range & m_{11} & m_{22} & m_{33} & m_{44} & m_{55} \\
\midrule
\addlinespace
1 & 0 & 0 & 0 & 0 & 0 \\
\addlinespace
2 & \frac{7}{50} & 0 & 0 & 0 & 0 \\
\addlinespace
3 & \frac{1}{5} & 0 & 0 & 0 & 0 \\
\addlinespace
4 & \frac{6}{25} & 0 & 0 & 0 & 0 \\
\addlinespace
5 & \frac{7}{25} & 0 & 0 & 0 & 0 \\
\addlinespace
6 & \frac{9}{25} & 0 & 0 & 0 & 0 \\
\addlinespace
\bottomrule
\end{tabular}
\caption{Case 2 $ m_{ii} $'s}
\label{tbl:main_diag_case2_m_ii}
\end{center}
\end{table}

\begin{table}
\begin{center}
\begin{tabular}{ c *{5}{ >{$} c <{$} } }
\toprule
Sub-range & M_{11} & M_{22} & M_{33} & M_{44} & M_{55} \\
\midrule
\addlinespace
1 & \frac{7}{50} & \frac{1}{10} & \frac{7}{50} & \frac{7}{50} & \frac{7}{50} \\
\addlinespace
2 & \frac{1}{5} & \frac{9}{100} & \frac{1}{6} & \frac{1}{5} & \frac{1}{5} \\
\addlinespace
3 & \frac{6}{25} & \frac{3}{40} & \frac{3}{20} & \frac{6}{25} & \frac{6}{25} \\
\addlinespace
4 & \frac{7}{25} & \frac{13}{200} & \frac{13}{100} & \frac{1}{4} & \frac{1}{4} \\
\addlinespace
5 & \frac{9}{25} & \frac{11}{200} & \frac{11}{100} & \frac{11}{50} & \frac{11}{50} \\
\addlinespace
6 & \frac{1}{2} & \frac{7}{200} & \frac{7}{100} & \frac{7}{50} & \frac{7}{50} \\
\addlinespace
\bottomrule
\end{tabular}
\caption{Case 2 $ M_{ii} $'s}
\label{tbl:main_diag_case2_M_ii}
\end{center}
\end{table}

\begin{table}
\begin{center}
\begin{tabular}{ c *{6}{ >{$} c <{$} } }
\toprule
Sub-range & r_{12} & \tilde{r}_{12} & \tilde{M}_{12} & r_{24} & \tilde{r}_{24} & \tilde{M}_{24} \\
\midrule
\addlinespace
1 & \frac{2,396,304}{390,625} & \frac{247}{100} & \frac{ 68 \sqrt{14,867} }{14,867} & \frac{2,396,304}{390,625} & \frac{247}{100} & \frac{ 68 \sqrt{14,867} }{14,867} \\
\addlinespace
2 & \frac{64}{15} & \frac{103}{50} & \frac{ \sqrt{17,894,630} }{7,690} & \frac{2,304}{625} & \frac{48}{25} & \frac{13}{22} \\
\addlinespace
3 & \frac{58,786}{15,625} & \frac{193}{100} & \frac{ \sqrt{87,262} }{542} & \frac{976,144}{390,625} & \frac{79}{50} & \frac{ 9 \sqrt{10,402} }{1,486} \\
\addlinespace
4 & \frac{9,657}{2,500} & \frac{49}{25} & \frac{ \sqrt{279,110} }{988} & \frac{1,188}{625} & \frac{137}{100} & \frac{ \sqrt{366} }{30} \\
\addlinespace
5 & \frac{1,895,166}{390,625} & \frac{11}{5} & \frac{ 4 \sqrt{751,137} }{6,767} & \frac{489,216}{390,625} & \frac{111}{100} & \frac{ 8 \sqrt{465,063} }{8,159} \\
\addlinespace
6 & \frac{3,095,226}{390,625} & \frac{281}{100} & \frac{ \sqrt{60,285,298} }{16,571} & 0 & 0 & \frac{ \sqrt{1,462} }{43} \\
\addlinespace
\bottomrule
\end{tabular}
\caption{Case 2 intermediates}
\label{tbl:main_diag_case2_intermediates}
\end{center}
\end{table}

\begin{table}
\begin{center}
\begin{tabular}{ c *{6}{ >{$} c <{$} } }
\toprule
Sub-range & m_{12} & m_{13} & m_{24} & m_{35}^{13} & m_{35}^{45} & m_{45} \\
\midrule
\addlinespace
1 & \frac{63}{100} \left( \frac{1}{2} \right) & \frac{14}{25} & \frac{63}{100} \left( \frac{1}{2} \right) & \frac{63}{100} * & \frac{63}{100} * & \frac{14}{25} \\
\addlinespace
2 & \frac{1}{2} & \frac{9}{20} & \frac{14}{25} & \frac{49}{100} & \frac{1}{2} * & \frac{1}{2} \\
\addlinespace
3 & \frac{1}{2} & \frac{41}{100} & \frac{59}{100} & \frac{11}{25} & \frac{11}{25} * & \frac{11}{25} \\
\addlinespace
4 & \frac{1}{2} & \frac{39}{100} & \frac{3}{5} & \frac{21}{50} & \frac{21}{50} * & \frac{41}{100} \\
\addlinespace
5 & \frac{1}{2} & \frac{37}{100} & \frac{31}{50} & \frac{11}{25} * & \frac{43}{100} & \frac{39}{100} \\
\addlinespace
6 & \frac{1}{2} & \frac{6}{25} & \frac{13}{20} & \frac{11}{25} * & \frac{11}{25} & \frac{9}{25} \\
\addlinespace
\bottomrule
\end{tabular}
\caption{Case 2 $ m_{ij} $'s}
\label{tbl:main_diag_case2_m_ij}
\end{center}
\end{table}

\begin{table}
\begin{center}
\begin{tabular}{ c *{1}{ >{$} c <{$} } }
\toprule
Sub-range & P_{B_{min}} \left( 1 \right) \\
\midrule
\addlinespace
1 & -\frac{390,487,023}{1,250,000,000} \\
\addlinespace
2 & -\frac{48,643}{1,000,000} \\
\addlinespace
3 & -\frac{5,699,171}{500,000,000} \\
\addlinespace
4 & -\frac{12,411}{4,000,000} \\
\addlinespace
5 & -\frac{46,207,263}{2,500,000,000} \\
\addlinespace
6 & -\frac{72}{15,625} \\
\addlinespace
\bottomrule
\end{tabular}
\caption{Case 2 $ P_{B_{min}} \left( 1 \right) $ }
\label{tbl:main_diag_case2_P_B_min}
\end{center}
\end{table}

\subsubsection{$ b_{55} \geq b_{11} $ and $ b_{33} \geq b_{44} $}

By Assumption~\ref{assmp:min_diagonal}, Assumption~\ref{assmp:b_11_geq_b_44} and by Lemma~\ref{lem:not_similar_to_positive_C} we have
\begin{align}
b_{55} \geq b_{11} \geq b_{44} \geq b_{22}, \quad b_{33} \geq b_{44} \geq b_{22} , \nonumber \\
b_{12} \geq b_{35}, \quad b_{24} \geq b_{35}, \quad b_{45} \geq b_{13}, \quad b_{24} \geq b_{13} . \label{eq:b_55_geq_b11_and_b33_geq_b44}
\end{align}
Let $ \tilde{m}_{33} \leq b_{33} \leq \tilde{M}_{33} $ and $ \tilde{m}_{55} \leq b_{55} \leq \tilde{M}_{55} $. We derive upper and lower bounds for the $ b_{ii} $'s using \eqref{eq:b_55_geq_b11_and_b33_geq_b44}.

\paragraph{Bounds for $ b_{33} $.}

We have
\begin{align*}
b_{33} &= \frac{1}{2} - b_{11} - b_{22} - b_{44} - b_{55} \leq \frac{1}{2} - b_{55} \leq \frac{1}{2} - \tilde{m}_{55} , \\
b_{33} &= \frac{1}{2} - b_{11} - b_{22} - b_{44} - b_{55} \geq \frac{1}{2} - 4 b_{55} \geq \frac{1}{2} - 4 \tilde{M}_{55} , \\
3 b_{33} & \geq b_{22} + b_{33} + b_{44} = \frac{1}{2} - b_{11} - b_{55} \geq \frac{1}{2} - 2 b_{55} \geq \frac{1}{2} - 2 \tilde{M}_{55} ,
\end{align*}
and so $ m_{33} \leq b_{33} \leq M_{33} $, where
\begin{align*}
m_{33} &= \max \left\{ \tilde{m}_{33}, \frac{1}{2} - 4 \tilde{M}_{55}, \frac{1}{6} - \frac{2}{3} \tilde{M}_{55} \right\} , \\
M_{33} &= \min \left\{ \tilde{M}_{33}, \frac{1}{2} - \tilde{m}_{55} \right\} .
\end{align*}

\paragraph{Bounds for $ b_{55} $.}

We have
\begin{align*}
b_{55} &= \frac{1}{2} - b_{11} - b_{22} - b_{33} - b_{44} \leq \frac{1}{2} - b_{33} \leq \frac{1}{2} - m_{33} , \\
2 b_{55} & \geq b_{11} + b_{55} = \frac{1}{2} - b_{22} - b_{33} - b_{44} \geq \frac{1}{2} - 3 b_{33} \geq \frac{1}{2} - 3 M_{33} , \\
4 b_{55} & \geq b_{11} + b_{22} + b_{44} + b_{55} = \frac{1}{2} - b_{33} \geq \frac{1}{2} - M_{33} ,
\end{align*}
and so, $ m_{55} \leq b_{55} \leq M_{55} $, where
\begin{align*}
m_{55} &= \max \left\{ \tilde{m}_{55}, \frac{1}{4} - \frac{3}{2} M_{33}, \frac{1}{8} - \frac{1}{4} M_{33} \right\} , \\
M_{55} &= \min \left\{ \tilde{M}_{55}, \frac{1}{2} - m_{33} \right\} .
\end{align*}

\paragraph{Bounds for $ b_{11} $.}

We have
\begin{align*}
b_{11} & \leq b_{11} + b_{22} + b_{44} = \frac{1}{2} - b_{33} - b_{55} \leq \frac{1}{2} - m_{33} - m_{55} , \\
2 b_{11} & \leq b_{11} + b_{55} = \frac{1}{2} - b_{22} - b_{33} - b_{44} \leq \frac{1}{2} - b_{33} \leq \frac{1}{2} - m_{33} , \\
3 b_{11} & \geq b_{11} + b_{22} + b_{44} = \frac{1}{2} - b_{33} - b_{55} \geq \frac{1}{2} - M_{33} - M_{55} ,
\end{align*}
and so $ m_{11} \leq b_{11} \leq M_{11} $, where
\begin{align*}
m_{11} &= \max \left\{ 0, \frac{1}{6} - \frac{1}{3} M_{33} - \frac{1}{3} M_{55} \right\} , \\
M_{11} &= \min \left\{ M_{55}, \frac{1}{2} - m_{33} - m_{55}, \frac{1}{4} - \frac{1}{2} m_{33} \right\} .
\end{align*}

\paragraph{Bounds for $ b_{44} $.}

We have
\begin{align*}
2 b_{44} & \leq b_{11} + b_{44} \leq b_{11} + b_{22} + b_{44} = \frac{1}{2} - b_{33} - b_{55} \leq \frac{1}{2} - m_{33} - m_{55} , \\
3 b_{44} & \leq b_{11} + b_{44} + b_{55} \leq b_{11} + b_{22} + b_{33} + b_{55} = \frac{1}{2} - b_{33} \leq \frac{1}{2} - m_{33} , \\
3 b_{44} & \leq b_{11} + b_{33} + b_{44} \leq b_{11} + b_{22} + b_{33} + b_{44} = \frac{1}{2} - b_{55} \leq \frac{1}{2} - m_{55} , \\
4 b_{44} & \leq b_{11} + b_{33} + b_{44} + b_{55} \leq b_{11} + b_{22} + b_{33} + b_{44} + b_{55} = \frac{1}{2} ,
\end{align*}
and so, $ m_{44} \leq b_{44} \leq M_{44} $, where
\begin{align*}
m_{44} &= 0 , \\
M_{44} &= \min \left\{ \frac{1}{8}, M_{33}, M_{55}, \frac{1}{4} - \frac{1}{2} m_{33} - \frac{1}{2} m_{55}, \frac{1}{6} - \frac{1}{3} m_{33}, \frac{1}{6} - \frac{1}{3} m_{55} \right\} .
\end{align*}

\paragraph{Bounds for $ b_{22} $.}

We have
\begin{align*}
3 b_{22} & \leq b_{11} + b_{22} + b_{44} = \frac{1}{2} - b_{33} - b_{55} \leq \frac{1}{2} - m_{33} - m_{55} , \\
4 b_{22} & \leq b_{11} + b_{22} + b_{44} + b_{55} = \frac{1}{2} - b_{33} \leq \frac{1}{2} - m_{33} , \\
4 b_{22} & \leq b_{11} + b_{22} + b_{33} + b_{44} = \frac{1}{2} - b_{55} \leq \frac{1}{2} - m_{55} ,
\end{align*}
and so, by Assumption~\ref{assmp:min_diagonal}, $ m_{22} \leq b_{22} \leq M_{22} $, where
\begin{align*}
m_{22} &= 0 , \\
M_{22} &= \min \left\{ \frac{1}{10}, M_{33}, M_{55}, \frac{1}{6} - \frac{1}{3} m_{33} - \frac{1}{3} m_{55}, \frac{1}{8} - \frac{1}{4} m_{33}, \frac{1}{8} - \frac{1}{4} m_{55} \right\} .
\end{align*}

\paragraph{Sub-ranges.}

We divide the range $ 0 \leq b_{33} \leq \frac{26}{100} $ and $ 0 \leq b_{55} \leq \frac{26}{100} $ into sub-ranges. Tables~\ref{tbl:main_diag_case3_sub_range}, \ref{tbl:main_diag_case3_m_ii}, \ref{tbl:main_diag_case3_M_ii}, \ref{tbl:main_diag_case3_intermediates}, \ref{tbl:main_diag_case3_m_ij} and \ref{tbl:main_diag_case3_P_B_min} are arranged as in Section~\ref{sec:main_diag_case_1}, except that in Table~\ref{tbl:main_diag_case3_m_ij} we take into account the relations in \eqref{eq:b_55_geq_b11_and_b33_geq_b44}.

\begin{table}
\begin{center}
\begin{tabular}{ c *{4}{ >{$} c <{$} } }
\toprule
Sub-range & \tilde{m}_{33} & \tilde{M}_{33} & \tilde{m}_{55} & \tilde{M}_{55} \\
\midrule
\addlinespace
1 & 0 & \frac{12}{100} & 0 & \frac{26}{100} \\
\addlinespace
2 & \frac{12}{100} & \frac{26}{100} & 0 & \frac{18}{100} \\
\addlinespace
3 & \frac{12}{100} & \frac{26}{100} & \frac{18}{100} & \frac{21}{100} \\
\addlinespace
4 & \frac{12}{100} & \frac{26}{100} & \frac{21}{100} & \frac{26}{100} \\
\addlinespace
\bottomrule
\end{tabular}
\caption{Case 3 sub-ranges}
\label{tbl:main_diag_case3_sub_range}
\end{center}
\end{table}

\begin{table}
\begin{center}
\begin{tabular}{ c *{5}{ >{$} c <{$} } }
\toprule
Sub-range & m_{11} & m_{22} & m_{33} & m_{44} & m_{55} \\
\midrule
\addlinespace
1 & \frac{1}{25} & 0 & 0 & 0 & \frac{19}{200} \\
\addlinespace
2 & \frac{1}{50} & 0 & \frac{3}{25} & 0 & \frac{3}{50} \\
\addlinespace
3 & \frac{1}{100} & 0 & \frac{3}{25} & 0 & \frac{9}{50} \\
\addlinespace
4 & 0 & 0 & \frac{3}{25} & 0 & \frac{21}{100} \\
\addlinespace
\bottomrule
\end{tabular}
\caption{Case 3 $ m_{ii} $'s}
\label{tbl:main_diag_case3_m_ii}
\end{center}
\end{table}

\begin{table}
\begin{center}
\begin{tabular}{ c *{5}{ >{$} c <{$} } }
\toprule
Sub-range & M_{11} & M_{22} & M_{33} & M_{44} & M_{55} \\
\midrule
\addlinespace
1 & \frac{1}{4} & \frac{1}{10} & \frac{3}{25} & \frac{3}{25} & \frac{13}{50} \\
\addlinespace
2 & \frac{9}{50} & \frac{19}{200} & \frac{13}{50} & \frac{1}{8} & \frac{9}{50} \\
\addlinespace
3 & \frac{19}{100} & \frac{1}{15} & \frac{13}{50} & \frac{1}{10} & \frac{21}{100} \\
\addlinespace
4 & \frac{17}{100} & \frac{17}{300} & \frac{13}{50} & \frac{17}{200} & \frac{13}{50} \\
\addlinespace
\bottomrule
\end{tabular}
\caption{Case 3 $ M_{ii} $'s}
\label{tbl:main_diag_case3_M_ii}
\end{center}
\end{table}

\begin{table}
\begin{center}
\begin{tabular}{ c *{6}{ >{$} c <{$} } }
\toprule
Sub-range & r_{12} & \tilde{r}_{12} & \tilde{M}_{12} & r_{24} & \tilde{r}_{24} & \tilde{M}_{24} \\
\midrule
\addlinespace
1 & \frac{2,795,584}{390,625} & \frac{267}{100} & \frac{ \sqrt{73,334,649} }{15,863} & \frac{1,332}{625} & \frac{29}{20} & \frac{ \sqrt{60,605} }{391} \\
\addlinespace
2 & \frac{2,331}{625} & \frac{193}{100} & \frac{ \sqrt{22} }{8} & \frac{1,721,344}{390,625} & \frac{209}{100} & \frac{ 66 \sqrt{12,973} }{12,973} \\
\addlinespace
3 & \frac{63,936}{15,625} & \frac{101}{50} & \frac{ \sqrt{44,526} }{362} & \frac{5,752,701}{1,562,500} & \frac{191}{100} & \frac{ \sqrt{204,021,627} }{24,146} \\
\addlinespace
4 & \frac{1,685,979}{390,625} & \frac{207}{100} & \frac{ \sqrt{6,307,787} }{4,314} & \frac{1,216,116}{390,625} & \frac{44}{25} & \frac{ \sqrt{46,730,082} }{11,334} \\
\addlinespace
\bottomrule
\end{tabular}
\caption{Case 3 intermediates}
\label{tbl:main_diag_case3_intermediates}
\end{center}
\end{table}

\begin{table}
\begin{center}
\begin{tabular}{ c *{6}{ >{$} c <{$} } }
\toprule
Sub-range & m_{12} & m_{13} & m_{24} & m_{35}^{13} & m_{35}^{45} & m_{45} \\
\midrule
\addlinespace
1 & \frac{1}{2} & \frac{49}{100} & \frac{51}{100} & \frac{23}{50} * & \frac{11}{25} & \frac{49}{100} \left( \frac{41}{100} \right) \\
\addlinespace
2 & \frac{1}{2} & \frac{9}{20} & \frac{51}{100} & \frac{11}{25} & \frac{9}{20} * & \frac{49}{100} \\
\addlinespace
3 & \frac{13}{25} & \frac{47}{100} & \frac{13}{25} & \frac{9}{20} & \frac{9}{20} * & \frac{12}{25} \\
\addlinespace
4 & \frac{13}{25} & \frac{12}{25} & \frac{13}{25} & \frac{43}{100} * & \frac{21}{50} & \frac{12}{25} \left( \frac{23}{50} \right) \\
\addlinespace
\bottomrule
\end{tabular}
\caption{Case 3 $ m_{ij} $'s}
\label{tbl:main_diag_case3_m_ij}
\end{center}
\end{table}

\begin{table}
\begin{center}
\begin{tabular}{ c *{1}{ >{$} c <{$} } }
\toprule
Sub-range & P_{B_{min}} \left( 1 \right) \\
\midrule
\addlinespace
1 & -\frac{214,977,947}{20,000,000,000} \\
\addlinespace
2 & -\frac{335,607}{1,250,000,000} \\
\addlinespace
3 & -\frac{7,419,049}{156,250,000} \\
\addlinespace
4 & -\frac{7,326,711}{156,250,000} \\
\addlinespace
\bottomrule
\end{tabular}
\caption{Case 3 $ P_{B_{min}} \left( 1 \right) $ }
\label{tbl:main_diag_case3_P_B_min}
\end{center}
\end{table}

\subsubsection{$ b_{11} \geq b_{55} $ and $ b_{33} \geq b_{44} $}

By Assumption~\ref{assmp:min_diagonal}, Assumption~\ref{assmp:b_11_geq_b_44} and by Lemma~\ref{lem:not_similar_to_positive_C} we have
\begin{align}
b_{11} \geq b_{55} \geq b_{22}, \quad b_{11} \geq b_{44} \geq b_{22}, \quad b_{33} \geq b_{44} \geq b_{22} , \nonumber \\
b_{12} \geq b_{35}, \quad b_{24} \geq b_{35}, \quad b_{45} \geq b_{13}, \quad b_{24} \geq b_{13} . \label{eq:b_11_geq_b55_and_b33_geq_b44}
\end{align}
Let $ \tilde{m}_{11} \leq b_{11} \leq \tilde{M}_{11} $ and $ \tilde{m}_{33} \leq b_{33} \leq \tilde{M}_{33} $. We derive upper and lower bounds for the $ b_{ii} $'s using \eqref{eq:b_11_geq_b55_and_b33_geq_b44}.

\paragraph{Bounds for $ b_{11} $.}

We have
\begin{align*}
2 b_{11} & \geq b_{11} + b_{55} = \frac{1}{2} - b_{22} - b_{33} - b_{44} \geq \frac{1}{2} - 3 b_{33} \geq \frac{1}{2} - 3 \tilde{M}_{33} , \\
4 b_{11} & \geq b_{11} + b_{22} + b_{44} + b_{55} = \frac{1}{2} - b_{33} \geq \frac{1}{2} - \tilde{M}_{33} ,
\end{align*}
and so $ m_{11} \leq b_{11} \leq M_{11} $, where
\begin{align*}
m_{11} &= \max \left\{ \tilde{m}_{11}, \frac{1}{4} - \frac{3}{2} \tilde{M}_{33}, \frac{1}{8} - \frac{1}{4} \tilde{M}_{33} \right\} , \\
M_{11} &= \tilde{M}_{11} .
\end{align*}

\paragraph{Bounds for $ b_{33} $.}

We have
\begin{align*}
b_{33} &= \frac{1}{2} - b_{11} - b_{22} - b_{44} - b_{55} \leq \frac{1}{2} - b_{11} \leq \frac{1}{2} - m_{11} , \\
b_{33} &= \frac{1}{2} - b_{11} - b_{22} - b_{44} - b_{55} \geq \frac{1}{2} - 4 b_{11} \geq \frac{1}{2} - 4 M_{11} , \\
3 b_{33} & \geq b_{22} + b_{33} + b_{44} = \frac{1}{2} - b_{11} - b_{55} \geq \frac{1}{2} - 2 b_{11} \geq \frac{1}{2} - 2 M_{11} ,
\end{align*}
and so $ m_{33} \leq b_{33} \leq M_{33} $, where
\begin{align*}
m_{33} &= \max \left\{ \tilde{m}_{33}, \frac{1}{2} - 4 M_{11}, \frac{1}{6} - \frac{2}{3} M_{11} \right\} , \\
M_{33} &= \min \left\{ \tilde{M}_{33}, \frac{1}{2} - m_{11} \right\} .
\end{align*}

\paragraph{Bounds for $ b_{44} $.}

We have
\begin{align*}
b_{44} &= \frac{1}{2} - b_{11} - b_{22} - b_{33} - b_{55} \leq \frac{1}{2} - b_{11} - b_{33} \leq \frac{1}{2} - m_{11} - m_{33} , \\
2 b_{44} & \leq b_{33} + b_{44} \leq b_{22} + b_{33} + b_{44} + b_{55} = \frac{1}{2} - b_{11} \leq \frac{1}{2} - m_{11} , \\
2 b_{44} & \leq b_{11} + b_{44} \leq b_{11} + b_{22} + b_{44} + b_{55} = \frac{1}{2} - b_{33} \leq \frac{1}{2} - m_{33} , \\
3 b_{44} & \leq b_{11} + b_{33} + b_{44} \leq b_{11} + b_{22} + b_{33} + b_{44} + b_{55} = \frac{1}{2} ,
\end{align*}
and so, $ m_{44} \leq b_{44} \leq M_{44} $, where
\begin{align*}
m_{44} &= 0 , \\
M_{44} &= \min \left\{ \frac{1}{6}, M_{11}, M_{33}, \frac{1}{2} - m_{11} - m_{33}, \frac{1}{4} - \frac{1}{2} m_{11}, \frac{1}{4} - \frac{1}{2} m_{33} \right\} .
\end{align*}

\paragraph{Bounds for $ b_{55} $.}

We have
\begin{align*}
b_{55} &= \frac{1}{2} - b_{11} - b_{22} - b_{33} - b_{44} \leq \frac{1}{2} - b_{11} - b_{33} \leq \frac{1}{2} - m_{11} - m_{33} , \\
2 b_{55} & \leq b_{11} + b_{55} \leq b_{11} + b_{22} + b_{44} + b_{55} = \frac{1}{2} - b_{33} \leq \frac{1}{2} - m_{33} ,
\end{align*}
and so, $ m_{55} \leq b_{55} \leq M_{55} $, where
\begin{align*}
m_{55} &= 0 , \\
M_{55} &= \min \left\{ M_{11}, \frac{1}{2} - m_{11} - m_{33}, \frac{1}{4} - \frac{1}{2} m_{33} \right\} .
\end{align*}

\paragraph{Bounds for $ b_{22} $.}

We have
\begin{align*}
3 b_{22} & \leq b_{22} + b_{44} + b_{55} = \frac{1}{2} - b_{11} - b_{33} \leq \frac{1}{2} - m_{11} - m_{33} , \\
4 b_{22} & \leq b_{22} + b_{33} + b_{44} + b_{55} = \frac{1}{2} - b_{11} \leq \frac{1}{2} - m_{11} , \\
4 b_{22} & \leq b_{11} + b_{22} + b_{44} + b_{55} = \frac{1}{2} - b_{33} \leq \frac{1}{2} - m_{33} ,
\end{align*}
and so, by Assumption~\ref{assmp:min_diagonal}, $ m_{22} \leq b_{22} \leq M_{22} $, where
\begin{align*}
m_{22} &= 0 , \\
M_{22} &= \min \left\{ \frac{1}{10}, M_{11}, M_{33}, \frac{1}{6} - \frac{1}{3} m_{11} - \frac{1}{3} m_{33}, \frac{1}{8} - \frac{1}{4} m_{11}, \frac{1}{8} - \frac{1}{4} m_{33} \right\} .
\end{align*}

\paragraph{Sub-ranges.}

We divide the range $ 0 \leq b_{11} \leq \frac{1}{2} $ and $ 0 \leq b_{33} \leq \frac{26}{100} $ into sub-ranges. Tables~\ref{tbl:main_diag_case4_sub_range}, \ref{tbl:main_diag_case4_m_ii}, \ref{tbl:main_diag_case4_M_ii}, \ref{tbl:main_diag_case4_intermediates}, \ref{tbl:main_diag_case4_m_ij} and \ref{tbl:main_diag_case4_P_B_min} are arranged as in Section~\ref{sec:main_diag_case_1}, except that in Table~\ref{tbl:main_diag_case4_m_ij} we take into account the relations in \eqref{eq:b_11_geq_b55_and_b33_geq_b44}.

\begin{table}
\begin{center}
\begin{tabular}{ c *{4}{ >{$} c <{$} } }
\toprule
Sub-range & \tilde{m}_{11} & \tilde{M}_{11} & \tilde{m}_{33} & \tilde{M}_{33} \\
\midrule
\addlinespace
1 & 0 & \frac{15}{100} & 0 & \frac{26}{100} \\
\addlinespace
2 & \frac{15}{100} & \frac{25}{100} & 0 & \frac{15}{100} \\
\addlinespace
3 & \frac{15}{100} & \frac{25}{100} & \frac{15}{100} & \frac{26}{100} \\
\addlinespace
4 & \frac{25}{100} & \frac{40}{100} & 0 & \frac{10}{100} \\
\addlinespace
5 & \frac{25}{100} & \frac{40}{100} & \frac{10}{100} & \frac{20}{100} \\
\addlinespace
6 & \frac{25}{100} & \frac{40}{100} & \frac{20}{100} & \frac{26}{100} \\
\addlinespace
7 & \frac{40}{100} & \frac{50}{100} & 0 & \frac{26}{100} \\
\addlinespace
\bottomrule
\end{tabular}
\caption{Case 4 sub-ranges}
\label{tbl:main_diag_case4_sub_range}
\end{center}
\end{table}

\begin{table}
\begin{center}
\begin{tabular}{ c *{5}{ >{$} c <{$} } }
\toprule
Sub-range & m_{11} & m_{22} & m_{33} & m_{44} & m_{55} \\
\midrule
\addlinespace
1 & \frac{3}{50} & 0 & \frac{1}{15} & 0 & 0 \\
\addlinespace
2 & \frac{3}{20} & 0 & 0 & 0 & 0 \\
\addlinespace
3 & \frac{3}{20} & 0 & \frac{3}{20} & 0 & 0 \\
\addlinespace
4 & \frac{1}{4} & 0 & 0 & 0 & 0 \\
\addlinespace
5 & \frac{1}{4} & 0 & \frac{1}{10} & 0 & 0 \\
\addlinespace
6 & \frac{1}{4} & 0 & \frac{1}{5} & 0 & 0 \\
\addlinespace
7 & \frac{2}{5} & 0 & 0 & 0 & 0 \\
\addlinespace
\bottomrule
\end{tabular}
\caption{Case 4 $ m_{ii} $'s}
\label{tbl:main_diag_case4_m_ii}
\end{center}
\end{table}

\begin{table}
\begin{center}
\begin{tabular}{ c *{5}{ >{$} c <{$} } }
\toprule
Sub-range & M_{11} & M_{22} & M_{33} & M_{44} & M_{55} \\
\midrule
\addlinespace
1 & \frac{3}{20} & \frac{1}{10} & \frac{13}{50} & \frac{3}{20} & \frac{3}{20} \\
\addlinespace
2 & \frac{1}{4} & \frac{7}{80} & \frac{3}{20} & \frac{3}{20} & \frac{1}{4} \\
\addlinespace
3 & \frac{1}{4} & \frac{1}{15} & \frac{13}{50} & \frac{1}{6} & \frac{7}{40} \\
\addlinespace
4 & \frac{2}{5} & \frac{1}{16} & \frac{1}{10} & \frac{1}{10} & \frac{1}{4} \\
\addlinespace
5 & \frac{2}{5} & \frac{1}{20} & \frac{1}{5} & \frac{1}{8} & \frac{3}{20} \\
\addlinespace
6 & \frac{2}{5} & \frac{1}{60} & \frac{1}{4} & \frac{1}{20} & \frac{1}{20} \\
\addlinespace
7 & \frac{1}{2} & \frac{1}{40} & \frac{1}{10} & \frac{1}{20} & \frac{1}{10} \\
\addlinespace
\bottomrule
\end{tabular}
\caption{Case 4 $ M_{ii} $'s}
\label{tbl:main_diag_case4_M_ii}
\end{center}
\end{table}

\begin{table}
\begin{center}
\begin{tabular}{ c *{6}{ >{$} c <{$} } }
\toprule
Sub-range & r_{12} & \tilde{r}_{12} & \tilde{M}_{12} & r_{24} & \tilde{r}_{24} & \tilde{M}_{24} \\
\midrule
\addlinespace
1 & \frac{52,836}{15,625} & \frac{183}{100} & \frac{ 3 \sqrt{828,714} }{4,682} & \frac{14,161}{2,500} & \frac{119}{50} & \frac{9}{16} \\
\addlinespace
2 & \frac{14,161}{2,500} & \frac{119}{50} & \frac{ 3 \sqrt{2} }{8} & \frac{9}{4} & \frac{3}{2} & \frac{5}{8} \\
\addlinespace
3 & \frac{1,184}{375} & \frac{177}{100} & \frac{ 2 \sqrt{149,003} }{1,367} & \frac{1,287}{400} & \frac{179}{100} & \frac{ 5 \sqrt{5,406} }{612} \\
\addlinespace
4 & \frac{5,184}{625} & \frac{72}{25} & \frac{ \sqrt{161} }{26} & \frac{18}{25} & \frac{21}{25} & \frac{ 5 \sqrt{1,507} }{274} \\
\addlinespace
5 & \frac{126}{25} & \frac{56}{25} & \frac{ \sqrt{140,910} }{732} & \frac{714}{625} & \frac{53}{50} & \frac{ \sqrt{66} }{12} \\
\addlinespace
6 & \frac{513}{100} & \frac{113}{50} & \frac{ \sqrt{20,155} }{278} & \frac{1,026}{625} & \frac{32}{25} & \frac{ \sqrt{493} }{34} \\
\addlinespace
7 & \frac{6,156}{625} & \frac{313}{100} & \frac{ \sqrt{105,415} }{727} & 0 & 0 & \frac{ \sqrt{7} }{3} \\
\addlinespace
\bottomrule
\end{tabular}
\caption{Case 4 intermediates}
\label{tbl:main_diag_case4_intermediates}
\end{center}
\end{table}

\begin{table}
\begin{center}
\begin{tabular}{ c *{6}{ >{$} c <{$} } }
\toprule
Sub-range & m_{12} & m_{13} & m_{24} & m_{35}^{13} & m_{35}^{45} & m_{45} \\
\midrule
\addlinespace
1 & \frac{1}{2} & \frac{9}{20} & \frac{13}{25} & \frac{47}{100} & \frac{49}{100} * & \frac{27}{50} \\
\addlinespace
2 & \frac{1}{2} & \frac{23}{50} & \frac{57}{100} & \frac{9}{20} * & \frac{9}{20} & \frac{23}{50} \left( \frac{11}{25} \right) \\
\addlinespace
3 & \frac{1}{2} & \frac{39}{100} & \frac{57}{100} & \frac{21}{50} & \frac{43}{100} * & \frac{12}{25} \\
\addlinespace
4 & \frac{1}{2} & \frac{2}{5} & \frac{61}{100} & \frac{43}{100} * & \frac{21}{50} & \frac{2}{5} \left( \frac{19}{50} \right) \\
\addlinespace
5 & \frac{1}{2} & \frac{7}{20} & \frac{61}{100} & \frac{11}{25} * & \frac{11}{25} & \frac{43}{100} \\
\addlinespace
6 & \frac{13}{25} \left( \frac{1}{2} \right) & \frac{39}{100} & \frac{61}{100} & \frac{13}{25} * & \frac{13}{25} & \frac{53}{100} \\
\addlinespace
7 & \frac{1}{2} & \frac{7}{25} & \frac{67}{100} & \frac{12}{25} * & \frac{23}{50} & \frac{21}{50} \\
\addlinespace
\bottomrule
\end{tabular}
\caption{Case 4 $ m_{ij} $'s}
\label{tbl:main_diag_case4_m_ij}
\end{center}
\end{table}

\begin{table}
\begin{center}
\begin{tabular}{ c *{1}{ >{$} c <{$} } }
\toprule
Sub-range & P_{B_{min}} \left( 1 \right) \\
\midrule
\addlinespace
1 & -\frac{17,003,473}{468,750,000} \\
\addlinespace
2 & -\frac{1,684,287}{80,000,000} \\
\addlinespace
3 & -\frac{31,285,863}{2,000,000,000} \\
\addlinespace
4 & -\frac{6,702,413}{400,000,000} \\
\addlinespace
5 & -\frac{136,283}{6,250,000} \\
\addlinespace
6 & -\frac{153,180,277}{1,250,000,000} \\
\addlinespace
7 & -\frac{2,089,397}{31,250,000} \\
\addlinespace
\bottomrule
\end{tabular}
\caption{Case 4 $ P_{B_{min}} \left( 1 \right) $ }
\label{tbl:main_diag_case4_P_B_min}
\end{center}
\end{table}

\end{appendices}

\bibliography{../bib/sniep}

\end{document}